\newtheorem{thm}{Theorem}[section]
\newtheorem{lemma}[thm]{Lemma}
\newtheorem{rem}[thm]{Remark}
\newtheorem{defn}[thm]{Definition}
\newtheorem{prop}[thm]{Proposition}
\newtheorem{cor}[thm]{Corollary}
\newtheorem{claim}[thm]{Claim}
\numberwithin{equation}{section}
\newcommand{\Q}{\mathbb Q}
\newcommand{\R}{\mathbb R}
\newcommand{\C}{\mathbb C}
\renewcommand{\H}{\mathbb H}
\newcommand{\MM}{\mathfrak{M}}
\newcommand{\N}{\mathbb N}
\newcommand{\D}{\mathbb D}
\newcommand{\E}{\mathbb E}
\newcommand{\CC}{\mathcal C}
\renewcommand{\1}{\mathbf 1}
\newcommand{\A}{\mathds A}
\newcommand{\ABP}[2]{\protect\rotatebox[origin=c]{180}{$#1\A$}}
\newcommand{\AB}{{\mathpalette\ABP\relax}}
\newcommand{\B}{\mathcal B}
\newcommand{\ED}{\operatorname{EL}}
\newcommand{\M}{\operatorname{M}}
\newcommand{\LL}{\mathbb L}
\newcommand{\F}{\mathcal F}
\renewcommand{\epsilon}{\varepsilon}
\newcommand{\JJ}{\mathcal J}
 	\definecolor{pakistangreen}{rgb}{0.0, 0.4, 0.0}
\newcommand{\eps}{\epsilon}	
\newcommand{\crad}{\operatorname{CR}}
\renewcommand{\a}{\alpha}
\renewcommand{\d}{{d}}
\begin{document}
\title{First passage sets of the 2D continuum Gaussian free field}
\author{Juhan Aru \and Titus Lupu \and Avelio Sep\'ulveda}

\address {
Institute of Mathematics,
EPFL, CH-1015 Lausanne,
Switzerland}
\email
{juhan.aru@math.ethz.ch}

\address{CNRS and LPSM, UMR 8001, Sorbonne Université, 4 place Jussieu, 75252 Paris cedex 05, France}
\email
{titus.lupu@upmc.fr}

\address{Univ Lyon, Université Claude Bernard Lyon 1, CNRS UMR 5208, Institut Camille Jordan, 69622 Villeurbanne, France}
\email
{sepulveda@math.lyon-1.fr}

\begin{abstract}
We introduce the first passage set (FPS) of constant level $-a$ of the two-dimensional continuum Gaussian free field (GFF) on finitely connected domains. Informally, it is the set of points in the domain that can be connected to the boundary by a path on which the GFF does not go below $-a$. It is, thus, the two-dimensional analogue of the first hitting time of $-a$ by a one-dimensional Brownian motion. We provide an axiomatic characterization of the FPS, a continuum construction using level lines, and study its properties: it is a fractal set of zero Lebesgue measure and Minkowski dimension 2 that is coupled with the GFF $\Phi$ as a local set $A$ so that $\Phi+a$ restricted to $A$ is a positive measure. One of the highlights of this paper is identifying this measure as a Minkowski content measure in the non-integer gauge $r \mapsto \vert\log(r)\vert^{1/2}r^{2}$, by using Gaussian multiplicative chaos theory. 
\end{abstract}

\subjclass[2010]{60G15; 60G60; 60J65; 60J67; 81T40} 
\keywords{ first passage sets; Gaussian free field;Gaussian multiplicative chaos; local set;  Schramm-Loewner evolution; Two-valued local sets}

\maketitle

\section{Introduction}

The continuum Gaussian free field (GFF) is a canonical model of a Gaussian field satisfying a spatial Markov property. It first appeared in Euclidean quantum field theory, where it is known as bosonic massless free field \cite{Simon1974EQFT,GawedzkiCFT}.
In the probability community the study of the 2D continuum GFF has reflourished in the 2000's due to its various connections to Schramm's SLE processes \cite{She05,Dub,SchSh2,MS1,MS2,MS3,MS4}, Liouville quantum gravity measures \cite{DS,BSS} and Brownian loop-soups \cite{LeJan2010LoopsRenorm,LeJan2011Loops}. 

The seminal papers that connected SLE processes and the free field showed that SLE$_4$ can be seen as a level line of the GFF - more precisely, in \cite{SchSh} Schramm and Sheffield showed that the level lines of the discrete GFF converge in law to SLE$_4$ and in \cite{SchSh2} they gave a purely continuum definition of the limiting coupling, giving rise to the study of level lines of the continuum GFF. 

In \cite{ASW}, it was further shown that level lines give a way to define other geometric subsets of the GFF. More precisely, in \cite{ASW} the authors introduce two-valued local sets $\A_{-a,b}$. Heuristically, the set $\A_{-a,b}$ corresponds to the points in the domain that can be connected to the boundary by some path along which the height of the GFF remains in $[-a,b]$. The mathematical definition of these sets is based on thinking of the 2D GFF as a generalization of the Brownian motion, and it relies on the strong Markov property of the free field. In the case of the Brownian motion, the same geometric heuristic defines the set $[0,T_{-a,b}]$, where $T_{-a,b}$ is the first exit time from the interval $[-a,b]$, and in \cite{ASW} it is proved that $\A_{-a,b}$ satisfies many properties expected from the analogy with these exit times.

In the current article, we introduce a further geometric subset of the GFF: the first passage set (FPS) $\A_{-a}$. Heuristically, it corresponds to the points in the domain that can be connected to the boundary by paths along which the height of the GFF is greater or equal to $-a$, i.e., it is the analogue of $[0,T_{-a}]$, where $T_{-a}$ is the one-sided first passage time of a BM. We provide an axiomatic characterization of the continuum FPS, a construction using  iterations of level lines as in \cite{ASW} and study several of its properties. 

There are two key aspects which make the FPS interesting to study. First of all, compared to most of the geometric subsets of the GFF studied so far, this set is large in the sense that the restriction of the GFF to this set is a non-zero distribution.  This not only requires new ways of working with this set, but also introduces interesting phenomena - as one of the key results we show that even though the GFF on this set is non-trivial, it is measurable with respect to the underlying set. Even more, we show that the restriction of the GFF to its FPS can be identified with the Minkowski content measure of the underlying set in the gauge $r\mapsto |\log(r)|^{1/2} r^2$. Secondly, in the case of the FPS the geometric definition given above can be made precise in the following sense: in a follow-up article \cite{ALS2}, we show that the first passage sets of the metric graph GFF introduced in \cite{LupuWerner2016Levy} converge to the continuum FPS. This will, among other things, allow us to identify the FPS with the trace of a clusters of Brownian loops and excursions, and to prove convergence results for the level lines of the GFF. Moreover, in a subsequent work we will use the results of these two papers to prove an excursion decomposition of the 2D continuum GFF \cite{ALS4}. 

First passage sets have already proved useful in studying Gaussian multiplicative chaos (GMC) measures of the GFF: in \cite{APS}, the authors confirm that a construction of \cite{Ai} converges to the GMC measure. This was done using the fact that GFF can be approximated by its FPS of increasing levels. Moreover, in \cite{APS2}, and using the same FPS based construction, the authors prove that a ``derivative'' of subcritical GMC measures coincides with a multiple of the critical measure. This confirmed a conjecture of \cite{DRSV}.

In this article we construct the FPS not only for simply-connected domains, but on any planar domain with finitely many holes. To do this, we also extend the construction of the two-valued local sets, first introduced in \cite{ASW} for simply connected domains, to multiply-connected domains. Admittedly, the choice to work in a non-simply-connected setting makes the paper more technical. This more general setting has, however, several important motivations. First of all, in the follow-up article \cite{ALS2} we relate the FPS to the clusters of Brownian loops and Brownian excursions. Doing it in multiply connected domains emphasizes that this relation is not specific to the geometry of the domain, and stems from very general considerations known as ``isomorphism theorems'', such as the Dynkin's isomorphism. As a consequence, we obtain convergence results also for SLE-type curves in multiply-connected domains. Moreover, in a different article \cite{ALS3} by the same authors, we use two-valued local sets and FPS in annular domains in order to calculate explicit laws of extremal distances related to CLE$_4$ loops. In fact, the need to deal with multiply connected domains  arises naturally in the simply connected setting: take a two valued-set or an FPS of a GFF on a disk and then remove a finite number of its holes. One then gets a multiply connected domain. Thus, in order to study the conditional law of a two-valued set or an FPS given a finite number of its holes, one has to deal with multiply connected domains, even if the initial domain is simply connected. This said, let us stress that in order to prove the results of the current paper for the simply-connected case, one does not need to pass through the multiply-connected set-up.

\subsection{Overview of results}

Let us give a more detailed overview of the results presented in the paper. To do this, first recall the local set coupling of a random set $A$ with the Gaussian free field $\Phi$ in a domain $D$. It is a coupling $(\Phi, A)$ that induces a Markovian decomposition of $\Phi$. That is to say $\Phi$ can be written as a sum between $ \Phi_A$ and $\Phi^A$, where $\Phi_A$ is a random distribution that is a.s. harmonic on $D\backslash A$ and  conditional on $(A,\Phi_A)$, $\Phi^A$ is a zero boundary GFF on $D\backslash A$. We denote by $h_A$ the harmonic function corresponding to $\Phi_A$ outside of $A$. The local set condition implies that conditional on $A$ and 
$\Phi_A$, the GFF $\Phi$ restricted to $D\backslash A$ is given by the sum of $h_A$ and $\Phi^A$. 

The two valued local sets (TVS) $\A_{-a,b}$ for a simply connected domain $D$, studied in \cite{ASW}, can be then defined as the only thin local sets of the GFF such that $h_A \in \{-a,b\}$ . Here, thin means that at $\Phi_A$ contains no extra mass on $A$, i.e. for any smooth function $f$, we have that $(h_A,f) = (\Phi_A,f)$.

Our first task is to generalize two-valued sets to multiply-connected domains and to more general boundary conditions $u$, and to show that the main properties of TVS proved in \cite{ASW} remain true also in this setup. The generalization to more general boundary conditions requires only slight modifications in the definitions, and slight extensions of the proofs. The case of multiply-connected domains, however, requires both new ideas and technical work. In particular, as technical result of independent interest we prove in Proposition \ref{propext} \textit{that level lines in multiply-connected domains are continuous up to and at hitting a continuation threshold.}

We next introduce the first passage set (FPS). For a zero boundary GFF, the FPS of level $-a$, denoted $\A_{-a}$ with $a \geq 0$ is then defined as a local set of the Gaussian free field on $D$ satisfying the following properties: 
{\it \begin{itemize}
		\item Conditional on $\A_{-a}$, the law of the restriction of the GFF on $D$ to  $D\backslash \A_{-a}$ is that of a GFF on $D\backslash \A_{-a}$ with boundary condition $-a$, or in other words $h_{\A_{-a}} = -a$.
		\item The GFF on $\A_{-a}$ is greater than or equal to $-a$, in the sense that for any positive test function $f$ we have that $(\Phi_{\A_{-a}}+a,f) \geq 0$ - that is to say $\Phi_{\A_{-a}}+a$ is a positive measure.
	\end{itemize}}
	
	The full definition for general boundary conditions $u$ is given in Definition \ref{Def ES}. There, we also define the FPS in the other direction: $\AB_{b}$ will heuristically correspond to the local set $A$ such that $h_A = b$ and the GFF on $A$ is smaller than $b$. As proved in Theorem \ref{FPSthm}  in the setting of more general boundary conditions, the first passage set
	{\it
		\begin{itemize}
			\item is unique in the sense that any other local set with the above conditions is a.s. equal to the FPS, and thus, it is a measurable with respect to the GFF it is coupled with;
			\item is monotone in the sense that for all $a \leq a'$ almost surely $\A_{-a} \subset \A_{-a'}$;
			\item as in the case of the Brownian motion can be constructed as a limit of two-valued local sets,
			$\A_{-a} = \lim_{b \to \infty} \A_{-a,b}$.
		\end{itemize}}
		
		\begin{figure}	   
			\centering
			\includegraphics[width=5in]{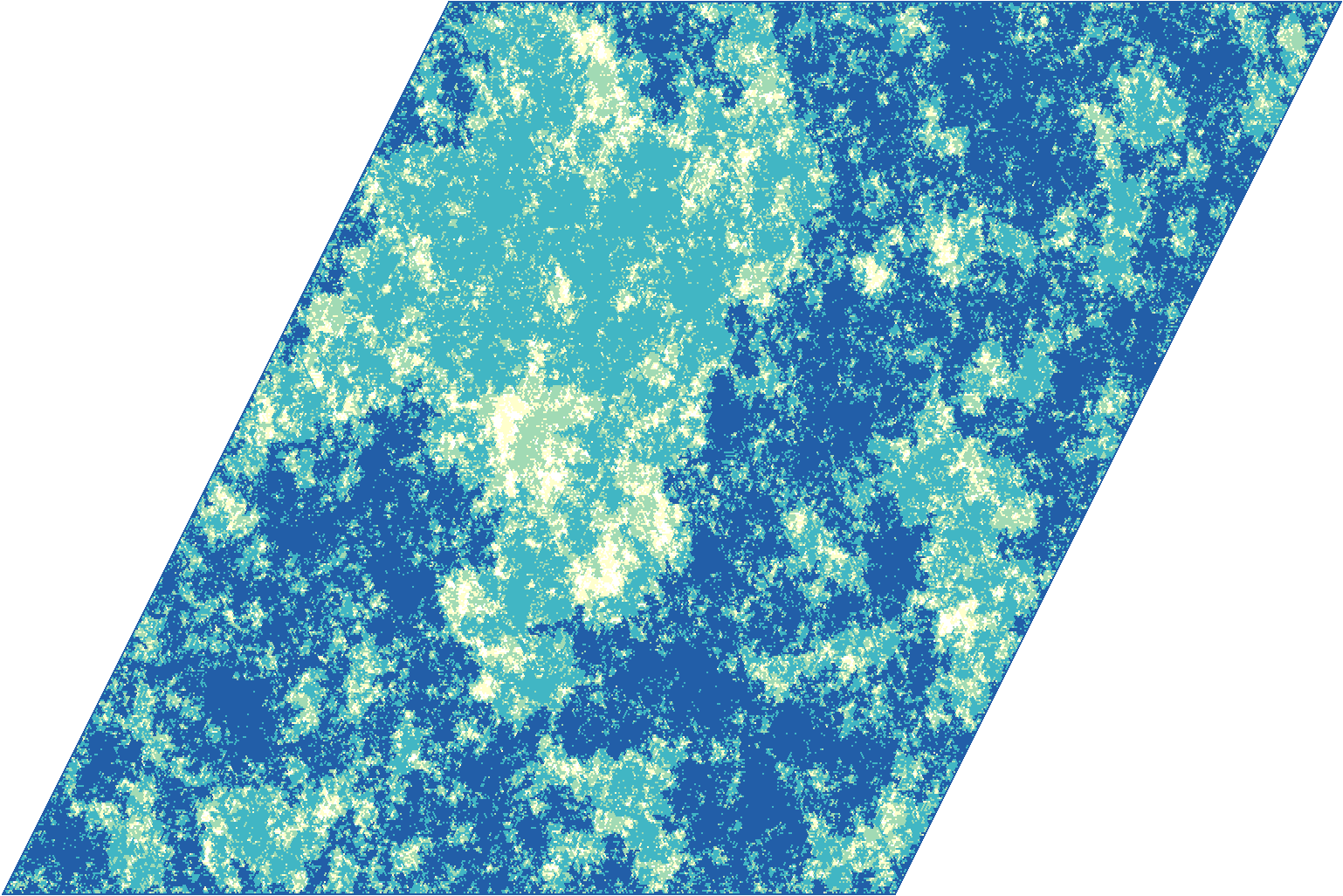}
			\caption {A simulation of four nested First passage sets. The first passage set $\A_{-\lambda}$ with $\lambda=\sqrt{\pi/8}$ is in dark blue. The difference between $\A_{-2\lambda}$ and $\A_{\lambda}$ is in lighter blue, difference between $\A_{-2\lambda}$ and $\A_{-3\lambda}$ in green and yellow depicts the missing part of $\A_{-4\lambda}$. Image done by B. Werness.}
			\label{FPS}
		\end{figure}
		In fact the relation to two-valued sets is even stronger: we will show that {\it the intersection of two FPS $\A_{-a}$ and $\AB_{b}$ is precisely $\A_{-a,b}$ in the simply connected case}. Proposition \ref{EaEbAab} generalizes this result to multiply connected domains and it is the key to show the uniqueness of TVS.
		
		One can show that {\it $\A_{-a}$ has zero Lebesgue measure} but, contrary to $\A_{-a,b}$,  {\it its Minkowski dimension is 2 and $\A_{-a}$ is not a thin local set}, i.e., $\Phi$ ``charges'' $\A_{-a}$. Also, quite surprisingly {\it $\Phi_{\A_{-a}}$ is measurable function of just the set $\A_{-a}$ itself}. Even more, {\it this measure is in fact equal to one half times its Minkowski content measure in the gauge $r\mapsto |\log(r)|^{1/2} r^2$} (Proposition \ref{prop:: mes LQG}). Notice that in fact both of these statements are non-trivial! Our proof uses the recent construction of Liouville quantum gravity measures via the local sets \cite{APS}, the fact that GFF is a measurable function of of its Gaussian multiplicative chaos measures \cite{BSS}, and a deterministic argument to link different measures on fractal sets, that could turn out to be useful in a more general setting. As a cute consequence we observe in Corollary \ref{cor::cute} that {\it the GFF can be seen as a limit of recentered Minkowski content measures of a sequence of growing random sets}.

		Finally, our techniques allow us also to compute explicitly the laws of several observables. In Propositions \ref{LawELBddFPS} and \ref{LawELBdd}, we compute the extremal distance between FPS or TVS started from a given boundary to the rest of the boundary. This is the continuum analogue of some of the results obtained in the metric graph setting \cite{LupuWerner2016Levy}, where the extremal distance replaces the effective resistance. In an upcoming paper \cite{ALS3}, we further use these techniques to calculate the law of the extremal distance between the CLE$_4$ loop surrounding zero and the boundary, and the joint laws between different nested loops.

		\subsection{Outline of the paper}
		
		The rest of the article is structured as follows:
		
		Section \ref{sectC} contains the preliminaries: a summary of general potential theory objects, two-dimensional continuum GFF, local sets and basic results about Gaussian multiplicative chaos. The only novel parts are Propositions \ref{BProcess}, \ref{BProcess2} that heuristically allow us to parametrize local set processes using their distance to a part of the boundary. 
		
		In Section \ref{section:BTLS},  we extend the theory of two-valued local set to the finitely-connected case. This will require a detailed study of the generalized level lines in multiply-connected domains. After that, in Section \ref{sec:: FPS}, we define and characterize the continuum FPS and prove several of its properties. Finally, in Section \ref{sec:: Minkowski} we show that the measure $\Phi_{\A_{-a}}+a$ corresponds to a constant times the Minkowski content measure (in a certain gauge) of the underlying set.
		

\section{Preliminaries}\label{sectC} 
In this section, we describe the underlying objects and their key properties. First, we go over the conformally invariant notion of distance in complex analysis - the extremal length; then we discuss the continuum two-dimensional GFF and its local sets. The only new contributions of this section are Proposition \ref{BProcess} and Proposition \ref{BProcess2}.

We denote by $D\subseteq \mathbb{C}$ an open planar bounded domain with a non-empty and non-polar boundary. Here, by a non-polar set we mean a set on the plane, that is a.s. not hit by a Brownian motion started from outside of this set (see e.g. Section 8.3 in \cite{PMBM}). By conformal invariance, we can always assume that $D$ is a subset of the unit disk $\D$.
The most general case that we work with are domains $D$ such that the complement of $D$ has at most finitely many connected component and no complement being a singleton. Recall that by the Riemann mapping for multiply-connected domains \cite{Ko2}, such domains $D$ are known to be conformally equivalent to a circle domain (i.e. to $\D \backslash K$, where $K$ is a finite union of closed disjoint disks, disjoint also from $\partial \D$). 

\subsection{Extremal distance and conformal radius}\label{secED}

In multiply-connected domains, the natural way to measure distances between the components is the \textit{extremal length} (it is a particular case of \textit{extremal distance}) and its reciprocal \textit{conformal modulus}. Both of the quantities are conformally invariant and extremal distance is the analogue of the effective resistance on electrical networks \cite{Duffin1962ELRes}. We introduce it shortly here and refer to \cite{Ahlfors2010ConfInv}, Section 4 for more details.

If $\rho(z)\vert dz\vert$ is a metric on $D$ conformally equivalent to the Euclidean metric, we will denote by
\begin{displaymath}
\operatorname{Length}_{\rho}(\gamma):=\int_{\gamma}\rho(z)\vert dz\vert
\end{displaymath}
the $\rho$-length of a path $\gamma$, and by
\begin{displaymath}
\operatorname{Area}_{\rho}(D):=\int_{D}\rho(z)^{2}dz
\end{displaymath}
the $\rho$-area of $D$. Now, let $\B_{1}$ and $\B_{2}$ be unions of finitely many boundary arcs of $D$, such that $d(\B_{1},\B_{2})>0$. The extremal distance between
$\mathcal{B}_{1}$ and $\mathcal{B}_{2}$ is defined as
\begin{displaymath}
\operatorname{EL}(\mathcal{B}_{1},\mathcal{B}_{2})=
\sup_{\rho}
\inf_{\substack{\gamma~\text{connecting}\\
		\mathcal{B}_{1}~\text{and}~\mathcal{B}_{2}}}
\dfrac{\operatorname{Length}_{\rho}(\gamma)^{2}}
{\operatorname{Area}_{\rho}(D)}.
\end{displaymath}
The conformal modulus $\M(\mathcal{B}_{1},\mathcal{B}_{2})$ is then defined as $\ED(\mathcal{B}_{1},\mathcal{B}_{2})^{-1}$. We state here also a theorem giving an explicit formula for the extremal distance using the Dirichlet energy .

\begin{thm}[Theorem 4-5 of \cite{Ahlfors2010ConfInv}]
	\label{thmEL}
	Let $D$ be finitely connected, $\B_{1}$ and $\B_{2}$ be unions of finitely many boundary arcs, such that $d(\B_{1},\B_{2})>0$. 
	
	If $\partial D\backslash (\B_{1}\cup\B_{2})$ is piece-wise smooth, then $M(\B_{1},\B_{2}) = \ED(\mathcal{B}_{1},\mathcal{B}_{2})^{-1}$ is given by the Dirichlet energy 
	$\int_D |\nabla \bar u|^2$ of the harmonic function $\bar u$ equal to $0$ on $\B_{1}$, $1$ on $\B_{2}$, and having zero normal derivative on
	$\partial D\backslash (\B_{1}\cup\B_{2})$.

 If $\B_1$, resp. $\B_2$, has piecewise smooth boundary, then $\int_D |\nabla \bar u|^2$ is equal to $-\int_{\B_1} \partial_n \bar u$, resp. $\int_{\B_2} \partial_n \bar u$, where $\partial_n$ is the outward derivative.
\end{thm}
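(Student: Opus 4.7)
The plan is to prove $M(\B_{1},\B_{2})=\int_D |\nabla \bar u|^2$ by a two-sided bound, then derive the boundary integral representation via Green's identity. I will work with the equivalent characterization $M=\inf_\rho \int_D \rho^2$, the infimum taken over Borel measurable $\rho\geq 0$ with $\int_\gamma \rho\,|dz|\geq 1$ for every rectifiable path $\gamma$ joining $\B_{1}$ to $\B_{2}$ in $D$; this reformulation follows from the definition in the excerpt after normalising so that $L(\rho)=1$.

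For the upper bound $M\leq \int_D|\nabla\bar u|^2$, take $\rho_{0}:=|\nabla\bar u|$. For any admissible path $\gamma$,
\[
\int_\gamma |\nabla\bar u|\,|dz|\;\geq\;\Big|\int_\gamma \nabla \bar u\cdot d\gamma\Big|\;=\;|\bar u(\gamma_{\mathrm{end}})-\bar u(\gamma_{\mathrm{start}})|\;=\;1,
\]
so $\rho_{0}$ is admissible and $\int_D \rho_{0}^{2}=\int_D|\nabla\bar u|^2$. For the reverse inequality I would invoke the Dirichlet principle: given admissible $\rho$, define $v(z):=\min\{1,d_\rho(z,\B_{1})\}$, where $d_\rho$ is geodesic distance in the conformal metric $\rho|dz|$. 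Admissibility gives $v=0$ on $\B_{1}$ and $v=1$ on $\B_{2}$; a standard argument (smoothing $\rho$ by mollification and passing to the limit) shows $v\in H^{1}(D)$ with $|\nabla v|\leq \rho$ almost everywhere. Since $\bar u$ minimises the Dirichlet energy among functions that vanish on $\B_{1}$, equal $1$ on $\B_{2}$ and are free on the rest of $\partial D$ (the "free" boundary encoding the Neumann condition $\partial_{n}\bar u=0$), we conclude $\int_D|\nabla\bar u|^2\leq \int_D|\nabla v|^2\leq \int_D \rho^2$, and taking the infimum over $\rho$ gives the matching bound.

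For the explicit boundary representation in the piecewise smooth case, I would apply Green's identity $\int_D|\nabla\bar u|^2=\int_{\partial D}\bar u\,\partial_{n}\bar u-\int_D \bar u\,\Delta\bar u$. Harmonicity kills the bulk term; the Neumann condition kills the contribution from $\partial D\setminus(\B_{1}\cup\B_{2})$; and on $\B_{1}$ and $\B_{2}$ one uses $\bar u\equiv 0$ and $\bar u\equiv 1$ respectively to obtain $\int_D|\nabla\bar u|^2=\int_{\B_{2}}\partial_{n}\bar u$. Testing Green's identity against the constant $1$ yields $\int_{\partial D}\partial_{n}\bar u=0$, whence also $-\int_{\B_{1}}\partial_{n}\bar u=\int_{\B_{2}}\partial_{n}\bar u=\int_D|\nabla\bar u|^2$.

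The main technical difficulty is the Dirichlet-principle step: rigorously verifying $v\in H^{1}(D)$ with $|\nabla v|\leq \rho$ for merely $L^{2}$ Borel $\rho$ requires either a careful mollification argument together with lower semicontinuity of the length functional, or a direct use of the Lipschitz structure of the metric $\rho|dz|$. A secondary care point is to check that the Dirichlet-principle minimisation is valid in the mixed Dirichlet--Neumann setting on finitely connected $D$, which is classical and reduces to the uniqueness of the harmonic extension with these mixed boundary data.
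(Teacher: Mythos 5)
The paper does not actually prove this statement: it is quoted verbatim as Theorem 4-5 of Ahlfors' book on conformal invariants, so the relevant comparison is with Ahlfors' classical argument rather than with anything in the text. Your proof is correct in outline but takes a genuinely different route. Ahlfors establishes the extremality of $\rho_{0}=|\nabla\bar u|$ via Beurling's criterion (his Theorem 4-4): one checks that any $h$ with $\int_{\gamma}h\,|dz|\geq 0$ along every connecting curve satisfies $\int_{D}h\rho_{0}\geq 0$, by integrating $h$ along the gradient flow lines of $\bar u$ (the orthogonal trajectories of its level curves, which join $\B_{1}$ to $\B_{2}$) and using the conformal coordinates given by $\bar u$ and its conjugate; this identifies the extremal metric directly and stays within classical function theory. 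You instead prove the two bounds separately: the upper bound with the admissible metric $|\nabla\bar u|$ (same as Ahlfors), and the reverse bound by the ``modulus equals capacity'' argument, building $v=\min\{1,d_{\rho}(\cdot,\B_{1})\}$ and invoking the Dirichlet principle for the mixed Dirichlet--Neumann problem. That route is standard and more robust (it is essentially how the equality of extremal length and capacity is proved in the quasiconformal and metric-space literature, e.g.\ by Ziemer), but the step you flag is indeed where the real work lies: plain mollification of $\rho$ does not obviously preserve admissibility, so one should either replace $\rho$ by a slightly larger lower semicontinuous metric with nearly the same $L^{2}$ norm (Vitali--Carath\'eodory) or argue via the upper-gradient/ACL characterization of Sobolev functions that $v\in H^{1}(D)$ with $|\nabla v|\leq\rho$ a.e.; both fixes are classical, and one also needs continuity of $\bar u$ up to $\B_{1}\cup\B_{2}$ to run the fundamental theorem of calculus along connecting curves in the upper bound. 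Your Green's identity derivation of $\int_{D}|\nabla\bar u|^{2}=-\int_{\B_{1}}\partial_{n}\bar u=\int_{\B_{2}}\partial_{n}\bar u$ matches the statement, the piecewise smoothness hypothesis being exactly what makes those boundary integrals legitimate.
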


This theorem gives in particular a relation between the extremal distance and the boundary Poisson kernel. To explain this, we define the Green's function $G_D$ of the Laplacian (with Dirichlet boundary conditions) in $D$. It is often useful to write 
\begin{equation}
\label{EqLogSing}
G_D(z,w) = (2\pi)^{-1} \log(1/|z-w|) + g_{D}(z,w),
\end{equation} 
where $g_{D}(z,\cdot)$ is the bounded harmonic function with boundary values given by $(2\pi)^{-1} \log(|z-x|)$ for $x \in \partial D$. It can be shown that the Green's function is conformally invariant. Additionally, note that in simply connected domains, $g_{D}(z,z)$ equals the log conformal radius:
\begin{displaymath}
g_{D}(z,z)=\frac{1}{2\pi}\log(\operatorname{CR}(z,D)).
\end{displaymath}

The Green's function can be used to define the Poisson kernel and the boundary Poisson kernel. In the case of domains with locally analytic boundary, for $z \in D$ and $x \in \partial D$ the Poisson kernel is given by:
\begin{equation}\label{l.Poisson Kernel}
P_{D}^z(x) := -\partial_{n_x} G_D(x,z) = \lim_{\eps \to 0} \eps^{-1}G_D(x-n_x\eps,z),
\end{equation}
where $n_x$ is the outward unit normal vector at $x$, and $\partial_{n_x}$ is the outward normal derivative in the first component. 
Similarly, the boundary Poisson kernel equals:
\begin{equation}\label{l.Boundary Poisson Kernel}
H_{D}(x,y)=\partial_{n_{x}}\partial_{n_{y}} G_{D}(x,y),~
x,y\in\partial D,
\end{equation}
where $\partial_{n_{x}}$ and $\partial_{n_{y}}$ are the normal derivatives in the first and second component respectively. If $D$ and $D'$ are domains with locally analytic boundaries and $f$ is a conformal transformation from $D$ to $D'$, then
\begin{equation}
\label{EqConfCovP}
P_{D'}^{f(z)}(f(x))= \vert f'(x)\vert P_{D}^{z}(x)
\end{equation}
and
\begin{equation}
\label{EqConfCovHD}
H_{D'}(f(x),f(y))= \vert f'(x)\vert \vert f'(y)\vert H_{D}(x,y).
\end{equation}
One can see the Poisson kernel as a measure on $\partial D$ by setting
\begin{displaymath}
P_{D}^z(dx)=P_{D}^z(x) dx.
\end{displaymath}
It then equals the harmonic measure in $D$ seen from $z$ and solves the Dirichlet problem: for any bounded harmonic function $u$ we have that 
$$u(z) = \int_{\partial D}u(x)P_D^z(dx).$$
Similarly, one can see the boundary Poisson kernel as a measure on
$\partial D\times\partial D$ rather than a function, by setting
\begin{displaymath}
H_{D}(dx,dy)=H_{D}(x,y) dx dy,
\end{displaymath}
where on the right-hand side $dx$ and $dy$ denote the length measure on
$\partial D$. Both $P_D^z(dx)$ and $H_D(dx,dy)$ are conformally invariant by \eqref{EqConfCovP} and
\eqref{EqConfCovHD}. In addition, $H_D(dx,dy)$ has infinite total mass due to diagonal divergence. For any domain $D$ with locally connected boundary, we can define the Poisson kernel $P_D^z(dx)$ and the boundary Poisson kernel $H_{D}(dx,dy)$ as the push-forward measures of, respectively, the Poisson kernel and boundary Poisson kernel on a domain $D'$ with locally analytic boundary, under a conformal transformation taking $D' \to D$. This is true even in the case 
$\partial D$ has locally infinite length, e.g. in the case where the boundary ``looks like'' an $\hbox{SLE}_{4}$ curve.

Finally, notice that from the definition using the Green's function and Theorem \ref{thmEL} we see that the extremal length introduced above can be expressed using the boundary Poisson kernel. Indeed, let $\B$ be a union of finitely many boundary components. Then
\begin{equation} \label{Modulus}
M(\B,\partial D \backslash \B)=\ED(\B,\partial D \backslash \B)^{-1}=
\iint_{\B\times\partial D \backslash \B}
H_{D}(dx,dy) 
,
\end{equation}
where the last equality follows from Theorem \ref{thmEL}.
In general, if $\B_{1},\B_{2}\subseteq\partial D$ are disjoint,
\begin{displaymath}
M(\B_{1},\B_{2})=\ED(\B_{1},\B_{2})^{-1}\geq
\iint_{\B_{1}\times\B_{2}}
H_{D}(dx,dy)
.
\end{displaymath}
\subsection{The continuum GFF}

The (zero boundary) Gaussian Free Field (GFF) in a domain $D$  can be viewed as a centered Gaussian process $\Phi$ 
indexed by the set of continuous functions with compact support in $D$, with covariance given by the Green's function:
\begin{equation*} \E [(\Phi,f_{1}) (\Phi,f_{2})]  =  
\iint_{D\times D} f_{1}(z) G_D(z,w) f_{2}(w) \d z \d w.\end{equation*}

In this paper $\Phi$ always denotes the zero boundary GFF. We also consider GFF-s with non-zero Dirichlet boundary conditions - they are given by $\Phi + u$ where $u$ is some bounded harmonic function that is piecewise constant\footnote{Here and elsewhere this means piecewise constant that changes only finitely many times.} boundary data on $\partial D$.

Because the covariance kernel of the GFF blows up on the diagonal, it is impossible to view $\Phi$ as a random function. However, it can be shown that the GFF has a version that lives in the Sobolev space 
$H^{-1}(D)$ of generalized functions, justifying the notation $(\Phi,f)$ for $\Phi$ acting on functions $f$ (see for example \cite{Dub}). In fact, for any domain $D$ included in the unit disk $\D$, a GFF $\Phi$ in $D$ also belongs to $H^{-1}(\D)$ and the expected value of the square of its norm is uniformly bounded by
\begin{equation}\label{e.H-1 norm}
\E\left[\|\Phi\|_{H^{-1}(\D)}^2 \right]=\iint_{\D\times \D} G_D(z,w)G_{\D}(z,w)dzdw\leq \iint_{\D\times \D} G_{\D}^2(z,w)dzdw<\infty.
\end{equation}

Moreover, let us remark that it is in fact possible and useful to define the random variable $(\Phi,\mu)$ for any fixed Borel measure $\mu$, provided the energy 
$\iint \mu (dz) \mu (dw) G_D (z,w)$ is finite.

Finally, there is an important numerical constant $\lambda$, that depends on the normalization of the GFF and is in the current setting (where $G_D(z,w) \sim -(2\pi)^{-1}\log(|z-w|)$ as $w\to z$) is given by 
\begin{equation*}
\lambda=\sqrt{\pi/8},
\end{equation*}
where $2\lambda$ is the \textit{height gap} of the GFF
\cite{SchSh,SchSh2}.
Sometimes, other normalizations are used in the literature: if ${G_D (z,w) \sim c \log(1/|z-w|)}$ as $z \to w$, then $\lambda$ should 
be taken to be $(\pi/2)\times \sqrt {c}$.

\subsection{Local sets: definitions and basic properties}
Let us now discuss more thoroughly the local sets of the GFF, that were introduced in Lemma 3.9 of \cite{SchSh2}. We only discuss items that are directly used in the current paper. For a more general discussion of local sets and thin local sets (not necessarily of bounded type), we refer to 
\cite{SchSh2,WWln2,Se}.

Even though, it is not possible to make sense of $(\Phi,f)$ when $f=\1_A$ is the indicator function of an arbitrary random set $A$, local sets form a class of random sets where this is (in a sense) possible.

\begin{defn}[Local sets]\label{d.local}
	Consider a random triple $(\Phi, A,\Phi_A)$, where $\Phi$ is a  GFF in $D$, $A$ is a random closed subset of $\overline D$ and $\Phi_A$ a random distribution that can be viewed as a harmonic function when restricted to
	$D \backslash A$.
	We say that $A$ is a local set for $\Phi$ if conditionally on $(A,\Phi_A)$, $\Phi^A:=\Phi - \Phi_A$ is a  GFF in $D \backslash A$. 
	\end {defn}
	Throughout this paper, we use the notation $h_A: D\rightarrow\R$ for the function that on $D\backslash A$ is harmonic and equal to $\Phi_A$, and is set to $0$ on $A$. We will sometimes also talk about the values of $h_A$ on the boundary of $D \backslash A$ and by this we mean the extension of this harmonic function $h_A$ to the (prime-end) boundary. This extension to the boundary does not necessarily exist for all local sets, but in the current paper we only ever use this notation when we a priori know that $h_A$ is given on the boundary by a piece-wise constant function, changing value finitely many times.
		
	Let us list a few properties of local sets that are used in this paper:.
	\begin{lemma}\label{BPLS}    $\ $
		\begin {enumerate}
		\item Any local set can be coupled in a unique way with a given GFF: Let $(\Phi,A,\Phi_A,\widehat \Phi_A)$ be a coupling where $(\Phi,A,\Phi_A)$ and $(\Phi,A,\widehat \Phi_A)$ satisfy the conditions of Definition \ref{d.local}. Then, a.s. $\Phi_A=\widehat \Phi_A$. Thus, being a local set is a property of the coupling $(\Phi,A)$, as  $\Phi_A$ is a measurable function of $(\Phi,A)$. 
		\item If $A$ and $B$ are local sets coupled with the same GFF $\Phi$, and $(A, \Phi_A)$ and $(B, \Phi_B)$ are conditionally independent given $\Phi$, then $A \cup B$ is also a local set coupled with $\Phi$. Additionally, $B\backslash A$ is a local set of $\Phi^A$ with $(\Phi^A)_{B\backslash A} = \Phi_{B\cup A}-\Phi_{A}$.
		\item Let $(\Phi,A_n)$ be such that for all $n\in \N$, $(\Phi,A_n)$ is a local set coupling, the sets $A_n$ are non-decreasing in $n$, measurable w.r.t. $\Phi$, such that the cardinal of connected components of $A_n\cup \partial D$ is uniformly bounded in $n$, and each connected component is larger than a point. Then, $\overline{\bigcup  A_n}$ is also a local set and $\Phi_{A_n}\to\Phi_{\overline{\bigcup  A_n}} $ in probability as $n\to \infty$, in, say, $H^{-1-\eps}(D)$ for bounded $D$ and in $H_{loc}^{-1-\eps}(D)$ otherwise.
	\end{enumerate}
\end{lemma}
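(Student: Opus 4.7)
The plan is to prove each of the four statements in turn, using standard techniques for local sets of the GFF.

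For (1), the approach is to test both couplings against a smooth function $f$ with compact support in $D$. By the local-set property, the conditional characteristic function $\E[e^{it(\Phi,f)}\mid A, \Phi_A]$ equals $\exp(it(\Phi_A,f) - \tfrac{t^2}{2}\sigma_A^2(f))$, where $\sigma_A^2(f)=\iint_{(D\setminus A)^2} f(z) G_{D\setminus A}(z,w) f(w)\,dz\,dw$ depends only on $A$. The same identity holds with $\widehat\Phi_A$ in place of $\Phi_A$. Conditioning further on $(A, \Phi_A, \widehat\Phi_A)$ and using the tower property on both identities, one obtains $e^{it(\Phi_A,f)} = e^{it(\widehat\Phi_A,f)}$ almost surely; letting $t$ range over a rational sequence tending to zero forces $(\Phi_A,f)=(\widehat\Phi_A,f)$ a.s.\ on a single null set (simultaneously for a countable dense family of $f$). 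This gives $\Phi_A=\widehat\Phi_A$ as distributions, and in turn the fact that $\Phi_A$ is a measurable function of $(\Phi, A)$.

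For (2), apply the local-set property iteratively. Conditional on $(A,\Phi_A)$, $\Phi^A$ is a GFF on $D\setminus A$. The conditional independence hypothesis lets one re-run the argument of (1) inside the residual field: given $\Phi$, the pair $(A,\Phi_A)$ and $B$ are conditionally independent, so $B\setminus A$ plays the role of a local set for $\Phi^A$ on $D\setminus A$. Defining $\Phi_{A\cup B} := \Phi_A + (\Phi^A)_{B\setminus A}$, the distribution $\Phi_{A\cup B}$ is harmonic on $D\setminus(A\cup B)$, and
\[
\Phi - \Phi_{A\cup B} = \Phi^A - (\Phi^A)_{B\setminus A}
\]
is a GFF on $D\setminus(A\cup B)$ conditional on $(A,B,\Phi_A,\Phi_{A\cup B})$, hence on $(A\cup B, \Phi_{A\cup B})$. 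This verifies the local-set axioms for $A\cup B$ and simultaneously identifies the harmonic extension $(\Phi^A)_{B\setminus A} = \Phi_{A\cup B}-\Phi_A$.

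For (3) and (4), run martingale convergence arguments. Using (1), $\Phi_{A_n}$ is a measurable function of $(\Phi, A_n)$, so $(\Phi_{A_n}, f)$ is adapted to the filtration $\mathcal{F}_n=\sigma(A_n, \Phi_{A_n})$. By (2) applied to $A_n\subset A_{n+1}$, one checks the tower property $\E[(\Phi_{A_{n+1}},f)\mid \mathcal{F}_n]=(\Phi_{A_n},f)$, giving a martingale in the increasing case and a backward martingale in the decreasing case. In the increasing case of (3), $L^2$-boundedness via \eqref{e.H-1 norm} gives convergence in $H^{-1}$; the uniform bound on the number of connected components of $A_n\cup\partial D$ is used to ensure the Hausdorff limit $A_\infty=\overline{\bigcup A_n}$ has the same control, that the harmonic functions $h_{A_n}$ converge uniformly on compacts of $D\setminus A_\infty$ to a harmonic limit, and that one can pass to the limit in the GFF Markov decomposition. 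In (4), the domains $D\setminus A_n$ increase to $D\setminus \bigcap A_n$, the reverse-martingale converges almost surely, and the limiting distribution is harmonic on $D\setminus\bigcap A_n$ by standard equicontinuity of harmonic functions on expanding domains, which together identify $\bigcap A_n$ as a local set with the claimed decomposition.

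The main obstacle is part (3): without the topological control on $A_n$, the limit set could have infinitely many components accumulating and the harmonic functions $h_{A_n}$ could fail to converge in any useful topology, so one genuinely needs the uniform bound to obtain the harmonic limit and to justify exchanging the limit with the Markov decomposition. Parts (1) and (2) are essentially algebraic manipulations with conditional characteristic functions, and (4) is easier than (3) because $\Phi_{A_n}$ is automatically harmonic on every compact subset of the limit domain for large enough $n$.
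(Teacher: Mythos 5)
The main gap is in your part (1). From the two identities $\E[e^{it(\Phi,f)}\mid A,\Phi_A]=e^{it(\Phi_A,f)-\frac{t^2}{2}\sigma_A^2(f)}$ and its analogue for $\widehat\Phi_A$ you cannot ``condition further on $(A,\Phi_A,\widehat\Phi_A)$'': the tower property only produces conditional expectations with respect to \emph{smaller} $\sigma$-algebras, and the conditional law of $\Phi$ given the joint $\sigma$-algebra $\sigma(A,\Phi_A,\widehat\Phi_A)$ is precisely what the hypotheses do not give you. Moreover, the information you actually use (for each fixed $f$, $(\Phi,f)$ equals $(\Phi_A,f)$ plus a centred Gaussian of $A$-measurable variance, independent of $(A,\Phi_A)$) cannot suffice in principle: already for $X\sim N(0,1)$, taking $W\sim N(0,1/4)$ independent of $X$ and setting $Y=X/2+W$, $Z=X/2-W$ gives two decompositions $X=Y+N_1=Z+N_2$ with $N_i\sim N(0,1/2)$ independent of $Y$, resp.\ of $Z$, yet $Y\neq Z$ a.s. So a correct proof must exploit the harmonicity of $\Phi_A$ on $D\backslash A$ and the finer characterizations of local sets (conditioning on $A$ together with the restriction of $\Phi$ to small neighbourhoods of $A$), which is how the works the paper cites for this lemma (\cite{SchSh2,Aru,AS}) establish that $\Phi_A$ is a measurable function of $(\Phi,A)$; the paper itself does not reprove (1)--(2) but refers to those sources.

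Part (2) of your proposal is essentially circular: the assertion that, thanks to conditional independence, ``$B\setminus A$ plays the role of a local set for $\Phi^A$'' is, together with the identification $(\Phi^A)_{B\setminus A}=\Phi_{A\cup B}-\Phi_A$, exactly the statement to be proved, and it cannot be obtained by re-running (1); the known arguments use the conditional independence through the characterization of local sets via deterministic open sets. For (3)--(4) your martingale scheme is the standard route and matches the paper's remarks, but the step you leave implicit is the real content of (3): the uniform bound on the number of connected components of $A_n\cup\partial D$ enters through the Beurling estimate, which yields $G_{D\backslash A_n}\to G_{D\backslash\overline{\bigcup A_n}}$ and thereby allows passing to the limit in the Markov decomposition; the paper invokes precisely this, and (4) is indeed the backward martingale theorem. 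So (3)--(4) are acceptable sketches modulo that convergence statement, while (1) and (2) have genuine gaps.
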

	
\begin{proof}
Property (1) comes from Lemma 3.9 in \cite{SchSh2}, and property (2) from Lemma 3.10 in the same paper. So let us only explain the property (3) for bounded $D$: the convergence in law of $(\Phi, A_n, \Phi_{A_n}, \Phi^{A_n})$ follows because:
\begin{itemize}
	\item $A_n$ is non-decreasing as thus converges to some $A$ (given by $\overline{\bigcup  A_n}$);
	\item $\Phi^{A_n}$, conditioned on $A_n$ convergence in law in $H^{-1}(D)$ to the GFF $\Phi^{A}$ in $D \backslash A$: indeed, by calculating the expected $H^{-1}(D)$-norm we get tightness (using \eqref{e.H-1 norm}), and the Beurling estimate (Theorem 3.76 of \cite{LawC}) ensures that $G_{D\backslash A_n}\to G_{D\backslash A}$ as $n\to \infty$;
	\item For any smooth $f \in \CC_0^\infty(D)$, $(\Phi_{A_n},f)$ is a martingale with $\E\left[(\Phi_{A_n},f)^2\right]\leq \E[(\Phi,f)^2]$;
	\item Finally, as $\Phi_{A_n}$ is harmonic in $D\backslash A_n$, and for any $z \in D\backslash A$, we have $z \in D\backslash A_n$ for $n$ large enough, we obtain that $\Phi_A$ is harmonic in $D \backslash A$.
\end{itemize}\end{proof}
Often one is interested in a growing family of local sets, which we call local set processes.

\begin{defn}[Local set process]
	We say that a coupling $(\Phi,(\eta_t)_{t\geq 0})$ is a local set process if $\Phi$ is a GFF in $D$, $\eta_0\subseteq \partial D$, and $\eta_t$ is an increasing continuous family of local sets such that for all stopping time $\tau$ of the filtration $\F_t:=\sigma(\eta_s:s\leq t)$, 
	$(\Phi,\eta_\tau)$ is a local set.
\end{defn}

Let us note that in our definition $\eta_t$ is actually a random set. In the rest of the paper, we are mostly interested in local set processes that are equal to the trace of a continuous curve. In those cases, we are going to denote by $\eta(t)$ the tip of the curve at time $t$. In other words, in our notation $\eta_t=\eta([0,t])$.

Local processes can be naturally parametrized from the viewpoint of any interior point $z$: the expected height $h_{\eta_t}(z)$ (i.e. the harmonic part of $\Phi_{\eta_t}$ as defined just after Definition \ref{d.local}) then becomes a Brownian motion. More precisely, we have that:

\begin{prop}[Proposition 6.5 of \cite{MS1}]\label{BMinterior}
	For any $z \in D$ if $(\eta_t)_{t\geq 0}$ is parametrized such that 
	$(G_{D}- G_{D\backslash \eta_t})(z,z)=t$, then $(h_{\eta_t}(z))_{t\geq 0}$ has (a modification with) the law of a Brownian motion. 
\end{prop}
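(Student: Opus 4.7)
The plan is to apply L\'evy's characterization of Brownian motion to the process $M_t := h_{\eta_t}(z)$, working in the enriched filtration $\mathcal{G}_t := \sigma(\eta_s, \Phi_{\eta_s} : s \leq t)$. Three facts must be verified: $M_0 = 0$, $M$ is a continuous martingale, and $\langle M \rangle_t = t$ under the prescribed parametrization. The first is immediate because $\eta_0 \subseteq \partial D$ forces $\Phi_{\eta_0} \equiv 0$.

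For the martingale property, fix $s \leq t$. By Lemma \ref{BPLS}(2), conditionally on $\mathcal{G}_s$ the set $\eta_t \setminus \eta_s$ is a local set of the residual zero-boundary GFF $\Phi^{\eta_s}$ in $D \setminus \eta_s$, whose associated harmonic function is $\Phi_{\eta_t} - \Phi_{\eta_s}$. Testing against a smooth mollifier $f_n$ of $\delta_z$ and using the identity $\Phi_{\eta_t} - \Phi_{\eta_s} = \Phi^{\eta_s} - \Phi^{\eta_t}$, one computes
\[
\E\bigl[(\Phi_{\eta_t} - \Phi_{\eta_s})(f_n) \,\big|\, \mathcal{G}_s\bigr] = \E[\Phi^{\eta_s}(f_n)\mid\mathcal{G}_s] - \E[\Phi^{\eta_t}(f_n)\mid\mathcal{G}_s] = 0,
\]
the second term vanishing by the tower property since $\Phi^{\eta_t}$ is centered given $\mathcal{G}_t$. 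Passing $n \to \infty$ and invoking the local harmonicity of $h_{\eta_t}$ at $z$ to justify the interchange of limit and conditional expectation gives $\E[M_t - M_s \mid \mathcal{G}_s] = 0$.

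For the quadratic variation, the same decomposition $\Phi^{\eta_s} = (\Phi_{\eta_t} - \Phi_{\eta_s}) + \Phi^{\eta_t}$ has conditionally uncorrelated summands given $\mathcal{G}_s$ (the first is $\mathcal{G}_t$-measurable and the second is centered given $\mathcal{G}_t$), yielding
\[
\E[\Phi^{\eta_s}(f_n)^2 \mid \mathcal{G}_s] = \E\bigl[((\Phi_{\eta_t} - \Phi_{\eta_s})(f_n))^2 \,\big|\, \mathcal{G}_s\bigr] + \E[\Phi^{\eta_t}(f_n)^2 \mid \mathcal{G}_s].
\]
Letting $f_n \to \delta_z$, the left side tends to $G_{D \setminus \eta_s}(z,z)$ while the last term tends to $\E[G_{D \setminus \eta_t}(z,z) \mid \mathcal{G}_s]$. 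Under the prescribed parametrization, $G_{D \setminus \eta_t}(z,z) = G_D(z,z) - t$ is \emph{deterministic}, and the identity collapses to $\E[(M_t - M_s)^2 \mid \mathcal{G}_s] = t - s$; hence $\langle M \rangle_t = t$.

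Continuity of $t \mapsto M_t$ is inherited from the continuity of the local set process via Lemma \ref{BPLS}(3)--(4), which controls convergence of $\Phi_{\eta_t}$ in $H^{-1}$ under monotone limits of the sets and, through the local harmonicity of $h_{\eta_t}$ at $z$, gives continuity of the harmonic values at $z$ (valid because the parametrization keeps $z$ at positive distance from $\eta_t$). L\'evy's theorem then identifies $M$ with a standard Brownian motion. The main technical obstacle is justifying the pointwise-at-$z$ interpretation: although $h_{\eta_t}$ is a genuine harmonic function on $D \setminus \eta_t$, the mean and variance identities live at the distributional level, and the limit $f_n \to \delta_z$ must be controlled using the logarithmic singularity structure \eqref{EqLogSing} of $G_D$ and its conformal invariance.
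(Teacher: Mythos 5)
The paper does not actually prove this statement -- it is quoted from Proposition 6.5 of \cite{MS1} -- but the proof it gives of the analogous Proposition \ref{BProcess} shows what the intended argument is, and comparing with it exposes a genuine gap in your proposal. You verify only the first two conditional moments of the increments ($\E[M_t-M_s\mid\mathcal{G}_s]=0$ and $\E[(M_t-M_s)^2\mid\mathcal{G}_s]=t-s$) and then appeal to L\'evy's characterization. But L\'evy's theorem requires almost sure continuity of the sample paths, and this is precisely the step you have not established: a martingale with conditional variance increments $t-s$ need not be a Brownian motion if it is allowed to jump (the compensated Poisson process is the standard counterexample), so continuity here is not a cosmetic add-on but the load-bearing hypothesis. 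Lemma \ref{BPLS}(3)--(4), which you invoke for it, only gives convergence of $\Phi_{A_n}$ in probability (or a.s.) along \emph{monotone sequences} of sets, i.e.\ continuity in probability of $t\mapsto h_{\eta_t}(z)$ at each fixed time; that is compatible with jumps at random times and cannot by itself upgrade to pathwise continuity. With only second moments in hand you also cannot run a Kolmogorov-type argument.

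The missing idea is to use the full conditional Gaussian structure rather than just two moments: conditionally on $\mathcal{G}_t$, the random variable $(\Phi,f_n)$ is Gaussian with mean $(\Phi_{\eta_t},f_n)$ and variance $\iint f_n G_{D\setminus\eta_t}f_n$, so $\exp\bigl(i\lambda(\Phi_{\eta_t},f_n)-\tfrac{\lambda^2}{2}\iint f_n(G_D-G_{D\setminus\eta_t})f_n\bigr)$ is a (bounded) martingale in $t$; letting $f_n\to\delta_z$ and using the parametrization, one gets $\E[e^{i\lambda(M_t-M_s)}\mid\mathcal{G}_s]=e^{-\lambda^2(t-s)/2}$, i.e.\ the increments are exactly $N(0,t-s)$ and independent of the past. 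This pins down the finite-dimensional distributions directly (no L\'evy needed) and gives fourth-moment bounds from which path continuity follows by Kolmogorov's criterion; it is exactly the characteristic-function computation the paper performs in its proof of Proposition \ref{BProcess}. Two smaller points: in your variance identity the terms $\E[\Phi^{\eta_s}(f_n)^2\mid\mathcal{G}_s]$ and $\E[\Phi^{\eta_t}(f_n)^2\mid\mathcal{G}_s]$ individually diverge as $f_n\to\delta_z$ (the claimed limits $G_{D\setminus\eta_s}(z,z)$ and $\E[G_{D\setminus\eta_t}(z,z)\mid\mathcal{G}_s]$ are infinite), so you must subtract before passing to the limit, using \eqref{EqLogSing} to cancel the common logarithmic singularity -- you acknowledge this but the displayed statements are not correct as written; and the interchange of the limit $f_n\to\delta_z$ with conditional expectation should be justified by Gaussian $L^2$ bounds rather than by ``local harmonicity''.
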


Whereas in \cite{MS1} this was proved only in the simply-connected setting, the proof extends directly to the multiply-connected setting as well. Also, notice that the normalization of the GFF in \cite{MS1} differs from ours.

\begin{rem}
	Notice that whereas $G_D$ diverges on the diagonal, the difference of Green's functions can be given a canonical sense, using
	\eqref{EqLogSing}. In fact when $D$, and $D\backslash \eta_t$ are simply connected domains, it is a difference of logarithms of conformal radii: 
	$$(  G_{D}- G_{D\backslash \eta_t})(z,z)=
	\dfrac{1}{2\pi}\log (\operatorname{CR}(z,D))-
	\dfrac{1}{2\pi}
	\log(\operatorname{CR}(z,D\backslash \eta_t)).$$ 
\end{rem}

In fact, one can also parametrize local set processes $\eta_t$ using their distance to the boundary. As the boundary values of the GFF away from $\eta_t$ do not change, it is natural to look at normal derivatives. In order to obtain a conformally invariant quantity, notice that if $\B \subseteq \partial D$ and $h$ is a harmonic function, then by Green's identities the quantity $\int_{\B} \partial_n h$ can be given a conformally invariant meaning: $\int_{\B} \partial_n h = \int_D \nabla h \nabla \bar u$ where $\bar u$ is the harmonic extension of the function that takes the value $1$ on $\B$ and $0$ on $\partial D\backslash \B$. 

We will first consider the case, where the local set process is parametrized by its extremal distance to a whole boundary component. Recall the notation $h_A: D\rightarrow\R$ just below the Definition \ref{d.local}.

\begin{prop} \label{BProcess} Let $D$ be finitely connected circle-domain and $(\Phi,\eta_t)$ be a local set process with $\Phi $ a GFF in $D$. Take $\B \subseteq \partial D$ be a union of finitely many boundary components. Then, if $\eta_t$ is parametrized by its conformal modulus, i.e. such that $$t = \M(\B, \mathcal (\partial D \cup \eta_t) \backslash \B) - \M(\B, \partial D \backslash \B)$$ then $W_t :=\int_{\B} \partial_n h_{\eta_t}$ has (a modification with) the law of a standard Brownian motion started from $0$.
	
	Equivalently, when parametrized by the extremal length $$t=\ED(\B, \partial D \backslash \B)-\ED(\B, \mathcal (\partial D \cup \eta_t) \backslash \B),$$ the process $$\widehat{W}_t := \ED(\B, \mathcal (\partial D \cup \eta_t) \backslash \B)\int_{\B} \partial_n h_{\eta_t},$$ has (a modification with) the law of a Brownian bridge from 0 to 0 with length $\ED(\B, \mathcal \partial D \backslash \B)$.
	
	Moreover, the same holds (with the appropriate definitions) for any finitely connected domain $D$ with all boundary components larger than a point. 
\end{prop}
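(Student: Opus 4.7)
The plan is to apply L\'evy's characterization: show that $W_t$ is a continuous $\F_t$-martingale with bracket equal to the modulus increment.

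First, for interior $z, w \in D$, the local set property gives that $h_{\eta_t}(z)h_{\eta_t}(w) - [G_D(z,w)-G_{D\setminus\eta_t}(z,w)]$ is a continuous martingale; in particular
$$\langle h_\eta(z), h_\eta(w)\rangle_t = G_D(z,w) - G_{D\setminus\eta_t}(z,w)$$
(a polarization of Proposition \ref{BMinterior}). Since $h_{\eta_t}$ vanishes on $\partial D$, one has $\partial_n h_{\eta_t}(x) = \lim_{\epsilon\to 0} h_{\eta_t}(x+\epsilon \mathbf{n}_x)/\epsilon$ for $x\in\B$ away from $\eta_t$; this limit and the subsequent integration over $\B$ preserve the martingale property, so $W_t$ is a continuous $\F_t$-martingale.

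Next, taking boundary normal derivatives in the bracket identity and integrating gives
$$\langle W\rangle_t = \iint_{\B\times\B}[H_D(x,y)-H_{D\setminus\eta_t}(x,y)]\, dx\, dy,$$
which is finite because the universal diagonal singularity $\sim c|x-y|^{-2}$ of $H_{D'}$ cancels between the two terms. To match this with the target modulus difference, I use the Poisson-kernel representation $\M(\B_1,\B_2) = \iint_{\B_1\times\B_2} H_{D'}$ from Section 2.1 together with the principal-value identity $\int_{\partial D'} H_{D'}(x,\cdot)=0$ (encoding that the Dirichlet-to-Neumann map sends the constant $1$ to $0$). Writing this identity for both $D$ and $D\setminus\eta_t$, subtracting, and integrating over $x\in\B$ converts integration over $\B$ into integration over $(\partial D\setminus\B)\cup\eta_t$ and reassembles $\M(\B,(\partial D\cup\eta_t)\setminus\B) - \M(\B, \partial D\setminus\B)$ exactly. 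L\'evy's theorem then yields the first statement.

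For the Brownian-bridge version, setting $L=\ED(\B,\partial D\setminus\B)$ and $s=L-\ED(\B, (\partial D\cup\eta_t)\setminus\B)$ gives the deterministic identity (modulus increment) $= s/(L(L-s))$, so $\widehat W_s = (L-s)B_{s/(L(L-s))}$ where $B$ is the BM from the first part; the resulting covariance $\mathrm{Cov}(\widehat W_{s},\widehat W_{s'}) = s(L-s')/L$ for $s<s'$ identifies it as a Brownian bridge from $0$ to $0$ of length $L$. The extension from circle domains to arbitrary finitely-connected domains with non-degenerate boundary components follows by conformal invariance of both the modulus (Theorem \ref{thmEL}) and of $W_t$ itself (via the Dirichlet pairing representation $\int_\B\partial_n h = \int_D \nabla h\cdot \nabla\bar u$, where $\bar u$ is the harmonic function equal to $1$ on $\B$ and $0$ on $\partial D\setminus\B$).

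The main technical obstacle is rigorously justifying the boundary limits --- the exchange of $\partial_n$, $\iint_\B$, $\E[\cdot\mid\F_s]$ and the $\epsilon\to 0$ limit --- and handling the possibility that $\eta_t$ approaches $\B$, which I would address by stopping at the first such hitting time together with Beurling-type estimates to control the Green's functions near the boundary.
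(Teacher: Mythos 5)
Your proposal is correct in substance, but it follows a genuinely different route from the paper. The paper does not use L\'evy's characterization or quadratic variation at all: it approximates $\int_{\B}\partial_n h_{\eta_t}$ by circle averages $\eps^{-1}\int_{\B_\eps}h_{\eta_t}$, writes $h_{\eta_t}=\Phi-\Phi^{\eta_t}$, and exploits Gaussianity directly, computing the conditional characteristic function given $\eta_t$ and showing that the difference of the (individually divergent) variances $\sigma_\eps^2-\sigma_{\eps,t}^2$ converges to $\iint_{\B\times\B}\partial_{n_x}\partial_{n_y}(G_D-G_{D\setminus\eta_t})$, which is identified with the modulus increment via Theorem \ref{thmEL}; the bridge statement is then dismissed as a ``simple calculation''. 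You instead polarize Proposition \ref{BMinterior} to get $\langle h_\eta(z),h_\eta(w)\rangle_t=(G_D-G_{D\setminus\eta_t})(z,w)$, push this to the boundary to compute $\langle W\rangle_t$, and convert $\iint_{\B\times\B}(H_D-H_{D\setminus\eta_t})$ into the modulus increment via the kernel identity $\int_{\partial D'}H_{D'}(x,\cdot)=0$ --- which is a correct and rather clean alternative to the paper's appeal to Theorem \ref{thmEL}, provided you subtract the two kernels before integrating so the diagonal singularities cancel (which you do). What your route buys is that Gaussianity comes for free from L\'evy's theorem; what it costs is that the martingale property and continuity of $W_t$, and the interchange of the normal-derivative limit, the integral over $\B$, and the bracket, must be justified --- exactly the technical point you flag, which the paper handles (tersely) by noting uniform convergence of the variance difference when $\eta_t$ stays at extremal distance at least $\delta$ from $\B$; your plan of stopping times plus Beurling estimates is a workable substitute, and you should also note that the cross-bracket identity needs the conditioning at stopping times to be on $(\eta_\tau,\Phi_{\eta_\tau})$ (Lemma \ref{BPLS}(2)), not merely on the set-valued filtration. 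Your explicit bridge computation, $\widehat W_s=(L-s)W_{s/(L(L-s))}$ with covariance $s(L-s')/L$, correctly supplies the calculation the paper omits, and the conformal-invariance extension via the Dirichlet pairing matches the paper's remark. Watch the sign/orientation convention in $\partial_n h_{\eta_t}(x)=\lim_{\eps\to 0}h_{\eta_t}(x+\eps\mathbf{n}_x)/\eps$: with the outward normal this point may leave the domain; the paper avoids this by taking $\B$ an inner circle and approximating from inside $D$.
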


\begin{proof}
	Using the conformal invariance both of the quantity $W_t$, the Gaussian free field and the extremal length, it suffices to work in a circle domain and consider the case where $\B$ is the union of a subset of the circles. In fact, for simplicity, we only prove the case where $\B=\partial B(0,1)$ and $D=B(0,1)\backslash \bigcup_{i=1}^k B(x_i,r_i)$, with $|x_i|<1-r_i$, and a $\eta_t$ is a local set process started from an interior boundary, i.e. from a boundary different from $\partial B(0,1)$. The general case follows by exactly the same argument.
	
	We claim the following.
	\begin{lemma}\label{l.characterstic function}For all $t > 0$, we have that 
		\[\E \left[ \exp\left (\lambda i\int_{\B} \partial_n h_{\eta_t}(z) dz\right) \mid \eta_t\right] = \exp \left(-\frac{\lambda^2}{2}(\M(\B, \mathcal (\partial D \cup \eta_t) \backslash \B) - \M(\B, \partial D \backslash \B))\right )\]	
	\end{lemma}
	Notice that	this lemma implies the first part of the proposition: indeed, for any $t_0$, by redefining $D' = D\backslash \eta_{t_0}$, the claim implies that conditioned on $\eta_{t_0}$ the increment $W_{t+t_0} - W_{t_0}$ is a Gaussian of variance $t$. This implies that $W_t$ has the same finite-dimensional distributions as the standard Brownian motion.

	\begin{proof}[Proof of Lemma \ref{l.characterstic function}]
	As we are working in a circle domain, all circles $\B_\eps := \partial B(0,1-\eps)$ are contained in $D$ for all $\eps > 0$ small enough. Notice that as the boundary value of $h_{\eta_t}$ on $\B$ equals zero, we have that $\eps^{-1}\int_{\B_\eps}  h_{\eta_t}$ converges a.s. to $\int_{\B}\partial_n h_{\eta_t}$. Moreover, we can write $h_{\eta_{t}} = \Phi - \Phi^{\eta_t}$, and for any $\epsilon>0$ we have that: 
	\begin{align*}
	&\sigma^2_\eps:=\E\left[\left (\eps^{-1}\int_{\B_\eps}  \Phi\right )^2 \right]= \eps^{-2}\iint_{{\B_\eps}\times {\B_\eps}}G_{D}(z,w)dz dw,\\
	&\sigma_{\eps,t}^2:=\E\left[\left (\eps^{-1}\int_{\B_\eps}  \Phi^{\eta_t}\right )^2 \mid \eta_t \right]= \eps^{-2}\iint_{{\B_\eps}\times {\B_\eps}}
	G_{D\backslash \eta_t}(z,w) dz dw.
	\end{align*}
	When $\eps \to 0$, both terms individually diverge. However, we will see that the difference converges.	
	\begin{claim}\label{c.Modulus}
		Suppose $\eta_t$ is any fixed set connected to $\partial D\backslash \B$ and at  distance $\delta>0$ from $\B$. Then, as $\eps \to 0$, the difference $\sigma_\eps^2 - \sigma_{\eps,t}^2$ converges to
		$$\M(\B, \mathcal (\partial D \cup \eta_t) \backslash \B) - \M(\B, \partial D \backslash \B).$$
	\end{claim}
	
	Let us postpone the proof of the claim, and show how to conclude the lemma from it. We start by noting that 
		\[\exp\left (-\frac{\lambda^2}{2}\sigma_{\eps}^2\right ) = \E\left[ \exp\left (\lambda i\eps^{-1}\int_{\B_\eps} \Phi\right )\right].\]
		But, when first conditioning on $\eta_t$ this also equals 
		\[\E\left[\E\left [\exp\left (\lambda i\eps^{-1}\int_{\B_\eps} (h_{\eta_t}+\Phi^{\eta_t})\right )\mid \eta_t \right]\right] = \E\left[ \exp\left (-\frac{\lambda^2}{2}\sigma_{\eps,t}^2\right )\E\left( \exp\left(\lambda i\eps^{-1}\int_{\B_\eps} h_{\eta_t}(z) dz\right)\mid \eta_t \right)\right].\]
		In particular,
		\[\E\left[ \exp\left( -\frac{\lambda^2}{2}\left (\sigma_{\eps,t}^2-\sigma_{\eps}^2\right ) \right)\E\left[ \exp\left(\lambda i\eps^{-1}\int_{\B_\eps} h_{\eta_t}(z) dz\right)\mid \eta_t \right]\right] = 1\]
		Using now the a.s.  convergence of 
		$\eps^{-1}\int_{\B_\eps} h_{\eta_t}(z)\to \int_{\B}\partial_n h_{\eta_t}(z) dz$, and the limiting expression for $\sigma_\eps^2 - \sigma_{\eps,t}^2$  the lemma follows by dominated convergence.
	
	To finish the proof of Lemma \ref{l.characterstic function}, we now prove the Claim \ref{c.Modulus}.

	\begin{proof}[Proof of Claim \ref{c.Modulus}]
	Indeed, consider any $x,y \in \B$. Then writing $G_D(z,w) = -(2\pi)^{-1}\log \lvert z - w \rvert + g_D(z,w)$, we have that
	$$u(w,z) = G_D(z,w) -G_{D \backslash \eta_t}(z,w) = g_D(z,w) - g_{D \backslash \eta_t}(z,w)$$
	is a harmonic function both for fixed $w, z \in D\backslash \eta_t$. The boundary values (for fixed $w$) are $0$ on $\partial D$ and are continuous and bounded on $\eta_t$. In particular, if $n_x, n_y$ are outward unit normal vectors at $x,y$, then
	\[\eps^{-2}\left(G_D(x-n_x \eps, y - n_y \eps) - G_{D\backslash \eta_t}(x-n_x \eps, y - n_y \eps)\right) = \eps^{-2}u(x-n_x \eps, y - n_y \eps).\]
	As $u$ is harmonic and bounded in whole of $D \backslash \eta_t$ both in $z,w$, and equals $0$ when either $z \in \B$ or $w \in \B$, the derivative $\partial_{n_x} \partial_{n_y} u(x,y)$ exists for all $x,y \in \B^2$, is continuous jointly in both $x, y$ and is equal to the above limit \cite{Kell}.  Thus in particular the difference $\sigma_\eps^2 - \sigma_{\eps,t}^2$ converges almost surely.
	
	It remains to argue that this limit equals $\M(\B, \mathcal (\partial D \cup \eta_t) \backslash \B) - \M(\B, \partial D \backslash \B)$.	First notice that the above considerations imply that the limit of $\sigma_\eps^2 - \sigma_{\eps,t}^2$ also equals the limit of
	$$\eps_1^{-1}\eps_2^{-1}\iint_{{\B_{\eps_1}}\times {\B_{\eps_2}}}\left(G_{D}(z,w)-G_{D \backslash \eta_t}(z,w)\right)dz dw$$
	when we first let $\eps_2 \to 0$ and then let $\eps_1 \to 0$.
	
	Now, as $\B$ is locally analytic, for any $z \in D$, by definition $\eps_2^{-1}G_D(z, x-n_x\eps_2)$ converges to the Poisson kernel $P_D^z(x)$ (see Section \ref{secED}). Hence, denoting by $\bar u$ and $\bar u_t$ the harmonic functions in $D$, $D \backslash \eta_t$ respectively, with boundary condition $1$ on $\B$ and $0$ elsewhere, we can write for any $z\in D$
$$\lim_{\eps_2 \to 0}\eps_2^{-1}\int_{\B_{\eps_2}}\left(G_{D}(z,x-n_x\eps_2)-G_{D \backslash \eta_t}(z,x-n_x\eps_2)\right)dx = \bar u(z) - \bar u_t(z).$$
For $\widetilde u = 1 - \bar u$, $\widetilde u_t = 1 - \bar u_t$ we have that 
$$\lim_{\eps_1 \to 0}\eps_1^{-1}\int_{\B_{\eps_1}} \widetilde u_t(x-n_x \eps_1) - \widetilde u(x-n_x\eps_1) dx = \int_{\B}  (\partial_{n_x}\widetilde u(x) - \partial_{n_x}\widetilde u_t(x))dx.$$
Thus 
$$\lim_{\eps_1 \to 0}\eps_1^{-1}\int_{\B_{\eps_1}} \bar u(x-n_x \eps_1) - \bar u_t(x-n_x\eps_1) dx = \int_{\partial_{\B}}  (\partial_{n_x}\bar u_t(x) - \partial_{n_x}\bar u(x))dx.$$
Finally, by Theorem \ref{thmEL} $\int_{\B} \partial_{n_x}\bar u(x) dx = M(\B, \partial D \backslash \B)$, and thus the claim follows.
\end{proof}

	\end{proof}

	It remains to justify the second part of the proposition. This follows from the following general claim:
	\begin{claim}
		Let $t_0$ be positive and suppose that $(B(t-t_0))_{t \geq t_0}$ has the law of a standard Brownian motion (started from $0$). Let $s_0 = t_0^{-1}$. Then $$\widehat B(s) := (s_0-s)B\left (\frac{1}{s_0-s}-\frac{1}{s_0}\right )$$ has the law of a Brownian bridge on $[0,s_0]$ from $0$ to $0$ .
	\end{claim}

	\begin{proof}
	This just follows by calculating the covariance for $s \leq t \in [0, s_0]$:
	\[\E \left[\widehat B(s)\widehat B(t)\right] = (s_0-s)(s_0-t)\left (\frac{1}{s_0-s}-\frac{1}{s_0}\right ) = (s_0-t)\frac{s}{s_0}. \qedhere\]
	\end{proof}
\end{proof}

	Now, let us see how to extend this proposition to the case where the local set process is parametrized by its ``distance'' to a part of the boundary. One of the obstacles here is that when the growing set that we want to parametrize is of $0$ Euclidean distance from this part of the boundary, then the naive conformal modulus between the set and this boundary part diverges. However, similarly to the reduced extremal distance (see Chapter 4.14 of \cite{Ahlfors1966CA}), the difference between the moduli is still non-trivial. 
	
	Let us explain this a bit more precisely: assume that $\partial D$ can be partitioned as $\B_1\cup \B_2$, where $\B_2$ is a connected subset of $\partial D$ (that is not necessarily a whole boundary component). Let $B\subseteq \overline{D}$ a closed set that remains at positive distance from $\B_2$ and that intersects $\B_1$. Define as $u_0$ to be the bounded harmonic function that takes value $1$ on $\B_2$ and $0$ on $\B_1$, and $u_B$ to be the bounded harmonic function taking again value $1$ on $\B_2$ but is equal to $0$ on $\B_1\cup B$. Then $u_B-u_0$ is a bounded harmonic function that takes values $0$ in $\B_1\cup \B_2$ and $-u_0<0$ in $B$. In particular,
	\[ 0\leq \int_{\B_2} \partial_n(u_B-u_0)<\infty.\]
	even when the conformal modulus between $\B_1$ and $\B_2$ is 0. Thus, we can define $M(\B_2,\B_1\cup B)-M(\B_2,\B_1)$ as $\int_{\B_2} \partial_n(u_B-u_0)$ even when both terms individually are infinite.
	
	Using this observation, it is possible to use a proof similar to that of  Proposition \ref{BProcess} to parametrize local sets using only a part of a boundary component:
	
	\begin{prop} \label{BProcess2} Let $D$ be a finitely connected circle domain, $(\Phi,\eta_t)$ a local set process with $\Phi $ a GFF in $D$. Now, let us partition $\partial D$ in two sets: $\B_1$ and $\B_2$, where $\B_2$ is a connected subset of $\partial D$ (not necessarily a whole boundary component) and suppose that $\eta_0\subseteq \B_1$ and $d(\eta_0,\B_2)>0$.
		
		Then, if $\eta_t$ is parametrized by the difference of conformal moduli (as the individual conformal moduli may not exist)
		\[t=M(\B_2, \B_1 \cup \eta_t)-M(\B_2, \B_1):= \int_{\B_2}\partial_n (\bar u_t-\bar u_0),\]
		and $\bar u_t$ is defined as the harmonic function taking the value $1$ on $\B_2$ and the value $0$ on $\B_1\cup \eta_t$. Then, the process $W_t=\int _{\B_2} \partial_n h_{\eta_t}$ has (a modification with) the law of a Brownian motion. 
		
		Moreover, the same holds (with the appropriate definitions) for any finitely connected domain with all boundary components larger than a point.
	\end{prop}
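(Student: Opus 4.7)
The argument will closely follow the proof of Proposition \ref{BProcess}, with the essential modification being a different regularization near $\B_2$ and a careful interpretation of the ``difference of moduli'' when the individual quantities diverge. By conformal invariance of the GFF, of the extremal length, and of the Dirichlet pairing $\int_{\B_2}\partial_n h = \int_D \nabla h \cdot \nabla \bar v$ for a suitable test function $\bar v$, I can first assume that $D$ is a circle domain in which $\B_2$ is an analytic boundary arc, so that inward normal directions and small parallel curves are well defined.

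The key regularization is to approximate $\B_2$ from inside $D$ by an analytic curve $\B_2^\eps$ at Euclidean distance $\eps$ from $\B_2$. Since $\eta_0 \subseteq \B_1$ and $\eta_t$ varies continuously in the local set topology, on the event that $\eta_t$ stays at extremal distance at least $\delta>0$ from $\B_2$, the curve $\B_2^\eps$ lies in $D\setminus\eta_t$ for $\eps$ small enough. I would then consider
\[
X_t^\eps := \eps^{-1}\int_{\B_2^\eps} h_{\eta_t}(z)\,|dz|,
\]
which converges almost surely and uniformly in this regime to $W_t = \int_{\B_2}\partial_n h_{\eta_t}$.

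Writing $h_{\eta_t} = \Phi - \Phi^{\eta_t}$, the two variances
\[
\sigma_\eps^2 = \eps^{-2}\iint_{\B_2^\eps\times\B_2^\eps} G_D(z,w)\,|dz||dw|, \qquad
\sigma_{\eps,t}^2 = \eps^{-2}\iint_{\B_2^\eps\times\B_2^\eps} G_{D\setminus\eta_t}(z,w)\,|dz||dw|
\]
each diverge logarithmically as $\eps\to 0$, but their difference converges to $\iint_{\B_2\times\B_2}\partial_{n_x}\partial_{n_y}(G_D-G_{D\setminus\eta_t})(x,y)\,|dx||dy|$, uniformly on the event $\{\operatorname{EL}(\eta_t,\B_2)\geq\delta\}$. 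By the formulation of $M(\B_2,\B_1)-M(\B_2,\B_1\cup\eta_t)$ as $\int_{\B_2}\partial_n(\bar u_0-\bar u_t)$ explained in the paragraph preceding the statement, and the identification of $\partial_{n_x}\partial_{n_y}G$ with the boundary Poisson kernel (via Theorem \ref{thmEL}), this limit equals precisely the parameter $t$. The Gaussian characteristic function identity
\[
e^{-\tfrac{\lambda^2}{2}(\sigma_\eps^2-\sigma_{\eps,t}^2)}\E\bigl[\exp(i\lambda X_t^\eps)\mid\eta_t\bigr] = e^{-\tfrac{\lambda^2}{2}\sigma_\eps^2}\E\bigl[\exp(i\lambda\eps^{-1}\textstyle\int_{\B_2^\eps}\Phi)\bigr],
\]
together with the stopping-time structure of local set processes, yields that $W_t$ has independent centered Gaussian increments with variance equal to the time increment, hence is a standard Brownian motion. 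The general case of finitely connected domains with all boundary components larger than a point follows by a further conformal transformation.

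The main obstacle is precisely the point that makes the statement non-trivial: the individual conformal moduli $M(\B_2,\B_1)$ and $M(\B_2,\B_1\cup\eta_t)$ may each be infinite (for instance when $\B_1$ and $\B_2$ share an endpoint), so one must work with the renormalized difference throughout, controlling the cancellation of the diagonal divergences of $G_D$ and $G_{D\setminus\eta_t}$ near $\B_2$ in terms of the finite Dirichlet integral of $\bar u_0-\bar u_t$. Verifying this cancellation, and the uniformity of the convergence in $\eta_t$ on events of positive extremal distance, is where the reduced-extremal-distance idea recalled just before the statement is used in an essential way.
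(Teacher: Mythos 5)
Your proposal is correct and takes essentially the same route as the paper: the authors give no separate argument for Proposition \ref{BProcess2}, saying only that one repeats the proof of Proposition \ref{BProcess} using the renormalized difference of moduli $\int_{\B_2}\partial_n(\bar u_0-\bar u_t)$ from the preceding paragraph, which is exactly your regularization of $\B_2$ by a parallel curve, cancellation of the divergent variances $\sigma_\eps^2-\sigma_{\eps,t}^2$, and Gaussian characteristic-function computation.
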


	\subsection{Gaussian multiplicative chaos}\label{GMC} Finally, let us summarize the definition and some properties of the Gaussian multiplicative chaos (GMC) associated to the GFF. The GMC measures were first introduced in the realm of self-interacting Euclidean field theories \cite{HK}, and named by Kahane in his seminal article \cite{Kah}. We refer to e.g. \cite{Aru2017LQG} for more detailed proofs and properties for the GMC measures, and to \cite{RhodesVargasReview} for an overview of the GMC measures and their applications.
	
	To define the GMC measure, one usually passes through an approximation procedure. Denote by $\Phi_\epsilon(z)$ the circle-average process of the GFF in $D$: i.e. the GFF tested against the unit measure on the circle of radius $\epsilon$ around $z$. For $\epsilon>0$ and $\gamma \in \R$, we then set
	\begin{linenomath}
		\begin{align*}
		d\mu^{\epsilon}_\gamma:=\eps^{\frac{\gamma^2}{2}2\pi}\exp\left (\gamma\sqrt{2\pi}\Phi_\epsilon(z)\right )dz.
		\end{align*}
	\end{linenomath}
	The $\sqrt{2\pi}$ factor comes from the fact that in the GMC literature the GFF is normalized differently (i.e. usually with covariance of that behaves like $- \log |z-w|$ near the diagonal). 
	
	In \cite{DS} it was shown that for $\gamma\in (-2,2)$, as $\epsilon$ goes to $0$ along the dyadics, $\mu^\epsilon_\gamma$ converges towards a measure $\mu_\gamma$ 
	a.s. and in $\LL^1$. Notice that for any $z \in D$ and any $\eps > 0$ small enough, we have that 
	\begin{equation}\label{LVexp}
	\E(\mu_\gamma^\eps(z)) = \crad(z,D)^{\frac{\gamma^2}{2}2\pi},
	\end{equation}
	and thus the same holds in the limit $\eps \to 0$.
	
For us it is important that the GMC measures are $\mathcal C^1$ functions of the parameter $\gamma$ (in fact, they depend analytically on $\gamma$ but this won't be needed here). More precisely, the following theorem (which can be, for example, found in Section 1 of \cite{Aru2017LQG}) suffices for our needs.
	\begin{prop}\label{thm::Derivative}
		For any $\gamma\in(-\sqrt 2,\sqrt 2)$ and any continuous compactly supported function $f$ on $D$, there exists a modification of 
		$((\mu^{\epsilon}_\gamma,f))_{\vert\gamma\vert<\sqrt{2}}$, and a deterministic sequence $\epsilon_k\to 0$ such that a.s. and in $\LL^2$, 
		$((\mu^{\epsilon_{k}}_\gamma,f))_{\vert\gamma\vert<\sqrt{2}}$ converges in the space $\mathcal{C}^1((-\sqrt{2},\sqrt{2}))$ to 
		$((\mu_\gamma,f))_{\vert\gamma\vert<\sqrt{2}}$. 
		
		In particular, the map 
		$\gamma\mapsto (\mu_\gamma,f)$ is $\mathcal{C}^1$ on 
		$(-\sqrt{2},\sqrt{2})$ and furthermore
		\begin{linenomath}
			\begin{align}\label{Derivative}
			(\Phi,f)=\lim_{k\to +\infty}(\Phi_{\epsilon_k},f)=
			\frac{1}{\sqrt {2\pi}}\lim_{k\to +\infty}\partial_{\gamma} (\mu^{\epsilon_{k}}_\gamma,f)\mid_{\gamma=0}=\frac{1}{\sqrt {2\pi}}\partial_{\gamma} (\mu_\gamma,f)\mid_{\gamma=0},
			\end{align}
		\end{linenomath}
		where all the limits are a.s. and in $\LL^2$.	
	\end{prop}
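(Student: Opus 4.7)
My plan is to treat the family $\gamma\mapsto (\mu_\gamma^\epsilon,f)$ as a sequence of random smooth functions of $\gamma$ and to upgrade the known pointwise $L^2$ convergence to $C^1$ convergence by controlling the derivative uniformly in $\gamma$.

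First, I would record the pointwise derivative. For each fixed $\epsilon>0$ the integrand $\epsilon^{\gamma^2\pi}\exp(\gamma\sqrt{2\pi}\Phi_\epsilon(z))$ is smooth in $\gamma$, so
\begin{equation*}
\partial_\gamma(\mu_\gamma^\epsilon,f)=\int_D f(z)\bigl(\gamma\,2\pi\log\epsilon+\sqrt{2\pi}\,\Phi_\epsilon(z)\bigr)\,d\mu_\gamma^\epsilon(z),
\end{equation*}
and in particular at $\gamma=0$ this equals $\sqrt{2\pi}(\Phi_\epsilon,f)$. Since $\Phi_\epsilon\to\Phi$ in $H^{-1}(D)$ (hence $(\Phi_\epsilon,f)\to (\Phi,f)$ in $L^2$), the final identity of the proposition will be automatic once the $C^1$ convergence of $\gamma\mapsto(\mu_\gamma^\epsilon,f)$ is established. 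I would then recall from subcritical GMC theory that for $|\gamma|<\sqrt{2}$ the second moment of $\mu_\gamma$ is finite and one has the standard $L^2$ convergence $(\mu_\gamma^\epsilon,f)\to(\mu_\gamma,f)$; the computation behind this rests on a Cameron--Martin shift that turns the second moment into an integral of $|z-w|^{-2\pi\gamma^2}G_\epsilon(z,w)$-type kernels which converge to an integrable limit precisely when $\gamma^2<2$.

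The heart of the argument is a uniform version of the same $L^2$ estimate for the derivative. I would fix a compact interval $I\Subset(-\sqrt{2},\sqrt{2})$ and show, by expanding the square and using independence of the circle-average field on disjoint annuli together with the explicit Gaussian moment generating function, that
\begin{equation*}
\sup_{\gamma\in I}\E\bigl[\bigl((\mu_\gamma^\epsilon,f)-(\mu_\gamma^{\epsilon'},f)\bigr)^2\bigr]+\sup_{\gamma\in I}\E\bigl[\bigl(\partial_\gamma(\mu_\gamma^\epsilon,f)-\partial_\gamma(\mu_\gamma^{\epsilon'},f)\bigr)^2\bigr]\xrightarrow[\epsilon,\epsilon'\to 0]{}0,
\end{equation*}
the extra $\log\epsilon$ and $\Phi_\epsilon$ factors in the derivative being absorbed because $I$ stays away from the critical value $\sqrt{2}$ and leaves room for absolute constants and for the logarithmic weight (one only loses a subpolynomial factor in $\epsilon$, which is still summable along a dyadic sequence $\epsilon_k=2^{-k}$). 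Combining these two estimates via the one-dimensional Sobolev embedding $W^{1,2}(I)\hookrightarrow C^0(I)$ applied to $\gamma\mapsto(\mu_\gamma^\epsilon,f)-(\mu_\gamma^{\epsilon'},f)$ gives $\E[\|(\mu_\gamma^\epsilon,f)-(\mu_\gamma^{\epsilon'},f)\|_{C^1(I)}^2]\to 0$, and a Borel--Cantelli argument along $\epsilon_k=2^{-k}$ extracts a subsequence converging almost surely in $C^1(I)$.

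The main obstacle is precisely this uniform-in-$\gamma$ control of the second moment of the \emph{derivative}: the presence of the factor $\Phi_\epsilon(z)$ inside $d\mu_\gamma^\epsilon(z)$ produces, after squaring, cross terms of the form $\E[\Phi_\epsilon(z)\Phi_\epsilon(w)e^{\gamma\sqrt{2\pi}(\Phi_\epsilon(z)+\Phi_\epsilon(w))}]$. These are evaluated by writing $\Phi_\epsilon(z)=\partial_\alpha e^{\alpha\Phi_\epsilon(z)}|_{\alpha=0}$ and performing a Cameron--Martin shift, which produces a kernel behaving like $|z-w|^{-2\pi\gamma^2}$ up to logarithmic factors in $|z-w|$ and in $\epsilon$. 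Integrability then forces the restriction $\gamma^2<2$ (and not merely $\gamma^2<4$ as for the measure itself), explaining why the domain of differentiability in the proposition is only $(-\sqrt{2},\sqrt{2})$. Once this estimate is in hand, the remaining steps are routine, and the identification $(\Phi,f)=(2\pi)^{-1/2}\partial_\gamma(\mu_\gamma,f)|_{\gamma=0}$ follows immediately from the computation of $\partial_\gamma(\mu_\gamma^\epsilon,f)|_{\gamma=0}$ above.
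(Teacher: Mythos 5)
The paper does not actually prove this proposition: it is quoted from Section~1 of \cite{Aru2017LQG} (together with the remark reducing it to the normalization $\tilde\mu_\gamma$), so your attempt has to be judged as a self-contained substitute for that citation. Your overall strategy is the natural one and matches the standard argument: the explicit $\gamma$-derivative of $(\mu^\epsilon_\gamma,f)$, with $\partial_\gamma(\mu^\epsilon_\gamma,f)\vert_{\gamma=0}=\sqrt{2\pi}(\Phi_\epsilon,f)$, uniform-in-$\gamma$ second-moment Cauchy estimates on a compact $I\Subset(-\sqrt2,\sqrt2)$ via Cameron--Martin shifts, an upgrade to convergence in a function space of $\gamma$, and Borel--Cantelli along $\epsilon_k=2^{-k}$.

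There is, however, a genuine gap at the step where you pass to $\mathcal{C}^1$ convergence. The embedding $W^{1,2}(I)\hookrightarrow C^0(I)$ applied to $\gamma\mapsto(\mu^\epsilon_\gamma,f)-(\mu^{\epsilon'}_\gamma,f)$ controls only the supremum of that difference, not the supremum of its $\gamma$-derivative; so the two estimates you state (on the difference and on its first derivative, in $L^2$ of the probability space for each fixed $\gamma$) yield at best $\E\bigl[\|\cdot\|^2_{C^0(I)}\bigr]\to0$, not $\E\bigl[\|\cdot\|^2_{C^1(I)}\bigr]\to0$. Note also that a supremum over $\gamma$ of expectations does not bound the expectation of the supremum, so without a Sobolev (or Kolmogorov/chaining) device applied to the \emph{derivative} you do not even get uniform convergence of the derivatives, which is exactly what $\mathcal{C}^1$ convergence and the identity at $\gamma=0$ require. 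The fix is to control one more derivative: estimate $\sup_{\gamma\in I}\E\bigl[(\partial^2_\gamma(\mu^\epsilon_\gamma,f)-\partial^2_\gamma(\mu^{\epsilon'}_\gamma,f))^2\bigr]$ by the same Gaussian computation (the additional factors are again only powers of $\log\epsilon$ and $\log|z-w|$, harmless on $I$), then use $W^{2,2}(I)\hookrightarrow C^1(I)$, equivalently apply your $W^{1,2}\hookrightarrow C^0$ argument to the derivative itself. A smaller but real inconsistency: with the paper's normalization ($G_D\sim(2\pi)^{-1}\log(1/|z-w|)$ and exponent $\gamma\sqrt{2\pi}\Phi_\epsilon$), the shifted two-point kernel behaves like $|z-w|^{-\gamma^2}$ up to logarithms, not $|z-w|^{-2\pi\gamma^2}$; it is the former that is integrable precisely for $\gamma^2<2$, and this $L^2$ threshold already concerns the measure itself, not specifically its derivative, so it is not the derivative that imposes the range $(-\sqrt2,\sqrt2)$.
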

	
	\begin{rem}
		In fact, in \cite{Aru2017LQG} this is explained for a different normalization, instead of $\mu_\gamma$, one considers $\tilde \mu_\gamma$ defined by
		\[d\tilde \mu^{\epsilon}_\gamma:=\exp\left (\gamma\sqrt{2\pi}\Phi_\epsilon(z)-\frac{\gamma^2}{2}2\pi \E\left[\Phi_\epsilon(z)^2\right] \right )dz.\]
		Since $\mu_\gamma(dz) = \crad(z)^{\frac{\gamma^2}{2}2\pi}\tilde \mu_\gamma(dz)$, Proposition \ref{thm::Derivative} for $\mu_\gamma$ is equivalent to that for $\tilde \mu_\gamma$. 
	\end{rem}
	
	\begin{rem} \label{Measurability GMC} Note that from \eqref{Derivative} it follows that $\Phi$ is a deterministic measurable function of
		$(\mu_{\gamma})_{|\gamma|<\sqrt{2}}$ (see e.g. Corollary 1.6 in \cite{Aru2017LQG}). The measurability of the field w.r.t. its GMC measure was further studied in \cite{BSS}, where the authors show that in fact  $\Phi$ is a deterministic function of $\mu_\gamma$ for any fixed $\gamma\in(-2,2)$.
	\end{rem}

	\section{Two-valued local sets}\label{section:BTLS}
	Next, we discuss a specific type of local sets introduced in \cite{ASW}: two-valued local sets. In \cite{ASW}, these sets were defined and studied in the case of the zero boundary GFF; we will extend this definition to general boundary conditions and to n-connected domains. We will also calculate the size of the set seen from interior points and from boundary components that do not intersect the set.
	
	First, it is convenient to review a larger setting, that of bounded type local sets (BTLS) introduced in \cite{ASW}. These sets are thin local set $A$, for which its associated harmonic function $h_A$ remains bounded. Here, by a thin local set (see \cite{ WWln2, Se}) we mean the following condition:
	\begin{itemize}
		\item For any smooth test function $f \in \mathcal{C}^\infty_0$, the random variable $(\Phi,f)$ is almost surely equal to  $(\int_{D \backslash A }  h_A(z) f(z) dz)    + (\Phi^A,f)$. 
	\end{itemize}
	
	This definition assumes that $h_A$ belongs to $\mathbb L^1(D\backslash A)$ \, which is the case in our paper. For the general definition see \cite{Se}. In order to say that the union of two thin sets is thin, it is more convenient to use a stronger condition. Indeed, it is not hard to show that (see  Proposition 4.3  of \cite{Se} for a proof):
	\begin{itemize}
		\item If $h_A$ is $\mathbb L^1(D\backslash A)$  and for any compact set $K\subseteq D$, the Minkowski dimension of $A\cap K$ is strictly smaller than 2 then $A$ is thin. 
	\end{itemize}		 
	Now, we can define the bounded type local sets.		 
	\begin{defn}[BTLS]\label{BTLSCND}
		Consider  a  closed subset $A$ of $\overline D$  and $\Phi$ a GFF in $D$ defined on the same probability space.
		Let $K>0$, we say that $A$ is a $K$-BTLS for $\Phi$ if the following three conditions are satisfied: 
		\begin {enumerate}
		\item $A$ is a thin local set of $\Phi$.
		\item Almost surely, $|h_A| \le K$ in $D \backslash A$.  
		\item Almost surely, $A$ contains no isolated points and each connected component of $A$ that does not intersect $\partial D$ has a neighbourhood that does not intersect any other connected component of A.
		\end {enumerate}
		If $A$ is a $K$-BTLS for some $K$, we say that it is a BTLS.
		\end {defn}
		
		\subsection{Generalized level lines}

		One of the simplest family of BTLS are the generalized level lines, first described in \cite{SchSh2}, that correspond to SLE$_4(\rho)$ processes. 
		
		\begin{defn}[Generalized level line]
		Let $D:=\H\backslash \bigcup_{k=1}^n C_k$, where 
		$(C_k)_{1\leq k\leq n}$ is a finite family of disjoint closed disks, be a circle domain in the upper half plane. Further, let $u$ be a harmonic function in $D$. We say that $\eta(\cdot)$, a curve parametrized by half plane capacity, is the generalized level line for the GFF $\Phi + u$ in $D$ up to a stopping time $\tau$ if for all $t \geq 0$:
		
		\begin{description}
			\item[$(**)$]The set $\eta_{t}:=\eta[0, t \wedge \tau]$ is a BTLS of the GFF $\Phi$, with harmonic function satisfying the following properties: $h_{\eta_{t}} + u$ is a harmonic function in $D \backslash \eta_t$ with boundary values $-\lambda$  on the left-hand side of $\eta_t$, $+ \lambda$ on the right side of $\eta_t$, and with the same boundary values as $u$ on $\partial D$. 
		\end{description}
	\end{defn}
		
		The first example of level lines comes from \cite{SchSh2}: let $u_0$ be the unique bounded harmonic function in $\H$ with boundary condition $-\lambda$ in $\R-$ and $\lambda$ in $\R^+$. Then it is shown in \cite{SchSh2} that there exists a unique $\eta$ satisfying $(**)$ for $\tau= \infty$, and its law is that of an SLE$_4$. 
		
		Several subsequent papers \cite{SchSh2,MS1,WaWu,PW} have studied more general boundary data in simply-connected case and also level lines in a non-simply connected setting \cite{ASW}. The following lemma is a slight variant of the latter, stating existence of level lines until it either accumulate at another component, or hit the continuation threshold\footnote{The continuation threshold is the first time $\tau$ in which the level line hits a boundary point, $x\in \partial D$, such that there is no level line of $\Phi+u+h_{\eta_\tau}$ starting from $x$ in $O$. Here $O$ is the non bounded connected component of $D\backslash \eta_\tau$. This condition can be described explicitly using boundary values. See, for example, Definition 2.14 in \cite{WaWu}.} on $\R$. It is a consequence of Theorem 1.1.3 of \cite{WaWu} and Lemma 15 of \cite{ASW}.
		
		\begin{lemma}[Existence of generalized level line targeted at $\infty$]\label{lemext} Let $u$ be a bounded harmonic function with piecewise constant boundary data such that $u(0^-) < \lambda$ and $u(0^+)>-\lambda$. Then, there exists a unique law on random simple curves $(\eta(t), t\geq 0)$ coupled with the GFF such that $(**)$ holds for the function $u$ and possibly infinite stopping time $\tau$ that is defined as the first time when $\eta$ hits or accumulates at a point $x \in \partial D \backslash \R$ or hits a point $x\in \R$ such that $x \geq 0$ and $u(x^+) \leq -\lambda$ or $x \leq 0$ and $u(x^-) \geq \lambda$. Furthermore, $\eta$ is measurable w.r.t $\Phi$ and if $\eta(\tau) \in \R$, then $\eta$ is continuous on $[0,\tau]$.
		\end{lemma}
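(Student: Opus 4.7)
The plan is to bootstrap the simply connected existence/continuity statement (Theorem 1.1.3 of \cite{WaWu}, the base case in $\H$) to the circle domain $D$ via an exhaustion-plus-conformal-map argument in the spirit of Lemma 15 of \cite{ASW}, and then to derive the continuity on $[0,\tau]$ when $\eta(\tau) \in \R$ from the corresponding continuity on the real line in the simply connected setting.

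First, for each $n \geq 1$, I would let $O_n$ denote the connected component containing $0$ of $D$ minus the closed $1/n$-neighborhood of $\bigcup_k C_k$, and let $\phi_n \colon O_n \to \H$ be a conformal map fixing $0$ and $\infty$. Then $\Phi \circ \phi_n^{-1}$ is a GFF in $\H$, and $u\circ \phi_n^{-1}$ extends to a bounded harmonic function with piecewise constant boundary data on $\partial \H$ that still satisfies the hypotheses at $0^{\pm}$. Theorem~1.1.3 of \cite{WaWu} yields a unique continuous generalized level line $\tilde\eta^{(n)}$ in $\H$ up to the first time it hits its continuation threshold on $\R$ or accumulates at $\phi_n(\partial O_n \setminus \R)$. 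Pulling back by $\phi_n^{-1}$ gives a continuous curve $\eta^{(n)}$ in $\overline{O_n}$, which, by the conformal invariance built into property $(**)$, is a BTLS satisfying $(**)$ with the prescribed boundary data, up to a stopping time $\tau_n$ that is the minimum of the first accumulation at the $1/n$-neighborhood of $\bigcup_k C_k$ and the first hitting of the continuation threshold on $\R$.

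Next, I would check consistency as $n$ varies. For $n' > n$, uniqueness in Theorem~1.1.3 of \cite{WaWu} together with uniqueness of the harmonic function associated with a local set (Lemma~\ref{BPLS}(1)) forces $\eta^{(n')} = \eta^{(n)}$ on $[0,\tau_n]$. Thus $\eta := \lim_n \eta^{(n)}$ is well defined on $[0,\tau)$ with $\tau := \sup_n \tau_n$, and $\tau$ agrees with the stopping time in the statement: on $\{\tau < \infty\}$ either $\eta(\tau_{n_k}) \to \partial D \setminus \R$ along some subsequence (accumulation at $\partial D \setminus \R$) or eventually $\tau_n$ is attained at the continuation threshold on $\R$. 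Lemma~\ref{BPLS}(3) promotes $\eta_\tau := \eta([0,\tau])$ to a local set whose harmonic function satisfies $(**)$ up to $\tau$. Uniqueness in law of $\eta$ is inherited from uniqueness of $\tilde\eta^{(n)}$ in each $O_n$, and measurability with respect to $\Phi$ from Lemma~\ref{BPLS}(1).

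For the final claim, suppose $\eta(\tau) \in \R$. I would argue that $\eta[0,\tau]$ must stay at strictly positive distance from $\bigcup_k C_k$: otherwise one could extract a subsequence $\eta(\tau_{n_k})$ with $\mathrm{dist}(\eta(\tau_{n_k}), \bigcup_k C_k) \to 0$, whose limit would then lie in $\bigcup_k C_k \subset \partial D \setminus \R$, contradicting $\eta(\tau) \in \R$. Consequently there exists $n$ large enough that $\tau = \tau_n$ and $\eta[0,\tau] \subset \overline{O_n}$, so continuity of $\eta$ on $[0,\tau]$ follows from the continuity of $\tilde\eta^{(n)}$ at the continuation threshold on $\R$ provided by Theorem~1.1.3 of \cite{WaWu}, transported by the conformal map $\phi_n^{-1}$ (which is analytic in a neighborhood of the relevant boundary arc).

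The main obstacle I anticipate is the "positive distance" step: proving that on $\{\eta(\tau) \in \R\}$ the curve cannot approach $\bigcup_k C_k$ arbitrarily closely and then retreat to hit $\R$. This is where one genuinely uses that each $\eta^{(n)}$ is already known to be a continuous curve from the simply connected result, so that $\eta(\tau_n)$ is a well-defined boundary point of $O_n$; the argument then reduces to a clean limit-point analysis rather than to any new SLE estimate.
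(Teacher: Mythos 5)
There is a genuine gap at the very first step, and it is the step the whole proposal rests on: your domain $O_n$ --- the connected component of $D$ minus the closed $1/n$-neighbourhood of $\bigcup_k C_k$ --- is just $\H$ with slightly fattened disks removed, hence it is still multiply connected. There is therefore no conformal map $\phi_n\colon O_n\to\H$, and everything built on pulling back the Wang--Wu level line by $\phi_n^{-1}$ (the curves $\eta^{(n)}$, the consistency argument, the transfer of uniqueness and of continuity at the continuation threshold) has no starting point. This is not a cosmetic issue: the obstruction to reducing to $\H$ by a conformal change of variables is exactly what makes the multiply connected case nontrivial, and no exhaustion of $D$ by simply connected subdomains can contain a fixed neighbourhood of the starting arc of $\R$, since connectivity is a conformal (indeed topological) invariant. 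The paper does not reduce the geometry at all; it reduces the \emph{law}: the lemma is quoted as a consequence of Theorem 1.1.3 of \cite{WaWu} together with Lemma 15 of \cite{ASW}, whose mechanism is absolute continuity of the GFF between two domains that coincide away from the holes (the same tool invoked later as ``Lemma 16 of \cite{ASW}'' in the proof of Lemma \ref{notouch}). Concretely: as long as the curve stays at distance $\geq 1/n$ from $\bigcup_k C_k$, the restriction of $\Phi+u$ to a neighbourhood of the curve is mutually absolutely continuous with the restriction of a GFF in $\H$ with the same boundary data on $\R$, so existence, measurability w.r.t.\ the field, a.s.\ uniqueness of the coupling $(**)$, and continuity up to the continuation threshold on $\R$ all transfer from the simply connected statement; one then lets $n\to\infty$, exactly as in your stopping-time scheme $\tau_n\uparrow\tau$.

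Two smaller points would also need repair even after the main fix. First, ``uniqueness in Theorem 1.1.3 of \cite{WaWu} together with Lemma \ref{BPLS}(1)'' does not by itself give consistency of the curves: Lemma \ref{BPLS}(1) only identifies the harmonic function $\Phi_A$ given the set, not the curve; what you need is the statement that the curve satisfying $(**)$ is a deterministic function of the field, which is part of the simply connected theory and is what absolute continuity lets you import on each time interval where the curve avoids the holes. Second, in the final step the deduction ``a subsequential limit of $\eta(\tau_{n_k})$ lies in $\bigcup_k C_k$, contradicting $\eta(\tau)\in\R$'' only works once you fix the meaning of the event $\{\eta(\tau)\in\R\}$ as ``$\tau$ is attained by hitting the continuation threshold on $\R$ and not by accumulation at $\partial D\setminus\R$''; with that reading the limit-point argument does show $\tau=\tau_n$ for some finite $n$, and continuity at $\tau$ then again comes from the simply connected result via absolute continuity rather than via a conformal map.
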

		
		\begin{rem}
			In Section \ref{sectlfc} we will be able to show that $\eta$ is continuous up to $\tau$ even if $\eta(\tau) \notin \R$. 
		\end{rem}
		
		In simply-connected domains Theorem 1.1.3 of \cite{WaWu} and Lemma 15 of \cite{Dub2} also give us precise information on the subset of the boundary where the level line can hit:
		
		\begin{prop}[Hitting of level lines in simply-connected domains]\label{propnotouchsc}
			Let $D = \H$ and let $u$ be a bounded harmonic function with piecewise constant boundary data such that $u(0^-) < \lambda$ and $u(0^+)>-\lambda$. 
			Let $\eta$ be a generalized level line in $\H$ starting from $0$. If either $u \geq \lambda$ or $u \leq -\lambda$ on some open interval $J \subset \R$, then $\eta_t$ stays at a positive distance of any $x \in J$. Moreover, if $u \leq -\lambda$ on a neighborhood in $\R$ to the left of $x$, or $u \geq \lambda$ on a neighborhood in $\R$ to the right of $x$, then almost surely $\eta_{t}$ stays at a positive distance of $x$.
		\end{prop}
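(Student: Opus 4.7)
The plan is to invoke the identification of the generalized level line $\eta$ as an SLE$_4(\underline\rho)$ process in $\H$ (Theorem 1.1.3 of \cite{WaWu}) combined with the boundary-hitting characterization for SLE$_\kappa(\underline\rho)$ processes (Lemma 15 of \cite{Dub2}). Both assertions are pointwise in $x$: the first follows from the second, since every $x \in J$ automatically satisfies the one-sided condition of the second assertion (as $J$ is open). Moreover, by a left-right symmetry of the GFF level-line setup (sending $u \mapsto -u$ and $\Phi \mapsto -\Phi$ and exchanging the two sides of $\eta$), it suffices to handle the case in which $u \geq \lambda$ holds on a right-neighborhood of a fixed point $x \in \R$.

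Under this hypothesis, the main step is to express $\eta$ as an SLE$_4(\underline\rho)$ process in $\H$ whose right-side force points lie at the discontinuities of $u$ on $\R_+$ and whose weights are determined by the jumps of $u/\lambda$, with the convention that the boundary value of the field on the right of $\eta$ near the tip is $+\lambda$. A direct bookkeeping computation then shows that the cumulative right-side $\rho$-weight at a point $y > 0$ equals $(u(y^-) - \lambda)/\lambda$. In particular, the assumption $u \geq \lambda$ on a right-neighborhood of $x$ translates into a cumulative weight $\geq 0$ at every point of this right-neighborhood, matching precisely the non-hitting threshold $\sum \rho \geq \kappa/2 - 2 = 0$ for SLE$_4(\underline\rho)$. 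Lemma 15 of \cite{Dub2} then yields that $\eta$ stays at positive distance from $x$ almost surely. The opposite one-sided hypothesis $u \leq -\lambda$ on a left-neighborhood of $x$ is handled symmetrically, by the same argument applied to left-side force points.

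The main obstacle is the careful conversion between the piecewise constant boundary data of the GFF and the configuration of force points and weights $\underline\rho$, especially in configurations where $u$ has multiple jumps of both signs or where the cumulative weight is formally infinite; once this translation is in place and Lemma 15 of \cite{Dub2} is applied, the proposition is immediate. Both the SLE/GFF dictionary and the hitting criterion are standard and carried out in \cite{WaWu, Dub2}, so no further input is required.
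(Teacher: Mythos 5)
Your route is exactly the paper's: the proposition is stated there without a separate proof, as a direct consequence of the SLE$_4(\underline\rho)$ identification of the level line (Theorem 1.1.3 of \cite{WaWu}) together with the boundary-hitting criterion of Lemma 15 of \cite{Dub2}, which is precisely what you invoke, and your translation of the boundary data into cumulative weights is the right dictionary. The only caveat is that after your symmetry reduction the point $x$ may still lie on either side of the origin, and when the prescribed stretch (say $u\geq\lambda$ just to the right of an $x<0$) sits between $x$ and $0$ the relevant force points are on the other side of the tip and the cumulative weight there is $\leq \kappa/2-4=-2$ rather than $\geq \kappa/2-2=0$, so non-hitting comes from the lower threshold in the same cited criterion --- still covered by \cite{WaWu,Dub2}, hence no essential gap.
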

		
		\begin{rem}
			Observe that the boundary points described by this lemma correspond exactly to points, from where one cannot start a generalized level line of $-\Phi - u$.
		\end{rem}
		
		A simple, but important corollary of this result allows us to check whether a level line can enter a connected component of the complement of a bounded-type local set. This observation was key in \cite{ASW}, where it was used for simply-connected domains and it followed just from the facts that 1) a generalized level line does not hit itself; 2) it has to exit such a component in finite time. We will prove the generalization of this lemma to finitely-connected setting below in Lemma \ref{donotenter}; the same proof could be used in the simply-connected setting.
		
		\begin{lemma}\label{donotentersimplyconnected}
			Let $\eta$ be a generalized level line of a GFF $\Phi + u$ in $\H$ as above and $A$ a BTLS of $\Phi$ conditionally independent of $\eta$. Take $z\in \H$ and define $O(z)$ the connected component of $\H\backslash A$ containing $z$. On the event where on any connected component of $\partial O(z)$ the boundary values of $(h_A+u)\mid _{O(z)}$ are either everywhere $\geq \lambda$ or everywhere $\leq -\lambda$ , we have that a.s. $\eta([0,\infty])\cap O(z)=\emptyset$.
		\end{lemma}

		In Section \ref{sectlfc}, we extend all these results to finitely-connected domains, in particular, we extend the definition of generalized level lines by showing that they remain continuous until the stopping time $\tau$. That is to say, that level lines remain continuous up to its accumulation point, even if it is on other boundary component. To do this, we will however first have to gain a better understanding of certain type of BTLS in simply-connected domains, called two-valued local sets.

		\subsection{Two-valued local sets in simply connected domains}

		\begin{figure}[ht!]	   
			\centering
			\includegraphics[width=5.in]{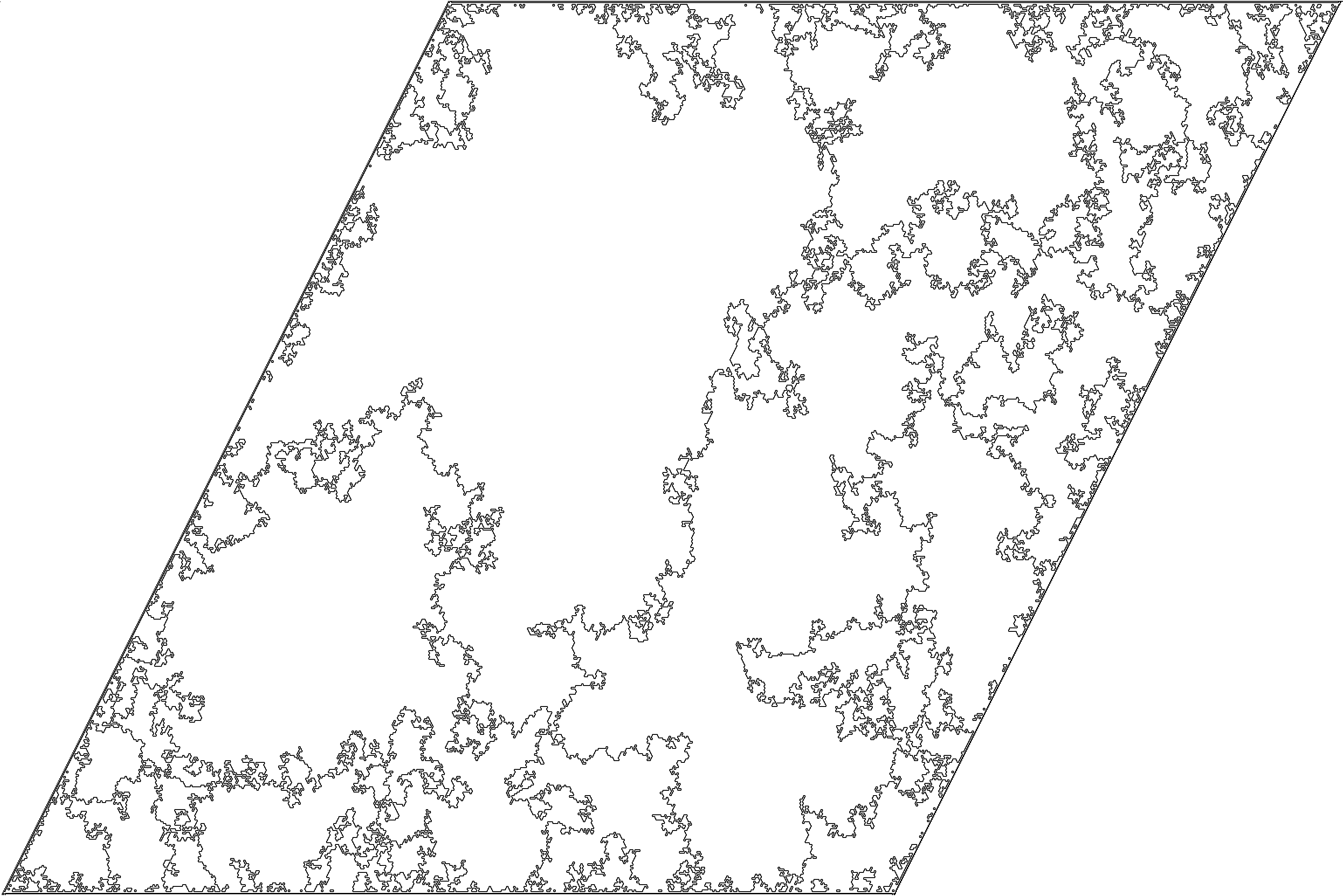}
			\caption{Simulation of $\A_{-\lambda,\lambda}$ done by B. Werness.}\label{SimALE}
		\end{figure}
		
		Another family of useful BTLS is that of two-valued local sets. In \cite{ASW}, two-valued local sets of the zero boundary GFF were introduced in the simply connected case, which we assume to be $\D$ for convenience. Two-valued local sets are thin local sets $A$ such that the harmonic function $h_A$ takes precisely two values.	More precisely, take $a,b>0$, and consider BTLS $\A_{-a,b}$ coupled with the GFF such that $h_{\A_{-a,b}}$ is constant in each connected component of $\D\backslash \A_{-a,b}$ and for all $z\in \D\backslash \A_{-a,b}$, $h_{\A_{-a,b}}\in \{-a,b\}$. It is somewhat more convenient to assume that the two-valued local sets and first passage sets introduced later also by convention contain the boundary.
		
		\cite{ASW} dealt with the construction, measurability, uniqueness and monotonicity of two-valued local sets in the case of the zero boundary GFF in simply connected domains. Here we state a slight generalization of this main theorem for more general boundary values.
		
		In this respect, let $u$ be a bounded harmonic function with piecewise constant boundary values. Take $a,b>0$ and define $u^{-a,b}$ to be the part of the boundary where the values of $u$ are outside of $[-a,b]$. As long as $u^{-a,b}$ is empty, the harmonic function $h_A$ still takes only two values $-a$ and $b$. Otherwise, we also allow for components where some of the boundary data for $h_A$ (corresponding to $u^{-a,b}$) is not equal to $-a$ or $b$. More precisely, we define the two-valued local sets $\A_{-a,b}^u$ for a GFF $\Phi + u$ as follows:
			
		\begin{defn}[Two-valued local sets] Let $u$ be a bounded harmonic function with piecewise constant boundary values. We call a BTLS $\A^{u}_{-a,b}$ coupled with $\Phi + u$ a two-valued set, if the complement of $\A^u_{-a,b}$ has exactly two types of components $O$: 
		\begin{enumerate}
			\item Those where $\partial O \cap \partial \D$ is a totally disconnected set. In these components $h_{\A_{-a,b}^u}+u$ takes the constant value: $-a$ or $b$.
			\item Those where $\partial O \cap \partial \D \subset u^{-a,b}$. In these components $h_{\A_{-a,b}^u} + u$ takes boundary values $u$ on the part $\partial O \cap \partial u^{-a,b}$ and has either constant boundary value $-a$ or $b$ on the rest of $\partial O$, in such a way that $h_{\A_{-a,b}^u}+u$ is a bounded harmonic function that is either greater or equal to $b$ or smaller or equal to $-a$ throughout the whole component. 
		\end{enumerate}
		\end{defn}
		
		The next proposition basically says that all the properties of the zero-boundary case generalize to the general boundary.
		
		\begin {prop}
		\label {cledesc3}
		Consider a bounded harmonic function $u$ as above. If $|a+b| \geq 2\lambda$ and\\ $[\min(u),\max(u)] \cap (-a,b) \neq \emptyset$, then  it is possible to construct $\A_{-a,b}^u\neq \emptyset$ coupled with a GFF $\Phi$ . Moreover, the sets $\A_{-a,b}^u$ are
		\begin{itemize} 
			\item unique in the sense that if $A'$ is another BTLS coupled with the same $\Phi$,  such that a.s. it satisfies the conditions above,	then $A' = \A^u_{-a, b}$ almost surely;  
			\item measurable functions of the GFF $\Phi$ that they are coupled with;
			\item 	monotone in the following sense: if $[-a,b] \subset [-a', b']$ with $b+a \ge 2\lambda$,  then almost surely, $\A^u_{-a,b} \subset \A^{u}_{-a', b'}$. 
			
		\end{itemize}
		\end {prop}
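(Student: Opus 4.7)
My plan is to follow the strategy of \cite{ASW} with modifications to accommodate the general boundary function $u$. The proof has three stages: (i) construct $\A^u_{-a,b}$ by iterated generalized level lines; (ii) prove uniqueness (and hence measurability) via a Brownian-motion characterization based on Proposition~\ref{BMinterior}; (iii) deduce monotonicity from uniqueness together with the Markov property of the GFF.

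For the construction, I would iterate Lemma~\ref{lemext}. Starting from boundary points where $u$ has jumps through admissible thresholds, shoot generalized level lines of $\Phi + u + c$ for suitable constants $c$, and stop each at its continuation threshold. The assumption $a+b\geq 2\lambda$ ensures that the $\pm\lambda$ values on the two sides of each level line fit inside the target interval $[-a,b]$, while the assumption $[\min u, \max u]\cap(-a,b)\neq \emptyset$ guarantees that at least one admissible level line exists so the construction does not stop immediately. Combining Lemma~\ref{BPLS}(2) (to take unions of conditionally independent local sets) with Lemma~\ref{BPLS}(3) (for monotone limits) gives an increasing family of BTLS whose limit is itself a BTLS; iterating the procedure inside each complementary component produces the desired $\A^u_{-a,b}$, whose complementary components are exactly of type (1) or type (2) as described before the proposition.

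For uniqueness, let $A = \A^u_{-a,b}$ and let $A'$ be another BTLS satisfying the same conditions. Fix $z \in D$, take a conditionally independent coupling of $A$ and $A'$, and let $\eta_t$ be a growing local set process approaching $A \cup A'$, which is local by Lemma~\ref{BPLS}(2). By Proposition~\ref{BMinterior}, after a deterministic time change the process $t\mapsto h_{\eta_t}(z)$ is a Brownian motion started at $0$. Both $A$ and $A'$ freeze this martingale when $h_{\cdot}(z)+u(z)$ first reaches $\{-a,b\}$ (in a type-1 component) or matches the $u$-dependent boundary value (in a type-2 component). The first-passage characterization of Brownian motion then forces $A$ and $A'$ to be detected at the same stopping time viewed from every $z \in D$, yielding $A = A'$ almost surely, and measurability with respect to $\Phi$ is immediate. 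For monotonicity, I would sample $\A^u_{-a,b}$ and then, in each complementary component and conditionally on the set, apply the construction for $[-a',b']$ relative to the Markovian GFF in that component; uniqueness identifies the result with $\A^u_{-a',b'}$, giving $\A^u_{-a,b} \subset \A^u_{-a',b'}$.

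The main obstacle relative to the simply-connected zero-boundary setting is the careful treatment of type-(2) complementary components, where $h_A + u$ is neither constantly $-a$ nor constantly $b$. In both the construction step and the monotonicity step one must verify, using Proposition~\ref{propnotouchsc} and Lemma~\ref{donotentersimplyconnected}, that subsequent level lines avoid the portions of $\partial \D$ where $u$ already lies outside $[-a,b]$ with the ``wrong'' sign, so that the $\{-a,b\}$-dichotomy is preserved under iteration. Checking these hitting and non-hitting constraints, together with showing that the iterative procedure terminates and produces a genuine BTLS, is the technical heart of the generalization from \cite{ASW}.
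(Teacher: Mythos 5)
Your construction and monotonicity steps are essentially the paper's route: iterate generalized level lines (reducing to the case $a+b=2\lambda$ and then iterating inside components), and deduce monotonicity from uniqueness by re-running the construction for $[-a',b']$ inside the complementary components of $\A^u_{-a,b}$. However, the termination of the iteration --- that a fixed $z\in\D$ lies in a finished component after finitely many level lines --- which you explicitly defer to ``the technical heart'', is a real step that the paper does prove: it uses that $h_{A_t}(z)$ is a bounded martingale (Proposition \ref{BMinterior}) and that each time the sign of the level line facing $z$ flips, a harmonic-measure comparison forces $h_{A_t}(z)$ to change by a definite amount, which can happen only finitely often. The paper also organizes the construction as an induction on the boundary partition size of $u$ (segments where $u\geq\lambda$, $u\leq-\lambda$, or $u\in(-\lambda,\lambda)$), together with Proposition \ref{propnotouchsc} to control where the level lines can end; your sketch gestures at this but does not carry it out.

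The genuine gap is in your uniqueness argument. Knowing that $h_{\eta_t}(z)$ is a Brownian motion along a local set process, and that both $A=\A^u_{-a,b}$ and a competitor $A'$ have $h_\cdot+u$ valued as prescribed, does not identify them as the same first-passage time: there is no filtration in which $A$ and $A'$ both arise as stopping times of one Brownian motion (after the conditionally independent coupling the natural process grows to $A\cup A'$, and equality in law of $h_A(z)$ and $h_{A'}(z)$ yields nothing almost sure); moreover, for an arbitrary competitor $A'$ an interpolating local set process need not keep $h_{\eta_t}(z)+u(z)$ inside $[-a,b]$ until absorption, so the assertion that $A'$ ``freezes the martingale at the first hitting of $\{-a,b\}$'' is precisely what must be proved, not an input. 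The paper's uniqueness is a two-step argument with different content: first, $\A^u_{-a,b}\subset A'$, because the level lines used in the construction cannot enter a component of $\D\backslash A'$ whose boundary values are already $\leq -a$ or $\geq b$ (Lemma \ref{donotentersimplyconnected}); second, since in every component of $\D\backslash\A^u_{-a,b}$ the boundary values of $h_{\A^u_{-a,b}}+u$ are everywhere $\geq b$ or everywhere $\leq -a$, one has $|h_{\A^u_{-a,b}}+u+a/2-b/2|\geq |h_{A'}+u+a/2-b/2|$, and Lemma 9 of \cite{ASW}, applied to $A'=A'\cup\A^u_{-a,b}$ with the recentered function $u+a/2-b/2$ in place of the constant, forces $A'=\A^u_{-a,b}$. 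Your proposal contains nothing playing the role of this second step, so it cannot rule out that $A'$ is strictly larger than $\A^u_{-a,b}$; once uniqueness is established in this strong sense, measurability and monotonicity do follow as you indicate.
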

		The proof is an extension of the proof of Proposition 2 and the arguments in Sections 6.1 and 6.2 of \cite{ASW}.
		\begin{proof}
			\textsc{Construction:}  We know from \cite{ASW} that the condition $|a+b| \geq 2\lambda$ is necessary. Also, if $[\min(u),\max(u)] \cap (-a,b) \neq \emptyset$ does not hold, then the empty set satisfies our conditions. Thus, suppose that $|a+b| \geq 2\lambda$ and $[\min(u),\max(u)] \cap (-a,b) \neq \emptyset$. We start by constructing the basic sets with $|b+a| = 2\lambda$. Observe that in this basic case one can only concentrate on $\A_{-\lambda,\lambda}^u$ as for any other $(a,b)$ with $b=-a+2\lambda$ it is enough to construct $\A_{-\lambda,\lambda}^{u+a-\lambda}$.
			
			So let us  build $\A_{-\lambda,\lambda}^u$. To do this, partition the boundary $\partial \D = \bigcup_{k=1}^m \B_k$ such that each $\B_k$ is a finite segment, throughout each $\B_k$ the function $u$ is either larger or equal to $\lambda$, smaller or equal to $-\lambda$, or is contained in $(-\lambda, \lambda)$, and $m$ is as small as possible. Call $m$ the \textit{boundary partition size}. Notice that $m$ is finite by our assumption. We will now show the existence by induction on $m$.
			
			In fact, the heart of the proof is the case $m = 2$, so we will start from this. If $u$ is, say, larger than $\lambda$ on $\B_1$ and smaller than $-\lambda$ on $\B_2$, then by Lemma \ref{lemext} we can draw a generalized level line from one point in $\partial \B_1$ to the other one, by Proposition \ref{propnotouchsc} it almost surely finishes at the other point of $\partial \B_1$ and decomposes the domain into components satisfying (2). 
			
			So suppose $u$ is larger than $\lambda$ on $\B_1$ but in $(-\lambda, \lambda)$ on $\B_2$. Then, we can similarly start a generalized level line from one point in $\partial \B_1$ targeted to the other one. Again, we know that it finishes there almost surely. It will decompose the domain into one piece that satisfies the condition (2) and possibly infinitely many simply connected pieces that have a boundary partition size equal to 2. We can iterate the level line in each of these components. Now for any $z \in \D \cap \Q^2$, denoting the local set process arising from the construction and continuing always in the connected component containing $z$ by $A_t^z$, we have (say from Proposition \ref{BMinterior}) that $h_{A_t}(z)$ is a martingale. We claim that from this it follows that any $z$ is in a component satisfying (1) or (2) above after drawing a finite number of generalized level lines. Indeed, fix some $z \in \D \cap \Q^2$; then any level line iterated in a component containing $z$ that stays on the same side of $z$ than the previous level line will have a larger harmonic measure than the previous one; as the sign of the level line facing $z$ changes, we see that $h_{A_t}(z)$ changes by a bounded amount. This can happen only a finite number of times and thus the claim. Hence we have shown the construction in the case $m=2$.
			
			Now, if $m = 1$, then the only possible case is that $u$ takes values in $(-\lambda, \lambda)$. In this case the generalized level lines can be started and ended at all points of the boundary. In choosing any two different points on the boundary and drawing a level line, we will decompose $D$ into simply-connected components such that their boundary partition size equal to 2 (See Figure \ref{ALE const}). 
			
			\begin{figure}[ht!]	   
				\centering
				\includegraphics[width=2in]{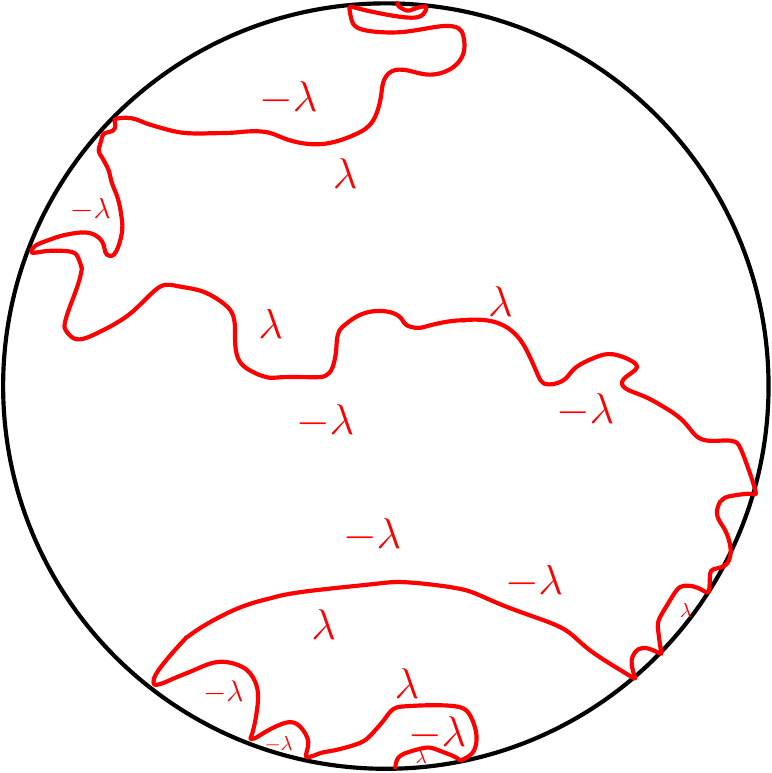}
				\includegraphics[width=2in]{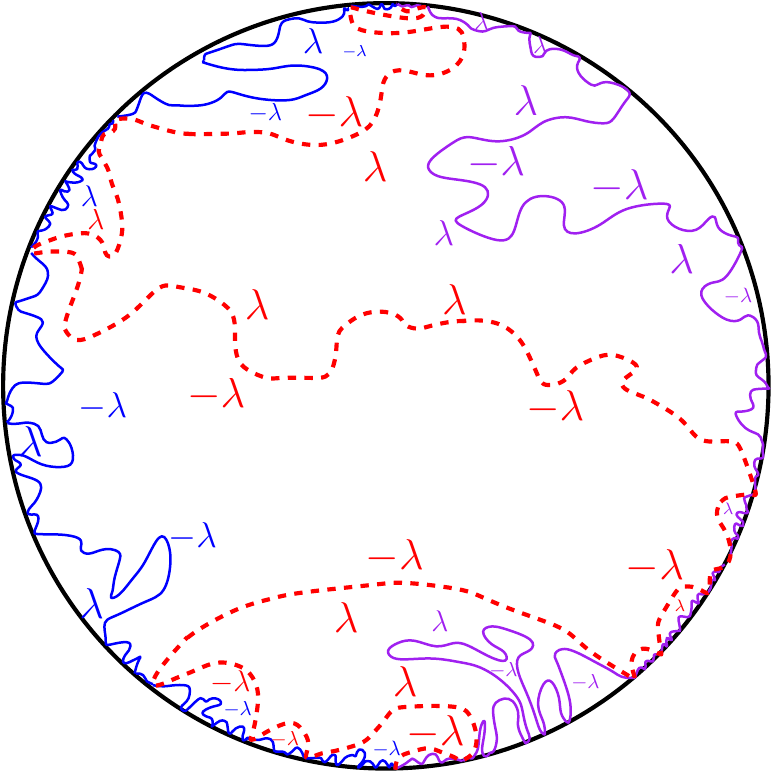}
				\caption {An illustration of the first two steps in the construction of $\A^u_{-\lambda,\lambda}$. }
				\label{ALE const}
			\end{figure}
			
			For $m \geq 3$, we must have at least two $\B_k$, say $\B_1$ and $\B_2$ (not necessarily adjacent) such that $|u| \geq \lambda$ on them. We then start our generalized level line from a possible starting point in 
			$\partial \B_1$ towards a possible target point in $\partial \B_2$. By Lemmas \ref{lemext} and Proposition \ref{propnotouchsc} it stops at either at its target point or at a point between two $\B_i$, $\B_j$ such that on both  $|u| \geq \lambda$. One can verify that in each of these cases, in each component cut out the boundary partition size is strictly smaller than $m$.
			
			To construct $\A_{-a,b}^u$ in the general case, one can follow the proof in Section 6 in \cite{ASW} or in Section 3.2.2 of \cite{AS2}. For the sake of completeness, let us provide the only slightly modified argument here:
			
			We first the construct  $\A^u_{-a,b}$ for some ranges of values of $a$ and $b$, and then describe the general case. 
			
				\begin{itemize}
	\item\underline{\textit{$a=n_1\lambda$ and $b=n_2\lambda$, where $n_1$ and $n_2$ are positive integers:}}   Define $A^1=\A^u_{-\lambda,\lambda}$, and define $A^{n+1}$ iteratively in the following way: consider a connected component $O$ of $D\backslash A^n$ not yet satisfying the conditions of the definition of the TVS. Suppose that $h_{A^n} + u $ is equal to $\widetilde n \lambda$ on $\partial O \cap A^n$. Then we explore $\A^{h_{A^n}+u-\widetilde n\lambda}_{-\lambda,\lambda}$ of $\Phi^{A^n}$ restricted to $O$. Define $A^{n+1}$ as the closed union between $A^n$ and the explored sets. Then we set: $\A^u_{-a,b}=\overline{ \bigcup A^n}$.
	
	\item \underline{\textit{$a+b = n\lambda$ where $n \geq 3$ is an integer}}: Pick $r\in [0, 2\lambda)$ such that there exists two integers $n_1\geq 0, n_2\geq 2$ with $a=r+n_1\lambda$ and $b =-r+n_2\lambda$. Let us start with $A:=\A_{-r,-r+2\lambda}^u$. Inside each connected component $O$ of $D\backslash A$ such that on $\partial O \cap A$ the harmonic function $h_A + u$ equals $-r$, resp. $-r+2\lambda$, explore $A^{h_A + u+r}_{-n_1 \lambda,  n_2 \lambda}$ of $\Phi^{A}$ restricted to $O$, resp. $\A^{h_A + u + r - 2\lambda}_{-(n_1+2)\lambda,  (n_2 -2) \lambda}$ of $\Phi^{A}$ restricted to $O$. We have that $\A_{-a,b}$ is the closed union of $A$ with the explored sets.
	\item\underline{\textit{General case with $b+a > 2\lambda$}}: As $\A^u_{-a,b}(\Phi)=\A^{-u}_{-b,a}(-\Phi)$, we may assume that $b>\lambda$.  Let $m\in \N$ such that $m \lambda-a \in (b-\lambda,b]$ and note that $m\geq 2$. Define $A^1:= \A^u_{-a,m\lambda-a}$, and iteratively construct $A^n$ in the following way:
	\begin{itemize}
		\item If $n$ is odd, then each component $O$ of $D\backslash A^n$ is either ready, or $h_{A^n} + u$ equals $m\lambda-a$ on $\partial O \cap A^n$. In the latter components, we explore $\A^{h_{A^n}+u-m\lambda +a}_{b+a-2m\lambda,b+a-m\lambda}$ of $\Phi^{A^n}$  restricted to $O$. Define $A^{n+1}$ the closed union of $A^{n}$ with the explored sets. Then all loops of $A^{n+1}$ are either finished, or $h_{A^n} + u$ equals $b-m\lambda\in [-a,-a+\lambda)$ on $\partial O \cap A^{n+1}$.
		\item If $n$ is even, then each component $O$ of $D\backslash A^n$ is either ready, or $h_{A^n} + u$ equals $b-m\lambda$ on $\partial O \cap A^n$. In the latter components, we explore $\A^{h_A +u +m\lambda -b}_{-a-b+m\lambda,-a-b+2m\lambda}$ of $\Phi^{A^n}$ restricted to $O$. Define $A^{n+1}$ the closed union of $A^{n}$ with the newly explored sets. All loops of $A^{n+1}$ are either finished, or $h_{A^n} + u$ equals $m\lambda-a$ on $\partial O \cap A^{n+1}$.
	\end{itemize}
	Then $\A^u_{-a,b}:=\overline{ \bigcup A^n}$.
\end{itemize}
			
		\end{proof}
		
		Let us now make the following remarks:
		\begin{enumerate}[(i)]
			\item In the construction we only need to use level lines of $\Phi+u$ whose boundary values are in $[-a,b]$.
			\item For a fixed point $z\in \D$ a.s. we only need a finite number of level lines to construct the connected component of $\D\backslash \A^u_{-a,b}$ containing $z$ - this is just because for each point $z$, the harmonic function $h_{A^n}$ of the construction is a discrete-valued, discrete-time martingale stopped when it first exits an interval.
			
			\item As none of the level lines is started inside $u^{-a,b}$ nor can touch $u^{-a,b}$, any connected component of $u^{-a,b}$ belongs entirely to the boundary of a single connected component of $\D \backslash A_{-a,b}^u$. In particular, each connected component $O$ of $\D\backslash\A_{-a,b}^u$ with $\partial O \cap \partial \D \neq \emptyset$ has only finitely many intersection points and by Lemma 10 in \cite{ASW} we can assign them any values, in particular those that $u$ already takes on $\partial \D$.
		\end{enumerate}

		\begin{proof}\textsc{Uniqueness, measurability and monotonicity:} Measurability follows from the construction, as each level line is measurable w.r.t the GFF. To prove uniqueness and monotonicity, we can follow the proof of Proposition 2 in \cite{ASW}. 
			
		Indeed, to prove uniqueness suppose that there is another BTLS $A'$ satisfying the conditions of a TVS. Now sample the $\A^u_{-a,b}$ constructed above conditionally independently of $A'$, given $\Phi$. By the Remarks (i), (ii), (iii) and Lemma \ref{donotentersimplyconnected}, we see that none of the level lines used in the construction of $\A^u_{-a,b}$ can enter any connected component $O$ of $D \backslash  A'$. Thus we obtain $\A^u_{-a,b} \subset A'$.
		
		To show the opposite inclusion, notice that by conditions (1) and (2) of the TVS we have that in any connected component $O$ of $\D\backslash \A^u_{-a,b}$ the boundary values of $h_{\A^u_{-a,b}}+u$ are either larger or equal to $b$ or smaller or equal to $-a$. In particular, $$|h_{\A^u_{-a,b}}+u+a/2-b/2|\geq |h_{A'}+u+a/2-b/2|.$$ Thus,  given that $A'=A'\cup \A^u_{-a,b}$ we can just use Lemma 9 of \cite{ASW}, where instead of $k$ we use $u+a/2-b/2$ (the proof is exactly the same in this case). 
	
		Finally, monotonicity follows easily from uniqueness: to construct $\A^u_{-a',b'}$ with $a' \geq a$, $b' \geq b$ we can first construct $\A^u_{-a,b}$ and then further construct suitable two-valued sets inside each component of $D\backslash \A^u_{-a,b}$ to finish the construction of $\A^u_{-a',b'}$.
		\end{proof}
		
		\begin{rem}
			In fact in the monotonicity statement, one could also include the changes in the harmonic function: if $u, u'$ are two bounded harmonic function with piecewise constant boundary data, take $\delta=\sup_{\partial \D} |u-u'|$ if $[-a,b] \subset [-a'+\delta, b'-\delta]$ with $b+a \ge 2\lambda$,  
			then almost surely, $\A^u_{-a,b} \subset \A^{u'}_{-a', b'}$. The proof just follows from the construction and Lemma \ref{donotentersimplyconnected}.
		\end{rem}

		In \cite{AS2}, the authors studied further properties of the TVS of the GFF in a simply connected domain with constant boundary condition. Let us mention two results that are important for us in analyzing generalized level lines in finitely-connected domains. First, it was proved in \cite{AS2} that $\A_{-\lambda,\lambda}^u$ is a.s. equal to the union of all level lines of $\Phi + u$ (see Lemma 3.6 of \cite{AS2}). The exact same proof works in this context and implies that:
		\begin{lemma}\label{lem::unionofll}
			Let $Z$ be any countably dense set of points on $\partial D$. Then	$\A_{-\lambda,\lambda}^u$ is equal to the closure of the union of all the level lines of $\Phi+ u$ going between two different points of $Z$.
		\end{lemma}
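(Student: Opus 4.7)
\emph{Strategy.} The plan is to prove both inclusions separately. Write $A := \A_{-\lambda,\lambda}^{u}$ and let $E$ denote the closure of the union of all generalized level lines of $\Phi + u$ between distinct points of $Z$.

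For the inclusion $E \subseteq A$, take any generalized level line $\eta$ between two points of $Z$. By Lemma \ref{lemext} and Proposition \ref{cledesc3}, both $\eta$ and $A$ are measurable functions of $\Phi$, hence trivially conditionally independent given $\Phi$. By the defining properties of the TVS, on every connected component $O$ of $\D \setminus A$ the boundary values of $h_{A} + u$ are either all $\geq \lambda$ or all $\leq -\lambda$ on $\partial O$. Lemma \ref{donotentersimplyconnected} applied to each such $O$ then forces $\eta \cap O = \emptyset$, so $\eta \subset A$; taking closures yields $E \subseteq A$.

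For the opposite inclusion $A \subseteq E$, I would revisit the iterative construction of $A$ in the proof of Proposition \ref{cledesc3}. That construction exhibits $A$ as an almost surely countable union of generalized level lines, each started and targeted at a boundary point of $\D$ (either a transition point of $u$, or an endpoint on $\partial \D$ of a previously drawn level line), and iterated inside complementary simply connected pieces. Since $Z$ is dense in $\partial \D$ and the admissibility condition ($u(x^{-}) < \lambda$ and $u(x^{+}) > -\lambda$) is preserved under small perturbations of $x$ away from the finitely many transition points of $u$, one can approximate the starting and target points of each level line in the construction by sequences in $Z$. Using a Hausdorff-stability statement for generalized level lines with respect to their starting and target points, the level lines between these $Z$-approximations would converge to each level line used in the construction, so every such level line lies in $E$, and passing to the countable union gives $A \subseteq E$.

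The main obstacle is precisely this stability statement. Since the construction iterates inside complementary components whose boundary data depend on the previously drawn level lines, one needs a version of the continuity result that is robust under repeated iteration. In the simply connected setting with piecewise constant boundary data it should reduce to a convergence statement for the Loewner driving functions of the corresponding SLE$_{4}(\underline{\rho})$ processes, in the spirit of the stability results in \cite{WaWu,MS1}.
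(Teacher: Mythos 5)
Your first inclusion is fine: taking $\eta$ a level line between two points of $Z$, noting that both $\eta$ and $\A_{-\lambda,\lambda}^{u}$ are determined by $\Phi$ (so the conditional independence hypothesis holds trivially), and invoking Lemma \ref{donotentersimplyconnected} component by component is exactly the standard argument, and it covers the type (2) components as well since there the boundary values of $h_{\A_{-\lambda,\lambda}^{u}}+u$ are still everywhere $\geq\lambda$ or everywhere $\leq-\lambda$.

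The problem is the inclusion $\A_{-\lambda,\lambda}^{u}\subseteq E$, which in your write-up is not a proof but a programme. Two things are missing. First, the ``Hausdorff-stability of generalized level lines in their starting and target points'' is precisely the hard content here; it is not available as a black box (continuity of the SLE$_4(\underline\rho)$ trace in the seed, uniformly enough to pass to Hausdorff limits of the traces, is delicate, in particular near the jump points of $u$ and near continuation thresholds), and you explicitly leave it unproved. Second, even granting such stability, your scheme does not reach the curves actually used in the construction of Proposition \ref{cledesc3}: after the first step, the construction iterates level lines of the \emph{conditional} field $\Phi^{A_t}$ inside complementary components $O$, whose boundary data is $\pm\lambda$ on previously drawn curves. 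These are not level lines of $\Phi+u$ in $D$ joining two points of $\partial D$, so approximating their endpoints by points of $Z$ does not by itself show they lie in $E$; you would additionally need to identify (or approximate) them by level lines of the original field $\Phi+u$ between boundary points, e.g.\ via the interaction/merging rules for level lines of the same field, or alternatively bypass the construction altogether by showing that the closed countable union $E$ is itself a local set whose complementary components satisfy the two-valued condition and then invoking the uniqueness part of Proposition \ref{cledesc3}. Note that the paper does not argue this either: it simply states that the proof of Lemma 3.6 of \cite{AS2} applies verbatim, so the burden of your blind proof was exactly the step you defer, and as it stands the argument is incomplete.
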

		\begin{rem}
			Note that the level line of $\Phi +u$ going from $x$ to $y$ is equal to the level line of $-\Phi-u$ going from $y$ to $x$ (Theorem 1.1.6 of \cite{WaWu}).
		\end{rem}
		
		Second, the TVS $\A_{-a,b}$ are locally finite for $a, b \leq 2\lambda$, again for zero boundary GFF (Proposition 3.15 of \cite{AS2}). A rather direct generalization is as follows:
		
		\begin{lemma}\label{prop::ALE locally finite}
			In simply connected domains,	 $\A_{-\lambda,\lambda}^u$ is locally finite, i.e., a.s. for each $\epsilon>0$ there are only finitely many connected components of $D\backslash \A_{-\lambda,\lambda}^u$ with diameter larger than $\epsilon$. 
		\end{lemma}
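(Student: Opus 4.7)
The plan is to reduce to the zero boundary case handled in \cite{AS2} (Proposition~3.15 there, cited in the excerpt above). By conformal invariance I would first assume $D=\D$. Then I would use the construction of $\A^u_{-\lambda,\lambda}$ from the proof of Proposition~\ref{cledesc3}: it starts by drawing at most $n$ initial level lines, where $n$ is the boundary partition size of $u$; each such level line is a simple curve, and together they partition $\D$ into at most $n+1$ simply connected subdomains $O_1,\ldots,O_N$. Each $O_i$ is either a settled component (type~(1) or~(2)) or is unsettled and the construction is iterated inside. By conformal invariance the iteration inside each unsettled $O_i$ is equivalent to constructing some $\A^{\tilde u_i}_{-\lambda,\lambda}$ in $\D$ with $\tilde u_i$ piecewise constant of strictly smaller boundary partition size than that of $u$. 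Iterating this reduction finitely many times, it suffices to prove local finiteness of $\A^{\tilde u}_{-\lambda,\lambda}$ in $\D$ for $\tilde u$ piecewise constant with values in $[-\lambda,\lambda]$; inside such an unsettled domain, $\tilde u$ then takes values strictly in $(-\lambda,\lambda)$ at every interior point by the maximum principle.

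In this reduced setting I would invoke Proposition~\ref{BMinterior}: for each fixed $z\in\D$, if $O_t(z)$ denotes the component of $z$ at time $t$ in the iterated construction, then $h_{A_t}(z)+\tilde u(z)$ is a standard Brownian motion started at $\tilde u(z)\in(-\lambda,\lambda)$ when parametrized by $t=(2\pi)^{-1}(\log\crad(z,\D)-\log\crad(z,O_t(z)))$, and it is stopped at the first time $T_z$ where $|h_{A_{T_z}}(z)+\tilde u(z)|=\lambda$. Standard Brownian exit-time estimates then give uniform exponential tails on $\log\crad(z,\D)-\log\crad(z,O(z))=2\pi T_z$, where $O(z)$ denotes the final component of $z$ in $\D\setminus\A^{\tilde u}_{-\lambda,\lambda}$.

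Finally, I would pass from these per-point conformal-radius estimates to Euclidean diameter bounds on the components via Koebe distortion combined with Beurling-type estimates, using the specific iterative level-line structure of the construction, and conclude via a Borel--Cantelli argument over a countable dense collection of interior base points in $\D$ that almost surely, for each fixed $\epsilon>0$, only finitely many components of $\D\setminus\A^{\tilde u}_{-\lambda,\lambda}$ have Euclidean diameter $\geq\epsilon$. The hard part will be precisely this last step: the conformal radius at a single interior point does not in general control the Euclidean diameter of a simply connected domain, so one must exploit the specific geometric structure of the iterative level-line construction, exactly as is done for $u\equiv 0$ in \cite{AS2}.
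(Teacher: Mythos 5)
There is a genuine gap at the heart of your plan, precisely at the step you yourself flag as ``the hard part.'' Your reduction (iterate the level-line construction until the remaining unsettled components have boundary data with values in $[-\lambda,\lambda]$, and note that only macroscopic components matter) is in the right spirit, although one claim in it is wrong as stated: a single generalized level line with boundary data in $(-\lambda,\lambda)$ on part of $\partial\D$ can touch the boundary and therefore cuts $\D$ into possibly \emph{infinitely} many components, not ``at most $n+1$''; what saves the reduction is only the weaker observation (the one the paper uses) that a continuous curve separates finitely many components of diameter $\geq\eps$. But for the reduced case your argument is a plan, not a proof: the exponential tail on $\log\crad(z,\D)-\log\crad(z,O(z))$ coming from Proposition \ref{BMinterior} controls, via Koebe, only the inradius of the component of a fixed point, and a component of diameter $\geq\eps$ can be a thin sliver of arbitrarily small inradius, so no Borel--Cantelli over a countable dense set of base points can close the argument by itself. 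You defer this to ``exactly as is done for $u\equiv 0$ in \cite{AS2},'' i.e.\ you propose to redo the proof of Proposition 3.15 of \cite{AS2} with general piecewise constant boundary data; that adaptation is exactly what needs justification, and it is not supplied.

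The paper takes a different route for the base case which avoids redoing that quantitative work: after the same first reduction (finitely many level lines, so that unsettled components have boundary data changing twice, between $\pm\lambda$ and a constant $v\in(-\lambda,\lambda)$), it observes that this mixed-boundary TVS arises as the restriction of the \emph{constant}-boundary set $\A^{v}_{-\lambda,\lambda}$ (equal in law to $\A_{-\lambda-v,\lambda-v}$, which is locally finite by \cite{AS2}) to a connected component of the complement of a level line of $\Phi+v$; local finiteness is thus inherited from the known constant-boundary case rather than re-proved, with only a domain-transfer remark borrowed from the proof in \cite{AS2}. If you want to keep your direct approach, you must actually carry out the geometric step converting per-point conformal radius tails into diameter control for all components with general $u$; otherwise, the restriction/embedding trick is the efficient way to finish.
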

		
		This result will be proved in more generality in Proposition \ref{prop:: locally finite} (relying on the results of the paper \cite{ALS2}), but we will sketch a direct argument here too:
		\begin{proof}[Proof sketch:]
			First assume that the boundary condition changes twice and that one boundary value corresponds to either $-\lambda$ or $\lambda$, and the other to a constant $v\in(-\lambda,\lambda)$. Let us argue that in this case $\A_{-\lambda,\lambda}^u$ is locally finite, by reducing it to the case of $\A_{-\lambda,\lambda}^v$ (that has the same law as $\A_{-\lambda-v,\lambda-v}$).
				Indeed, consider $\Phi$ a GFF  in $\D$  and $\eta$ a generalized level line  of $\Phi+v$ joining two different boundary points. Then, $\A_{-\lambda,\lambda}^v$ restricted to any connected component $O$ of  $D\backslash \eta^v$ is given by $\A_{-\lambda,\lambda}^u$ of $\Phi^{O}$. But by Proposition 3.15 of \cite{AS2}, we know that $\A_{-\lambda-v,\lambda-v}$ is locally finite, and moreover by following the proof of that proposition, we can deduce that if $\A_{-a,b}^u$ is locally finite inside some simply-connected set, it is locally finite in all simply-connected sets.
			
			Now, the proof follows from two observations:
			\begin{itemize}
				\item By uniform continuity, a continuous curve in $\D$ parametrized by $[0,1]$ only separates finitely many components of diameter larger than $\eps$ for any $\eps \geq 0$;
				\item From the construction of TVS with piecewise-constant boundary conditions given above, we see that after drawing a finite number of level lines (that are continuous up to their endpoint by Proposition \ref{propnotouchsc}) we can construct a local set $A \subset \A^u_{-\lambda,\lambda}$ such that the connected components of $D\backslash A$ either already correspond to those of $D\backslash \A^u_{\lambda,\lambda}$, or have boundary conditions as above: it changes twice between a part that is $\pm \lambda,$  and a part where it takes a value in $(-\lambda, \lambda)$.
				\qedhere 
			\end{itemize}
		\end{proof}

		These two lemmas allow us to explicitly describe a part of $\A_{-\lambda, \lambda}$ in an $\eps$-neighborhood of the boundary with a local set.
		
		\begin{lemma} \label{lem:: close boundary}
			Fix $\epsilon>0$ and let $Z$ be any countably dense set of points on $\partial \D$. Define $A$ as  closed union of the level lines of $\Phi+u$ and $-\Phi-u$ starting in $\D$ and stopped the first time they reach distance $\epsilon$ from $\partial \D$. Let $\widehat A$ be equal to the union of the connected components of $\A^u_{-\lambda,\lambda} \backslash ((1-\eps)\D)$ that are connected to $\partial \D$. Then almost surely, $A$ is equal to $\widehat A$. Furthermore, $A$ is a local set such that $A\cap \partial ((1-\eps)\D)$ is a finite set of points. 
			
			If we define $O$ to be the connected component of $\D\backslash A$ containing $0$, then the boundary values of $h_A+u$ are, in absolute value bigger than or equal to $\lambda$. Furthermore, they are piece-wise constant, change their value only finitely many times, and change their sign only at points situated on $\partial ((1-\eps)\D)$.
		\end{lemma}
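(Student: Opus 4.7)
The plan is to leverage Lemma \ref{lem::unionofll}: by it together with the accompanying Remark, $\A^u_{-\lambda,\lambda}$ equals the closure of the union of all generalized level lines of $\Phi + u$ (equivalently of $-\Phi - u$) between pairs of points in $Z$. The candidate set $A$ is obtained by stopping each such level line the first time it reaches distance $\epsilon$ from $\partial \D$. The inclusion $A \subseteq \widehat{A}$ is then immediate: each stopped level line trace lies in $\A^u_{-\lambda,\lambda} \cap \overline{\D \setminus ((1-\epsilon)\D)}$ and is connected to its $\partial \D$-starting point, so it sits inside a component of $\A^u_{-\lambda,\lambda} \setminus ((1-\epsilon)\D)$ touching $\partial \D$.

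For the reverse inclusion $\widehat{A} \subseteq A$ together with the finiteness of $A \cap \partial((1-\epsilon)\D)$, I would invoke Lemma \ref{prop::ALE locally finite}: almost surely only finitely many connected components of $\D \setminus \A^u_{-\lambda,\lambda}$ have diameter at least $\epsilon/2$, so only finitely many level line arcs "penetrate" beyond $\partial((1-\epsilon)\D)$, giving at once the finiteness statement. For $\widehat{A} \subseteq A$, the iterative construction of $\A^u_{-\lambda,\lambda}$ in the proof of Proposition \ref{cledesc3} captures each such long component of $\A^u_{-\lambda,\lambda} \setminus ((1-\epsilon)\D)$ by finitely many successive level lines; using density of $Z$ and the continuity of generalized level lines from Lemma \ref{lemext}, these construction level lines can be Hausdorff-approximated by level lines starting from nearby points of $Z$, whose stopped traces then cover the component upon taking the closure.

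To see that $A$ is a local set, I would enumerate the countable family of stopped level lines $(\eta^{(n)})_{n \geq 1}$ and consider the increasing unions $A_n = \bigcup_{k \leq n} \eta^{(k)}$. Each $\eta^{(n)}$ is a measurable function of $\Phi$ by Lemma \ref{lemext}, so $(A_{n-1}, \Phi_{A_{n-1}})$ and $(\eta^{(n)}, \Phi_{\eta^{(n)}})$ are trivially conditionally independent given $\Phi$, and an induction using Lemma \ref{BPLS}(2) shows each $A_n$ is a local set. Since every $\eta^{(n)}$ is attached to $\partial \D$ at its starting point, the set $A_n \cup \partial \D$ is always connected, so Lemma \ref{BPLS}(3) applies with uniform bound $k=1$ and gives that $A = \overline{\bigcup_n A_n}$ is a local set.

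Finally, for the boundary value statement, $O$ contains $(1-\epsilon)\D$ since $A \cap (1-\epsilon)\overline{\D}$ is finite, and $\partial O$ decomposes into arcs of $\partial \D$, both sides of the finitely many stopped level lines in $A$, together with the finite set of stopping points on $\partial((1-\epsilon)\D)$. On level line sides $h_A + u = \pm \lambda$ by the defining property of generalized level lines. On a $\partial \D$-arc lying in $\partial O$: if $u$ were to enter $(-\lambda, \lambda)$ or change sign through $\pm\lambda$ on this arc, then the starting condition of Lemma \ref{lemext} would be satisfied at a dense subset of $Z$-points on it for either $\Phi + u$ or $-\Phi - u$, producing level lines of $A$ that separate the arc from $O$, contradicting its presence in $\partial O$; hence $|u| \geq \lambda$ with constant sign on each such arc, and the only way for the sign of $h_A + u$ to change along $\partial O$ is at points where two opposite-type level lines meet, which happens precisely on $\partial((1-\epsilon)\D)$. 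The hardest part is the reverse inclusion $\widehat{A} \subseteq A$: making rigorous the claim that every long component of $\A^u_{-\lambda,\lambda} \setminus ((1-\epsilon)\D)$ is actually covered by stopped $Z$-level lines requires carefully combining local finiteness, density of $Z$, and the non-crossing and continuity properties of level lines in a topological argument.
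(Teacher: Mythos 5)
The central point of the lemma is the identification of $\widehat A$ with $A$, and this is exactly where your proposal has a genuine gap: for $\widehat A\subseteq A$ you route the argument through the iterative construction of Proposition \ref{cledesc3} plus a ``Hausdorff approximation'' of the construction's level lines by level lines started at nearby points of $Z$, and you yourself admit this is not carried out. The route is also problematic in substance: after the first step, the lines used in that construction are level lines of the conditioned fields $\Phi^{A_t}+h_{A_t}+u$ inside complementary components, started at points produced by the previously drawn lines, not level lines of $\Phi+u$ in $\D$ between boundary points; no continuity of a level line in its starting point is among the available tools, so approximating them by $Z$-level lines is unsupported. The paper avoids all of this: by Lemma \ref{lem::unionofll} (and the remark following it), $\A^u_{-\lambda,\lambda}$ is the closure of the union of the $Z$-to-$Z$ level lines of $\Phi+u$, each of which, traversed backwards, is a level line of $-\Phi-u$; the part of such a full line lying in $\D\setminus(1-\eps)\D$ and attached to $\partial\D$ is precisely its initial segment stopped upon reaching distance $\eps$ together with its terminal segment, i.e.\ a stopped level line of $\Phi+u$ and a stopped level line of $-\Phi-u$. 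Hence intersecting the union with the annulus and discarding the pieces not attached to $\partial \D$ gives exactly $A$, with no appeal to the construction of the TVS.

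Two further points. First, your finiteness argument asserts that finitely many complementary components of diameter $\geq\eps/2$ force finitely many penetrating arcs, but the step that makes local finiteness bite is missing: the paper bounds $\#\bigl(A\cap\partial((1-\eps)\D)\bigr)$ by twice the number of excursions of $\A^u_{-\lambda,\lambda}$ (connected pieces of $\A^u_{-\lambda,\lambda}\cap\D$ meeting $\partial\D$ only at their two endpoints) that reach $(1-\eps)\D$, and then injects these excursions into complementary components of $\D\setminus\A^u_{-\lambda,\lambda}$ by assigning to each excursion the component it separates from a fixed reference boundary point (e.g.\ the point $1$); only then does Lemma \ref{prop::ALE locally finite} conclude. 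Second, your boundary-value discussion is also loose: a single stopped level line separates nothing, so ruling out arcs of $\partial\D$ with $|u|<\lambda$ on $\partial O$ really rests on the identity $A=\widehat A$ together with the classification of the complementary components of $\A^u_{-\lambda,\lambda}$, i.e.\ on the very step left open. Your inclusion $A\subseteq\widehat A$ and the locality of $A$ (via Lemma \ref{BPLS}, noting $A_n\cup\partial\D$ is connected) are fine and in fact more detailed than the paper's one-line locality claim, but as it stands the proof of the main identification is not there.
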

		
		\begin{figure}[h!]
			\includegraphics[width=0.35 \textwidth]{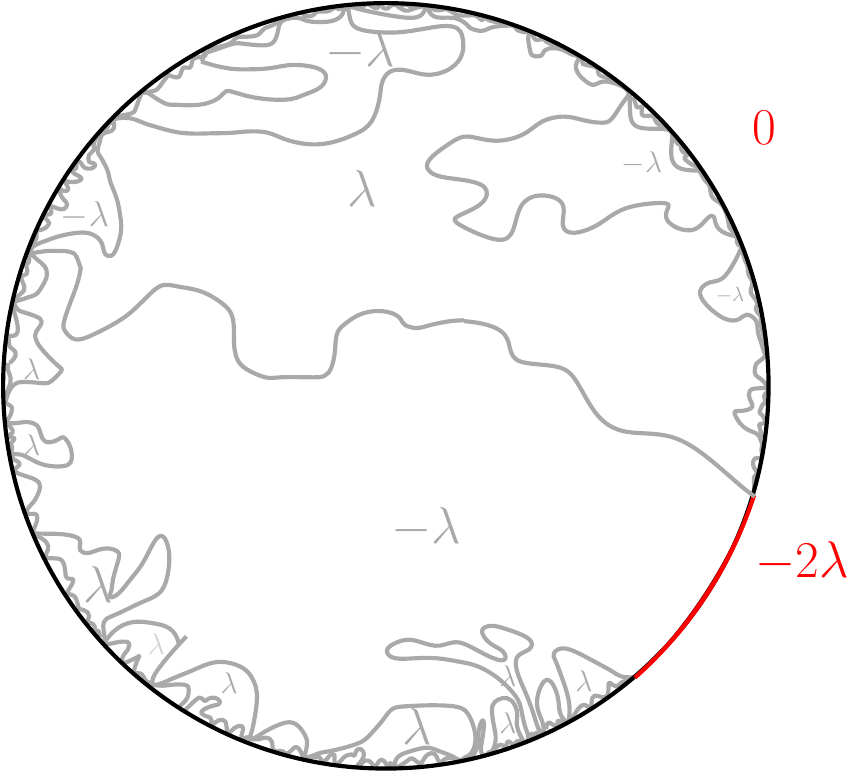}
			\includegraphics[width=0.35\textwidth]{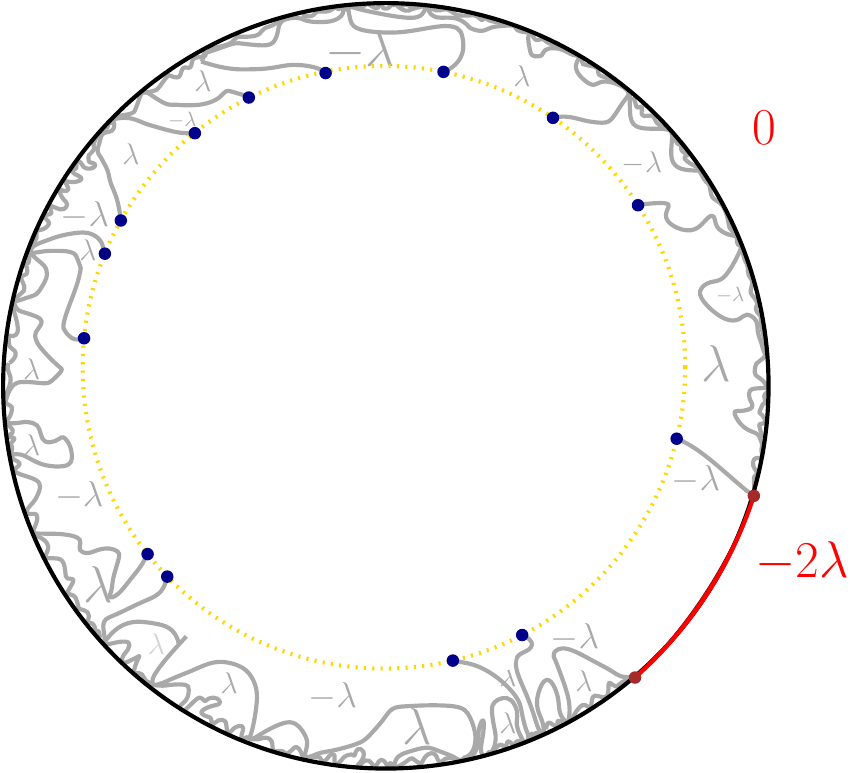}
			\caption{To the left the set $\A_{-\lambda,\lambda}^u$ where $u$ is equal to $-2\lambda$ on an arc, and equal to $0$ on the rest of the boundary. To the right the set $A$ described in Lemma \ref{lem:: close boundary}. Note that the boundary values of $h_A+u$ on $A$ are $\lambda$, $-\lambda$, equal to $u$ on the parts of the boundary where $u \notin (-\lambda,\lambda)$ and can only change its sign at the points drawn in blue, i.e. at the points where $A$ intersects $\partial (1-\eps)\D$.}		\label{Flowers}
		\end{figure}
		
		\begin{proof}
			$A$ is a local set as it can be written as the closed union of countably many local sets. Now, by Lemma \ref{lem::unionofll}, the set $\A^u_{-\lambda,\lambda}$ is equal to the union of all level lines with starting and endpoints in $Z$. By taking the intersection of this union of level lines with $\D \backslash (1-\eps)\D$, and throwing away parts that are not connected to the boundary, we obtain exactly the union of level lines stopped at distance $\eps$ from $\partial D$.
			
			Thus, to show that $A\cap \partial (1-\eps)\D$ is a finite set of points, it suffices to prove this claim for $\widehat A$, the connected component of $\A^u_{-\lambda,\lambda} \cap (\D \backslash ((1-\eps)\D))$ that is connected to $\partial \D$. Notice that this number of intersection points is bounded by twice the number of ``excursions'' of $\A^u_{-\lambda,\lambda}$ between two boundary points that intersect $(1-\eps)\D$, where by an excursion we mean a connected subset of $\A_{-\lambda,\lambda}^u\cap \D$ that intersects $\partial \D$ only at its two endpoints. However, by Lemma \ref{prop::ALE locally finite} $\A^u_{-\lambda, \lambda}$ is a.s. locally finite. This implies that there are a.s. also only finitely many excursions of $\A^u_{-\lambda,\lambda}$ intersecting $(1-\eps)\D$, as one can associate to each such excursion a unique connected component of $\D\backslash \A_{-\lambda,\lambda}^u$ (for example the component that is separated from the point $1$ by this excursion).			
		\end{proof}
		
		\subsection{Generalized level lines in finitely connected domains}\label{sectlfc}
		
		In this section, we prove that the generalized level lines are continuous up to their stopping time $\tau$, i.e. that any accumulation point described in Lemma \ref{lemext} is in fact a hitting point. WLoG, let us work in the circle domain $D=\H\backslash \bigcup_{k=1}^n C_k$, where $(C_k)_{1\leq k\leq n}$ is a finite family of disjoint closed disks. Note that, from the point of view of the GFF, $\R$  plays a similar role as the boundary of any $C_k$ because straight lines and circles are equivalent under Möbius transforms.
		
		We first need to extend Proposition \ref{propnotouchsc} to finitely-connected domains, showing that we cannot accumulate near the points that cannot be hit by a generalized level line:
		
		\begin{lemma}[Non-intersecting regime]\label{notouch}
			Let $\eta$ be a generalized level line of $\Phi +u$ in $D$ starting from $0$. If either $u \geq \lambda$ or $u \leq -\lambda$ on some open interval $J \subset \partial D$, then a.s. $\eta$ stays at a positive distance of any $x \in J$. 
			
			Moreover, if $x\in \partial D \backslash \R$ and $u \geq \lambda$ on a neighborhood in $\partial D$ clockwise of $x$, or $u \leq -\lambda$ on a neighborhood in $\partial D$ counter-clockwise of $x$, then almost surely $\eta$ stays at a positive distance of $x$. 
		\end{lemma}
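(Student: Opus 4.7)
The plan is to reduce the finitely-connected setting to the simply-connected case (Proposition~\ref{propnotouchsc}) via the domain Markov property of $\Phi$ around $x$. By replacing $\Phi$ with $-\Phi$ when necessary, I may restrict to the case $u\geq\lambda$ on $J$; the second part of the lemma will follow from the same argument with a one-sided localization.

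Since $x\in\partial D\setminus\R$ lies on a smooth inner boundary component $C_k$, I can choose $r>0$ small enough so that $B:=B_r(x)$ satisfies $B\cap\partial D\subset J$ and $U:=B\cap D$ is simply connected. The deterministic closed set $A:=\overline{D\setminus U}$ is trivially a local set of $\Phi$, and the associated Markov decomposition reads $\Phi = h_A + \Phi^A$, where $\Phi^A$ is a zero-boundary GFF in $U$ independent of $h_A$, which is a random bounded harmonic function in $U$ vanishing on $\partial U\cap\partial D$ (because $\Phi$ has zero Dirichlet boundary values on $\partial D$).

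Let $\sigma$ be the first entrance time of $\eta$ into $U$. On $\{\sigma<\tau\}$, the strong Markov property for generalized level lines, combined with the Markov property of $\Phi$ in $U$, ensures that conditional on $\F_\sigma$ and on the field outside $U$ --- equivalently, on $\eta_\sigma$, $h_{\eta_\sigma}$ and $h_A$ --- the continuation of $\eta$ inside $U$ is a generalized level line in the simply connected domain $U$ starting from $\eta(\sigma)$, with boundary data $v := h_A + h_{\eta_\sigma} + u$ on $\partial U$. Because both $h_A$ and $h_{\eta_\sigma}$ vanish on $\partial D$, one has $v = u \geq \lambda$ on the open arc $\partial U\cap J$ containing $x$, while $v$ is a.s.\ bounded on the remaining arc $\partial U\cap D$. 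Applying Proposition~\ref{propnotouchsc} in this conditional setting (after conformally mapping $U$ to $\H$ with $x$ sent to a real boundary point) shows that the conditional level line stays at positive distance from $x$; since outside $U$ the curve $\eta$ is at distance at least $r$ from $x$, integrating over the conditioning completes the argument. The second part of the lemma follows by choosing $U$ so that $\partial U\cap\partial D$ equals only the specified one-sided neighborhood of $x$ and invoking the one-sided version of Proposition~\ref{propnotouchsc}.

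The main obstacle is that $v$ restricted to $\partial U\cap D$ is only a.s.\ bounded, whereas Proposition~\ref{propnotouchsc} is stated for piecewise-constant boundary data. I plan to overcome this either by an extension of Proposition~\ref{propnotouchsc} to bounded boundary data (which follows from the standard SLE$_4(\rho)$ absolute continuity argument, since non-intersection depends only on the local values of $v$ near $x$), or, more concretely, by an auxiliary iteration inside $U\setminus\overline{B_{r/2}(x)}$: draw further generalized level lines of $\Phi^A + h_A + u$ there to produce, in the spirit of the TVS construction of Proposition~\ref{cledesc3}, a BTLS $A'$ such that the connected component of $U\setminus A'$ adjacent to $x$ has piecewise-constant boundary data with all values at least $\lambda$. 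Lemma~\ref{donotentersimplyconnected} then forbids $\eta$ from entering that component, yielding the claim.
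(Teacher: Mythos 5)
Your reduction to the simply connected case has genuine gaps. First, the harmonic part $h_A$ of $\Phi$ in $U=B_r(x)\cap D$ (with $A=\overline{D\setminus U}$) is \emph{not} a bounded harmonic function: its pointwise variance is $g_{D}(z,z)-g_{U}(z,z)$, which blows up as $z$ approaches the interior cut $\partial U\cap D$, so $h_A$ is a.s.\ unbounded there. Hence your conditional boundary data $v=h_A+h_{\eta_\sigma}+u$ is neither piecewise constant nor even bounded, and neither Proposition \ref{propnotouchsc} nor a ``bounded-data'' extension of it applies; the level-line theory invoked in this paper is only available for piecewise constant data. Second, even granting nice data, the continuation of $\eta$ after $\sigma$ is not a single generalized level line of the conditional field in $U$: the curve can leave $U$ and re-enter many times, and where and when it re-enters depends on the field outside $U$, so inside $U$ you only see a countable family of excursion pieces, not a curve to which Proposition \ref{propnotouchsc} (a statement about one level line run in a fixed simply connected domain) can be applied. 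Third, and most importantly, the difficulty this lemma must handle is \emph{accumulation without hitting}: at this stage it is not yet known that level lines in finitely connected domains are continuous up to hitting other boundary components (that is exactly Propositions \ref{prop::target} and \ref{propext}, which are deduced from this lemma). Absolute-continuity arguments in the spirit of Lemma 16 of \cite{ASW} rule out \emph{hitting} points near $x$, but not the scenario where $\eta$ enters every ball $B_\rho(x)$ at larger and larger times without converging. Your fallback does not repair this: applying Lemma \ref{donotentersimplyconnected} to $\eta$ in the finitely connected $D$ is precisely the statement whose finitely connected proof (Lemma \ref{donotenter}) relies on \ref{propext} and hence on the present lemma, so the argument is circular; applying it inside $U$ instead runs again into the rough boundary data of the conditional field (the TVS construction of Proposition \ref{cledesc3} also requires piecewise constant data).

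For comparison, the paper's proof is global rather than local: it parametrizes $\eta$ by the conformal modulus seen from a compact $J'\subset J$ (Proposition \ref{BProcess2}), so that $W_t=\int_{J'}\partial_n h_{\eta_t}$ is a standard Brownian motion; if $\eta$ accumulated on $J'$ the time horizon would be infinite, while a harmonic comparison shows $W_t$ stays bounded from below, contradicting the oscillation of Brownian motion. The second claim is then obtained by running an auxiliary level line of $\Phi+u$ started at $x$ itself, and combining the first claim with Lemma-16-type absolute continuity to exclude both hitting and accumulation at its tip. If you want to salvage a local strategy, you would need a result controlling level lines with GFF-type (non piecewise constant) boundary data, or an a priori continuity statement, neither of which is available at this point of the paper.
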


	\begin{proof}
		Let us start from the first claim. To do this, consider some compact interval $J' \subset J$ and parametrize $\eta_t$ using the conformal modulus $t = M(J', \eta_t \cup \partial\H) - M(J',\partial\H)$ as in Proposition \ref{BProcess2}. 
		
		WLoG assume that the boundary values on $J'$ satisfy $u \geq \lambda$. Suppose for contradiction that $\eta$ comes arbitrarily close to $J'$. This means by Proposition \ref{BProcess2} that $W_t = \int_{J'}\partial_n h_{\eta_t}$ is a Brownian motion on $[0,\infty)$. We argue that $\int_{J'}\partial_n h_{\eta_t}$ is bounded from below. Notice that this would give the desired contradiction, as then $W_t$ would be a Brownian motion that  remains bounded from below for all times.
		
		Let $u'$ be the harmonic function that is equal to $\lambda$ in $J$ and to $u$ in $\partial D\backslash J$. Because $u'\leq u$, the harmonic function $h_{\eta_t} + u'$ has boundary values on $\eta_t$ that are smaller than or equal to $\lambda$. Let $\bar h_t$ be the bounded harmonic function with boundary values equal to $\lambda$ in $\eta_t\cup J$ and to $\sup |u| + 1 > \lambda$ in the rest of the boundary. As the minimum of $\bar h_t - (h_{\eta_t} + u')$ is attained on $J$ and equals zero over the whole of $J$, we have that the outward normal derivative of $\bar h_t - (h_{\eta_t} + u')$ on $J'$ is negative and thus $\int_{J'} \partial_n (h_{\eta_t}+u') \geq \int_{J'} \partial_n \bar h_t$. To finish the first claim, we note that $\int_{J'} \partial_n u'<\infty$, and $\int_{J'}\partial_n \bar h_t$ is non-decreasing in $t$: indeed, the harmonic function $\bar h_t \geq \lambda$ on any $\eta([t,t+\eps])$ and thus $\lambda \leq \bar h_{t+\eps} \leq \bar h_t$. As the value of both $\bar h_t, \bar h_{t+\eps}$ on $J'$ equals $\lambda$, we have that $\int_{J'}\partial_n (\bar h_t - \bar h_{t+\eps}) \leq 0$, as $\bar h_t - \bar h_{t+\eps}$ is a non-negative harmonic function which is equal to $0$ on $J'$.

		For the second claim, assume WLoG that $u \geq \lambda$ on some neighborhood in $\partial D$ clockwise of $x$. Let us denote this neighborhood by $J_L$. If $u \geq \lambda$ also on a neighborhood counter-clockwise, then we are done by the first claim. Otherwise, let $\eps>0$ be very small, in particular smaller than the distance between any boundary components. Notice that we can start a generalized level line $\hat \eta$ of $\Phi+u$ from $x$ stopped at $\hat \tau_\eps$ the first time the level line reaches distance $\epsilon$ from the connected component it started from, or has reached its continuation threshold on the same connected component of $\partial D$ that contains $x$. Note that $\hat \eta$ cannot hit $J_L$ before time $\hat \tau_\eps$ by absolute continuity w.r.t. to the simply-connected case (see Lemma 16 of \cite{ASW}). On the other hand, it a.s. hits the counter-clockwise boundary neighborhood if $u \in (-\lambda,\lambda)$ on this neighborhood, and a.s. does not hit it if $u \leq -\lambda$. 
		
		In both cases, by the first claim of this proposition, we have that $\eta$ cannot intersect nor accumulate in $J_L \cup \hat \eta_t$ before either accumulating or hitting $\hat \eta(\hat \tau_\eps)$, or finishing by accumulating somewhere on the boundary $\partial D$ at positive distance from $x$. We now argue that $\eta$ cannot accumulate at $\hat \eta(\hat \tau_\eps)$ without hitting. Indeed, because Lemma \ref{lemext} allows us to further grow $\hat \eta$ for a positive amount $s$ after time $\hat \tau_\eps$, the first claim of this proposition implies again that $\eta$ cannot accumulate near $\hat \eta(\hat \tau_\eps)$, before hitting or accumulating $\hat \eta(\hat \tau_\eps + s)$. So it remains to argue that $\eta$ cannot hit $\hat \eta(\hat \tau_\eps)$. This now follows, as hitting $\hat \eta(\hat \tau_\eps)$ would in particular mean that $\eta$ is continuous up to this hitting time $\tau_1$ and thus stay in some open neighborhood of $\hat \eta(\hat \tau_\eps)$ after some time $s < \tau_1$. Thus, by the absolute continuity of the GFF in this neighborhood, i.e., the same proof as for Lemma 16 of \cite{ASW}, one can show that $\eta$ cannot hit $\hat \eta(\hat \tau_\eps)$.
	\end{proof}

		The following proposition tells us that when  level lines approach the boundary they hit it in only one point and thus they are continuous until that time. This implies that, it is not possible for level lines to accumulate on the boundary without hitting it
		
		\begin{prop}\label{prop::target} Let $\eta$ be a generalized level line of GFF $\Phi + u$ in $D$ starting from $0$. Let $T_p$ be the set of boundary points on $\partial D \backslash \R$ that $\eta$ could potentially hit, i.e. set of points $x$ for which $u> -\lambda$ in a neighborhood in $\partial D$ counter-clockwise of $x$ and $u<\lambda$ in some neighbourhood in $\partial D$ clockwise of $x$. Then a.s. either $\eta$ hits $T_p$ or stays at a positive distance from it.
		\end{prop}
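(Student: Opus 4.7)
The plan combines Lemma~\ref{notouch} (which already forbids accumulation of $\eta$ outside $\R\cup T_p$) with a local absolute continuity argument near a candidate accumulation point and a flower construction adapted from Lemma~\ref{lem:: close boundary}.

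Suppose $\eta$ does not stay at positive distance from $T_p$ and pick $t_n\nearrow\tau$ and $y_n\in T_p$ with $d(\eta(t_n),y_n)\to 0$. After extraction, $y_n\to y\in\overline{T_p}$; since $\overline{T_p}\setminus T_p$ consists of boundary points at which $u$ equals $\pm\lambda$ one-sidedly, which are at positive distance from $\eta$ by Lemma~\ref{notouch}, one has $y\in T_p$. Let $C_k$ be the boundary component containing $y$ and choose $r>0$ small enough that $V:=B(y,r)\cap D$ is simply connected, is at positive distance from every other boundary component, and the arc $J:=V\cap\partial D\subset C_k$ lies in a region where $u$ is piecewise constant with values strictly in $(-\lambda,\lambda)$; this is possible since $y\in T_p$ and $u$ is piecewise constant. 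By the local absolute continuity Lemma~16 of \cite{ASW}, $\Phi+u$ restricted to $V$ is absolutely continuous with respect to a GFF in a simply-connected reference domain $\widetilde D\supset V$ with matching boundary data on $J$, so any excursion of $\eta$ into $V$ starting from a point of $\partial V\cap D$ is absolutely continuous with respect to a generalized level line in $\widetilde D$; the simply-connected case of Lemma~\ref{lemext} together with Proposition~\ref{propnotouchsc} then gives that each such excursion is continuous up to its termination and either hits $J\subset T_p$ or exits through $\partial V\cap D$.

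To rule out infinitely many such excursions, I would adapt the flower construction of Lemma~\ref{lem:: close boundary} to $C_k$: drawing generalized level lines of $\Phi+u$ and $-\Phi-u$ from a dense countable set of points on $C_k\setminus J$ stopped at distance $\epsilon$ from $C_k$, the local analogue of Lemma~\ref{prop::ALE locally finite} (transferred by the same absolute continuity) yields only finitely many simply-connected pockets of diameter at least $\epsilon$ along $C_k\setminus J$, each with boundary values of $h_{A_\epsilon}+u$ everywhere $\ge\lambda$ or everywhere $\le-\lambda$. Lemma~\ref{donotentersimplyconnected}, applied inside $V$ after the same absolute continuity, forbids $\eta$ from entering these pockets, so $\eta$ can only approach $y$ through the pocket adjacent to $J$; inside that pocket the simply-connected theory above applies directly and $\eta$ is continuous up to hitting $J\subset T_p$. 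Sending $\epsilon\to 0$ along a subsequence for which $\eta$ still approaches $y$ finishes the proof.

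The main obstacle is the non-independence of $\eta$ and the flower $A_\epsilon$: both are determined by the same GFF $\Phi$, so Lemma~\ref{donotentersimplyconnected} does not apply off the shelf. The natural remedy is to perform the construction purely inside the simply-connected $V$ via the absolute continuity of Lemma~16 of \cite{ASW}, where the flower lemma and the ``do not enter'' principle are valid, and then to transfer the conclusion back to $D$. A secondary technical point is the careful patching of the excursion-by-excursion analysis in $V$ with the global Loewner description of $\eta$, in order to conclude that $\eta$ has a limit at $\tau$ rather than oscillating among several points of $T_p\cap \overline J$.
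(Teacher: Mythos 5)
Your overall architecture (a flower near the relevant boundary component, absolute continuity, reduction to the simply-connected theory) is close in spirit to the paper's proof, which indeed builds the local set of Lemma \ref{lem:: close boundary} in an $\epsilon$-neighbourhood of each $C_k$. However, there are two genuine gaps. First, the step where you forbid $\eta$ from entering the pockets via Lemma \ref{donotentersimplyconnected} is exactly the obstacle you flag, and your proposed remedy does not repair it: that lemma requires $A$ and $\eta$ to be conditionally independent, whereas your flower and $\eta$ are both deterministic functionals of the same field $\Phi$. Transporting the construction into $V$ by Lemma 16 of \cite{ASW} changes the law of the field on $V$, but it cannot create the missing conditional independence -- inside the reference domain the petals and the excursions of $\eta$ are still functions of one and the same field. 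The paper never uses the do-not-enter lemma here; it controls the interaction of $\eta$ with the flower through Lemma \ref{notouch} (the petal arcs carry boundary values $\geq\lambda$ or $\leq-\lambda$, so $\eta$ stays at positive distance from them), whose proof rests on the martingale argument of Proposition \ref{BProcess2} and on level-line commutation facts rather than on independence.

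Second, and more importantly, your argument never upgrades ``$\eta$ accumulates at a point of $T_p$'' to ``$\eta$ hits it''. The excursion-by-excursion absolute continuity is legitimate for the first excursion into $V$, when the past of $\eta$ is at positive distance from $B(y,r)$; but in the accumulation scenario the past of $\eta$ comes arbitrarily close to $y$, so the conditional boundary data governing the continuation inside $V$ differs from the reference data on sets accumulating at $y$, and Lemma 16 of \cite{ASW} gives no comparison in any neighbourhood of $y$. This is precisely the delicate case the proposition is about, and ruling out infinitely many excursions by a transferred local-finiteness statement does not address it (the failure can occur along the final approach). The paper resolves it with Lemma \ref{lem::rev}: $\eta$ can only approach $C_k$ through the finitely many tips of the flower at distance exactly $\epsilon$; each tip lies on a level line of $\Phi+u$ or of $-\Phi-u$ started on $C_k$; $\eta$ stays away from the former by Lemma \ref{notouch}, and if it comes close to the tip of a level line of $-\Phi-u$ it must merge with that curve and hence traverses, continuously, a trace terminating on $C_k$ -- this coalescence is what produces the hitting. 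Your proposal contains no substitute for this merging step, so as written it does not close the proof.
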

		
		It is useful to observe that the set $T_p$ is precisely the set of points for which one can start a level line of $-\Phi-u$. The following lemma says that when we start simultaneously a generalized level line of $\Phi + u$ and a level line of $-\Phi - u$ from different boundary components, then these level lines either agree on a continuous curve joining two boundary conditions or stay at a positive distance from each other.
		
		\begin{lemma}\label{lem::rev}
			Let $\eta$ be a level line of $\Phi+u$ started at $x \in \R$ and let $\eta'$ be a level line of $-\Phi-u$ started at $y \in \partial D\backslash \R$, stopped at times $\tau, \tau'$ respectively, that correspond to the first times they intersect a connected component of $\partial D$ different to their starting one (or when they hit the continuation threshold on their own component). Then, either $\eta\cap \eta'=\emptyset$ or $\eta\cap \eta'$ is a connected set that intersects $\R$ and the connected component of $\partial D$ that contains $y$. In the latter case, the level lines $\eta$ and $\eta'$ are continuous until the first time they intersect a connected component of the boundary that does not contain their respective starting point.
		\end{lemma}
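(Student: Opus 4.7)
The plan is to couple $\eta$ and $\eta'$ using that both are measurable functions of $\Phi$ (by Lemma \ref{lemext}), and to analyze their interaction via the Markov property of local sets, Lemma \ref{notouch}, and the simply-connected uniqueness of level lines (Theorem 1.1.6 of \cite{WaWu}).

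The first step is to exclude ``side'' intersections. I would fix a stopping time $\sigma' < \tau'$ and condition on $\eta'_{\sigma'}$. By the local set Markov property, in the connected component of $D \setminus \eta'_{\sigma'}$ containing $x$, the curve $\eta$ (before it first touches $\eta'_{\sigma'}$) is a generalized level line of $\Phi^{\eta'_{\sigma'}} + u + h_{\eta'_{\sigma'}}$. Since $\eta'$ is a level line of $-\Phi-u$, the harmonic function $-h_{\eta'_{\sigma'}} - u$ takes the values $-\lambda$ on the left of $\eta'_{\sigma'}$ and $+\lambda$ on its right, so $u + h_{\eta'_{\sigma'}}$ equals $+\lambda$ on the left and $-\lambda$ on the right. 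Applying Lemma \ref{notouch} to $\eta$ in this new domain, $\eta$ stays at positive distance from the relative interior of $\eta'_{\sigma'}$. Combined with the fact that $\eta$ avoids $\partial D \setminus \R$ before time $\tau$ (so in particular $y$), this gives $\eta \cap \eta'_{\sigma'} \subseteq \{\eta'(\sigma')\}$ for every $\sigma' < \tau'$, and the symmetric statement with the roles of $\eta, \eta'$ swapped.

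The second and key step is the merging at the meeting point. If $s_1 := \inf\{s : \eta(s) \in \eta'\} < \tau$, then by the previous step the first meeting occurs at a common tip, i.e., $\eta(s_1) = \eta'(s'_1)$ for some $s'_1 < \tau'$. I would then take a simply-connected open neighborhood $U$ of this meeting point, at positive distance from the other boundary components of $D$. Using absolute continuity of the GFF restricted to $U$ (in the spirit of Lemma 16 of \cite{ASW}), the continuation of $\eta$ beyond $s_1$ inside $U$ and the time-reversal of $\eta'$ immediately before $s'_1$ inside $U$ are both generalized level lines of $\Phi+u$ starting from the meeting point, with boundary data $\pm\lambda$ on the already traced portions and $u$ on $\partial D \cap \overline{U}$. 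By the simply-connected uniqueness of level lines (Theorem 1.1.6 of \cite{WaWu}), the two curves coincide in $U$; iterating this argument past each subsequent ``tip'', the portions of $\eta$ and $\eta'$ after $s_1$ and $s'_1$ trace the same connected curve, which together with $\eta_{s_1}$ and $\eta'_{s'_1}$ yields a connected intersection $\eta \cap \eta'$ joining $\R$ and the component of $\partial D$ containing $y$.

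Finally, continuity of each curve up to its final time follows from Lemma \ref{lemext} on the portions before the meeting (where the tip stays strictly away from other boundary components), and from the merging analysis on the portions after, since the merged curve is a single continuous path reaching the target component. The main technical obstacle is the absolute continuity reduction in the second step: one must verify carefully that locally at the meeting point the multiply-connected setting is indistinguishable from a simply-connected one, so that Theorem 1.1.6 of \cite{WaWu} can be invoked to force the coincidence of $\eta$ and $\eta'$.
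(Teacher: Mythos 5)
Your first step is essentially the paper's: condition on $\eta'_{\sigma'}$, note that $\eta$ up to its first hitting of $\eta'_{\sigma'}$ is a level line of the conditioned field with $\pm\lambda$ data on the two sides of $\eta'_{\sigma'}$, and invoke Lemma \ref{notouch} to see that the first contact can only happen at the tip $\eta'(\sigma')$. (Two small caveats: the statement you extract, $\eta\cap\eta'_{\sigma'}\subseteq\{\eta'(\sigma')\}$ for \emph{every} $\sigma'<\tau'$, is actually false in the intersecting case -- once the curves have merged, $\eta\cap\eta'_{s}$ is a nondegenerate arc for large $s$; only the ``first hit is at the tip'' version is valid. Also one must rule out accumulation at the tip without hitting, which the paper does by growing $\eta'$ a little further, as in the second claim of Lemma \ref{notouch}.)

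The genuine gap is in your second, key step. You assert that the time-reversal of the stopped portion $\eta'\vert_{[0,s_1']}$ is, locally near the meeting point, a generalized level line of $\Phi+u$ started from $\eta'(s_1')$, and then conclude by ``uniqueness'' that the continuation of $\eta$ must trace it. Theorem 1.1.6 of \cite{WaWu} is a reversibility statement for \emph{complete} boundary-to-boundary level lines in simply connected domains; it says nothing about the reversal of a level line stopped at an interior (random) time, viewed from its tip. Indeed, conditionally on $\eta'_{s_1'}$ the reversed piece is a deterministic arc lying on the boundary of $D\setminus\eta'_{s_1'}$, so it cannot be a level line of the conditioned field, and unconditionally there is no field of which it is a level line with the required data; likewise, uniqueness of level lines compares two level lines of the \emph{same} field with the same boundary data, which is not the situation here, and absolute continuity \`a la Lemma 16 of \cite{ASW} transfers almost-sure properties of the field, not an identity between two specific curves neither of which is known to be a level line. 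Your description of the meeting is also off: in the merged scenario the first time $\eta$ touches $\eta'$ is at the \emph{terminal} point $\eta'(\tau')$ (on the other boundary component), not at an interior tip with $s_1'<\tau'$. The paper avoids all of this with a soft argument: since for every rational $s$ the first hit of $\eta'_s$ by $\eta$ can only be at $\eta'(s)$, one gets that $\eta'(s)\in\eta$ whenever $\eta\cap\eta'_s\neq\emptyset$; hence, if the curves meet at times $s,s'$, then $\eta'([s',\tau'])\subseteq\eta$ and, symmetrically, $\eta([s,\tau])\subseteq\eta'$. This forces the two curves to visit the same points in reverse order past their last visits to their own boundary components, which yields at once the connectedness of $\eta\cap\eta'$, the fact that it joins $\R$ to the component containing $y$, and the continuity of each curve up to and including the hitting of the other component -- no local uniqueness or reversibility input is needed. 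To make your proposal correct you would have to either prove such a local merging/reversibility statement (which is essentially the content of the lemma) or switch to this set-inclusion argument.
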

		
		\begin{proof}
			Let us note that for any fixed $s\in \R$, $\eta$ stopped at the first time it hits $\eta'_s$ (or at $\tau$) is a level line of $\Phi+u+h_{\eta'_s}$ (see the right image of Figure \ref{f.rev}). Thus, by Lemma \ref{notouch} the only point it can hit or accumulate in $\eta'_s$ is in its tip: $\eta'(s)$. To argue it cannot accumulate at $\eta'(s)$ without hitting it, we can use the same argument as in the proof of the second claim in Lemma \ref{notouch}: we can continue the level line $\eta'$ for another positive time $u$ to see that $\eta(t)$ cannot accumulate at $\eta'(s)$ before accumulating or hitting at $\eta(s+u)$. Thus, $\eta$ can either hit $\eta'(s)$, or stay at a positive distance of $\eta'_s$.  
			
			Now, let us note that there is no rational $s$ such that that $\eta'(s) \notin \eta$, but $\eta'_s \cap \eta \neq \emptyset$. Indeed, if there were such a time $s$, then $\eta$ would hit a point in $\eta'_s$ different from $\eta'(s)$. It is clear that the same holds when we switch the roles of $\eta'$ and $\eta$. Thus, we see that if $\eta, \eta'$ intersect at some time-points $s, s'$ respectively, then $\eta'([s',\tau']) \subset \eta$ and $\eta([s,\tau]) \subset \eta'$. But this means in fact that in this case $\eta([\sigma,\tau]) = \eta'([\sigma',\tau'])$, where $\sigma, \sigma'$ are respectively the last times before $\tau, \tau'$ where the level lines $\eta, \eta'$ touch the component of the boundary containing their starting point. Morever, we see that if $\eta$ and $\eta'$ intersect they hit the same points but in inverse orders, and are thus both continuous up to and including the hitting time of the other boundary.
		\end{proof}
			\begin{figure}[h!]
				\includegraphics[width=0.45\textwidth]{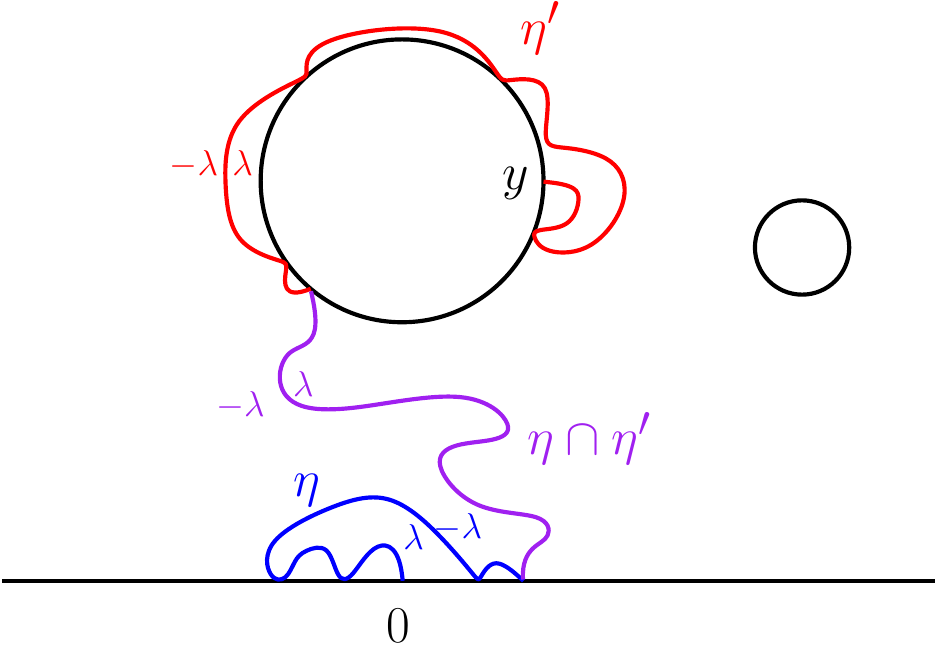}
				\includegraphics[width=0.45\textwidth]{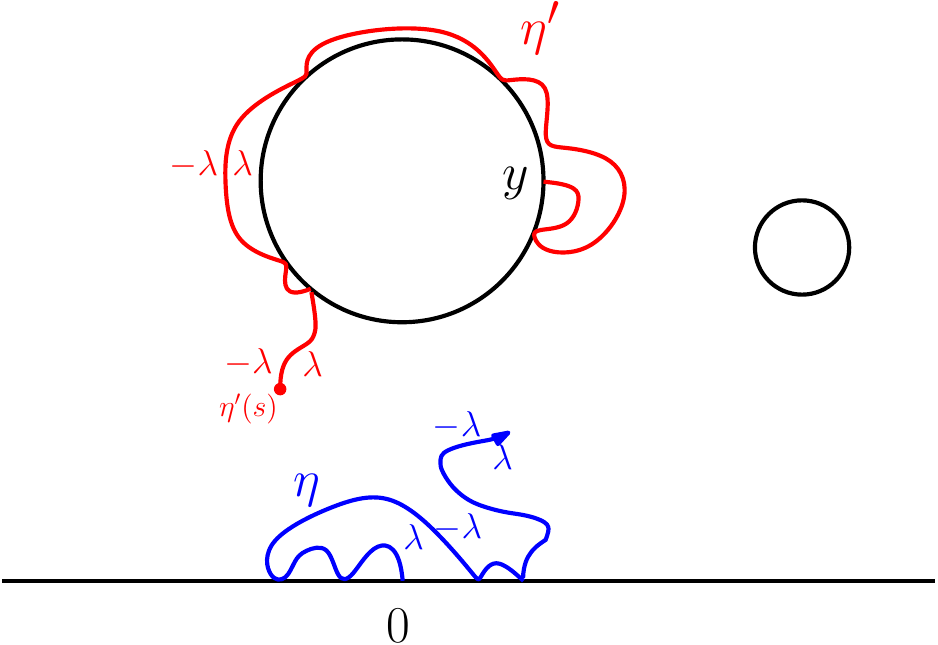}
				\caption{The left image illustrates the non-trivial option of Lemma \ref{lem::rev}. The right image gives us an idea of the proof: for any rational time $s$, we stop the construction of the red curve and show that the blue level line can only hit the red curve at its tip.}
				\label{f.rev}
			\end{figure}

		We can now prove Proposition \ref{prop::target}.
		\begin{proof}[Proof of Proposition \ref{prop::target}] Fix $\epsilon>0$ very small (say much smaller than the minimal distance between two connected components of $\C\backslash D$). By Lemma \ref{lem:: close boundary} and the absolute continuity of the GFF (Proposition 13 and Corollary 14 of \cite{ASW}), for all $C_k$ we can construct a local set $A_k$ in an $\epsilon$-neighbourhood of $C_k$ such that its boundary values are either $\geq \lambda$ or $\leq -\lambda$ in some open interval around any boundary point of $A_k$ that is of distance smaller than $\epsilon$ to the boundary. By Lemma \ref{notouch} the generalized level line $\eta$ of $\Phi+u$ started at $0$ will stay at a positive distance from all these points. Thus, it can only get infinitely close to one of $C_k$ by first hitting or accumulating at one of the points of $A_k$ that is exactly at distance $\epsilon$ from a boundary component $C_k$. Notice that by Lemma \ref{lem:: close boundary} there are only finitely many of such points. Moreover, each such point belongs to either a level line of $\Phi + u$ or a level line of $-\Phi-u$ started from the boundary component $C_k$. By Lemma \ref{notouch}, we know that the level line $\eta$ will stay at a positive distance from the first type of points. Finally, by Lemma \ref{lem::rev} we know that if it gets infinitely close to one of the other type of points, it will actually agree with this level line until hitting the boundary component containing its starting point.
		\end{proof}

		For further reference, let us resume the existence and continuity of level lines in finitely-connected domains in a proposition - it follows directly by combining Lemmas \ref{lemext} and \ref{notouch} with Proposition \ref{prop::target}.
		
		\begin{prop}[Existence and continuity of generalized level lines]\label{propext}
			Let $u$ be a bounded harmonic function with piecewise constant boundary data such that $u(0^-) < \lambda$ and $u(0^+)>-\lambda$. Then, there exists a unique law on random simple curves $(\eta(t), t\geq 0)$ coupled with the GFF such that $(**)$ holds for the function $u$ and possibly infinite stopping time $\tau$ that is defined as the first time when $\eta$ hits a point $x \in \partial D \backslash \R$ or hits a point $x\in \R$ such that $x \geq 0$ and $u(x^+) \leq -\lambda$ or $x \leq 0$ and $u(x^-) \geq \lambda$. We call $\eta$ the generalized level line for the GFF $\Phi + u$, and it is measurable w.r.t $\Phi$. 
			
			Moreover, $\eta_t$ is continuous on $[0,\tau]$, and it can only hit points from which we can start a generalized level line of $-\Phi - u$ (note that on $\partial D\backslash \R$ these are the points $x$ such that $u < \lambda$ on some interval of $\partial D$ clockwise of $x$ and $u > -\lambda$ on some interval of $\partial D$ counterclockwise of $x$).
		\end{prop}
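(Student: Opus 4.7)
My plan is to assemble the three ingredients already established in Lemma \ref{lemext}, Lemma \ref{notouch}, and Proposition \ref{prop::target}, and argue that they together yield all the assertions. Let me denote by $\tau_0$ the stopping time produced by Lemma \ref{lemext}, i.e.\ the first time $\eta$ either \emph{hits or accumulates} at a point of $\partial D\setminus\R$, or hits a continuation threshold on $\R$. Lemma \ref{lemext} directly gives the existence, uniqueness in law, measurability with respect to $\Phi$, and the satisfaction of $(**)$ up to $\tau_0$. It also hands us continuity on $[0,\tau_0]$ on the event $\{\eta(\tau_0)\in\R\}$. So the statement of the proposition reduces to showing two things on the complementary event: first, that the possible limit points on $\partial D\setminus\R$ are actually hit (not just accumulated at); and second, that $\eta$ is continuous up to the hitting time and can only hit points of $T_p$.

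For the first point I would use Proposition \ref{prop::target}: it asserts the dichotomy that a.s.\ either $\eta$ hits $T_p$ or stays at positive distance from $T_p$. Suppose $\eta$ accumulates at some $x\in\partial D\setminus\R$. Lemma \ref{notouch} then forces $x\in T_p$, since a point outside $T_p$ has a boundary neighborhood on one side where $|u|\geq\lambda$ of the sign that prevents approach. So $\eta$ fails to stay at positive distance from $T_p$, and therefore by Proposition \ref{prop::target} it must hit $T_p$. Combined with Lemma \ref{notouch}, we conclude that $\eta$ can only hit points of $T_p$, which is the characterization of admissible hitting points stated in the proposition.

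For the continuity claim on the event $\{\eta(\tau)\in \partial D\setminus\R\}$, I would invoke the structural information extracted in the proof of Proposition \ref{prop::target}: the only way $\eta$ can approach a connected component $C_k$ is by first agreeing, via Lemma \ref{lem::rev}, with a reverse level line $\eta'$ of $-\Phi-u$ started from some point on $C_k$. That reverse level line has its target on $\R$ (namely on the boundary component containing the starting point $0$ of $\eta$), so it falls under the case of Lemma \ref{lemext} where continuity up to the hitting time is already established. Because $\eta$ and $\eta'$ traverse the common arc in opposite orders, continuity of $\eta'$ transfers to continuity of $\eta$ up to and including time $\tau$.

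The main obstacle I anticipate is the continuity statement at the hitting time on $\partial D\setminus\R$: it is not automatic from the definition of $\tau_0$, and requires the coupling with the reverse level line supplied by Lemma \ref{lem::rev}. Everything else is either a direct quotation of Lemma \ref{lemext} or a straightforward dichotomy argument using Lemma \ref{notouch} and Proposition \ref{prop::target}. Once the continuity is in hand, the proposition follows by combining these facts and identifying $\tau$ with $\tau_0$ on this event (since accumulation at $\partial D\setminus\R$ has been upgraded to hitting).
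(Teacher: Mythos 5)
Your proposal is correct and follows essentially the same route as the paper, which proves Proposition \ref{propext} precisely by combining Lemma \ref{lemext}, Lemma \ref{notouch} and Proposition \ref{prop::target} (whose proof in turn rests on Lemma \ref{lem::rev} for upgrading accumulation to continuous hitting). Your only slight imprecision is attributing the reverse line's terminal continuity to Lemma \ref{lemext}; the needed statement is in fact part of the conclusion of Lemma \ref{lem::rev}, which you already invoke, so the argument stands as written.
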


		As a consequence, we can prove the generalization of Lemma \ref{donotentersimplyconnected} for finitely-connected domains, the only additional input being the continuity up to its stopping time, and the precise description of the hitting points stated in the proposition above.
		
		\begin{lemma}\label{donotenter}
			Let $\eta$ be a generalized level line of a GFF $\Phi + u$ in $D$ as above and $A$ a BTLS of $\Phi$ conditionally independent of $\eta$. Take $z\in D$ and define $O(z)$ the connected component of $D\backslash A$ containing $z$. On the event where on every connected component of $\partial O(z)$ the boundary values of $(h_A+u)\mid _{O(z)}$ are either everywhere $\geq \lambda$ or everywhere $\leq -\lambda$ , we have that a.s. $\eta([0,\infty])\cap O(z)=\emptyset$.
		\end{lemma}
		
		Notice that we allow for the situation where on some boundary components the value is $\geq \lambda$ and on some it is $\leq -\lambda$. One of the key ingredients is Lemma \ref{BPLS} (2), that reads as follows: if $A, B$ are conditionally independent local sets, then the boundary values on $A \cup B$ do not change at any point that is of positive distance of the boundary of $\partial A \cap \partial B$.
		
		\begin{proof}
			Define as $E^z$ the event where on any connected component of $\partial O(z)$ the boundary values of $(h_A+u)\mid _{O(z)}$ are either everywhere $\geq \lambda$ or everywhere $\leq -\lambda$. Suppose for contradiction that on the event $E^z$, $\eta([0,\infty]) \cap (O(z))\neq \emptyset$ with positive probability. 
			
			Take $\epsilon >0$ and $\tau:=\tau(\epsilon)$ the first time such that $\eta(\tau)\in O(z)$ and is at distance $\epsilon>0$ of $\partial O(z)$. Note that under our assumption for small enough $\epsilon$, the event $\{\tau<\infty\}\cap E^z$ has non-zero probability. One can verify that $\bar \eta(t):= \eta(t+\tau)$ is a generalized level line of $\Phi^{A\cup \eta_\tau}+( \Phi_{A\cup \eta_\tau}+u)$. As the generalized level-line is a simple continuous curve, it stays at a positive distance of $\partial O(z) \cap \eta_\tau$. Additionally, from Lemma \ref{BPLS} (2) it follows that the boundary values  of $( \Phi_{A\cup \eta_{\tau}}+u)$  are $\geq \lambda$ or $\leq -\lambda$ around any point on $\partial O(z)$ that is at positive distance from $\eta_\tau$. Thus, by  Lemma  16 of \cite{ASW}, $\bar \eta$ cannot exit $O(z)$ through any point that is at positive distance of $\eta_\tau$. But, by Proposition \ref{propext}, we have that $\bar \eta(\infty)$ ends at a point on $\partial D$ that is different from any of its previously visited points, staying continuous up to and at the moment it hits the boundary, giving a contradiction.
		\end{proof}
		
		A particularly useful corollary is the following.
		
		\begin{cor}
			Let $A$ be a local set satisfying condition (3) of Definition \ref{BTLSCND} and $\eta$ a generalized level line, both coupled with the same GFF. Then, $\eta$ cannot enter a connected component of $D\backslash A$ inside which $h_A$ is smaller than the boundary values of $h_\eta$.
		\end{cor}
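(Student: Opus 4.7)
The plan is to derive the corollary as an essentially immediate consequence of Lemma \ref{donotenter}, once one unpacks the comparison ``$h_A$ smaller than the boundary values of $h_\eta$''. The boundary values of a generalized level line $\eta$ on its two sides are $\pm\lambda$ (up to the harmonic extension $u$ already incorporated in $h_\eta$), so the natural reading of the hypothesis is that throughout the component $O$ of $D\setminus A$ under consideration, the harmonic function $h_A+u$ is bounded above by $-\lambda$ (the symmetric case, when $h_A+u$ exceeds the larger value $\lambda$, is treated identically).

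The execution is then short. By the maximum principle, if $h_A+u\le-\lambda$ on all of $O$, then this pointwise bound also holds on $\partial O$, and therefore on each connected component of $\partial O$ the boundary values of $(h_A+u)|_O$ are everywhere $\le -\lambda$. This is exactly the standing hypothesis of Lemma \ref{donotenter}, and the conclusion $\eta([0,\infty])\cap O = \emptyset$ follows directly. Should one prefer to assume only that the bound holds on $\partial O$ (and not \emph{a priori} throughout $O$), the maximum principle gives the interior version for free, so the two formulations coincide.

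The statement is really a convenient reformulation rather than a new result, so there is no genuine obstacle; the only point requiring care is the notational match-up between the corollary (phrased with the pair $h_A, h_\eta$) and Lemma \ref{donotenter} (phrased with $(h_A+u)|_O$ and the thresholds $\pm\lambda$). Once the reader is told to identify ``the boundary values of $h_\eta$'' with $\pm\lambda$ plus the contribution of $u$, no further work is needed.
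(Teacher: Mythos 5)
Your argument coincides with the paper's (implicit) one: the corollary is stated without proof precisely because it is a direct reformulation of Lemma \ref{donotenter}, and your unpacking of the hypothesis — $h_A+u\le-\lambda$ in the component, equivalently boundary values $\le-\lambda$ on every component of its boundary via the maximum principle, with the case $\ge\lambda$ symmetric — is exactly the intended reduction. Just keep in mind that the hypotheses of Lemma \ref{donotenter} ($A$ a BTLS, conditionally independent of $\eta$) are tacitly carried over to the corollary, so they should be assumed when invoking it.
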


		\subsection{Two-valued local sets in n-connected domains}\label{sec:: cons TVS}
		
		In order to study the FPS in n-connected domains, it is useful to first extend the definition of two-valued sets to this setting. In this respect, let $u$ be a bounded harmonic function with piece-wise constant boundary values. Recall that by $u^{-a,b}$ we denoted the part of $\partial D$ where the values of $u$ are outside of $[-a,b]$.The two-valued set $\A^{u}_{-a,b}$ in n-connected domains is then a BTLS such that in each connected component $O$ of $D\backslash \A_{-a,b}^u$ the bounded harmonic function $h_{\A_{-a,b}^u}$ satisfies the following \hypertarget{tvs} {conditions:} 
		\begin{enumerate}
			\item[(\twonotes)] On every boundary component of $\partial O\backslash u^{-a,b}$ the harmonic function $h_{\A_{-a,b}^u}+u$ takes constant value $a$ or $-b$, and on $\partial O \cap u^{-a,b}$ agrees with $u$. Additionally, we require that in every connected component of $\partial O$  either $h_{\A_{-a,b}^u}+u\leq -a$ or $h_{\A_{-a,b}^u}+u\geq b$ holds. 
		\end{enumerate}
		Note that in particular when $-a\leq \inf u \leq \sup u\leq b$,  the condition \hyperlink{tvs}{(\twonotes)} simplifies to $h_{\A_{-a,b}^u}+u$ takes constant values $-a$ or $b$ in every connected component of $\partial O$. 
		Now, we announce the fundamental proposition for two-valued sets in n-connected domains.
		\begin {thm}\label {two-value gen f}
		Let $D$ be an n-connected domain and $u$ be a bounded harmonic function that has piecewise constant boundary values. Consider $-a < 0 < b$, with $a+b\geq 2\lambda$. Then, $\A^u_{-a,b}$ exist and are non-empty if $[\min(u),\max(u)] \cap (-a,b) \neq \emptyset$. 
		Moreover, $\A^u_{-a,b}$ satisfy the following properties: 
		\begin{itemize} 
			\item They are unique in the sense that if $A'$ is another BTLS coupled with the same $\Phi$,  such $A'$ satisfies \hyperlink{tvs}{(\twonotes)} almost surely, $\A_{-a,b}^u=A'$.
			\item They are all measurable functions of the GFF $\Phi$ that they are coupled with.
			\item They are monotonic in the following sense: if $[a,b] \subset [a', b']$ and $-a < 0 < b$ with $b+a \ge 2\lambda$,  then almost surely, $\A^u_{-a,b} \subset \A^u_{-a', b'}$. 
			\item For each $\A^u_{-a,b}$, there are at most $n$ connected components of $D\backslash \A^u_{-a,b}$ which are not simply connected.
		\end{itemize} 
		\end {thm}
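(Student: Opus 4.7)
The plan is to closely follow the strategy of Proposition \ref{cledesc3}, using the new tool that generalized level lines in finitely connected domains are continuous up to their stopping time (Proposition \ref{propext}), together with Lemma \ref{donotenter} to rule out level lines entering ``wrong'' components. The main new ingredient beyond the simply connected case will be the topological bookkeeping needed to handle multiple boundary components, and to extract the bound on non-simply-connected components.

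\textbf{Existence.} I would first reduce to the case $a+b=2\lambda$ by a standard iteration argument inside components (as in Section 6.2 of \cite{ASW}), and then further to $a=b=\lambda$ after a constant shift. For the construction of $\A_{-\lambda,\lambda}^u$, partition $\partial D$ into a minimal finite collection of arcs on which $u\geq\lambda$, $u\leq-\lambda$, or $u\in(-\lambda,\lambda)$; call the total number of arcs on all boundary components the \emph{boundary partition size} $N$. I would induct on the pair $(n,N)$ (with lex order, $n$ the number of boundary components). At each step, choose two admissible start/target points (on the same or different boundary components) and draw the generalized level line between them. By Proposition \ref{propext} it is continuous up to its stopping time $\tau$, and hits a point from which one could start a level line of $-\Phi-u$. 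After drawing this curve, each connected component of the complement either reduces the boundary partition size, or in case the curve joined two boundary components, reduces $n$. The simply connected pieces are handled by Proposition \ref{cledesc3}; for the remaining multiply connected pieces we apply the induction hypothesis. Exactly as in the simply connected argument, for any fixed interior point $z$ the martingale $h_{A_t}(z)$ (Proposition \ref{BMinterior}) combined with the uniformly positive harmonic measure increments ensures that after finitely many level lines the component of $z$ satisfies \hyperlink{tvs}{(\twonotes)} and hence is included in the final set.

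\textbf{Uniqueness and measurability.} Let $A'$ be any other BTLS satisfying \hyperlink{tvs}{(\twonotes)}. To show $\A^u_{-a,b}\subseteq A'$, I would couple $A'$ to be conditionally independent of each level line used in the construction and apply Lemma \ref{donotenter}: in every component $O$ of $D\setminus A'$, the boundary values of $h_{A'}+u$ are everywhere $\geq b$ or everywhere $\leq -a$, so no generalized level line of $\Phi+u$ can enter $O$, which forces the entire iterative construction to stay outside $O$. For the reverse inclusion, I would use the identity $A'=A'\cup\A^u_{-a,b}$ and apply Lemma 9 of \cite{ASW} with the shifted function $u+(a-b)/2$, exactly as in the simply connected case, noting that the bound $|h_{\A^u_{-a,b}}+u+(a-b)/2|\geq|h_{A'}+u+(a-b)/2|$ holds componentwise by \hyperlink{tvs}{(\twonotes)}. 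Measurability with respect to $\Phi$ is then an immediate consequence of uniqueness together with the measurability of each generalized level line used in the construction.

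\textbf{Monotonicity.} This follows directly from the construction and Lemma \ref{donotenter}: one only uses level lines of $\Phi+u$ with boundary values in $[-a,b]\subset[-a',b']$, and in each component of $D\setminus\A^u_{-a',b'}$ the boundary values of $h_{\A^u_{-a',b'}}+u$ are outside $(-a',b')$, hence outside $(-a,b)$, so Lemma \ref{donotenter} forces the constructive level lines (and hence $\A^u_{-a,b}$) to avoid those components.

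\textbf{Bound on non-simply-connected components.} This is the genuinely new topological claim. The idea is that each non-simply-connected component $O$ of $D\setminus\A^u_{-a,b}$ must ``surround'' at least one of the $n$ connected components of $\mathbb{C}\setminus D$. More precisely, I would track the construction: at each step, the union of level lines drawn so far together with $\partial D$ is a closed connected-by-components set, and drawing a new level line either (i) splits a component into two (leaving the total boundary-component count unchanged) or (ii) reduces the number of boundary components of some piece by joining two of them. A straightforward induction shows that, writing $n_j$ for the number of boundary components of each resulting piece $O_j$ and $k$ for the number of pieces, we have $\sum_j(n_j-1)\leq n-1$, so the number of $j$ with $n_j\geq 2$ is at most $n-1\leq n$. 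Passing to the limit along the construction preserves this bound, yielding the required conclusion. I expect the most delicate point to be making this topological bookkeeping rigorous in the limit of infinitely many iterations, which I would handle by noting that each point $z$ lies in a component determined after finitely many level lines.
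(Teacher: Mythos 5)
Your existence construction (reduction to $\A^u_{-\lambda,\lambda}$, double induction on the number of boundary components and the boundary partition size, continuity of generalized level lines from Proposition \ref{propext}, and the bounded-martingale argument for $h_{A_t}(z)$) matches the paper's proof, as do the measurability claim and, in spirit, the topological bookkeeping for the bound on non-simply-connected components (the paper records this as a remark ``due to the construction'').

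The genuine gap is in your uniqueness argument. You transplant the simply connected proof: show $\A^u_{-a,b}\subseteq A'$ via Lemma \ref{donotenter}, then conclude $A'=\A^u_{-a,b}$ from Lemma 9 of \cite{ASW} using the claim that $|h_{\A^u_{-a,b}}+u+(a-b)/2|\geq|h_{A'}+u+(a-b)/2|$ ``holds componentwise by (\twonotes)''. In an $n$-connected domain this inequality fails precisely where the new difficulties live: condition (\twonotes) allows a non-simply-connected component $O$ of $D\backslash\A^u_{-a,b}$ whose different boundary components carry different values, say $-a$ on one and $b$ on another. Inside such an $O$ the harmonic function $h_{\A^u_{-a,b}}+u$ interpolates through all of $(-a,b)$, so its recentered absolute value is not bounded below by $(a+b)/2$ and there is no reason it dominates $|h_{A'}+u+(a-b)/2|$; the Lemma 9 mechanism (which certifies maximality of the harmonic function componentwise) breaks down. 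This is exactly why the paper does \emph{not} prove uniqueness at this stage: it defers it until after the first passage sets are constructed and shown to be unique, proves Proposition \ref{EaEbAab} identifying $\A^u_{-a,b}$ with the union of the boundary-connected components of $\A^u_{-a}\cap\AB^u_{b}$ (using uniqueness of the FPS and Corollary \ref{Not touch E}), and then shows that any competitor $A'$ satisfying (\twonotes) must itself be connected to the boundary and must coincide with that same intersection set. Note also that your argument never rules out components of $A'$ that are not connected to $\partial D$ (the BTLS definition does not forbid them), which is a separate step the paper handles through the FPS. Your monotonicity sketch via Lemma \ref{donotenter} is plausible as a direct argument, but in the paper monotonicity is deduced from uniqueness, so it too ultimately rests on the FPS route that your proposal is missing.
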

		
		\begin{figure}
			\includegraphics[width=0.5 \textwidth]{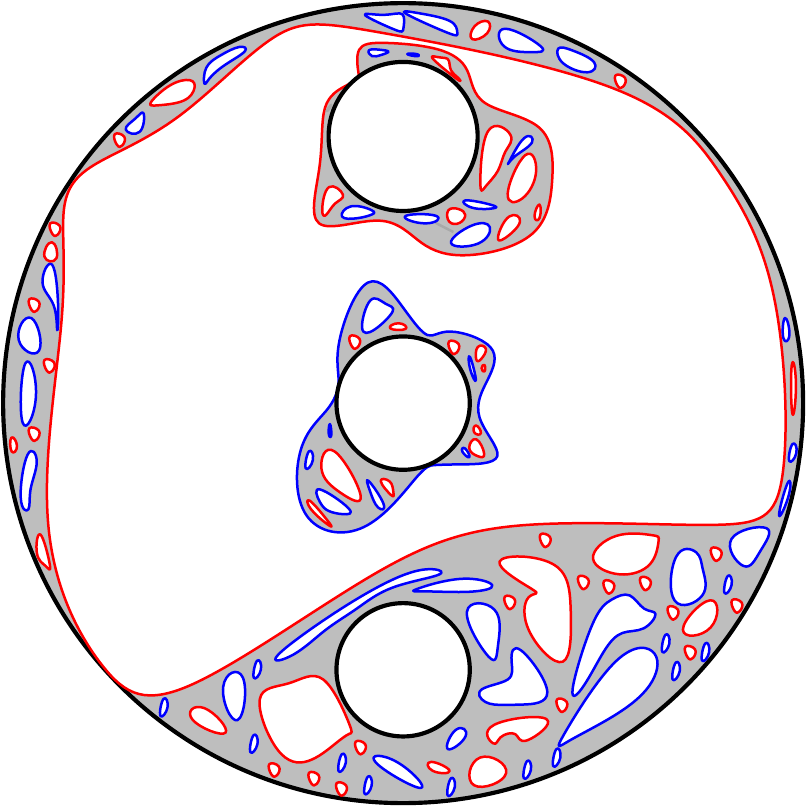}
			\caption{Representation of a TVS in a 4-connected domain. The boundaries of the domain are black, the boundaries of the TVS with boundary values $-a$ are coloured red and those with values $b$ are coloured blue. }
		\end{figure}
		We will now construct an instance of this set and prove the measurability of this construction. To prove the uniqueness, we will first in fact prove the uniqueness of the FPS in the next section; it will be a consequence of Lemma \ref{EaEbAab}. Finally, monotonicity follows from uniqueness as in the proof of Proposition \ref{cledesc3} above. Until having proved uniqueness, we mean by $\A^u_{-a,b}$ always the set constructed just below.
		
		The proof of existence is in its spirit very similar to the proof of Proposition \ref{cledesc3}. However, we do need extra arguments to treat the multiply-connected setting.
		
		\begin{proof}
			
			\textsc{Construction} Again, we can assume that we are in the non-trivial case, in other words that $[\min(u),\max(u)] \cap (-a,b) \neq \emptyset$. As in the proof of Proposition \ref{cledesc3}, it suffices to construct $\A_{-\lambda,\lambda}^u$. 
			
			This time we need a double induction. Let $N$ be the number of boundary components and as in proof of Proposition \ref{cledesc3}. We take the minimal partition of any boundary component $\B_i$ as $\B_i = \bigcup_{k=1}^{m_i} \B_i^k$,  such that each $\B_i^k$ is a finite segment, throughout each $\B_i^k$ the function $u$ is either larger or equal to $\lambda$, smaller or equal to $-\lambda$, or contained in 
			$(-\lambda, \lambda)$. Recall that we called $m_i$ boundary partition size of $\mathcal{B}_i$. We will now use induction on pairs $(N, \max_{i \leq N} m_i)$.
			
			The case $(1,m)$ is given by Proposition \ref{cledesc3}. The key case is $(N,2)$, so let us prove this by inducting on the number of boundary components $N$. 
			
			On any $\mathcal{B}_i$ satisfying $|u| \geq \lambda$, draw a generalized level line starting from one point of $\partial \B_i^1$ to the other. If it hits some other boundary component, we stop it and we have reduced the number of boundary components in each of the domains cut out and we can use induction hypothesis. Otherwise, by Proposition \ref{propext}, it ends at the other point of $\partial \B_i^1$ and reduces the boundary partition size of this boundary component to $1$. As in this case the level line is continuous up to its hitting point, we know that it stays at a positive distance of other boundary components. Hence we can suppose that the only boundary components with boundary partition size equal to 2 have one part with $u \in (-\lambda, \lambda)$. 
			
			Now, pick any such component, say, $\mathcal{B}_1$ and suppose $u$ is larger than $\lambda$ on $\B_1^1$. Then, we can start a generalized level line from one points on $\partial \B_1^1$ towards the other one. If the generalized level line hits some other component or cuts the domain into subdomains with strictly less than $N$ boundary components,  we can use induction hypothesis. Otherwise, we have finished all components $O$ such that $\partial O \cap u^{-a,b}$ is non-zero. It now remains to see that all `inner components' are also finished in finite time. This follows similarly to the proof of Proposition \ref{cledesc3} by using the fact that $h_{A_t}(z)$ is a bounded martingale and converges almost surely.
			
			Suppose now $(N,n)$ satisfy $N \geq 2$, $m \geq 3$. Then we can similarly to the proof of Proposition \ref{cledesc3} pick a generalized level line on some boundary component with boundary partition size bigger than $2$ such that by drawing it we either reduce the boundary partition size to $2$ for any subdomain with $N \geq 2$, or reduce the number of boundary components in each subdomain. Using a finite number of such lines we have reduced to either $(N,2)$, or $(N',3)$ with $N'$ smaller than 3.
			
			It remains to treat the case $(N,1)$, if all components satisfy $|u| \geq \lambda$, we are done. Otherwise, in any component with $u \in (-\lambda, \lambda)$ we can start a level line from any point for some short amount of time. This will either reduce the setting to $(N,3)$, $(N,2)$ or reduce the number of boundary components.

		\end{proof}
		Examining closely the proof the following holds:
		\begin{enumerate}[(i)]
			\item In the construction we only need to use level lines whose boundary values are in $[-a,b]$.
			\item For a fixed point $z\in D$, a.s. we only need a finite number of level lines to construct the connected component of $D\backslash \A^u_{-a,b}$ containing $z$.
			\item As none of the level lines is started inside $u^{-a,b}$ nor can touch $u^{-a,b}$, any connected component of $u^{-a,b}$ belongs entirely to the boundary of a single connected component of $D \backslash A_{-a,b}^u$. In particular each connected component $O$ of the complement of $\A_{-a,b}$ with $\partial O \cap \partial D \neq \emptyset$ has only finitely many intersection points and by Lemma 10 in \cite{ASW} we can assign them any values, in particular those that $u$ already takes on $\partial D$.
			\item Due to the construction $D\backslash \A_{-a,b}^u$ has at most $n$ non-simply connected components.
		\end{enumerate}
		
		\begin{proof}	\textsc{Measurability} of the sets $\A_{-a,b}^u$ with respect to the GFF just follows from the measurability of the level lines used in the construction.  \end{proof}

 
 \section{First passage sets of the 2D continuum GFF}\label{sec:: FPS}
 
 The aim of this section is to define the first passage sets of the 2D continuum GFF, prove its characterization and properties. We first state an axiomatic definition of the continuum FPS inspired by its heuristic interpretation:
 i.e. the FPS stopped at value $-a$ is given by all points in $\overline{D}$ that can be connected to the boundary via a path on which the values of the GFF do not decrease below $-a$. From this description, it is clear that it induces a Markovian decomposition of the GFF: the field outside of it is just a GFF with boundary condition equal to $-a$. In other words, the FPS is a local set, that we denote by $\A_{-a}$, and its harmonic function has to satisfy $h_{\A_{-a}}=-a$ as we stop at value $-a$. Finally, the question is how to translate the property for the values, as the GFF is not defined pointwise. The right way is to ask the distribution  $\Phi_{\A_{-a}}+a$ to be a positive measure.
 
 The set-up is again as follows: $D$ is a finitely-connected domain where no component is a single point and $u$ is a bounded harmonic function with piecewise constant boundary conditions. Here is the definition for general boundary values.
 
 \begin{defn}[First passage sets]\label{Def ES}
 	Let $a\in \R$ and $\Phi$ be a GFF in the multiple-connected domain $D$. We define the first passage set of  $\Phi+u$ of level $-a$ as the local set of $\Phi$ such that $\partial D \subseteq \A^u_{-a}$, with the following properties:
 	\begin{enumerate}
 		\item Inside each connected component $O$ of $D\backslash \A_{-a}^u$, the harmonic function $h_{\A_{-a}^u}+u$ is equal to $-a$ on $\partial O \backslash \partial D$ and equal to $u$ on $\partial O \cap \partial D$ in such a way that $h_{\A_{-a}^u}+u \leq -a$. 		
 		\item $\Phi_{\A^u_{-a}}-h_{\A_{-a}^u}\geq 0$, i.e., for any smooth positive test function $f$ we have 
 		$(\Phi_{\A^u_{-a}}-h_{\A_{-a}^u},f) \geq 0$.
 		\item Additionally, for any connected component $O$ of $D\backslash \A_{-a}^u$, for any $\eps > 0$ and $z \in \partial O \cap \partial D $ and for all sufficiently small open ball $U_z$ around $z$,  we have that a.s. \[h_{\A_{-a}^u}(z)+u(z) \geq \min\{ -a,\inf_{w \in U_z\cap \overline O} u(w)\} - \eps.\]  
 		\item Almost surely, $A$ contains no isolated points and each connected component of $A$ that does not intersect $\partial D$ has a neighbourhood that does not intersect any other connected component of A.
 	\end{enumerate}	
 \end{defn}
In Theorem \ref{FPSthm}, we show that there exists a unique set satisfying the conditions of Definition \ref{Def ES}.
 
 Notice that if $u\geq -a$, then the conditions (1) and (2) correspond more precisely to the heuristic and are equivalent to
 \begin{enumerate}
 	\item [(1)'] $h_{\A_{-a}^u}+u=-a$ in $D\backslash \A_{-a}^u$.
 	\item [(2)'] $\Phi_{\A_{-a}^u}+u+a\geq 0$.	
 \end{enumerate}
 
 Moreover, in this case the technical condition (3) is not necessary. This condition roughly says that nothing odd can happen at boundary values that we have not determined: those on the intersection $\partial \A_{-a}^u$ and $\partial D$. This condition enters in the case $u < -a$: 
 in \cite{ALS2} we want to take the limit of the FPS on metric graphs and it comes out that it is easier not to prescribe the value of the harmonic function at the intersection of $\partial D$ and $\partial\A_{-a}^u$.
 Notice that in contrast we did prescribe the values at intersection points for two-valued sets.
 \begin{rem}\label{bw}
 	One could similarly define excursions sets in the other direction, i.e. stopping the sets from above. We denote these sets by $\AB_b^u$. In this case the definition goes the same way except that (2) should now be replaced by $\Phi_{\AB_{b}^u}\leq h_{\AB_b^u}$ and (3) by\[h_{\AB_{b}^u}(z)+u(z) \leq \max\{ b,\sup_{w \in U_z\cap \overline O} u(w)\} + \eps.\] 
 	Let us also remark that in \cite{APS}  the sets $\AB_b$ are unluckily denoted by $\A_b$, and that they can be obtained as $\A_{-b}^{-u}$ of $-\Phi$.
 \end{rem}
 
 \begin{thm}\label{FPSthm}
 	The FPS $\A_{-a}^u$ of level $-a$ exists and is unique in the sense that if $(\Phi, A')$ is an FPS of level $-a$ for the GFF with boundary condition $u$, then $A'= \A_{-a}^u$ a.s.
 \end{thm}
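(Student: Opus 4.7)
The plan is to construct the FPS as a monotone limit of two-valued local sets, verify the three axioms directly, and then argue uniqueness via a combination of a Markov-type argument (to get one inclusion) and a mean-zero argument on positive measures (to get the other).

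\emph{Existence.} I would set $\A_{-a}^u := \overline{\bigcup_{n \ge 1} \A_{-a,n}^u}$, where $\A_{-a,n}^u$ are the TVS from Theorem \ref{two-value gen f}; they are nested by the monotonicity assertion of that theorem, and $D\setminus \A_{-a,n}^u$ has at most $N$ non-simply-connected components, where $N$ is the number of boundary components of $D$. This gives a uniform bound on the number of connected components of $\A_{-a,n}^u \cup \partial D$, so Lemma \ref{BPLS}(3) applies: $\A_{-a}^u$ is a local set and $\Phi_{\A_{-a,n}^u} \to \Phi_{\A_{-a}^u}$ in probability.

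\emph{Checking the axioms.} For axiom (1), fix $z \in D$: by Proposition \ref{BMinterior}, as $n$ grows, $h_{\A_{-a,n}^u}(z)+u(z)$ is (up to time change) a Brownian motion stopped upon exiting $[-a,n]$, so almost surely it exits at $-a$ as $n \to \infty$, giving $h_{\A_{-a}^u}(z)+u(z) = -a$. The bound $h_{\A_{-a}^u}+u \le -a$ is inherited from each TVS approximation by the maximum principle applied to the boundary values in $[-a,n]$. Axiom (2) follows because each $\A_{-a,n}^u$ is thin with $h_{\A_{-a,n}^u}+u \ge -a$, so $\Phi_{\A_{-a,n}^u}+u+a \ge 0$; passing to the limit in probability in $H^{-1}$ and extracting a.s.-convergent subsequences against a countable dense family of positive test functions yields positivity for $\A_{-a}^u$. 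Axiom (3) is inherited from the explicit description of TVS near $\partial D$ recorded in the remarks after Theorem \ref{two-value gen f}.

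\emph{Uniqueness.} Let $A'$ be any other FPS. The inclusion $\A_{-a}^u \subseteq A'$ is established by showing $\A_{-a,n}^u \cap O = \emptyset$ for every connected component $O$ of $D\setminus A'$ and every $n$. Using the measurability of TVS (Theorem \ref{two-value gen f}) together with Lemma \ref{BPLS}(2), $\A_{-a,n}^u \cap \overline{O}$ is, conditionally on $A'$, essentially a TVS of the zero-boundary GFF $\Phi^{A'}|_O$ with boundary data $h_{A'}+u \le -a$ and parameters $(-a,n)$. Since the non-emptiness criterion $[\min(h_{A'}+u),\max(h_{A'}+u)] \cap (-a,n) \neq \emptyset$ of Theorem \ref{two-value gen f} fails, the set is empty. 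Sending $n\to\infty$ gives $\A_{-a}^u \subseteq A'$. For the reverse inclusion, Lemma \ref{BPLS}(2) applied to $\A_{-a}^u \subseteq A'$ shows that $A'\setminus \A_{-a}^u$ is a local set of the zero-boundary GFF $\Phi^{\A_{-a}^u}$ with harmonic function identically $0$, and with $(\Phi^{\A_{-a}^u})_{A'\setminus \A_{-a}^u} = \Phi_{A'} - \Phi_{\A_{-a}^u}$. The positivity axiom applied to both $A'$ and $\A_{-a}^u$ shows that this distribution is the difference of two positive measures supported on $A'$ and $\A_{-a}^u$ respectively; restricted to $A'\setminus \A_{-a}^u$ it is a positive measure; but it is also a local-set decomposition of a centered GFF, hence has zero expectation, forcing it to vanish. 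Together with axiom (3), which prevents non-trivial thin extensions with zero harmonic function and zero extra mass, this yields $A' = \A_{-a}^u$ a.s.

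\emph{Main obstacle.} The delicate step is the final one: passing from vanishing of the distribution $(\Phi^{\A_{-a}^u})_{A'\setminus \A_{-a}^u}$ to actual emptiness of $A'\setminus \A_{-a}^u$, since a priori a non-empty thin local set could have zero harmonic function and zero extra mass. Condition (3) in the FPS definition is designed precisely to close this gap by imposing boundary regularity that rules out such pathological extensions.
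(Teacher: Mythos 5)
Your existence argument is essentially the paper's own (Proposition \ref{Existence}): the same monotone union of TVS, the same use of Lemma \ref{BPLS}, and a martingale/Brownian argument for axiom (1). The uniqueness half, however, has two genuine gaps. For the inclusion $\A_{-a}^u\subseteq A'$, your key claim --- that conditionally on $A'$, the set $\A_{-a,n}^u\cap\overline O$ is ``essentially a TVS'' of $\Phi^{A'}$ restricted to a component $O$ of $D\setminus A'$ --- is not justified by measurability plus Lemma \ref{BPLS}(2). That lemma only tells you that $\A_{-a,n}^u\setminus A'$ is a local set of $\Phi^{A'}$; it does not give you the TVS axioms inside $O$ (control of the harmonic function at the points where $\A_{-a,n}^u$ meets $\partial O$, the BTLS property relative to $O$, piecewise constant data), and to pass from ``satisfies the TVS conditions with boundary data outside $(-a,n)$'' to ``is empty'' you need the \emph{uniqueness} of TVS in the possibly multiply connected $O$ --- which in this paper is only proved afterwards, as a consequence of FPS uniqueness via Proposition \ref{EaEbAab}, so within the paper's logic your route is circular. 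The paper avoids all of this by arguing directly on the generalized level lines used in the construction: each such line has boundary values in $[-a,\infty)$ and is a measurable function of $\Phi$, hence conditionally independent of $A'$ given $\Phi$, and Lemma \ref{donotenter} forbids it from entering a component of $D\setminus A'$ on whose boundary $h_{A'}+u\leq -a$.

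For the reverse inclusion the decisive step is also missing. You correctly reduce to: $B=A'\setminus\A_{-a}^u$ is a local set of $\Phi^{\A_{-a}^u}$ whose associated distribution is a positive measure with zero expectation, hence vanishes (note, incidentally, that its harmonic part is only $\geq 0$, not identically $0$, when $u<-a$ on part of $\partial D$). But you then invoke axiom (3) to rule out a nonempty $B$ with vanishing $\Phi$-charge, and axiom (3) does no such thing: it is a boundary-regularity condition at $\partial D\cap\partial\A_{-a}^u$, relevant when $u<-a$, and says nothing about interior pieces of $A'$ carrying zero harmonic function and zero measure. The gap is closed in the paper (Proposition \ref{Uniqueness}) by a Green's function comparison: if a local set $A$ satisfies $\Phi_A\geq 0$, then $(\Phi_A,1)=0$ a.s., so
\begin{equation*}
\iint_{D\times D}G_D(z,w)\,dz\,dw=\E\bigl[(\Phi,1)^2\bigr]=\E\Bigl[\iint_{(D\backslash A)\times(D\backslash A)}G_{D\backslash A}(z,w)\,dz\,dw\Bigr],
\end{equation*}
and since $G_{D\backslash A}\leq G_D$ pointwise this forces $G_{D\backslash A}=G_D$ a.e., i.e.\ $A$ is polar; applied to $B$ this is what makes $A'=\A_{-a}^u$. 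Without this (or an equivalent) argument, your proof of uniqueness is incomplete, and the ``main obstacle'' you identify is real but is resolved by polarity, not by condition (3).
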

 
 We start from the existence of the FPS. Here, we provide a purely continuum construction using the two-valued sets $\A_{-a,b}$. Another approach would be to consider the scaling limit of the metric graph FPS when the mesh size goes to zero, as is done in 
 \cite{ALS2}.
 
 \begin{prop} \label{Existence}
 	For $n > 2\lambda$, denote by $\A_{-a,n}^u$ the two-valued local sets coupled with the GFF $\Phi$ in the domain $D$. Then for every $a \geq 0$ the local set $\A^u_{-a}:=\overline{ \cup_{n\in \N}\A_{-a,n}^u}$ is an FPS of level $-a$. 
 \end{prop}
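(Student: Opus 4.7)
The plan is to verify, for $A := \overline{\bigcup_{n \in \N} \A_{-a,n}^u}$, each of the three conditions of Definition \ref{Def ES}, writing $A_n := \A_{-a,n}^u$. First, we need to know $A$ is a local set and that $\Phi_{A_n}$ converges. By the inductive construction in Section \ref{sec:: cons TVS}, every connected component of $A_n$ is attached to $\partial D$, so the number of components of $A_n \cup \partial D = A_n$ is bounded by the fixed number of boundary components of $D$. Lemma \ref{BPLS} (3) then applies: $A$ is a local set coupled with $\Phi$ and $\Phi_{A_n} \to \Phi_A$ in probability, so in particular $(\Phi_{A_n},f) \to (\Phi_A, f)$ in probability for any smooth compactly supported test function $f$.

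For condition (1), fix a component $O$ of $D \setminus A$ and $z \in O$. For each $n$ there is a component $O_n$ of $D \setminus A_n$ with $O \subseteq O_n$. By the TVS property \hyperlink{tvs}{(\twonotes)}, on each connected component of $\partial O_n$ the function $h_{A_n} + u$ takes values either $\leq -a$ (Type 1) or $\geq n$ (Type 2). Proposition \ref{BMinterior} gives that $h_{A_n}(z)$ is a bounded (time-changed) Brownian motion, hence converges a.s. to a finite limit; combined with Lemma \ref{donotenter}, this rules out $O_n$ containing a Type 2 boundary component for $n$ large, so $h_{A_n} + u \leq -a$ on $O_n$. Passing to the limit using harmonic convergence in $O \subseteq O_n$ gives $h_A + u \leq -a$ in $O$, and the boundary values $-a$ on $\partial A \setminus \partial D$ respectively $u$ on $\partial D \setminus A$ follow from the corresponding boundary data of the $h_{A_n}$.

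For condition (2), we exploit that each $A_n$ is thin, so $(\Phi_{A_n}, f) = \int h_{A_n}(z) f(z)\,dz$. Applying the minimum principle to the harmonic function $h_{A_n} + u$ in every component of $D \setminus A_n$, whose boundary values lie in $\{-a\} \cup \{n\} \cup u(\partial D)$, gives the $n$-independent lower bound $h_{A_n}(z) \geq \min(-a, \inf u) - \sup u =: M$ for a.e.\ $z$. Since $h_{A_n}(z) \to h_A(z)$ a.s.\ for each fixed $z$ by the martingale convergence from Proposition \ref{BMinterior}, Fatou's lemma applied to the non-negative integrand $(h_{A_n} - M) f$ yields
\[
\liminf_n \int (h_{A_n}(z) - M) f(z)\,dz \;\geq\; \int (h_A(z) - M) f(z)\,dz \quad \text{a.s.}
\]
Combined with $(\Phi_{A_n}, f) = \int h_{A_n} f \to (\Phi_A, f)$ along a subsequence a.s., we get $(\Phi_A, f) \geq (h_A, f)$ for every positive $f$, i.e.\ condition (2).

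For condition (3), we must control $h_A(z) + u(z)$ at $z \in \partial O \cap \partial D \cap \partial A$; since the boundary data of $h_{A_n} + u$ in a small ball $U_z$ is either $-a$ (on $\partial A_n \setminus \partial D$) or $u$ (on $\partial D$), the minimum principle in the intersection $U_z \cap O_n$ gives $h_{A_n} + u \geq \min(-a, \inf_{U_z} u) - \eps/2$ there, and the bound persists in the limit using the convergence of $h_{A_n}$ to $h_A$. The main technical obstacle I expect is the argument that Type 2 components truly vanish in the limit in the multiply-connected setting, where components can change topology as $n$ grows; this is where Lemma \ref{donotenter} combined with the martingale behavior of $h_{A_n}(z)$ is essential, and it is also the point where the general boundary condition $u$ (especially when $u < -a$ on part of $\partial D$) introduces the need to distinguish between components touching $u^{-a,n}$ and those that do not.
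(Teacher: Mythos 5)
Your proposal is correct and follows essentially the same route as the paper: $A$ is a local set with $\Phi_{A_n}\to\Phi_A$ via Lemma \ref{BPLS}(3), condition (1) comes from the a.s.\ convergence of the lower-bounded martingale $h_{\A_{-a,n}^u}(z)$ forcing each point eventually into a component not taking the value $n$, condition (2) from passing the finite-$n$ positivity to the limit, and condition (3) from the prescribed values of the TVS at the intersection with $\partial D$. The only cosmetic differences are that you use a uniform lower bound plus Fatou where the paper directly compares $(\Phi_{\A_{-a,n}^u}-h_{\A_{-a}^u},f)\geq 0$, and your appeal to Lemma \ref{donotenter} in step (1) is not really needed.
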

 \begin{proof}
 	
 	Let $\A_{-a}^u$ be as in the statement. Then $\A_{-a}^u$ is the closed union of nested measurable local sets so it is a measurable local set: it is a local set by Lemma \ref{BPLS} and measurable as a limit of measurable functions. 
 	
 	We first prove the condition (1) of the Definition \ref{Def ES}. Take a countable dense set in $D$, $(z_i)_{i\in \N}$, and note that almost surely for all $i\in \N$,  $z_i\notin \A^u_{-a}$. Consider $n > \sup u$. 	It suffices to show that for any $z_i$, there will be some finite $n$ such that the component of the complement of $\A_{-a,n}^u$ containing $z_i$ does not take the value $n$. Indeed, when this happens, then by the definition and uniqueness of two-valued sets above, it would take a value as described in (1) for all $\tilde n \geq n$. 
 	
 	Now, for $n > 2\lambda \vee \sup u$ the process $h_{\A_{-a,n}^u}(z_i)$ is a lower bounded martingale in $n$. Indeed, it is lower bounded by $-a \wedge \inf u$ and it is a martingale as we can construct the local set $\A_{-a,n+1}^u$ by first constructing $\A_{-a,n}^u$ and then constructing $\A_{-a-n,1}$ inside each connected component of $D \backslash \A_{-a,n}^u$ where the boundary value is equal to $n$. In particular it converges almost surely. It can, however, only converge when for some $n$ it belongs to the component of the complement of $\A_{-a,n}^u$ not taking the value $n$. Hence we deduce the condition (1).
 	
 	The condition (3) just comes from the fact that the value at the intersection points is prescribed by the definition of two-valued sets and it satisfies the appropriate condition.
 	
 	It remains to prove (2), i.e. that $\Phi_{\A_{-a}^u} -h_{\A_{-a}^u}\geq 0$. Note that if $n > 2\lambda \vee \sup u$, then by definition of $\A_{-a,n}^u$, the components $O$ of $D \backslash \A_{-a,n}^u$ containing a part of $\partial D$ on their boundary, take boundary value $-a$ on the part $\partial O \cap \A_{-a,n}^u$. In particular, they will not change as $n$ increases further and for all $z \in O$ we have that $\Phi_{\A_{-a,n}^u}=h_{\A_{-a}^u}$. In any other connected component $\widetilde O$ of $D \backslash \A_{-a}^u$, we have that if $z \in \widetilde O$, then either $z \in \A_{-a,n}^u$, or $\Phi_{\A_{-a,n}^u}(z)$ is equal to either $-a$ or $n$ (and thus in both cases greater or equal than $h_{\A_{-a}^u(z)} = -a$). As $\A_{-a,n}^u$ is thin, it thus follows that for all positive $f\in \CC_0^\infty$, we have that $(\Phi_{\A_{-a,n}^u}-h_{\A_{-a}^u},f)\geq 0$. We conclude using Lemma \ref{BPLS} (iii). 
 	
 \end{proof}
 
 Let us make the following observation about the construction above:
 
 \begin{enumerate}[(i)]
 	\item In the construction, we only need to use generalized level lines whose boundary values are in $[-a,\infty)$, moreover these generalized level lines never hit themselves.
 	\item For a fixed point $z\in D$, it will belong to a component of the complement of $\A^u_{-a,n}$ with value $n$ only for a finite number of $n$. Thus, we need only a finite number of level lines to construct the loop of $\A^u_{-a}$ surrounding $z$. 
 \end{enumerate}

 We now want to use these remarks and the techniques of \cite{ASW} to prove the uniqueness of the FPS:

 \begin{prop} [Uniqueness and Monotonicity of the FPS]\label{Uniqueness}
 	Let $(\Phi, A')$ be a FPS of level $-a$ for the GFF with boundary condition $u$. Then $A'= \A_{-a}^u$ a.s. Additionally, if $a\leq a'$ and $u\leq u'$, then $\A_{-a}^{u}\subseteq\A_{-a'}^{u'}$
 \end{prop}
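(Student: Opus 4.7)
\textbf{Inclusion $\A_{-a}^u \subseteq A'$.} By Proposition \ref{Existence}, $\A_{-a}^u = \overline{\bigcup_n \A_{-a,n}^u}$, so it suffices to show $\A_{-a,n}^u \subseteq A'$ for every $n$ exceeding $\sup u$. We would argue inductively on the countable family of generalized level lines used to construct $\A_{-a,n}^u$ (observation (i) after Theorem \ref{two-value gen f}): each such level line $\eta$ has boundary values lying in $[-a,n]$. By condition (1) of Definition \ref{Def ES} applied to $A'$, in every connected component $O$ of $D\setminus A'$ one has $h_{A'}+u\leq -a$, which sits below the boundary data of $\eta$. A ``do not enter'' argument in the spirit of Lemma \ref{donotenter} and its corollary -- adapted using the continuity up to hitting established in Proposition \ref{propext} -- then forbids $\eta$ from entering any such $O$, and the induction closes.

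\textbf{Inclusion $A' \subseteq \A_{-a}^u$.} Fix a connected component $O$ of $D\setminus \A_{-a}^u$. By the first inclusion and Lemma \ref{BPLS}(2), $A' \cap O$ is a local set of the conditional GFF $\hat \Phi := \Phi^{\A_{-a}^u}|_O$, itself a zero-boundary GFF in $O$ modulo the constant $-a-u$. Translating the FPS conditions to $\hat \Phi$ gives $\hat h_{A' \cap O} \leq 0$ and a nonnegative distribution $\hat \Phi_{A'\cap O} - \hat h_{A'\cap O}$, which decomposes as $\hat h_{A' \cap O}$ on $O \setminus A'$ plus a positive measure $\mu_{A'}$ on $A' \cap O$. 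Testing against a positive smooth $f$ and using $\mathbb{E}[(\hat\Phi_{A'\cap O},f)]=0$ would give
\[
\mathbb{E}[(\hat h_{A' \cap O}, f)] + \mathbb{E}[(\mu_{A'}, f)] = 0,
\]
with summands of opposite signs, hence both must vanish for every $f \geq 0$. Thus $\hat h_{A' \cap O} \equiv 0$ and $\mu_{A'} \equiv 0$ a.s., which forces $A' \cap O$ to be polar, and then, using the closedness of $A'$ and the mild regularity of Definition \ref{Def ES}, empty.

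\textbf{Monotonicity.} Once uniqueness is secured, to prove $\A_{-a}^u \subseteq \A_{-a'}^{u'}$ for $a \leq a'$ and $u \leq u'$ we would verify that $\A_{-a'}^{u'}$ itself satisfies the FPS axioms of level $-a$ for the GFF with boundary data $u$. Condition (1) follows from the chain $h_{\A_{-a'}^{u'}} + u \leq h_{\A_{-a'}^{u'}} + u' \leq -a' \leq -a$ in each component; condition (2) is intrinsic to the pair $(\Phi,A)$ and is unchanged by the choice of boundary data; condition (3) reduces to a direct check at the boundary-intersection points of $\A_{-a'}^{u'}$.

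\textbf{Main obstacle.} The decisive difficulty is the second inclusion. Because the FPS is not a thin local set, the classical Lemma 9 argument of \cite{ASW} does not transfer, and one must quantitatively exploit condition (2) through the harmonic-plus-measure decomposition above. Turning the identities $\hat h_{A'\cap O}\equiv 0$ and $\mu_{A'}\equiv 0$ into $A' \cap O = \emptyset$ (rather than merely ``polar and charging no extra mass'') is the most subtle piece, and may well require either a Green's-function / capacity comparison in $O$, or a separate reduction to the trivial uniqueness of the FPS of level $0$ for the zero-boundary GFF in $O$.
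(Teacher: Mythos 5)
Your overall skeleton is the paper's: one inclusion via the level-line construction plus a ``do not enter'' lemma, the other via positivity of the conditional field plus a zero-expectation/Green's function argument. But two steps are genuinely wrong as written. First, in the second inclusion your sign is backwards: with $B:=A'\cap O$ viewed as a local set of $\hat\Phi=\Phi^{\A_{-a}^u}$, one has $\hat h_{B}=h_{A'}-h_{\A_{-a}^u}\geq 0$, not $\leq 0$ (compare boundary values: $h_{\A_{-a}^u}+u\leq -a=h_{A'}+u$ on $\partial A'\setminus\partial D$ and the two agree on $\partial D$, then use the maximum principle; in the paper this is the claim $h_{\A_{-a}^u}+u\leq h_{A'}+u$). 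More importantly, your deduction ``summands of opposite signs, hence both must vanish'' is invalid: two terms of opposite signs can cancel without vanishing. The argument only works because, with the correct sign, \emph{both} the harmonic part and the measure part are nonnegative, so $\hat\Phi_{B}\geq 0$; then $\E[(\hat\Phi_{B},f)]=0$ forces $(\hat\Phi_{B},f)=0$ a.s., and the Green's-function comparison $\iint G_{O}=\E\big[\iint G_{O\setminus B}\big]$ together with $G_{O\setminus B}\leq G_{O}$ gives polarity of $B$ --- this is exactly the paper's first step, and the ``capacity comparison'' you defer to the obstacle paragraph is an essential, not optional, part of it.

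Second, your monotonicity argument does not work. The set $\A_{-a'}^{u'}$ is \emph{not} an FPS of level $-a$ for the boundary data $u$: condition (1) of Definition \ref{Def ES} requires $h_{A}+u$ to equal $-a$ exactly on $\partial A\setminus\partial D$, whereas $h_{\A_{-a'}^{u'}}+u=-a'+(u-u')$ there; you only verified the inequality $\leq -a$. Moreover, if your verification were correct, uniqueness would give $\A_{-a'}^{u'}=\A_{-a}^u$, i.e.\ equality of all FPS's, which is absurd --- a sign the strategy itself is off. The paper instead obtains monotonicity for free from the first inclusion: it runs the level-line/``do not enter'' argument directly against an arbitrary FPS $A'$ of level $-a'\geq a$ with data $u'\geq u$, using that the level lines in the construction of $\A_{-a}^u$ carry boundary values in $[-a,\infty)$ for $\Phi+u$, while inside components of $D\setminus A'$ one has $h_{A'}+u\leq -a'+(u-u')\leq -a$, so by Lemma \ref{donotenter} (and its corollary) they cannot enter. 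You should prove your first inclusion at this level of generality; then uniqueness and monotonicity both follow, as in the paper.
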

 \begin{proof}
 	First let us prove that if $A$ is a local set such that almost surely $\Phi_{A}\geq 0$, then a.s. $A$ is a polar set. Given this condition, we have that $( \Phi_{A},1)\geq 0$, and due to the Markov property we know that $\E[(\Phi_{A},1)]=0$, thus a.s. $(\Phi_{A},1)=0$. Additionally, we know that $G_D\geq G_{D\backslash A}$, using again the strong Markov property we get that 
 	\begin{align*}
 	\iint_{D\times D}G_D(z,w) dz dw=\E\left[( \Phi,1)^2 \right] =\E\left[(\Phi^{ A},1)^{2}\right]=\iint_{D\backslash A\times D\backslash A}G_{D\backslash A}(z,w) dz dw.
 	\end{align*}
 	Hence almost everywhere $G_{D\backslash A}=G_{D}$, and thus $A$ is a polar set. 
 	
 	We now prove the uniqueness of the FPS. Assume first that $\A_{-a}^u \subseteq A'$. We claim that then $A'\backslash \A_{-a}^u$ is a polar set. Indeed, consider $B:=A'\backslash \A_{-a}^u$. From Lemma \ref{BPLS} (2), $B$ is a local set of the zero boundary GFF $\Phi^{\A_{-a}^u}$. Moreover, one can check that from our conditions on the FPS, it follows that $h_{\A_{-a}^u}+u\leq h_{A'}+u$ and hence $(\Phi^{\A_{-a}^u})_{B}\geq 0$. Thus, by the previous argument $B$ is polar. 
 	
 	Now, it suffices to prove that a.s. $\A_{-a}^u \subseteq A'$. Take $A'$ an FPS for $u' \geq u$ and $a' \geq a$. Suppose by contradiction, $\A_{-a}^u$ is not contained in $A'$. Then choosing a countable dense set in $D$, $(z_i)_{i\in \N}$, there must be some $z_i$ such that with positive probability during the construction of $ \A_{-a}^u$ a generalized level line enters the component $O(z_i)$ of the complement of $A'$ containing $z_i$. Thus, there should be some finite $n\in \N$  such that with positive probability, $\eta$, the $n^{\text {th}}$-level line pointed towards $z_i$, is the first one to enter $O(z_i)$ and $\eta \cap O(z_i) \neq \emptyset$. This is, however, in contradiction with Lemma \ref{donotenter} and the remark just after the proof: indeed, the boundary values of $h_{A'}$ inside $O(z)$ are equal to $-a'-u'\leq -a -u$ and by the observation (i) above the boundary values of $\eta$ are in $[-a,\infty)$. Thus, the uniqueness follows.
 	
 	Finally, monotonicity just follows from the construction given in Lemma \ref{Existence}.
 \end{proof}

 \begin{prop}\label{rmkmes}
 	$\Phi_{\A_{-a}^u} -h_{\A_{-a}^u}$ is almost surely a non-trivial positive measure, unless $u\leq -a$ on $D$.
  \end{prop}

\begin{proof}
	We know that it is a positive distribution, and thus by Theorem V in Section 1.4 of \cite{schwartz} in it is a positive measure. It remains to argue that this measure is almost surely non-trivial.
	
	Let us give two different arguments to see this, the first one for simplicity only for simply-connected domains and the second one for the general case, and proving a local statement:
	\begin{enumerate}
		\item Suppose that we work in $\D$ and with a zero boundary GFF. In this case, it is known that the circle average $(\rho_{1-r}^0,\Phi)$ around $0$ of radius $1-r$ converges to $0$ as $r \to 0$. But the circle average w.r.t. to $h_{\A_{-a}}$ is constantly equal to $-a$, and the variance of $(\rho_{1-r}^0, \Phi^{\A_{-a}^u})$ also converges to $0$.
		\item  A different way of seeing this is the following. 
		Since $$\E[\Phi_{\A_{-a}^u} -h_{\A_{-a}^u}]=\E[-h_{\A_{-a}^u}]>0,$$
		$\Phi_{\A_{-a}^u} -h_{\A_{-a}^u}$ is a non-trivial positive measure with positive probability. Then, in order to sample 
		$\A_{-a}^u$, we can first explore $\A_{-a/2}^u$ and then further explore 
		$\A_{-a/2}$ conditionally independently in each connected component of the complement of $\A_{-a/2}^u$. As there are infinitely many such components, we know that almost surely $\Phi_{\A_{-a}^u} -h_{\A_{-a}^u}$ is positive inside one of these components.
	\end{enumerate}

\end{proof}
  
\begin{rem}
 	Using the second argument, and the fact that the FPS has infinitely many small holes near the bounary, one can in fact argue that the FPS is non-trivial on every open dyadic square that it intersects. 
 \end{rem}
 \subsection{Extremal distance to interior points and boundary for $\A_{-a}^u$}

 We will now give an exact description of the law of the extremal distance of $\A_{-a}^u$ to interior points and to boundary components. This can be seen as a continuum analogue of Corollary 1 of \cite{LupuWerner2016Levy}. The proofs follow from Propositions \ref{BMinterior} and \ref{BProcess}, that give a way to parametrize the FPS using the distance to an interior point or a boundary component respectively. 
 
 \begin{prop} \label{LawELFPS}
 	Let $a>0$ and $D$ a n-connected domain. Moreover, let $u$ be a bounded harmonic function with piecewise constant boundary data and $u\geq -a$. Take $z\in D$ and $W_t$ a Brownian motion started from $u(z)$. Then $$G_D(z,z) - G_{D \backslash \A_{-a}^u}(z,z)$$ is distributed like the hitting time of the level $-a$ by $W_t$.
 \end{prop}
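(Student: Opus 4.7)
The plan is to realize $\A_{-a}^u$ as the terminal value of a continuous local set process seen from $z$, and then identify the parametrization appearing in Proposition~\ref{BMinterior} with a Brownian hitting time.

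First, I would reorganize the level-line construction of Proposition~\ref{Existence} as a single local set process $(A_t)_{t\geq 0}$ started from $A_0=\partial D$, obtained by concatenating the generalized level lines used to build the nested TVS $\A_{-a,n}^u$, each targeted (while possible) towards $z$. By point (i) after Proposition~\ref{Existence}, every such level line has boundary values in $[-a,\infty)$, and by point (ii) only finitely many are needed to enclose $z$ in a component of $D\backslash A_t$ whose boundary carries the constant value $-a$. I would then time-change the process so that at every moment
\[
t \;=\; (G_D - G_{D\backslash A_t})(z,z),
\]
which is strictly increasing and continuous in the natural curve time variable thanks to the continuity properties of the Green's function under the growth of a local set (Lemma~\ref{BPLS}(3)).

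Applying Proposition~\ref{BMinterior} to $(A_t)$ with this parametrization, the process $W_t := h_{A_t}(z)+u(z)$ is a Brownian motion started at $W_0 = u(z)$, because $h_{A_0}\equiv 0$ when $A_0=\partial D$. Since the boundary data seen by the component of $z$ in $D\backslash A_t$ always lies in $[-a,\infty)$ (using $u\geq -a$), the strong maximum principle gives $W_t > -a$ up to the first time $\tau$ at which this component has all its boundary data identically equal to $-a$; at this moment $W_\tau = -a$. By the uniqueness part of Proposition~\ref{Uniqueness}, $A_\tau$ must coincide with $\A_{-a}^u$ inside the component $O$ of $z$; since $G_{D\backslash B}(z,z)$ depends only on the connected component of $B^c$ containing $z$, this gives
\[
G_{D\backslash A_\tau}(z,z) \;=\; G_{D\backslash \A_{-a}^u}(z,z),
\]
hence $\tau = (G_D - G_{D\backslash \A_{-a}^u})(z,z)$. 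This is exactly the identification of the left-hand side of the proposition with a one-sided hitting time of $-a$ by a Brownian motion started at $u(z)$.

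The main technical obstacle will be verifying that the concatenated object $(A_t)$ really is a continuous local set process with the chosen time change, in particular that no jumps in $(G_D - G_{D\backslash A_t})(z,z)$ appear at the junctions between consecutive level lines and that the Brownian property from Proposition~\ref{BMinterior} extends seamlessly across all of them, despite the construction involving countably many generalized level lines and possibly infinitely many components before $z$ is sealed off. A secondary point is matching the natural time range of $W_t$ with the ``lifetime $g_D(z,z)$'' appearing in the statement via the decomposition $(G_D - G_{D\backslash A_t})(z,z) = g_D(z,z) - g_{D\backslash A_t}(z,z)$, which again reduces to continuity of the bounded harmonic part of the Green's function under the limiting procedure.
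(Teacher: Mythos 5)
Your proposal is correct and follows essentially the same route as the paper, which simply invokes the proof of Proposition 20 of \cite{ASW}: build the set by level lines targeted at $z$, parametrize the resulting local set process by $(G_D-G_{D\backslash A_t})(z,z)$ so that $h_{A_t}(z)+u(z)$ is a Brownian motion started at $u(z)$ (Proposition \ref{BMinterior}), and stop at the first hitting of $-a$, identifying the terminal component via uniqueness of the FPS. The technical points you flag (continuity of the Green's-function parametrization across concatenated level lines, absence of jumps when regions are pinched off) are exactly the ones handled in the ASW argument, so no new idea is missing.
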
 
 
 \begin{proof}
 	This follows exactly as the proof of Proposition 20 in \cite{ASW}. 
 \end{proof}
 
 Similarly, we can calculate the extremal distance between boundary components,
 analogously to Proposition 5 in \cite{LupuWerner2016Levy}.
 
 \begin{prop} \label{LawELBddFPS}
 	Let $a$ be a positive number, $D$ a finitely-connected domain, and $\B$ a union of connected components of $\partial D$. Moreover, let $u$ be a bounded harmonic function with piecewise constant boundary data changing finitely many times such that $u$ on $\partial D \backslash \B$ is a constant equal to $u_e \leq -a$. Let $\widehat{W}_t$ be a Brownian bridge with starting point:
 	$$u_s:=\ED(\B,\partial D\backslash \B)\iint_{\B \times\partial D\backslash\B} u(x)H_D(dx,dy)$$ endpoint $u_e$, and with length $\ED(\B,\partial D \backslash \B)$. 
 	If $u \geq -a$ on $\B$, then $$\ED(\B, \partial D \backslash \B) - \ED(\B \cup (\A_{-a}^u\backslash \partial D), \partial D \backslash \B)$$ has the law of the first hitting time of $-a$ by 
 	$\widehat{W}_t$. 
 	
 \end{prop}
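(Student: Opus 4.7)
The strategy mirrors the proof of Proposition~\ref{LawELFPS}, replacing the interior point $z$ and Proposition~\ref{BMinterior} by the boundary piece $\partial D\setminus\B$ and Proposition~\ref{BProcess}, so that the Brownian motion is replaced by a Brownian bridge. Set $L:=\ED(\B,\partial D\setminus\B)$. First I would construct a local set process $(\eta_s)_{s\in[0,L]}$ with $\eta_0=\B$, arranged so that at some intermediate time $s^*$ one has $\eta_{s^*}=\B\cup(\A^u_{-a}\setminus\partial D)$; this is achieved by exhausting the FPS through the monotone approximation $\A^u_{-a,n}$ of Proposition~\ref{Existence}, continuously reparametrizing by extremal length $s=L-\ED(\B\cup\eta_s,\partial D\setminus\B)$, and then continuing with an arbitrary local set process past $s^*$ until $s=L$. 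By Proposition~\ref{BProcess} applied with reference boundary $\partial D\setminus\B$, for the zero-boundary GFF $\Phi$ the process $(L-s)\int_{\partial D\setminus\B}\partial_n h_{\eta_s}$ is a Brownian bridge from $0$ to $0$ of length $L$; combining with the Green-identity computation $\int_{\partial D\setminus\B}\partial_n u=(u_e-u_s)/L$ (which is the Poisson-kernel definition of $u_s$ rewritten), the shifted process
\[
\widehat{V}_s:=u_e-(L-s)\int_{\partial D\setminus\B}\partial_n (h_{\eta_s}+u)
\]
is a Brownian bridge from $u_s$ to $u_e$ of length $L$.

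The remaining step is to show that $s^*=\inf\{s:\widehat V_s=-a\}$. Decomposing $h_{\eta_s}+u$ in each component $O$ of $D\setminus\eta_s$ using the partition of unity $\bar p_s+\bar w_s+\bar v_s=1$ (harmonic indicators of $\B$, $\eta_s\setminus\B$, $\partial D\setminus\B$), a direct Green-identity computation yields, in the illustrative case where $u\equiv u_\B$ is constant on $\B$,
\[
\widehat V_s=-a+(L-s)(u_\B+a)\,P_s,\qquad P_s:=-\int_{\partial D\setminus\B}\partial_n\bar p_s\ge 0,
\]
where $P_s$ measures the harmonic ``visibility'' of $\B$ from $\partial D\setminus\B$ inside $D\setminus\eta_s$ and vanishes exactly when $\eta_s$ separates $\B$ from $\partial D\setminus\B$. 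For $s<s^*$ the partial exploration cannot have closed off $\B$, so $P_s>0$ and $\widehat V_s>-a$. At $s=s^*$, the FPS condition $h_{\A^u_{-a}}+u\le-a$ combined with $u\ge -a$ on $\B$ forces no component of $D\setminus\A^u_{-a}$ to touch $\B$ (otherwise the harmonic extension of $h_{\A^u_{-a}}+u$ would exceed $-a$ near $\B$), giving $P_{s^*}=0$ and $\widehat V_{s^*}=-a$. Consequently $s^*=\inf\{s:\widehat V_s=-a\}$, so $s^*$ has the law of the first hitting time of $-a$ by the Brownian bridge $\widehat V_s$.

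The main obstacle is the rigorous passage from the TVS approximation to the FPS limit in the ``closing off'' argument at $s=s^*$ (using Proposition~\ref{Existence} and Lemma~\ref{BPLS}(3)), together with an explicit (if uninformative) extension of the exploration past $s^*$ to the full interval $[0,L]$ so that Proposition~\ref{BProcess} genuinely produces the full bridge and not just a stopped version. The general case of non-constant $u$ on $\B$ follows by replacing $u_\B+a$ in the expression for $\widehat V_s$ by a weighted integral of $u-(-a)$ against the harmonic measure of $\B$ seen from $\partial D\setminus\B$; the resulting weighted average is precisely $u_s-(-a)$, which is what produces the value $u_s$ as the starting point of the bridge in the statement.
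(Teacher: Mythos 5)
Your architecture coincides with the paper's: grow the FPS by the level-line construction, parametrize by extremal length, apply Proposition \ref{BProcess} with reference boundary $\partial D\setminus\B$ to get a Brownian bridge, identify the starting point $u_s$ by the Green-identity computation, and identify the FPS time with the first hitting of $-a$; the start/end computations and your flagged need to continue the exploration past $s^*$ are fine. The genuine gap is in the key identification step. Your formula $\widehat V_s=-a+(L-s)(u_\B+a)P_s$ comes from writing $h_{\eta_s}+u=u_\B\bar p_s-a\bar w_s+u_e\bar v_s$ in the component $O_s$ containing $\partial D\setminus\B$, i.e.\ it presumes that the explored set already carries boundary value $-a$ as seen from $O_s$. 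That is true at $s=s^*$ (condition (1) of Definition \ref{Def ES}) but false at intermediate times: the construction of Proposition \ref{Existence} uses level lines whose boundary values lie in $[-a,\infty)$, and as long as the exploration of $O_s$ is unfinished, some stretch of $\partial O_s$ carries a value strictly above $-a$. Correspondingly, your justification that $\widehat V_s>-a$ for $s<s^*$ (``the partial exploration cannot have closed off $\B$, so $P_s>0$'') fails: the exploration typically disconnects $\B$ from $\partial D\setminus\B$ well before the FPS is complete, by level lines of height $>-a$ behind which the exploration continues (e.g.\ an annulus with $u=0$ on the outer circle and $u_e=-a$, $a\gg\lambda$, on the inner one: the first separating loop has height of order $\lambda$, far above $-a$). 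At such a time $P_s=0$ and your formula returns $\widehat V_s=-a$ with $s<s^*$, so both the formula and the visibility argument break the identification of $s^*$ as the first hitting time, even though the inequality $\widehat V_s>-a$ you are after is true.

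The repair is what the paper does: in $O_s$, compare $h_{\eta_s}+u$ with the harmonic function having boundary values $-a$ on $\partial O_s\setminus(\partial D\setminus\B)$ and $u_e$ on $\partial D\setminus\B$. The difference is the harmonic extension $H_s$ of the nonnegative data $(h_{\eta_s}+u+a)\1_{\partial O_s\setminus(\partial D\setminus\B)}$, and the same Green-identity computation gives $\widehat V_s=-a+(L-s)\int_{\partial D\setminus\B}(-\partial_n H_s)$, which is nonnegative, vanishes exactly when the boundary data vanishes, i.e.\ exactly when $O_s$ is a finished component of the FPS complement (by uniqueness of the FPS and Lemma \ref{donotenter} this first happens at $s^*$), and is strictly positive before. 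Your $P_s$ is only the contribution of the unswallowed part of $\B$ and misses that of the unfinished level lines; your closing remark on non-constant $u$ inherits the same defect. Two smaller points: as in the paper, you should grow only the part of the construction adjacent to the component containing $\partial D\setminus\B$, since growth elsewhere does not change the extremal length and makes your parametrization non-injective; with that convention, $s^*$ is the time at which that component stops growing, which has the same extremal length as the full set $\B\cup(\A^u_{-a}\setminus\partial D)$.
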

 
 Before proving the proposition, we prove a deterministic lemma.
 \begin{lemma} Let $u$ be a bounded harmonic function in a domain $D$, that is constant equal to $u_e$ on $\partial D\backslash \B$. Then,
 	\begin{align}\label{e.Poisson Kernel}
 	\ED(\B,\partial D\backslash \B)\iint_{\B \times\partial D\backslash\B} u(x)H_D(dx,dy) = -\ED(\B,\partial D\backslash \B)\int_{\partial D\backslash\B}\partial_n u + u_e.\end{align}
 \end{lemma}
 \begin{proof}
 	By conformal invariance of the quantities we can always assume we are in a circle domain. Consider a function $\bar u$ that is equal to $u$ on $\B$ and $0$ on $\partial D \backslash \B$. 
 		As the boundary of $D$ is locally analytic, by definition the boundary Poisson kernel $H_D(dx,dy)$ for $y \in \partial D \backslash \B$ can be written as $\lim_{\eps \to 0}\eps^{-1}P_D^{y-n_y\eps}(dx)$, where $n_y$ is the outward unit normal vector at $y$. But for any $y \in \partial D \backslash \B$ we have that 
 		\[\int_\B u(x)P_D^{y - n_y \eps}(dx) = \bar u (y-n_y\eps).\]
 		As by definition $\bar u = 0$ on $\partial D \backslash \B$, we obtain that 
 		$$\iint_{(\partial D\backslash\B)\times \B} u(x)H_D(dx,dy) = -\int_{\partial D\backslash\B}\partial_n \bar u.$$
 		But now we can write $\bar u = u + \widetilde u$, where $\widetilde u$ is equal to the constant $u_e$ on $\partial D\backslash \B$ and to $0$ on $\partial B$. The last statement of Theorem \ref{thmEL} implies that $$-\int_{\partial D\backslash\B}\partial_n \tilde u = u_e \ED(\B,\partial D\backslash \B)^{-1},$$
 		from where the lemma follows.
 		
 \end{proof}

 \begin{proof}[Proof of Proposition \ref{LawELBddFPS}:]
 	By conformal invariance it suffices to work in a circle domain. From the construction of first passage sets (see the observations after the proof of Proposition \ref{Existence}), we know that the first passage set can be constructed by using only level lines with boundary values in $\geq -a$ that do not touch $\partial D\backslash \B$. We can parametrize the part of the construction, that always continues in the connected component containing  $\partial D \backslash \B$ on its boundary, using its extremal length to $\partial D\backslash \B$. We denote the resulting local set process by $(A_t)_{0\leq t \leq \tau}$. Here $\tau$ is the first time that this component stops growing. In other words $\tau:=\ED(\B,\partial D\backslash \B)-\ED(\B \cup (\A_{-a}^u\backslash \partial D),\partial D \backslash \B))$ and moreover $\tau$ is the first time that satisfies the following property: 
 	\begin{itemize}
 		\item[ \clock]Restricted to the connected component of $O$ of $D \backslash A_\tau$ such that $\partial D\backslash \B\subseteq \partial O$, $h_{A_\tau}+u$ is the bounded harmonic function with boundary value $-a$ in $\partial O\backslash (\partial D \backslash \B)$ and $u_e$ in $\partial D \backslash \B$.
 	\end{itemize}
 	
 	Using \eqref{e.Poisson Kernel} and Proposition \ref{BProcess} for the underlying GFF $\Phi$,  we deduce that 
 	\begin{equation}\label{equu}
 	\widehat W_t:=u_e+\ED(\B \cup (A_t\backslash \partial D),\partial D \backslash \B)\left(- \int_{\partial D \backslash \B} \partial_{n}(h_{A_t}+u)\right)
 	\end{equation}
 	
 	is a Brownian bridge from $u_s$ to $u_e$ and of length $\ED(\B, \partial D \backslash \B)$.
 	
 	We now show that condition \clock$\ $ implies that $\tau$ is equal to the first time $\widehat W$ hits level $-a$. First note that \clock$\ $ implies that $\widehat W_\tau = -a$. Indeed, we get directly from the last statement of Theorem \ref{thmEL} that
 		\begin{align*}
 		\widehat W_\tau=  u_e+(a+u_e)\ED(\B \cup (A_\tau\backslash \partial D),\partial D \backslash \B) \int_{\partial D\backslash \B} \partial_n \bar u = -a.
 		\end{align*}
 	Here $\bar u$ is the bounded harmonic function with values $0$ in $\partial D\backslash \B$ and $1$ in $\B \cup A_\tau$. Similarly, we can verify that for all times $t$ smaller than $\tau$, we have that $\widehat W_t > -a$: indeed, for any such time $h_{A_t} - a\bar u$ is positive in the connected component of $D \backslash A_t$ containing $\partial D \backslash \B$ on its boundary. Thus from \eqref{equu} we see that $\widehat W_t > \widehat W_\tau$.

 \end{proof}
 
 Let us point out the following corollary
 \begin{cor}\label{Not touch E}
 	Let $\A_{-a}^u$ be an FPS of $\Phi+u$, where $u$ is a bounded harmonic function with piecewise constant boundary data. Then $\A_{-a}^u\cap  D$ is at positive distance of any connected component of $\partial D$ where $u\leq-a$. 

 \end{cor}
 
 Notice that if only a part of the boundary satisfies $u \leq -a$, then we also know that the FPS stays at a positive distance of any point on this interval. Indeed, this follows from the level line construction: we know from the proof of Theorem \ref{two-value gen f} and the remarks following the proof that for $\A_{-a,b}^u$ any connected set of $u^{-a,b}$ is entirely part of the boundary of a component of $D \backslash \A_{-a,b}$; on the other hand we also know that any component with 
 $h_{\A_{-a,b}^u}+u\leq -a$ in the complement of $\A_{-a,b}^u$, will also be a component of the complement of $\A_{-a}^u$. Putting this together we conclude. 
 \begin{cor}\label{Not touch E2}
 	Let $\A_{-a}^u$ be an FPS with boundary condition $u$ of $\Phi$, where $u$ is a bounded harmonic function with piecewise constant boundary data. Then $\A_{-a}^u\cap  D$ is at positive distance of any connected $J\subseteq \partial D$ where there is an open neighborhood $J_\epsilon$ such that $u(x)\leq-a$ for all $x\in J_\epsilon\cap \partial D$.
 \end{cor}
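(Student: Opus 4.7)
The plan is to exploit the construction of $\A_{-a}^u$ via the two-valued sets $\A_{-a,b}^u$ (Proposition~\ref{Existence}) together with observation (iii) following Theorem~\ref{two-value gen f}, to isolate a component $O$ of $D\backslash\A_{-a,b}^u$ whose boundary contains $J$ and on which the effective boundary values are forced to be $\leq -a$; the statement will then follow from Corollary~\ref{Not touch E} applied inside $O$, combined with a local non-accumulation argument for $\A_{-a,b}^u$ near $J$.

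First, I fix $b$ with $a+b\geq 2\lambda$ and $b>\sup u$ and consider $\A_{-a,b}^u$. Since $u\leq -a$ on $J_\epsilon\cap\partial D$, this open neighborhood of $J$ lies in $u^{-a,b}$. By observation (iii), the maximal connected arc $C\subseteq u^{-a,b}\cap\partial D$ containing $J$ lies entirely on the boundary of a single connected component $O$ of $D\backslash\A_{-a,b}^u$, and the defining condition \hyperlink{tvs}{(\twonotes)} then forces $h_{\A_{-a,b}^u}+u\leq -a$ on the whole connected component $\Gamma$ of $\partial O$ containing $C$. Next, by the Markov property of the GFF and the uniqueness of the FPS (Theorem~\ref{FPSthm}), the set $\A_{-a}^u\cap\overline{O}$ coincides with the FPS of $\Phi^{\A_{-a,b}^u}$ in $O$ with effective boundary condition $\tilde u:=h_{\A_{-a,b}^u}+u$; since $\tilde u\leq -a$ on all of $\Gamma$, Corollary~\ref{Not touch E} applied inside $O$ gives positive distance between $\A_{-a}^u\cap O$ and $\Gamma$, in particular between $\A_{-a}^u\cap O$ and $J$.

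It remains to show that $\A_{-a}^u\cap(D\backslash\overline{O})$ stays at positive distance from $J$ as well. Since $\A_{-a}^u\cap(D\backslash\overline{O})\subseteq\A_{-a,b}^u$ and is separated from $J$ by the internal portion of $\Gamma$, the desired conclusion reduces to the assertion that $\A_{-a,b}^u\cap D$ does not accumulate on $J$. This non-accumulation property is the main obstacle. I would tackle it by using local finiteness of the two-valued set (Proposition~\ref{prop:: locally finite}) together with continuity of the generalized level lines (Proposition~\ref{propext}) to reduce, in a small fixed neighborhood of $J$, to finitely many level line arcs, which end at finitely many boundary points, none lying in $u^{-a,b}$, and each of which stays at positive distance from $J$ by Lemma~\ref{notouch} (possibly after drawing finitely many auxiliary level lines inside $O$ to force the effective boundary condition on $J_\epsilon$ below $-\lambda$, as required by that lemma, in the case $a<\lambda$). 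Taking a minimum over this finite collection of positive distances then concludes the proof.
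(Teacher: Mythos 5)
Your first three steps follow the same route as the paper: construct $\A_{-a}^u$ through the two-valued sets (Proposition \ref{Existence}), use observation (iii) after Theorem \ref{two-value gen f} to place the connected arc of $u^{-a,b}$ containing $J$ on the boundary of a single component $O$ of $D\backslash\A_{-a,b}^u$, note that condition (\twonotes) forces $h_{\A_{-a,b}^u}+u\leq -a$ on the whole boundary component $\Gamma\supseteq J$, and then restart the FPS inside $O$ (by uniqueness) and invoke Corollary \ref{Not touch E} there. This part is fine and is in fact more careful than the paper's own two-line argument, which implicitly treats $O$ as a component where $h_{\A_{-a,b}^u}+u\leq -a$ throughout and hence as a component of $D\backslash\A_{-a}^u$; your use of Corollary \ref{Not touch E} inside $O$ correctly covers the case where other boundary components of $\partial O$ carry the value $b$.

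The trouble is in your last step. First, the inclusion $\A_{-a}^u\cap(D\backslash\overline{O})\subseteq\A_{-a,b}^u$ is false as stated: the FPS keeps growing inside the other components $O'$ of $D\backslash\A_{-a,b}^u$. This is repairable, since such an $O'$ can only come close to $J$ if $\A_{-a,b}^u\cap D$ itself accumulates on $J$, so the statement does reduce, as you say, to $d(\A_{-a,b}^u\cap D,J)>0$. But your proposed proof of this non-accumulation does not work: local finiteness (Proposition \ref{prop:: locally finite}) bounds the number of complementary components of diameter at least $\eps$, not the number of level lines or excursions of $\A_{-a,b}^u$ entering a fixed neighborhood of $J$. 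Generically there are infinitely many such excursions within any neighborhood of the boundary (compare Lemma \ref{lem:: close boundary}, where only the excursions reaching $\partial((1-\eps)\D)$ are finitely many), so there is no finite collection of positive distances over which to take a minimum, and a countable union of sets each at positive distance from $J$ can still accumulate on $J$. What is needed is a uniform statement: the paper concludes directly from observation (iii) (no level line of the construction touches $u^{-a,b}$, so the whole arc lies on $\partial O$) combined with the persistence of the shielding component in $D\backslash\A_{-a}^u$, and never attempts your finite-collection argument; a genuinely quantitative alternative would be to run the extremal-distance/Brownian-bridge argument of Proposition \ref{LawELBddFPS}, with Proposition \ref{BProcess2} (parametrization by a compact sub-arc of $J_\eps$) replacing Proposition \ref{BProcess}. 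As written, the minimum-over-finitely-many-level-lines step is a gap.
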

 
 Finally, let us mention that one can, via the same proof, also prove an analogue of Proposition \ref{LawELFPS} for two-valued local sets. Notice that it is possible to identify the exact law only in the case of the annulus, as explained by Figure \ref{f.ona}.
 
   \begin{figure}[h!]
   	\includegraphics[width=0.5\textwidth]{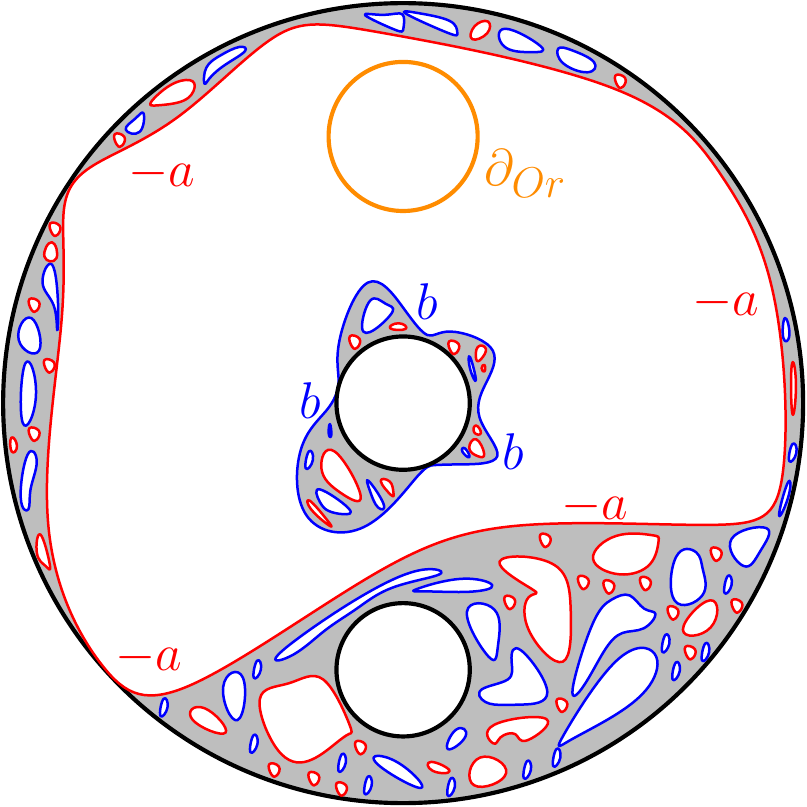}
   	\caption{Note that if we want to measure the extremal distance to $\partial_{Or}$, we may see more than one boundary value on the boundary of the connected component of $D\backslash \A_{-a,b}^u$ having $\partial_{Or}$ as a part of its boundary. This is not the case for FPS, or when the domain is an annulus. }
   	\label{f.ona}
   \end{figure}
 
 \begin{prop} \label{LawELBdd}
 	Let $a, b$ be positive with $a+b \geq 2\lambda$ , $D$ a n-connected domain, and $\B$ a union of connected components of $\partial D$. Moreover, let $u$ be a bounded harmonic function with piecewise constant boundary data changing finitely many times such that $u$ on $\partial D \backslash \B$ is a constant equal to $u_e \notin (-a,b)$. Let $\widehat{W}_t$ be a Brownian bridge with starting point:
 	$$u_s:=\ED(\B,\partial D\backslash \B)\iint_{\B \times\partial D\backslash\B} u(x)H_D(dx,dy)$$ endpoint $u_e$, and with length $\ED(\B,\partial D \backslash \B)$. 
 	If	 $u \in [-a,b]$ on $\B$, then 
 	\[\ED(\B, \partial D \backslash \B) - \ED(\B \cup (\A_{-a}^u\backslash \partial D), \partial D \backslash \B)\]
 	is stochastically bounded by the law of the first hitting time of $\{-a,b\}$ by 
 	$\widehat{W}_t$. Furthermore if $D$ is an annular domain, then they have exactly the same law.
 \end{prop}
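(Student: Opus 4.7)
The plan is to adapt the proof of Proposition \ref{LawELBddFPS} to the two-valued setting, where the structural stopping condition involves the set $\{-a,b\}$ rather than just $\{-a\}$. The key difference from the FPS case is that when $\partial O_\tau\setminus \partial D$ has several connected components at the stopping time, they may carry different values in $\{-a,b\}$; this is what turns equality into stochastic domination outside the annular case.

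First I would run the construction of $\A_{-a,b}^u$ from Theorem \ref{two-value gen f}, organizing the induction so that at each step the level line drawn (with boundary values in $[-a,b]$) lies in the component $O_t$ of $D\setminus A_t$ containing $\partial D\setminus \B$. Since $u_e\notin(-a,b)$, Lemma \ref{notouch} ensures that such level lines stay at positive distance from $\partial D\setminus\B$. This yields an increasing local-set process $(A_t)_{0\le t\le \tau}$, stopped at the first time $\tau$ when $O_\tau$ satisfies the two-valued condition \hyperlink{tvs}{(\twonotes)}: on each connected component of $\partial O_\tau\setminus \partial D$, $h_{A_\tau}+u$ is constantly $-a$ or constantly $b$. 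Parametrizing by $t=\ED(\B,\partial D\setminus\B)-\ED(\B\cup(A_t\setminus\partial D),\partial D\setminus\B)$, Proposition \ref{BProcess} gives that
\[
\widehat W_t := \ED(\B\cup(A_t\setminus\partial D),\partial D\setminus\B)\Bigl(-\int_{\partial D\setminus\B}\partial_n(h_{A_t}+u)\Bigr)+u_e
\]
is a Brownian bridge from $u_s$ to $u_e$ of length $\ED(\B,\partial D\setminus\B)$, with $u_s$ identified by the same Green's identity calculation as in the FPS proof.

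The central observation, obtained via the Green's identity computation underlying the FPS case, is that $\widehat W_t$ can be rewritten as a positive-weighted average of the boundary values of $h_{A_t}+u$ on the connected components of $\partial O_t\setminus(\partial D\setminus\B)$, with the weights determined by the extremal-length harmonic function $\bar u_t$ and summing to one. At every time $t\le \tau$ these boundary values lie in $[-a,b]$ — they come either from level lines (values in $\{-a,b\}$) or from uncovered arcs of $\B$ (values in $u([-a,b])$) — so $\widehat W_t\in[-a,b]$, with equality $\widehat W_t=-a$ (resp.\ $b$) iff every such value is uniformly $-a$ (resp.\ $b$). In the annular case, $\partial O_\tau\setminus(\partial D\setminus\B)$ is a single topological circle, so \hyperlink{tvs}{(\twonotes)} forces $\widehat W_\tau\in\{-a,b\}$ exactly; for a general $n$-connected domain, this set may consist of several circles carrying different values in $\{-a,b\}$, so $\widehat W_\tau$ can lie strictly inside $(-a,b)$.

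Combining these ingredients with a strict-interiority statement $\widehat W_t\in(-a,b)$ for $t<\tau$ then closes both claims: the annular case gives $\tau=T$ (the first hitting time of $\{-a,b\}$ by the bridge) hence equality in law, while the general case gives $\tau\le T$ hence stochastic domination. The main obstacle is precisely this strict-interiority claim: one must show that before the two-valued condition is fully achieved, the weighted average genuinely involves some value strictly in $(-a,b)$. For an uncovered arc of $\B$ where $u$ takes a value in $[-a,b]$ this is immediate from the boundary data; for the tip of a still-growing level line, this rests on the continuity and simplicity of generalized level lines up to their stopping time supplied by Proposition \ref{propext}, together with the fact that an unfinished curve contributes an arc on $\partial O_t$ whose boundary values interpolate away from $\{-a,b\}$.
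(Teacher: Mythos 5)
Your argument is correct and takes essentially the paper's own route: the paper proves this proposition simply by saying it follows "via the same proof" as Proposition \ref{LawELBddFPS}, i.e.\ exactly what you do — run the level-line construction of the two-valued set always continuing in the component containing $\partial D\setminus\B$, parametrize by extremal length and apply Proposition \ref{BProcess} to get the Brownian bridge, and read $\widehat W_t$ as a weighted average (weights summing to one) of the boundary values in $[-a,b]$, the annular case forcing a single boundary circle at time $\tau$ and hence equality, the general case only giving $\tau\le T$. One small remark: the strict interiority $\widehat W_t\in(-a,b)$ for $t<\tau$ does not really need the tip/continuity discussion of your last paragraph (boundary values along a partially drawn level line are constants, they do not "interpolate"); it follows directly from your own "iff" observation, since having all boundary values equal to $-a$ (resp.\ $b$) harmonic-measure almost everywhere on $\partial O_t\setminus(\partial D\setminus\B)$ is precisely the condition \hyperlink{tvs}{(\twonotes)} whose first fulfillment defines $\tau$.
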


 \subsection{Level lines as boundaries of FPS}
 
 Now, let us see that level line can be identified with the boundary of certain FPS. Let $D$ be finitely connected domain and $\partial_{\rm ext} D$ be the outermost connected component of $\partial D$, that is to say the one that separates $D$ from infinity. We consider two boundary points
 $x_{0}\neq y_{0}\in\partial_{\rm ext}D$ that split 
 $\partial_{\rm ext} D$ in two boundary arcs, $\mathcal{B}_{1}$ and $\mathcal{B}_{2}$ such that $y_0, \mathcal{B}_2, x_0$ come in clockwise order. Assume that $u$ is a bounded harmonic function with piecewise boundary values which are smaller than or equal to $-\lambda$ on $\B_2$, $\inf_{\mathcal{B}_{1}}u>-\lambda$ and 
 $\inf_{\partial D\backslash\partial_{\rm ext}D}u\geq \lambda$. Note that thanks to Lemma \ref{lemext} there is a generalized level line $\eta$ of $\Phi + u$ starting at $y_0$ and targeted at $x_0$.
 
 \begin{lemma} \label{LL as boundary}
 	Let $u$ be as above and $\Phi$ be a GFF in $D$. Then, there exists a connected component  $O$ of $D\backslash \A_{-\lambda}^u$ such that  $\partial_{\rm ext} O$ is equal to the union of $\B_2$ and the generalized level line of $\Phi + u$ from $y_0$ to $x_0$.
 \end{lemma}
 
 \begin{proof} 
 	
 	This basically just follows from the uniqueness of the FPS, i.e. Proposition \ref{Uniqueness}.
 	
 	First, construct $\eta$, the generalized level line of $\Phi + u$ from $y_0$ to $x_0$. By Lemma \ref{notouch}, it intersects $\B_2$ only at $y_0$ and $x_0$ and it does not intersect $\partial D\backslash \partial_{\rm ext} D$. Denote by $O$ the connected component of $D\backslash \eta$ containing $\B_2$ on its boundary: i.e. when we denote by $\partial_{\rm ext} O$ the outermost connected component of its boundary, then it holds that 
 	$\partial_{\text{ext}} O= \B_2\cup \eta$. To construct the FPS of level $-\lambda$ in $D$ it remains to construct the FPS of level $-\lambda$ inside each connected component of $D \backslash \eta([0,\infty])$. 
 	
 	If all of the boundary components of $\partial D \backslash \partial_{\rm ext} D$ are outside of $O$, then we can see that $O$ remains fixed in the remaining construction and thus by the uniqueness of FPS is a component of the complement of $\A_{-\lambda}^u$.
 	
 	If some of them are inside $O$, we have to make sure that the FPS of level $-\lambda$ constructed inside $O$ does not intersect $\B_2\cup \eta$. To do this, notice that the boundary values of $\partial_{\text{ext}} O= \B_2\cup \eta$ are bounded from above by $-\lambda$. Thus, by Corollary \ref{Not touch E}, the FPS of level $-\lambda$ inside $O$ stays at positive distance of $\partial_{\text{ext}} O$.
 	
 \end{proof}

 \begin{rem}
 	\label{RemLvlLine}
 	Let us remark that one can also obtain families of multiple level lines, as boundaries of the FPS $\A_{-\lambda}^u$. For instance, when $u$ changes finitely many times between $-\lambda$ and $\lambda$ in a simply connected domain, one can get in this way multiple commuting SLE$_{4}$ curves.	In \cite{PeltolaWu2017MultComSLE} the authors provide an explicit expression for probabilities of different pairings.
 \end{rem}
 
 \subsection{Uniqueness, monotonicity and local finiteness of $\A_{-a,b}^u$} In this subsection we will prove that $\A_{-a,b}^u$ is the only BTLS satisfying \hyperlink{tvs}{(\twonotes)}. The monotonicity then follows as in the proof of Proposition \ref{cledesc3}. 
 
 To prove the uniqueness of $\A_{-a,b}^u$, we show that it can represented as a simple function of $\A_{-a}^u$ and $\AB_{b}^u$, and then use the uniqueness of the FPS. 
 
 We say that a set $A\subseteq D$ is connected to the boundary if every connected component of $A\cup \partial D$ contains at least one boundary component of $\partial D$.
 
 \begin{prop}\label{EaEbAab}
 	Let $\Phi$ be a GFF in an $n$-connected domain $D$. Then almost surely the union of the components of $\A_{-a}^u\cap \AB_{b}^u$ that are connected to boundary is equal to $\A_{-a,b}^u$. In particular, if $D$ is simply connected we have that $\A_{-a,b}^u=\A_{-a}^u\cap \AB_{b}^u$ (see Figure \ref{Intersection}).
 \end{prop}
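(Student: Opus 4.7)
The plan is to prove a double inclusion, with the forward direction being routine and the reverse direction requiring an analysis in complement components.

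For the forward inclusion, I would use the monotonicity of two-valued sets from Theorem \ref{two-value gen f}: $\A_{-a,b}^u \subseteq \A_{-a,n}^u$ for every $n \geq b$, so passing to the closed union and invoking Proposition \ref{Existence} yields $\A_{-a,b}^u \subseteq \A_{-a}^u$. A symmetric argument gives $\A_{-a,b}^u \subseteq \AB_b^u$. The boundary-connectedness of $\A_{-a,b}^u$ itself would follow from its construction via iterated generalized level lines emanating from $\partial D$: every added piece is attached to $\partial D$ through the previously drawn level lines.

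For the reverse inclusion, let $O$ be a connected component of $D \setminus \A_{-a,b}^u$. By condition (\twonotes), each connected component of $\partial O$ has boundary values $v := h_{\A_{-a,b}^u}+u$ either everywhere $\geq b$ or everywhere $\leq -a$. Using the strong Markov property together with the uniqueness of the FPS (Proposition \ref{Uniqueness}), the restrictions of $\A_{-a}^u$ and $\AB_b^u$ to $\overline O$ coincide with the first passage sets $\A_{-a}^v$ and $\AB_b^v$ of the conditional field $\Phi^{\A_{-a,b}^u}|_O$ in $O$ with boundary data $v$. I would then split into cases. When $v \geq b$ on all of $\partial O$, the maximum principle gives $v \geq b$ in $O$, and the set $\partial O$ itself trivially satisfies the defining axioms of $\AB_b^v$ in $O$ (take $h = 0$, then $h + v \geq b$ and $\Phi_{\partial O} = 0 \leq h$); by uniqueness of the FPS one obtains $\AB_b^v = \partial O$, hence $(\A_{-a}^u \cap \AB_b^u) \cap O = \emptyset$. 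The case $v \leq -a$ on all of $\partial O$ is symmetric.

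In the remaining mixed-sign case, which can only occur when $O$ is not simply connected, $\partial O$ itself already satisfies (\twonotes), so the constructed $\A_{-a,b}^v$ inside $O$ is just $\partial O$; applying the proposition recursively inside $O$ would then yield that the boundary-connected-in-$O$ part of $\A_{-a}^v \cap \AB_b^v$ is $\partial O$, so any piece of the intersection lying in the interior of $O$ must be separated from $\partial O$. Since reaching $\partial D$ from the interior of $O$ requires crossing $\partial O \subseteq \A_{-a,b}^u$, such interior pieces cannot be boundary-connected in $D$, and thus do not appear in the union of boundary-connected components of $\A_{-a}^u \cap \AB_b^u$.

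The main obstacle is making the recursive step rigorous in the mixed-sign case: one needs a notion of complexity that strictly decreases when passing from $D$ to a complement component $O$. A natural candidate is the number of generalized level lines used in the construction of the TVS inside the current subdomain, which is zero for $O$ in the mixed-sign scenario, so the recursion bottoms out immediately with $\A_{-a,b}^v = \partial O$. In the simply connected case the mixed-sign scenario never arises, which recovers the cleaner statement $\A_{-a,b}^u = \A_{-a}^u \cap \AB_b^u$ without the boundary-connectedness qualifier.
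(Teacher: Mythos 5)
Your forward inclusion and the two constant-sign cases are fine and essentially the paper's own argument: monotonicity of TVS plus the construction of the FPS as $\overline{\cup_n\A^u_{-a,n}}$ gives $\A^u_{-a,b}\subseteq\A^u_{-a}\cap\AB^u_b$, and restarting the FPS inside a complement component $O$ via uniqueness (Proposition \ref{Uniqueness}) reduces everything to the boundary data $v=h_{\A^u_{-a,b}}+u$ on $\partial O$. The genuine gap is the mixed-sign case. There your argument is circular: you apply the proposition recursively inside $O$, and the complexity you propose (number of level lines used to build the TVS in the current subdomain) does \emph{not} decrease in any useful sense -- precisely because the TVS for the data $v$ inside $O$ is trivial, the ``recursive call'' is posed on exactly the same configuration $(O,v)$ you started from. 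Knowing $\A^v_{-a,b}=\partial O$ says nothing by itself about where $\A^v_{-a}\cap\AB^v_b$ sits inside $O$; that is precisely the statement to be proved for $O$, and the recursion supplies no new input, so it never terminates with content. (A secondary issue: identifying $\A^v_{-a,b}$ with $\partial O$ ``by uniqueness'' of the TVS is not available at this stage -- uniqueness of two-valued sets in $n$-connected domains is deduced in the paper \emph{from} this very proposition; one can only say that the level-line construction produces the trivial set.) Note also that the scenario you need to exclude is exactly that the two FPSs inside $O$ intersect along a set whose closure reaches $\partial O$: since $\partial O\subseteq\A^u_{-a,b}\subseteq\A^u_{-a}\cap\AB^u_b$ is connected to $\partial D$, such a piece would be boundary-connected and would falsify the claimed equality, so it must be ruled out by an actual argument.

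The missing ingredient, which is what the paper uses, is a boundary-avoidance statement for the FPS: Corollary \ref{Not touch E} (a consequence of the extremal-distance/Brownian-bridge computation in Proposition \ref{LawELBddFPS}) asserts that the FPS of level $-a$ in $O$ stays at positive distance from every connected component of $\partial O$ on which the boundary data is $\leq -a$, and symmetrically $\AB^v_b$ stays at positive distance from every component on which the data is $\geq b$. In the mixed case every component of $\partial O$ is of one of these two types, so the closure of $\A^u_{-a}\cap\AB^u_b\cap O$ does not meet $\partial O$ at all; hence this part of the intersection is not connected to $\partial D$ and is exactly what the boundary-connectedness restriction in the statement discards. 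Replacing your recursive step by this avoidance argument turns your outline into the paper's proof; without it, the mixed-sign case is unproved.
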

 
 \begin{proof}
 	From the monotonicity of two-valued sets (Theorem \ref{two-value gen f}) and the construction of the FPS (Lemma \ref{Existence}) we see that for any $n \geq a \vee b$, we have that $\A_{-a,b}^u\subseteq \A_{-a,n}^u \subseteq A_{-a}^u$ and, furthermore, $\A_{-a,b}^u\subseteq \A_{-n,b}^u \subseteq \AB_b^u$. Hence $\A_{-a,b}^u \subseteq \A_{-a}^u\cap \AB_{b}^u$. Moreover, as by construction $\A_{-a,b}^u$ is connected to the boundary, we deduce that $\A_{-a,b}^u$ is contained on the union of connected components of $\A_{-a}^u \cap \AB_b^u$ that are connected to the boundary.
 	
 	We will now prove the opposite inclusion. To do this, it suffices to show that for every connected component $O$ of $D\backslash \A_{-a,b}$, either it is contained in $D\backslash \A_{-a}^u$ or $D\backslash\AB_{b}^u$, or the closure $O\cap\A_{-a}^u\cap \AB_{b}^u$ does not intersect $\partial O$.
 	
 	First, let us consider the case where $h_{\A_{-a,b}^u}+u$ is less than or equal to $-a$ in $O$. By construction of the FPS (Lemma \ref{Existence}), we see that $O$ is also a connected component of $D\backslash \A_{-a}^u$. Similarly,  if $h_{\A_{-a,b}^u}+u$ is greater than or equal to $b$ in O, then $O$ is a component of $D\backslash \AB_{b}^u$.
 	
 	Thus, it remains to deal with the case when $O$ has more than one boundary component, and where $u$ takes values that less than or equal to $-a$ on some of the boundary components, and values that are greater than or equal to $b$ on all the other boundary components.
 	
 	Now, by the uniqueness of the FPS (Proposition \ref{Uniqueness}) we know that in order to construct the part of FPS $\A_{-a}^u$ intersecting $O$, we can first take $\A_{-a,b}^u$, and then construct the FPS of level $-a$ in $O$. But now, by Corollary \ref{Not touch E} we know that an FPS of level $-a$ inside $O$ cannot touch any part of the boundary where $u$ is smaller than or equal to $-a$. But similarly we see that the part of FPS $\AB_{b}^u$ intersecting $O$, cannot touch the boundary of $O$ where $u$ is larger than or equal to $b$. Hence the closure of $\A_{-a}^u \cap \AB_{b}^u \cap O$ cannot intersect the boundary of $O$ and hence is not connected to the boundary of $D$. The proposition follows.
 	
 \end{proof}
 
 \begin{figure}[ht!]	   
 	\centering
 	\includegraphics[width=2in]{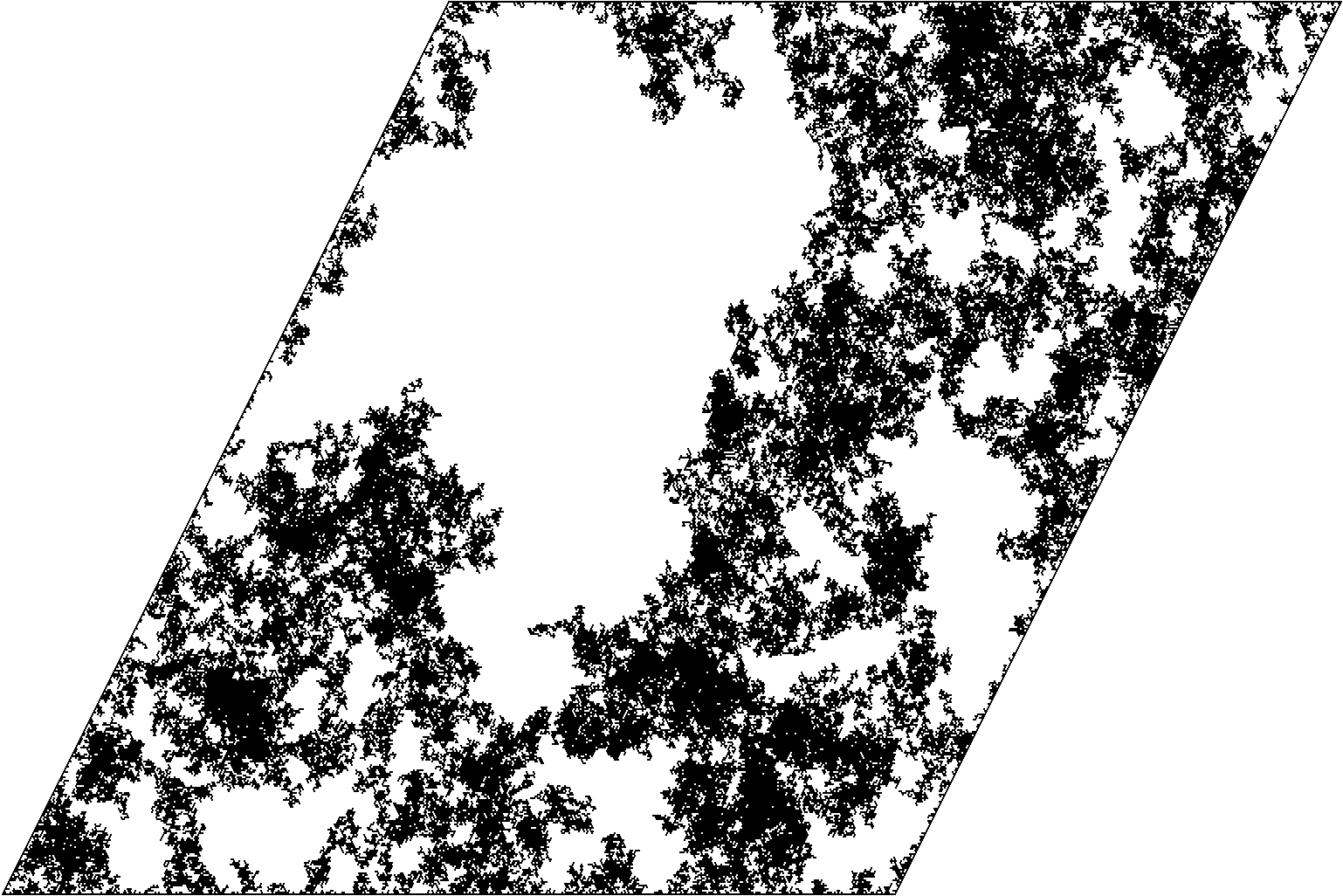}
 	\includegraphics[width=2in]{ALEll.png}
 	\includegraphics[width=2in]{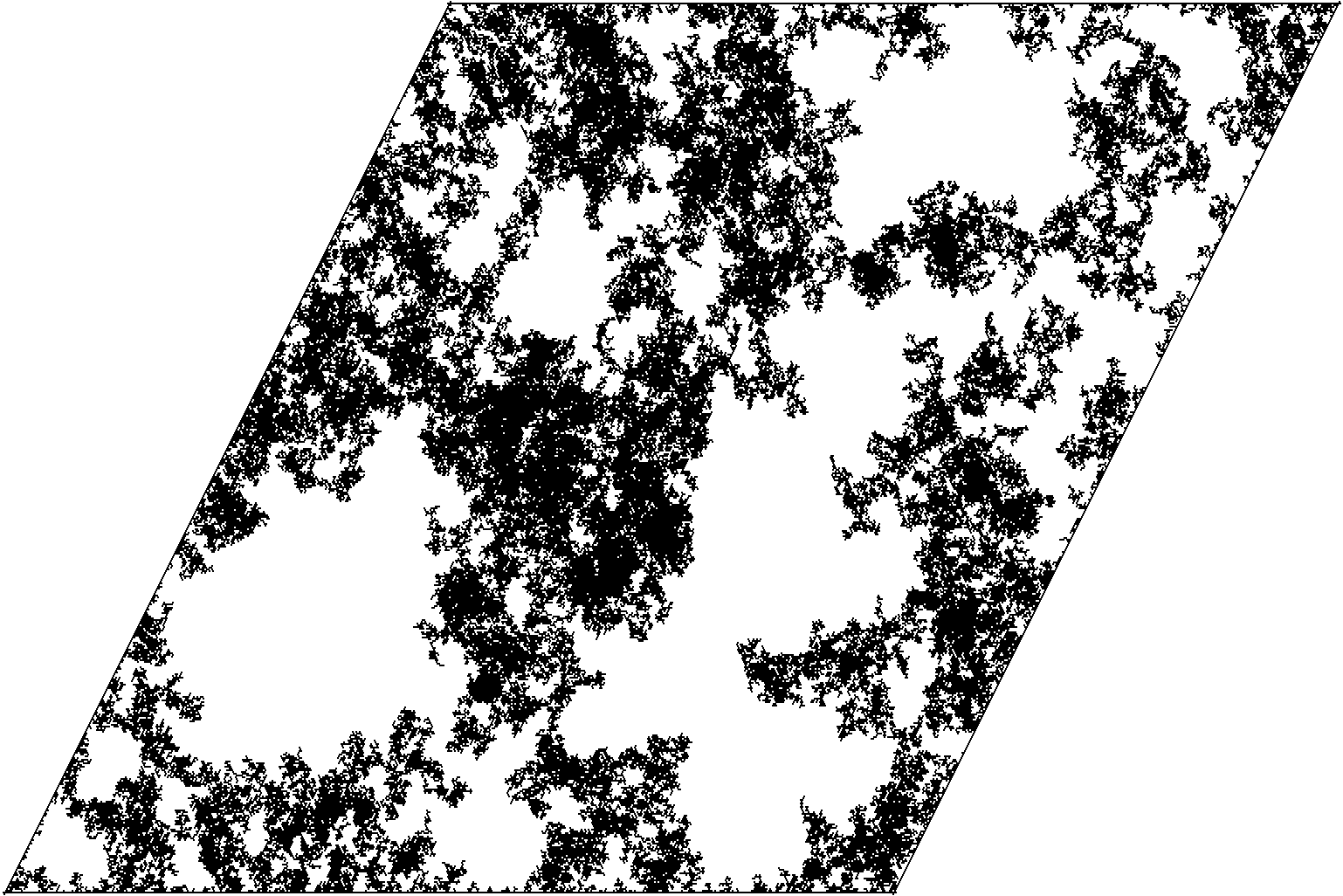}
 	\caption{Graphical evidence of Proposition \ref{EaEbAab}: On the left  $\A_{-\lambda}$, on the right {$\AB_{\lambda}$} and in the middle $\A_{-\lambda,\lambda}$. Simulation done by B. Werness.}
 	\label{Intersection}
 \end{figure}
 
 We are now ready to prove the uniqueness of two-valued sets $\A_{-a,b}^u$ for general boundary data in $n$-connected domains. See Theorem \ref{two-value gen f} for the setting and precise statement. 
 
 \begin{proof}[Uniqueness of two-valued sets]
 	In the proof of Theorem \ref{two-value gen f}, we showed the existence of a two-valued set. Denote this set by $\A_{-a,b}^u$.
 	Suppose $A'$ is another two-valued set coupled with the same GFF, i.e. it satisfies the condition \hyperlink{tvs}{(\twonotes)} given just before Theorem \ref{two-value gen f}. 
 	
 	First, notice that $A'$ has to be connected to the boundary: indeed, suppose for contradiction that there is a connected component $B$ of $A'$ that is not connected to the boundary. Consider a component $O$ of $D \backslash B$ that has $B$ on part of its boundary, and let $\mathcal{B} = \partial O \cap B$. WLOG suppose that the boundary conditions on $\mathcal{B}$ are equal to $-a$. Then, as in the last paragraph of proof of Proposition \ref{EaEbAab}, we see that the FPS of height $-a$ also contains $\B$, and that moreover $\B$ is also not connected to the boundary as a subset of the FPS. However, from the construction (Proposition \ref{Existence}) and uniqueness of the FPS (Proposition \ref{Uniqueness}) we know that the FPS is connected to the boundary.
 	
 	Note that in the proof of Proposition \ref{EaEbAab}, we only used the condition \hyperlink{tvs}{(\twonotes)} and the fact that $\A_{-a,b}^u$ is connected to the boundary. As the same conditions hold for $A'$, we can apply the same argument to conclude that almost surely the union of the components of $\A_{-a}^u\cap \AB_{b}^u$ that are connected to boundary is equal to $A'$. However, we know from Proposition \ref{EaEbAab} that it is equal to $\A_{-a,b}^u$ and the claim follows. 	
 \end{proof}
 
 Finally, let us state here the proposition about $\A_{-a,b}$ are locally finite. In fact this relies on a result from \cite{ALS2}, that proves the local finiteness of first passage sets, using a proof that uses the relation of FPS to Brownian loop-soups. We should maybe stress that none of \cite{ALS2} depends on local finiteness of $\A_{-a,b}$.
 
 \begin{prop}\label{prop:: locally finite}
 	Consider a finitely-connected domain with any piece-wise constant boundary condition $u$. For any $a,b \geq 0$, the two-valued set $\A_{-a,b}^u$ is almost surely locally finite. 
 \end{prop}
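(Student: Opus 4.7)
The plan is to reduce the local finiteness of $\A_{-a,b}^u$ to the local finiteness of the first passage sets $\A_{-a}^u$ and $\AB_{b}^u$, which is assumed from \cite{ALS2}. The key input is the structural classification of complementary components extracted from the proof of Proposition \ref{EaEbAab}.

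I would first recall from the proof of Proposition \ref{EaEbAab} that for every connected component $O$ of $D\backslash \A_{-a,b}^u$ exactly one of three situations occurs: either $h_{\A_{-a,b}^u}+u\leq -a$ throughout $O$, in which case $O$ is itself a connected component of $D\backslash\A_{-a}^u$; or $h_{\A_{-a,b}^u}+u\geq b$ throughout $O$, in which case $O$ is a connected component of $D\backslash\AB_{b}^u$; or $O$ has several boundary components on which $u$ takes values both $\leq -a$ and $\geq b$. Components of the first two types inherit local finiteness for free from the assumed local finiteness of the FPS: for fixed $\epsilon>0$, each of $\A_{-a}^u$ and $\AB_{b}^u$ has a.s. only finitely many complementary components of diameter exceeding $\epsilon$, so the same bound applies to those components of $D\backslash\A_{-a,b}^u$ that are of these two types.

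It remains to control the components of the third type. Here I would argue that there are only finitely many of them deterministically, given the domain $D$: since each such component $O$ has at least two boundary components coming from $\partial D$ (one where $u\leq -a$ and one where $u\geq b$), and $\partial D$ itself has only $n$ connected components (by assumption $D$ is $n$-connected), there can be at most $\lfloor n/2\rfloor$ such components of the third type. In particular this number is uniformly bounded regardless of $\epsilon$, so only finitely many components of this type can have diameter larger than $\epsilon$.

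Combining the three cases yields that for every $\epsilon>0$ only finitely many connected components of $D\backslash\A_{-a,b}^u$ exceed diameter $\epsilon$ almost surely, which is the definition of local finiteness. The only real content here is the classification of complementary components from Proposition \ref{EaEbAab}; the main (deferred) obstacle is the local finiteness of the FPS itself, which is treated in \cite{ALS2} via Brownian loop-soup arguments and is invoked here as a black box.
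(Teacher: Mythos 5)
Your overall strategy is the same as the paper's: invoke the local finiteness of the FPS from \cite{ALS2}, identify most complementary components of $\A_{-a,b}^u$ with complementary components of $\A_{-a}^u$ or $\AB_{b}^u$, and dispose of the finitely many remaining ones. The first two cases are handled correctly. The genuine gap is in your treatment of the third type. You assert that a ``mixed'' component $O$ (one whose boundary components carry both the label $\leq -a$ and the label $\geq b$) must have at least two boundary components coming from $\partial D$, one where $u\leq -a$ and one where $u\geq b$, and you deduce a bound of $\lfloor n/2\rfloor$. This does not follow from condition \hyperlink{tvs}{(\twonotes)} and is false in general: the labels on the boundary components of $O$ come from the harmonic function $h_{\A_{-a,b}^u}+u$, which equals $-a$ or $b$ on the parts of $\partial O$ made of the two-valued set itself, so a mixed component can perfectly well have boundary components consisting entirely of TVS pieces inside $D$, with no arc of $\partial D$ where $u\leq -a$ or $u\geq b$ on them (think of constant boundary data $u\in(-a,b)$ in an annulus, where $u^{-a,b}=\emptyset$ but annular complementary components with different labels on the two sides are allowed by the definition). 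So your counting argument for the third case has no justification as written.

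The fix is cheap and is exactly what the paper does: a component whose boundary components carry different labels cannot be simply connected, and the last bullet of Theorem \ref{two-value gen f} (a consequence of the level-line construction) guarantees that $D\backslash \A_{-a,b}^u$ has at most $n$ non-simply connected components. Replacing your $\partial D$-counting by this observation closes the gap, and then your conclusion — finitely many components of diameter $>\epsilon$ in each of the three classes — goes through as in the paper.
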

 \begin{proof}
 	Proposition 5.7 of \cite{ALS2} gives us the local finiteness of $\A_{-a}^u$ for any choice of $a \geq 0$, of a piece-wise constant boundary condition $u$ and of a finitely-connected domain $D$. But now, note that if $O$ is a simply connected component of $D\backslash \A_{-a}^u$, then it is also a connected component of either $D\backslash \A_{-a}^u$ or $D\backslash \AB_{b}^u$ - indeed, this follows from the uniqueness of the FPS, as when constructing say,  $\A_{-a}^u$ from $\A_{-a,b}^u$ one keeps all the components that have boundary condition less than or equal to $-a$. As there are only finitely many non-simply connected components of $D\backslash \A_{-a}^u$, we conclude.
 \end{proof}
 
 \begin{rem}
 	Let us stress that if one restricts oneself to the simply-connected case, then one can prove local finiteness of $\A_{-a}^u$ without relying on the construction or properties of FPS and TVS in multiply connected domains. In fact, the proof in \cite{ALS2} just uses properties of the Brownian loop-soup. In particular, this provides an alternative proof of Proposition \ref{prop::ALE locally finite}.
 \end{rem}
 

\section{The Minkowski content measure of the FPS}\label{sec:: Minkowski}

The aim of this section to identify the measure
\[\nu_{\A_{-a}^u} := \Phi_{\A_{-a}^u}-h_{\A_{-a}^u}\]
as a Minkowski content measure in a certain gauge: 

\begin{thm}\label{prop:: mes LQG}
	The measure $\nu_{\A_{-a}^u}$ is a measurable function of 
	$\A_{-a}^u$. Moreover, it is proportional to the Minkowski content measure in the gauge  
	$r\mapsto \vert \log(r)\vert^{1/2} r^{2}$. More precisely, almost surely for any continuous $f$ compactly supported in $D$,
	\[\nu_{\A_{-a}^u} = \lim_{r \to 0}\frac{1}{2}
	\vert\log(r)\vert^{1/2} \int_D f(z)  \1_{d(z,\A_{-a}^u)\leq r}dz.\]	
\end{thm}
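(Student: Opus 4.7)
The plan is to prove the two assertions in turn. First I would establish measurability of $\nu_{\A_{-a}^u}$ with respect to $\A_{-a}^u$ alone, and then identify this measure with the Minkowski content limit, combining an explicit moment computation with a deterministic argument that pins down the limiting measure.

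For the measurability, I would exploit the GMC route sketched in the introduction. The construction of the GMC measures $\mu_\gamma$ of $\Phi$ via iterated first passage sets from \cite{APS} expresses $\mu_\gamma$ as an almost sure limit of functionals built along the nested family $(\A^u_{-na})_{n\ge 1}$, and in particular it makes the piece of $\mu_\gamma$ associated to $\A^u_{-a}$ a measurable functional of this set and of $\Phi^{\A^u_{-a}}$ only. Differentiating in $\gamma$ at $0$ via Proposition \ref{thm::Derivative} recovers $\Phi$, and subtracting the harmonic part $h_{\A^u_{-a}}$ (which depends on $\A^u_{-a}$ alone) isolates $\nu_{\A^u_{-a}}$ as a measurable function of $\A^u_{-a}$. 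The measurability of $\Phi$ with respect to a single $\mu_\gamma$ from \cite{BSS} can be invoked to handle issues with taking derivatives versus limits.

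For the Minkowski content identification, set $M_r(f):=\tfrac{1}{2}\vert\log r\vert^{1/2}\int_D f(z)\1_{d(z,\A_{-a}^u)\le r}\,dz$. The key input is Proposition \ref{LawELFPS}: conditional on $u(z)$, the quantity $G_D(z,z)-G_{D\setminus \A_{-a}^u}(z,z)$ is distributed as the one-sided hitting time of $-a$ by a Brownian motion started from $u(z)$. Combining the Beurling-type asymptotic $G_D(z,z)-G_{D\setminus \A_{-a}^u}(z,z)\sim (2\pi)^{-1}\vert\log d(z,\A_{-a}^u)\vert$ with the classical tail asymptotic $\P(\tau_{-a}>T)\sim (u(z)+a)\sqrt{2/(\pi T)}$ yields
\[\P\bigl(d(z,\A_{-a}^u)\le r\bigr)\sim \frac{2(u(z)+a)}{\vert\log r\vert^{1/2}}\qquad\text{as }r\to 0,\]
and therefore $\E[M_r(f)]\to \int_D f(z)(u(z)+a)\,dz$. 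A direct calculation using the Markov decomposition shows that this limit matches $\E[(\nu_{\A_{-a}^u},f)]$, providing the correct normalization and the factor $\tfrac{1}{2}$.

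The hard part is upgrading convergence of means to almost sure convergence for each test function $f$, then promoting to the a.s.\ identity of measures by countability. Here I would use the deterministic linking argument alluded to in the introduction: since Step 1 has identified both $M_r(f)$ and $(\nu_{\A_{-a}^u},f)$ as measurable functionals of $\A_{-a}^u$, the task reduces to showing that any two such measurable functionals with matching (conditional) first moments and correct scaling must coincide. Concretely, I would control $\operatorname{Var}(M_r(f)\mid \A_{-a}^u)$ by estimating covariances of indicators $\1_{d(z,\A)\le r}$ via harmonic measure of the FPS at neighbouring scales (using local finiteness from Proposition \ref{prop:: locally finite} and the BTLS-approximation $\A^u_{-a,n}$), and then identify the limit of $M_r$ as the unique measure on $\A^u_{-a}$ absolutely continuous with respect to the natural scaling limit. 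The main obstacle I expect is precisely this deterministic identification step: one must rule out any other measurable functional of $\A_{-a}^u$ with the same first-moment asymptotics, which requires delicate quantitative control of $r$-neighbourhoods of the FPS uniformly in scale.
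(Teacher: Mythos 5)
Your measurability argument follows essentially the paper's own route (the conditional law of $\mu_\gamma$ given the FPS, the derivative at $\gamma=0$ from Proposition \ref{thm::Derivative}, and the measurability of $\Phi$ in $\mu_\gamma$), though note that knowing $\Phi_{\A_{-a}^u}$ is measurable with respect to $(\A_{-a}^u,\Phi^{\A_{-a}^u})$ only gives measurability with respect to $\A_{-a}^u$ alone after invoking the conditional independence of $\Phi_{\A_{-a}^u}$ and $\Phi^{\A_{-a}^u}$ given the set, a step you skip but which is exactly how the paper concludes in Proposition \ref{lem:mes}. The genuine gap is in the Minkowski-content identification. Your first-moment computation is correct in spirit: Proposition \ref{LawELFPS}, the comparison $d(z,\A_{-a}^u)\leq e^{2\pi g_{D\backslash \A_{-a}^u}(z,z)}\leq c\,d(z,\A_{-a}^u)$ (Lemma \ref{LemKoebe}) and the one-sided hitting-time tail do give $\E[M_r(f)]\to\int_D f(z)(u(z)+a)\,dz=\E[(\nu_{\A_{-a}^u},f)]$. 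But matching expectations over the law of $\A_{-a}^u$ cannot distinguish two $\A_{-a}^u$-measurable functionals, and the upgrade mechanism you propose is vacuous: conditionally on $\A_{-a}^u$, $M_r(f)$ is deterministic, so $\operatorname{Var}(M_r(f)\mid \A_{-a}^u)=0$ and controls nothing. To make a moment method work you would need two-point information, i.e. control of $\E[M_r(f)^2]$ and of the cross term $\E[M_r(f)\,(\nu_{\A_{-a}^u},f)]$, neither of which you supply; you yourself flag this identification step as the main obstacle, so the proof is incomplete precisely where the difficulty lies.

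The paper avoids moments altogether by producing a pathwise identity. Taking conditional expectation in $(\Phi,f)=\tfrac{1}{\sqrt{2\pi}}\partial_\gamma(\mu_\gamma,f)\vert_{\gamma=0}$ and using the explicit conditional law of $\mu_\gamma$ given the FPS (Proposition \ref{CEV GMC}) yields, almost surely and simultaneously for all compactly supported continuous $f$, the formula $(\nu_{\A_{-a}^u},f)=\lim_{\gamma\to 0}\tfrac{\gamma}{\sqrt{2\pi}}\int_{D\backslash\A_{-a}^u} f(z)\,\vert\log d(z,\A_{-a}^u)\vert\, d(z,\A_{-a}^u)^{\gamma^2/2}\,dz$ (Proposition \ref{LemFgamma}), where the interchange of $\partial_\gamma$, the limit and the conditional expectation is justified by the $\LL^1$ convergence in Proposition \ref{thm::Derivative}. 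The remaining step is then purely deterministic: writing $F_\gamma$ as a superposition of the cutoffs $\JJ_r$, exploiting the common scaling $F_{\gamma'}(s)=\beta^{-1/2}F_\gamma(s^\beta)$, a uniform boundedness estimate (Lemma \ref{LemBoundedMink}) and a polynomial approximation of smoothed versions of $\JJ_r$, the convergence of $\MM(F_\gamma)$ forces the convergence of $\tfrac12\MM(\JJ_r)$ to the same limit, the factor $\tfrac12$ arising as $\int_{-\infty}^{+\infty}(1-t^2/2)e^{-t^2/2}\,dt/\sqrt{2\pi}$. If you wish to keep your probabilistic route, you must replace first-moment matching either by such a pathwise formula or by genuine $\LL^2$ estimates; as written, the proposal does not prove the almost sure statement.
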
 

Let us stress that there are two non-obvious statements in this theorem:
\begin{enumerate}
	\item the fact that the measure $\nu_{\A_{-a}^u}$ can be obtained as a measurable function of $\A_{-a}^u$;
	\item and the identification of this function as the Minkowski content measure of $\A_{-a}^u$ in a certain gauge.
\end{enumerate}
As a simply corollary, we can deduce an almost sure statement on the dimension of $\A_{-a}^u$:

\begin{cor}
	If the harmonic function $u$ is not everywhere less or equal to $-a$
	(i.e. if $\A^{u}_{-a}\backslash\partial D$ is non-empty), then
	$\A^{u}_{-a}$ is a.s. of Minkowski dimension 2.
\end{cor}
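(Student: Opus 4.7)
The plan is a short contradiction argument pairing Theorem \ref{prop:: mes LQG} with the non-triviality of $\nu_{\A_{-a}^u}$ established in Remark \ref{rmkmes}. Since $\A_{-a}^u$ is a bounded subset of $\R^2$, its upper Minkowski dimension is automatically at most $2$, so only the matching lower bound $\overline{\dim}_M(\A_{-a}^u)\geq 2$ requires justification.

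First I would use the assumption that $\A_{-a}^u\backslash\partial D$ is non-empty together with Remark \ref{rmkmes}, which asserts that $\nu_{\A_{-a}^u}$ charges every dyadic square meeting $\A_{-a}^u\backslash\partial D$, to pick (on a full-probability event) a small open ball $B$ with $\overline B\subset D$ and a non-negative continuous bump function $f$ supported in $B$ such that $\nu_{\A_{-a}^u}(f)>0$.

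Then, assuming for contradiction that $\overline{\dim}_M(\A_{-a}^u)<2$, the definition of Minkowski dimension produces some $\alpha>0$ such that $\vert\{z\in\R^2:d(z,\A_{-a}^u)\leq r\}\vert\leq r^{\alpha}$ for all sufficiently small $r$. Therefore
\[\vert\log(r)\vert^{1/2}\int_D f(z)\1_{d(z,\A_{-a}^u)\leq r}\,dz\leq \|f\|_{\infty}\vert\log(r)\vert^{1/2} r^{\alpha},\]
which tends to $0$ as $r\to 0$, and Theorem \ref{prop:: mes LQG} forces $\nu_{\A_{-a}^u}(f)=0$, contradicting the choice of $f$.

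The only point one needs to double-check is that ``non-trivial with positive probability'' in Remark \ref{rmkmes} really upgrades to an almost sure statement (and indeed to the stronger statement that every intersecting dyadic square carries positive mass), which follows from the Markov iteration sketched in that remark. Beyond this, there is no genuine obstacle: the point of the corollary is that the logarithmic factor $\vert\log r\vert^{1/2}$ in the gauge of Theorem \ref{prop:: mes LQG} is exactly what prevents the set from having dimension strictly less than $2$, since any polynomial decay $r^{\alpha}$ of $\vert\{z:d(z,\A_{-a}^u)\leq r\}\vert$ would be killed by it while still having to reproduce the positive mass we know to exist.
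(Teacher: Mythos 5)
Your proof is correct and is essentially the argument the paper intends (the corollary is left there as an immediate consequence of Theorem \ref{prop:: mes LQG} combined with the almost sure non-triviality of $\nu_{\A_{-a}^u}$ from Remark \ref{rmkmes}): positive mass of $\nu_{\A_{-a}^u}$ on a bump function compactly supported in $D$ is incompatible, via the gauge $\vert\log r\vert^{1/2}r^{2}$, with any polynomial decay of the volume of the $r$-neighbourhood. The only cosmetic point is that your contradiction hypothesis only treats the upper Minkowski dimension; since the limit in Theorem \ref{prop:: mes LQG} actually exists, your same inequality gives $\vert\{z: d(z,\A_{-a}^u)\leq r\}\vert \geq c\,\vert\log r\vert^{-1/2}$ for all small $r$, which pins the lower Minkowski dimension to $2$ as well.
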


Moreover, observe that the expected value of $\nu_{\A_{-a}}$ is equal to $a$ in the sense that for any continuous function $f$, the expectation of $(f, \nu_{\A_{-a}})$ is equal to $a$ times $ \int_D f(z) dz$. Thus, the fact that the measures $\Phi_{\A_{-a}}$ converge to $\Phi$ as $a \to \infty$, can be interpreted as saying that  the GFF is a limit of recentered Minkowski content measures of certain growing random sets.

\begin{cor}\label{cor::cute}
	For any $n \geq 1$, let $A_n$ be a random set with the law of the FPS $\A_{-n}$ in $\D$, and denote by $\nu_{A_n}$ its Minkowski content measure in the gauge $\frac{1}{2}\vert \log(r)\vert^{1/2} r^{2}$. Then, the recentered measures $\nu_{A_n} - \E[\nu_{A_n}]$ converge in law in $H^{-1}(\D)$ to a zero boundary GFF on $\D$. 
\end{cor}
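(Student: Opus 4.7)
The plan is to identify $\nu_{A_n}-\E[\nu_{A_n}]$ with the local-set projection $\Phi_{A_n}$ of an underlying GFF and then to argue that $\Phi^{A_n}\to 0$ in $H^{-1}(\D)$. Couple the random sets so that $A_{n}=\A_{-n}$ is the FPS of a single zero-boundary GFF $\Phi$; convergence in probability in $H^{-1}(\D)$ on this coupling will then give the required convergence in law, and the limit is manifestly the law of $\Phi$, i.e.\ a zero boundary GFF on $\D$.

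The first step is a direct computation. By Theorem \ref{prop:: mes LQG} with $u\equiv 0$ and $a=n$, one has $\nu_{A_n}=\Phi_{A_n}-h_{A_n}$; since $h_{A_n}\equiv -n$ on $\D\setminus A_n$ and $A_n$ has zero Lebesgue measure, this reads $\nu_{A_n}=\Phi_{A_n}+n\,\mathrm{Leb}$ as elements of $H^{-1}(\D)$. Because $\E[\Phi_{A_n}(f)]=\E[\Phi(f)]-\E[\Phi^{A_n}(f)]=0$ for every test function $f$, we get $\E[\nu_{A_n}]=n\,\mathrm{Leb}$, whence $\nu_{A_n}-\E[\nu_{A_n}]=\Phi_{A_n}=\Phi-\Phi^{A_n}$. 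It thus suffices to show $\Phi^{A_n}\to 0$ in $L^{2}(\Omega;H^{-1}(\D))$.

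The second step is a Green's function estimate. Conditionally on $A_n$, $\Phi^{A_n}$ is a zero boundary GFF in $\D\setminus A_n$, so \eqref{e.H-1 norm} and Fubini give
\begin{equation*}
\E\!\left[\|\Phi^{A_n}\|_{H^{-1}(\D)}^{2}\right]=\iint_{\D\times\D}\E[G_{\D\setminus A_n}(z,w)]\,G_{\D}(z,w)\,dz\,dw.
\end{equation*}
For a fixed pair of distinct points $z,w\in\D$, Proposition \ref{LawELFPS} says that $G_{\D}(z,z)-G_{\D\setminus A_n}(z,z)=g_{\D}(z,z)-g_{\D\setminus A_n}(z,z)$ is distributed as the hitting time of $-n$ by a standard Brownian motion, hence diverges a.s.\ as $n\to\infty$. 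Thus $\crad(z,\D\setminus A_n)\to 0$ almost surely; and by Koebe's one-quarter theorem applied to the simply connected component of $\D\setminus A_n$ containing $z$ (the components are simply connected since $A_n$ is the monotone limit of two-valued sets in the simply connected domain $\D$), its Euclidean diameter also tends to $0$. In particular $w$ eventually lies in a different component of $\D\setminus A_n$ than $z$, so $G_{\D\setminus A_n}(z,w)=0$ for all large $n$.

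Combining this a.s.\ pointwise convergence with the bound $G_{\D\setminus A_n}\leq G_{\D}$ and the integrability $\iint G_{\D}^{2}<\infty$ coming from \eqref{e.H-1 norm}, two applications of dominated convergence (first on $\E[G_{\D\setminus A_n}(z,w)]$ for each fixed $(z,w)$, then on the outer double integral) yield $\E[\|\Phi^{A_n}\|_{H^{-1}(\D)}^{2}]\to 0$. The main obstacle is the geometric claim that the component of $\D\setminus A_n$ containing $z$ shrinks to a point: this requires both the distributional input of Proposition \ref{LawELFPS} and the passage from conformal radius to Euclidean diameter via Koebe, together with the simple-connectedness of the components. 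Everything else is a routine combination of the local-set decomposition and dominated convergence.
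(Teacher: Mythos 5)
Your overall route coincides with the paper's: one identifies $\nu_{A_n}-\E[\nu_{A_n}]=\Phi_{A_n}=\Phi-\Phi^{A_n}$ for the FPSs coupled to a single zero boundary GFF $\Phi$, and then shows $\E\big[\|\Phi^{A_n}\|_{H^{-1}(\D)}^{2}\big]\to 0$ by applying \eqref{e.H-1 norm} conditionally on $A_n$ and dominated convergence (the paper compresses the geometric input into the statement that $A_n\nearrow\D$ in the Hausdorff topology). Your first step and the reduction to the Green's function estimate are correct.

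The genuinely flawed step is the deduction that, because $\crad(z,\D\backslash A_n)\to 0$, ``its Euclidean diameter also tends to $0$'' by Koebe's quarter theorem, whence $w$ eventually lies in a different component and $G_{\D\backslash A_n}(z,w)=0$. Koebe's theorem controls the inradius, namely $\tfrac14\crad(z,\D\backslash A_n)\le d(z,A_n)\le \crad(z,\D\backslash A_n)$, not the diameter: a simply connected component seen from $z$ with tiny conformal radius can still have macroscopic diameter (a long thin sliver), so eventual separation of $z$ and $w$ does not follow from the tools you invoke (it would need extra input, e.g. local finiteness of the FPS). Fortunately the separation claim is not needed: $d(z,A_n)\to 0$ alone forces $G_{\D\backslash A_n}(z,w)\to 0$ for fixed $w\ne z$. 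For instance, by the Beurling estimate the probability that a Brownian motion started at $z$ reaches $\partial B(z,|z-w|/2)$ before hitting $A_n$ is $O\big((d(z,A_n)/|z-w|)^{1/2}\big)$, and $G_{\D\backslash A_n}(z,w)$ is bounded by a constant depending only on $|z-w|$ times this probability; alternatively one can use the Koebe distortion theorem for the uniformizing map of the component containing $z$. With this substitute for the diameter argument, the rest of your proof (domination by $G_{\D}^{2}$ and the two applications of dominated convergence) is correct and yields the corollary exactly as in the paper.
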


\begin{proof}
To prove this corollary, we construct a coupling where $\nu_{A_n} - \E[\nu_{A_n}]$ converges in probability to a GFF in $\D$. We take $\Phi$ a GFF in $\D$, and we couple $A_n$ such that for all $n$, $A_n=\A_{-n}(\Phi)$. Now, note that for any smooth $f$,  we have that $\E\left[\int f d\nu_{A_n} \right]=\E\left[(\Phi_{\A_{-n}}+n,f) \right]=n(1,f)$, thus $\E\left[\nu_{A_{n}} \right]$ is equal to $n$ times Lebesgue measure on $\D$.  Furthermore, $\Phi-\nu_{A_{n}}+n$ has the law of a GFF in $D\backslash A_{n}$, conditionally on $A_n$. Thus, 
		\begin{equation}\label{e. min to GFF}
		\E\big[\|\Phi-\nu_{A_{n}}+n\|_{H^{-1}(\D)}^2\big]= \E\left[ \iint _{\D\times \D}G_{\D\backslash A_n}(w,z)G_{\D} (w,z) dw dz\right].
		\end{equation}
		As  a.s. $A_n$ is connected and $A_{n}\nearrow \D$ for the Hausdorff topology, the Beurling estimate implies that $G_{\D\backslash A_n} \to 0$ a.e. in $\D\times \D$. The dominated convergence theorem implies that \eqref{e. min to GFF} converges to 0, giving the result. \end{proof}

Let us also make two remarks.
\begin{rem}\label{rem::compactly supported}
	In Theorem \ref{prop:: mes LQG}, we restricted to functions supported compactly away from $\partial D$, because by construction,
	$\nu_{\A^{u}_{-a}}(\partial D)=0$, yet $\partial D$ might be irregular enough to have a positive, or even infinite Minkowski content measure. Thus,
	$\nu_{\A^{u}_{-a}}$ is rather the Minkowski content measure of $\A^{u}_{-a}\backslash\partial D$.
\end{rem}

\begin{rem}
	In fact, the theorem holds in a more general setting (with basically the same proof). Indeed, take $A$ to be a local set of the GFF $\Phi$ such that there exists a deterministic $K$ such that $\Phi_{A}=\nu_{A} + h_{A}\geq- K$, where $h_{A}$ is harmonic of $D\backslash A$ and $\nu_{A}$ is a non-negative measure supported on $A$. Then, $\nu_{A}$ is given by the Minkowski	content measure of $A\backslash\partial D$ as in Theorem \ref{prop:: mes LQG}. 
\end{rem}

The rest of this section is devoted to the proof of Theorem \ref{prop:: mes LQG}. We start by looking at the decomposition of the Gaussian multiplicative chaos (GMC) measure induced by the Markovian decomposition of the field w.r.t the FPS. Then, we use this decomposition to see how the geometry of the FPS encodes its height and to give a short, but somewhat unexpected proof of why $\Phi_{\A_{-a}^u}$ is a measurable function of $\A_{-a}^u$. Next, we observe that a careful analysis of the proof gives in fact an explicit expression of $\Phi_{\A_{-a}^u}-h_{\A_{-a}^u}$ in terms of $\A_{-a}^u$, and finally we identify this expression with the Minkowski content measure of $A$ in the gauge $r\mapsto \vert \log(r)\vert^{1/2} r^{2}$. 

In several of the arguments it will be more convenient to work with $\AB_{b}^u$, but notice that by the symmetry of the GFF this is equivalent.

\subsection{Decomposition of the GMC using the FPS}

In \cite{APS}, the decomposition of the GMC measures w.r.t. to the FPS \footnote{In that paper we only use a specific construction of the FPS, basically the one given by Prop \ref{Existence}, and properties stemming from that construction.} and w.r.t. more general local sets was used to view the GMC measure of the GFF as a multiplicative cascade. In particular, it was observed that in the case of the FPS one recovers a construction of \cite{Ai}.

As in \cite{APS} the decomposition of the GMC measures was stated w.r.t. the sets $\AB_{b}^u$ instead of $\A_{-a}^u$ (as this allows to consider positive values of $\gamma$), we will do the same here. 

Define $\tilde \gamma= \sqrt{2\pi} \gamma$ and $\F_{\AB_{b}^u}:=\sigma(\AB_{b}^u,\Phi_{\AB_{b}^u})$. Then from the Proposition 4.1 of \cite{APS} (see also Section 5 of the same paper for the discussion on multiply-connected domains) it follows that: 
\begin{prop}\label{CEV GMC}
	Take $0\leq \gamma <2$, i.e., $0\leq \tilde \gamma<2\sqrt{2\pi}$. Then, for all  continuous function $f$ compactly supported in $D$,
	\[ \E\left[(\mu_\gamma,f)\mid \F_{\AB_{b}^u}\right]= \int_{D\backslash \AB_{b}^u} f(z)\exp\left (\tilde \gamma h_{\AB_{b}^u}(z)+\frac{\tilde \gamma^2}{2}g_{D\backslash \AB_{b}^u}(z,z)\right ) dz\]
	where $g_D$ was defined in \eqref{EqLogSing}. Moreover, conditioned on $\AB_{b}^u$, we have that
	\begin{equation}\label{2ndeq}
	(\mu_\gamma,f) = \int_{D\backslash \AB_{b}^u} f(z)\exp\left (\tilde \gamma h_{\AB_{b}^u}(z)\right ) d\mu_\gamma^{\Phi^{\AB_b^u}}(z),
	\end{equation}
	where $\mu_\gamma^{\Phi^{\AB_b^u}}$	is the GMC measure for the field $\Phi^{\AB_b^u}$.
\end{prop}
\begin{rem}
	In fact the second equality of this proposition is not part of the statement of Proposition 4.1 in \cite{APS}, but follows directly from its proof.
\end{rem}
As from condition (1) of the definition of the FPS it follows that in the coupling $(\Phi, \AB^u_{b})$, the harmonic extension $h_{\AB^u_{b}}$ is a measurable function only of the set $\AB^u_{b}$, we infer a direct but useful corollary:
\begin{lemma} \label{mes LQG}
	$(\mu_\gamma)_{0\leq \gamma<2}$ is a measurable function of $\AB_{b}^u$ and $\Phi^{\AB_{b}^u}$.
\end{lemma}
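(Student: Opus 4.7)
The plan is to read off the result directly from the decomposition formula in Proposition \ref{CEV GMC}. That proposition gives, conditionally on $\AB_{b}^u$, the explicit identity
\[
(\mu_\gamma,f) \;=\; \int_{D\backslash \AB_{b}^u} f(z)\exp\bigl(\tilde\gamma\, h_{\AB_{b}^u}(z)\bigr)\, d\mu_\gamma^{\Phi^{\AB_{b}^u}}(z),
\]
for each $0\leq\gamma<2$ and each continuous compactly supported $f$. So the only thing left is to observe that each factor on the right is measurable with respect to the announced $\sigma$-algebra.

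First, as noted immediately before the statement, condition (1) of Definition~\ref{Def ES} forces $h_{\AB_{b}^u}$ to be a measurable function of $\AB_{b}^u$ alone: the harmonic function has prescribed constant values on the boundary components of each component of $D\setminus\AB_{b}^u$, so it is determined by the set. Second, the measure $\mu_\gamma^{\Phi^{\AB_{b}^u}}$ is, by the very definition of GMC recalled in Section~\ref{GMC}, the $\L^1$ limit of the circle-average regularizations of $\Phi^{\AB_{b}^u}$ on the (random, but $\AB_b^u$-measurable) domain $D\setminus\AB_b^u$; hence it is a measurable function of $(\AB_b^u,\Phi^{\AB_b^u})$.

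Combining these two measurability statements with the explicit integral formula above shows that each random variable $(\mu_\gamma,f)$ is measurable with respect to $\sigma(\AB_b^u,\Phi^{\AB_b^u})$. Taking a countable dense family of test functions $f$ determines $\mu_\gamma$ as a measure, and taking a countable dense set of $\gamma$'s together with the continuity in $\gamma$ from Proposition~\ref{thm::Derivative} gives measurability of the whole family $(\mu_\gamma)_{0\leq\gamma<2}$. There is no real obstacle here: the content of the lemma is already packaged in Proposition~\ref{CEV GMC}, and the argument is a short bookkeeping of what that formula says together with the measurability of $h_{\AB_b^u}$ with respect to $\AB_b^u$.
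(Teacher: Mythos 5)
Your proof is correct and follows essentially the same route as the paper, which presents the lemma as a direct corollary of the conditional decomposition in Proposition \ref{CEV GMC} together with the fact that $h_{\AB_{b}^u}$ is a measurable function of $\AB_{b}^u$ alone. The only cosmetic remark is that the final step for the whole family $(\mu_\gamma)_{0\leq\gamma<2}$ needs no continuity in $\gamma$ (and Proposition \ref{thm::Derivative} only covers $\vert\gamma\vert<\sqrt{2}$ anyway): measurability for each fixed $\gamma$ already gives the statement.
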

Observe that this means that in order to construct the Liouville measure $\mu_\gamma$ we do not need the positive measure $\nu_{\AB^u_{b}}$.

\subsection{How to read the height of the FPS from its geometry}

We have seen that the FPS is measurable w.r.t. the underlying free field. 
A natural question is whether one can also determine $-a$ in an explicit way by just looking at the geometry set $\A_{-a}$. The following proposition says that this is indeed the case, and it presents some of the basic tools later used to show how to construct $\Phi_{\A_{-a}^u}$ as an explicit function of $\A_{-a}^u$. As this is more of a little side-story, we state and prove it in the simplest setting of the unit disk and the zero boundary GFF $\Phi$. A similar result can be stated and proved more generally for finitely-connected domains and general boundary conditions.

\begin{prop}\label{prop::FPSreada}
	Consider the FPS $\A_{-a}$ of the GFF $\Phi$ defined on $\D$ and let $\gamma \in (0,\sqrt 2)$. 
	Then, we have that $\int_0^{2\pi} \left(\frac{\crad(re^{i\theta},D\backslash\A_{-a})}{\crad(re^{i\theta},\D)}\right)^{\pi\gamma^2}d\theta$ converges in probability to $2\pi e^{-\gamma\sqrt{2\pi}a}$ as $r\to 1$.
\end{prop}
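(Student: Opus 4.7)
My plan is to combine a first-moment identity coming from Proposition~\ref{CEV GMC} with a matching second-moment estimate in order to deduce the $L^{2}$-convergence $M(r)\to 2\pi e^{-\tilde\gamma a}$, where $\tilde\gamma:=\gamma\sqrt{2\pi}$ and
\[M(r):=\int_{0}^{2\pi}\!\left(\frac{\crad(re^{i\theta},\D\setminus\A_{-a})}{\crad(re^{i\theta},\D)}\right)^{\!\pi\gamma^{2}}\!d\theta.\]
The key observation is that Proposition~\ref{CEV GMC} applies to $-\Phi$, for which $\AB_{a}(-\Phi)=\A_{-a}(\Phi)$ with $h_{\AB_{a}(-\Phi)}=a$ and $\mu_{\gamma}(-\Phi)=\mu_{-\gamma}(\Phi)$. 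Since every component of $\D\setminus\A_{-a}$ is simply connected (being carved out by iterated level lines in the disk), the Green-function identity reduces the $g_{\D\setminus\A_{-a}}$ term to a power of conformal radius, giving
\[\E[(\mu_{-\gamma},f)\mid \F_{\A_{-a}}]=e^{\tilde\gamma a}\int_{\D\setminus\A_{-a}}f(z)\,\crad(z,\D\setminus\A_{-a})^{\pi\gamma^{2}}\,dz,\]
while the unconditional expectation reduces to $\int f(z)(1-|z|^{2})^{\pi\gamma^{2}}\,dz$. Choosing $f(z)=\phi(|z|)(1-|z|^{2})^{-\pi\gamma^{2}}$ for arbitrary $\phi$, equating both sides after taking expectation, and using the arbitrariness of $\phi$ together with rotational invariance yields $\E[M(r)]=2\pi e^{-\tilde\gamma a}$ for every $r\in(0,1)$.

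Next, I would compute $\E[M(r)^{2}]$ using the two-point GMC formula inside the conditional decomposition. Conditionally on $\F_{\A_{-a}}$, Proposition~\ref{CEV GMC} gives $(\mu_{-\gamma},f)=e^{\tilde\gamma a}\int f\,d\mu_{-\gamma}^{\Phi^{\A_{-a}}}$, and the standard second-moment identity applied to $\mu_{-\gamma}^{\Phi^{\A_{-a}}}$ produces
\[\E[(\mu_{-\gamma},f_{1})(\mu_{-\gamma},f_{2})\mid\F_{\A_{-a}}]=e^{2\tilde\gamma a}\!\iint\! f_{1}f_{2}\,e^{\tilde\gamma^{2}G_{\D\setminus\A_{-a}}(z,w)}\crad(z,\D\setminus\A_{-a})^{\pi\gamma^{2}}\crad(w,\D\setminus\A_{-a})^{\pi\gamma^{2}}dz\,dw.\]
Equating with the direct second-moment $\iint f_{1}f_{2}\,e^{\tilde\gamma^{2}G_{\D}(z,w)}(1-|z|^{2})^{\pi\gamma^{2}}(1-|w|^{2})^{\pi\gamma^{2}}\,dz\,dw$ and testing against $f_{i}(z)=\phi(|z|)(1-|z|^{2})^{-\pi\gamma^{2}}$ yields for a.e.\ $z,w$
\[\E\!\bigl[e^{\tilde\gamma^{2}G_{\D\setminus\A_{-a}}(z,w)}\,X(z,w)\bigr]=e^{-2\tilde\gamma a}\,e^{\tilde\gamma^{2}G_{\D}(z,w)},\]
where $X(z,w):=\bigl(\crad(z,\D\setminus\A_{-a})/(1-|z|^{2})\bigr)^{\pi\gamma^{2}}\bigl(\crad(w,\D\setminus\A_{-a})/(1-|w|^{2})\bigr)^{\pi\gamma^{2}}$. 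The maximum-principle inequality $0\le G_{\D\setminus\A_{-a}}\le G_{\D}$ then sandwiches
\[e^{-2\tilde\gamma a}\;\le\;\E[X(z,w)]\;\le\;e^{-2\tilde\gamma a}\,e^{\tilde\gamma^{2}G_{\D}(z,w)}.\]

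Finally, taking $z=re^{i\theta_{1}}$, $w=re^{i\theta_{2}}$, the explicit disk Green's function $G_{\D}(z,w)=-(2\pi)^{-1}\log|(z-w)/(1-z\bar w)|$ shows that $G_{\D}(re^{i\theta_{1}},re^{i\theta_{2}})\to 0$ as $r\to 1$ whenever $\theta_{1}\ne\theta_{2}$, so the sandwich gives the pointwise limit $\E[X(re^{i\theta_{1}},re^{i\theta_{2}})]\to e^{-2\tilde\gamma a}$. The dominant satisfies $e^{\tilde\gamma^{2}G_{\D}(re^{i\theta_{1}},re^{i\theta_{2}})}\le C\,|\theta_{1}-\theta_{2}|^{-\gamma^{2}}$, which is integrable on $[0,2\pi]^{2}$ precisely when $\gamma^{2}<1$---this is where the hypothesis $\gamma\in(0,1)$ is used, and the condition is sharp as a $|\theta|^{-1}$ singularity fails to be integrable. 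Dominated convergence then gives $\E[M(r)^{2}]\to (2\pi)^{2}e^{-2\tilde\gamma a}$, so $\operatorname{Var}(M(r))\to 0$ and $M(r)\to 2\pi e^{-\tilde\gamma a}$ in $L^{2}$, hence in probability. The main obstacle in this approach is the careful assembly of the two-point conditional GMC identity and the verification that the integrability threshold $\gamma^{2}<1$ is genuinely sharp for our dominant; a strengthening to almost-sure convergence, if desired, would require a further Doob or maximal-type inequality in $r$.
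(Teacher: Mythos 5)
Your proposal is correct and follows essentially the same route as the paper's proof: a first-moment identity for the conformal-radius ratio obtained from Proposition \ref{CEV GMC}, then a second-moment computation in which the factor $e^{\tilde\gamma^{2}G_{\D\setminus\A_{-a}}}\geq 1$ is dropped to bound $\E[M(r)^{2}]$ by an integral of $e^{\tilde\gamma^{2}G_{\D}}$, and finally dominated convergence using $G_{\D}(re^{i\theta_1},re^{i\theta_2})\to 0$ off the diagonal with integrability of $|\theta_1-\theta_2|^{-\gamma^2}$ exactly when $\gamma<1$, giving vanishing variance and $L^{2}$-convergence. The only cosmetic difference is that you work with $\A_{-a}$ via $-\Phi$ and the conditional GMC two-point formula, whereas the paper passes to $\AB_{b}$ by symmetry and runs the same Gaussian computation through circle averages.
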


\begin{proof}
	It is slightly more convenient to work with $\AB_{b}$ instead of $\A_{-a}$. Denote by $$I_r := \int_0^{2\pi} \left(\frac{\crad(re^{i\theta},\D\backslash\AB_{b})}{\crad(re^{i\theta},\D)}\right)^{\pi\gamma^2}d\theta.$$ By taking the expectation in \eqref{2ndeq} and using \eqref{LVexp} we obtain that for any $z \in \D$, $$\E\left[ \left(\frac{\crad(z,\D\backslash\AB_{b})}{\crad(z,\D)}\right)^{\pi\gamma^2}e^{\gamma\sqrt{2\pi}b}\right] = 1.$$ Thus, the mean of $I_r$ is equal to $2\pi e^{-\gamma \sqrt{2\pi b}}$. It remains to show that its variance tends to zero as $r \to 1$. 
	
	Let $\Phi_\epsilon$ is the circle average process introduced in Section \ref{GMC}. By the Markov decomposition w.r.t. $\AB_{b}$, we can write $\Phi_\eps(z) = \Phi_\eps^{\AB_b}(z) + \Phi_{\AB_b,\eps}(z)$. The second term is equal to $b$ for any $z$ of distance larger than $\eps$ from $\AB_b$. Thus by the explicit formula for the Laplace transform of the Gaussian $\Phi_\eps^{\AB_b}(z)+\Phi_\eps^{\AB_b}(w)$, we have that for any $z \neq w \in \D$ and $\eps$ small enough
	\[\E\left[e^{\gamma\sqrt{2\pi}(\Phi_\eps(z)+\Phi_\eps(w))-\frac{\gamma^2}{2}2\pi(\E(\Phi_\eps(z)^2)+\E(\Phi_\eps(w)^2))} \lvert \AB_{b}\right]\]
	is equal to
	\[e^{\gamma^2 4\pi G_{D\backslash \AB_{b}}(z,w)}\left(\frac{\crad(z,\D\backslash\AB_{b})}{\crad(z,\D)}\right)^{\pi\gamma^2}\left(\frac{\crad(w,\D\backslash\AB_{b})}{\crad(w,\D)}\right)^{\pi\gamma^2}e^{2\gamma\sqrt{2\pi}b}, \]

	Thus, as the Green's function is always positive, using bounded convergence theorem $\E\left[ I_r^2\right]$ is upper bounded by
	\[ \lim_{\eps \to 0}e^{-2\gamma\sqrt{2\pi}b}\int_0^{2\pi}\int_0^{2\pi}\E \left[ e^{\gamma\sqrt{2\pi}(\Phi_\eps(re^{i\theta_1})+\Phi_\eps(re^{i\theta_2}))-\frac{\gamma^2}{2}2\pi(\E(\Phi_\eps(re^{i\theta_1})^2)+\E(\Phi_\eps(re^{i\theta_2})^2))}\right]d\theta_1d\theta_2.\]
	However, this expected value is again given by the Laplace transform of a Gaussian and we can get that the limit above is equal to
	\[ e^{-2\gamma\sqrt{2\pi}b}\int_0^{2\pi}\int_0^{2\pi}e^{\gamma^2 G_D(re^{i\theta_1},re^{i\theta_2})}d\theta_1d\theta_2<\infty \]
	as $\gamma<\sqrt 2$. Finally, using dominated convergence and the fact that a.e. $G_D(re^{i\theta_1},re^{i\theta_2})$ converges to $0$ as $r\to 1$, we have that $\E\left[I_r^2\right]$ converges as $r\to 1$ to $( 2\pi e^{-\gamma \sqrt{2\pi b}})^2$.
\end{proof}

\subsection{Measurability of $\Phi_{\A_{-a}^u}$ w.r.t. $\A_{-a}^u$}

We will now give a short argument to prove the measurability of $\Phi_{\A_{-a}^u}$ w.r.t. $\A_{-a}^u$: 

\begin{prop}\label{lem:mes}
	Let $\Phi$ be a GFF in $D$. We have that $\Phi_{\A_{-a}^u}$ is a measurable function of the set $\A_{-a}^u$.
\end{prop}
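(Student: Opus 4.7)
The plan is to combine three inputs: the decomposition of the GMC measure with respect to the FPS from Lemma \ref{mes LQG}, the measurability of the GFF from its GMC measure (Remark \ref{Measurability GMC}), and the conditional independence built into the local set property. By the $\Phi\mapsto-\Phi$ symmetry, it suffices to prove the corresponding statement for $\AB_{b}^{u}$ in place of $\A_{-a}^{u}$.

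First, fix any $\gamma\in(0,2)$ and let $\mu_\gamma$ be the GMC measure of $\Phi$. On the one hand, $\mu_\gamma$ is measurable with respect to $\Phi$ by construction. On the other hand, Lemma \ref{mes LQG} (itself a consequence of Proposition \ref{CEV GMC}) tells us that $\mu_\gamma$ is also measurable with respect to the pair $(\AB_{b}^{u},\Phi^{\AB_{b}^{u}})$: indeed, on $D\setminus\AB_{b}^{u}$ one has $d\mu_\gamma=e^{\tilde\gamma h_{\AB_{b}^{u}}}d\mu_\gamma^{\Phi^{\AB_{b}^{u}}}$, the harmonic function $h_{\AB_{b}^{u}}$ depends only on $\AB_{b}^{u}$, and $\mu_\gamma$ does not charge $\AB_{b}^{u}$. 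Invoking Remark \ref{Measurability GMC}, $\Phi$ is itself a measurable function of $\mu_\gamma$. Chaining these together gives that $\Phi$, and hence $\Phi_{\AB_{b}^{u}}=\Phi-\Phi^{\AB_{b}^{u}}$, is a measurable function of $(\AB_{b}^{u},\Phi^{\AB_{b}^{u}})$.

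To descend from $(\AB_{b}^{u},\Phi^{\AB_{b}^{u}})$-measurability to $\AB_{b}^{u}$-measurability, I would invoke the elementary measure-theoretic fact: if $X$ is $\sigma(Y,Z)$-measurable and $X$ is conditionally independent of $Z$ given $Y$, then $X$ is $\sigma(Y)$-measurable. Take $X=\Phi_{\AB_{b}^{u}}$, $Y=\AB_{b}^{u}$, $Z=\Phi^{\AB_{b}^{u}}$. The required conditional independence is precisely the content of Definition \ref{d.local}: given $(\AB_{b}^{u},\Phi_{\AB_{b}^{u}})$ the field $\Phi^{\AB_{b}^{u}}$ is a zero-boundary GFF in $D\setminus\AB_{b}^{u}$, so its conditional law depends only on $\AB_{b}^{u}$, which is equivalent to the conditional independence of $\Phi_{\AB_{b}^{u}}$ and $\Phi^{\AB_{b}^{u}}$ given $\AB_{b}^{u}$. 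Applying the fact completes the proof.

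The conceptually delicate step — and the one I see as the main obstacle — is the first one: reconstructing the full GMC $\mu_\gamma$ of $\Phi$ purely from $\AB_{b}^{u}$ and the interior field $\Phi^{\AB_{b}^{u}}$, without any explicit reference to the singular mass $\nu_{\AB_{b}^{u}}$ carried by $\Phi$ on $\AB_{b}^{u}$. The nontrivial reason this works is that $\mu_\gamma$ assigns no mass to $\AB_{b}^{u}$, so the apparent information lost when we forget $\nu_{\AB_{b}^{u}}$ is actually encoded in the geometry of $\AB_{b}^{u}$ together with the zero-boundary GFF in its complement. Once Lemma \ref{mes LQG} is in hand, the remainder of the argument is essentially an axiomatic consequence of the local set property together with \cite{BSS}.
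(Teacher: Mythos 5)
Your proposal is correct and follows essentially the same route as the paper's proof: it uses the same three ingredients (the GMC decomposition of Lemma \ref{mes LQG}, the measurability of $\Phi$ from $\mu_\gamma$ via Remark \ref{Measurability GMC}, and the conditional independence of $\Phi_{\AB_b^u}$ and $\Phi^{\AB_b^u}$ given $\AB_b^u$ coming from Definition \ref{d.local}). The only cosmetic difference is that you package the final step as a general fact about conditional independence, whereas the paper writes out the same argument as a chain of conditional expectations $F(\Phi_{\AB_{b}^u})=\E[F(\Phi_{\AB_{b}^u})\mid \mu_\gamma]=\E[F(\Phi_{\AB_{b}^u})\mid \AB_{b}^u,\Phi^{\AB_{b}^u}]=\E[F(\Phi_{\AB_{b}^u})\mid \AB_{b}^u]$.
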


\begin{proof}
	
	In fact it is again clearer to prove the claim for the sets $\AB_b^u$.

	To prove the measurability of $\Phi_{\AB_{b}^u}$, consider $\mu_\gamma$ the $\gamma-$GMC measure corresponding to 
	$\sqrt{2\pi}\Phi$.
	The proof is based on three measurability statements:
	\begin{enumerate}
		\item $\Phi_{\AB_{b}^u}$ is a measurable function of $\Phi$: this follows from Lemma \ref{Existence}.
		\item $\mu_\gamma$ is a measurable function of $\AB_{b}^u$ and $\Phi^{\AB_{b}^u}$: this follows from Lemma \ref{mes LQG}.
		\item $\Phi$ is a measurable function of $(\mu_\gamma)_{0\leq \gamma<2}$: this follows from Theorem \ref{thm::Derivative} (see also Remark \ref{Measurability GMC}).
	\end{enumerate}    
	
	Thus, if $F$ is a bounded measurable function, we have that
	\begin{align*}
	F(\Phi_{\AB_{b}^u})=\E\left[F(\Phi_{\AB_{b}^u})\mid \Phi \right]=\E\left[F(\Phi_{\AB_{b}^u})\mid (\mu_\gamma)_{0\leq \gamma <2}\right]= \E\left[F(\Phi_{\AB_{b}^u})\mid \AB_{b}^u, \Phi^{\AB_{b}^u}  \right].
	\end{align*}
	In other words, $\Phi_{\AB_{b}^u}$ is determined by the couple $(\AB_{b}^u, \Phi^{\AB_{b}^u} )$. Furthermore, note that when $A$ is a local set, it follows from the definition that $\Phi^A$ and $\Phi_A$ are conditionally independent given $A$. Hence $$\E\left[F(\Phi_{\AB_{b}^u})\mid \AB_{b}^u, \Phi^{\AB_{b}^u}  \right] = \E\left[F(\Phi_{\AB_{b}^u})\mid \AB_{b}^u \right].$$ 
	This proves the proposition. 
\end{proof}

\subsection{Explicit expression for the measure $\nu_{\A_{-a}^u}$}
Let us now derive an explicit expression for the measure $\nu_{\A_{-a}^u}$. In fact, it is simpler to continue working with $\AB_{b}^u$, and thus write $\nu_{\AB_{b}^u} := h_{\AB_b^u} - \Phi_{\AB_b^u}$. Notice the difference in sign in the definition due to the fact of taking the FPS in the other direction. Our aim is to prove that:

\begin{prop}
	\label{LemFgamma}
	Almost surely for all $f$ continuous function compactly supported in $D$,
	\begin{equation}
	\label{EqNuF}
	(\nu_{\AB^{u}_{b}},f)=
	\lim_{\gamma\to 0}\frac{\gamma}{\sqrt{2\pi}}\int_{D\backslash \AB_{b}^u}
	f(z)|\log d(z,\AB_{b}^u)|d(z,\AB_{b}^u)^{\gamma^2/2}dz.
	\end{equation}
	By symmetry, the same holds for $\nu_{\A_{-a}^u}$.
\end{prop}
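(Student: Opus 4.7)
The plan is to exploit the explicit formula of Proposition \ref{CEV GMC} for the conditional expectation $F(\gamma) := \E[(\mu_\gamma, f) \mid \F_{\AB_b^u}]$, together with the conditional form of Proposition \ref{thm::Derivative}, by differentiating in $\gamma$ and letting $\gamma \to 0^+$. Using that $g_{D\setminus\AB_b^u}(z,z) = (2\pi)^{-1}\log \crad(z, D\setminus \AB_b^u)$, the formula reads
$F(\gamma) = \int_{D\setminus \AB_b^u} f(z) e^{\sqrt{2\pi}\gamma h_{\AB_b^u}(z)} \crad(z, D\setminus \AB_b^u)^{\gamma^2/2} dz$. For $\gamma > 0$ the factor $\crad^{\gamma^2/2}$ provides integrability and the Leibniz rule applies:
\begin{align*}
\partial_\gamma F(\gamma) = &\sqrt{2\pi}\int f h_{\AB_b^u} e^{\sqrt{2\pi}\gamma h_{\AB_b^u}} \crad^{\gamma^2/2} dz \\
&+ \gamma \int f \log\crad(\cdot, D\setminus \AB_b^u)\, e^{\sqrt{2\pi}\gamma h_{\AB_b^u}} \crad^{\gamma^2/2} dz.
\end{align*}

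On the other hand, the $C^1$-convergence in $\gamma$ of Proposition \ref{thm::Derivative} holds also in $\LL^2$, so dominated convergence permits the exchange $\partial_\gamma F(\gamma) = \E[\partial_\gamma(\mu_\gamma, f) \mid \F_{\AB_b^u}]$ for $\gamma$ in a neighborhood of $0$, and then
$\lim_{\gamma \to 0^+}\partial_\gamma F(\gamma) = \sqrt{2\pi}(\Phi_{\AB_b^u}, f) = \sqrt{2\pi}\int f h_{\AB_b^u}\, dz - \sqrt{2\pi}(\nu_{\AB_b^u}, f)$. The first term in the derivative formula above converges by dominated convergence (since $h_{\AB_b^u}$ is bounded by the BTLS property and $e^{\sqrt{2\pi}\gamma h}\crad^{\gamma^2/2}$ is uniformly bounded on $\text{supp}(f)$) to $\sqrt{2\pi}\int f h_{\AB_b^u}\, dz$. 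Matching the two sides and using $\log\crad = -|\log\crad|$ on the region where eventually $\crad \leq 1$, I conclude
\[\lim_{\gamma\to 0^+}\gamma\int f(z)|\log\crad(z,D\setminus\AB_b^u)|\,e^{\sqrt{2\pi}\gamma h_{\AB_b^u}(z)}\crad^{\gamma^2/2}\, dz = \sqrt{2\pi}(\nu_{\AB_b^u}, f).\]

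The final step is to pass from $\crad$ to $d(\cdot, \AB_b^u)$ and from $e^{\sqrt{2\pi}\gamma h}$ to $1$. Since $h_{\AB_b^u}$ is bounded, the latter factor contributes $1 + O(\gamma)$ uniformly and can be dropped. For the former, Koebe's $1/4$-theorem gives $d/4 \leq \crad \leq 4d$ in each simply connected component of $D\setminus\AB_b^u$; the finitely many non-simply connected components (which arise as controlled limits of those of the approximating TVS via Theorem \ref{two-value gen f}) admit the same two-sided comparison up to a uniform constant by a standard potential-theoretic argument. Therefore $\log \crad = \log d + O(1)$ and $\crad^{\gamma^2/2} = d^{\gamma^2/2}(1 + O(\gamma^2))$ uniformly, and the resulting error terms in the integral are $O(\gamma)$ and vanish in the limit, yielding \eqref{EqNuF}.

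The central obstacle is the rigorous justification of the interchange of $\partial_\gamma$ with the conditional expectation together with control of the one-sided limit $\gamma \to 0^+$. The naive termwise limit of the explicit formula for $\partial_\gamma F(\gamma)$ at $\gamma = 0$ misses the $\nu$-contribution entirely: the $\gamma \cdot \log\crad$ factor is formally zero but integrates against the singular behaviour of $\crad$ near $\AB_b^u$ to yield exactly $-\sqrt{2\pi}(\nu_{\AB_b^u}, f)$. Tracking this hidden singular contribution is the heart of the argument and the main reason one cannot simply Taylor-expand in $\gamma$.
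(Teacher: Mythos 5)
Your route is the same as the paper's: differentiate the explicit conditional expectation of Proposition \ref{CEV GMC} in $\gamma$, use Proposition \ref{thm::Derivative} to interchange the $\gamma$-derivative and the limit $\gamma\to 0^{+}$ with the conditional expectation so that $\lim_{\gamma\to 0^{+}}\partial_\gamma\E[(\mu_\gamma,f)\mid\F_{\AB_b^u}]=\sqrt{2\pi}\,(\Phi_{\AB_b^u},f)$, identify the singular term $\gamma\int f\,g_{D\backslash\AB_b^u}(z,z)e^{\cdots}dz$ with $(\nu_{\AB_b^u},f)$ (the paper's Lemma \ref{LemApplyComDeriv}), and then trade the Green-function diagonal for the distance to the set (the paper's Lemma \ref{LemKoebe} plus a sandwich). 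The core computation and the sign bookkeeping are correct. Two steps, however, are thinner than the statement requires.

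First, the identity $g_{D\backslash\AB_b^u}(z,z)=(2\pi)^{-1}\log\crad(z,D\backslash\AB_b^u)$ only makes sense on the simply connected components of the complement; what you actually need is the two-sided bound $d(z,\partial U)\leq e^{2\pi g_U(z,z)}\leq c\,d(z,\partial U)$ on the finitely many non-simply-connected components $U$. This is not covered by Koebe, and your appeal to ``a standard potential-theoretic argument'' (and to components arising as limits of TVS components, which plays no role here) is exactly the content of the paper's Lemma \ref{LemKoebe}, whose upper bound is obtained from a Beurling estimate; moreover the constant there depends on the smallest hole of the (random) domain, so it is not ``uniform'', though this is harmless since $c^{\gamma^2/2}\to 1$. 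Second, and more substantively with respect to the quantifier in the proposition: your argument yields, for each \emph{fixed} $f$, an almost sure limit, so the exceptional null set a priori depends on $f$, whereas the statement is ``almost surely, for all $f$''. The paper removes this dependence by first obtaining the limit along a countable family dense for uniform convergence on compacts, and then noting that the measures $\frac{\gamma}{\sqrt{2\pi}}\vert g_{D\backslash\AB_b^u}(z,z)\vert e^{\pi\gamma^2 g_{D\backslash\AB_b^u}(z,z)}dz$ have uniformly bounded mass on compact subsets of $D$, so weak compactness upgrades the convergence to all continuous compactly supported $f$ simultaneously. Both points are fillable, but as written your proof establishes a statement weaker than the one claimed.
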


To prove this proposition we will first look at the step 3 in the proof of Proposition \ref{lem:mes} in more detail. Indeed, this step stems from \eqref{Derivative}, which identifies the GFF as the derivative of the GMC measures at $\gamma = 0$, i.e. for any compactly supported continuous $f$:
\[(\Phi,f) = \frac{1}{\sqrt {2\pi}}\partial_{\gamma} (\mu_\gamma,f)\mid_{\gamma=0}.\]
Let us take conditional expectation with respect to $\F_{\AB_b^u}$ and write
\[	(\Phi_{\AB_{b}^u},f)=\E\left[(\Phi,f)\mid \F_{\AB_{b}^u}\right]=\frac{1}{\sqrt{2\pi}}\E\left[\lim_{\gamma \to 0^{+}}\partial_\gamma 
(\mu_{\gamma},f) \mid \F_{\AB_{b}^u} \right].\]
Now, by Proposition \ref{thm::Derivative} $\partial_\gamma (\mu_{\gamma},f)$ converges in $\LL^1$ as $\gamma\to 0^{+}$ towards $\sqrt{2\pi}(\Phi,f)$ and thus one can interchange the limit as $\gamma\to 0^{+}$ and the conditional expectation. For the same reason, one can also change the order of the $\gamma-$derivative and the conditional expectation. We obtain:
\begin{equation}\label{petit derivative}
(\Phi_{\AB_{b}^u},f)= 
\frac{1}{\sqrt{2\pi}}\lim_{\gamma\to 0^{+}} \partial_\gamma \E\left[(\mu_{\gamma},f)\mid \F_{\AB_{b}^u}\right]
\end{equation}
We claim that:

\begin{lemma}
	\label{LemApplyComDeriv}
	Almost surely, for all compactly supported functions $f$ on $D$,
	\begin{equation}
	\label{EqMeas1}
	(\nu_{\AB_{b}^u},f):=(h_{\AB_{b}^u}-\Phi_{\AB_{b}^u},f)=
	\lim_{\gamma\to 0^{+}}
	-\sqrt{2\pi}\gamma
	\int_{D\backslash \AB_{b}^u}
	f(z)(g_{D\backslash \AB_{b}^u}(z,z))
	e^{\frac{\tilde\gamma^2}{2}g_{D\backslash \AB_{b}^u}(z,z)
	}dz.
	\end{equation}
	By symmetry, the same holds for $\nu_{\A_{-a}^u}$.
\end{lemma}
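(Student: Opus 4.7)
The plan is to substitute the explicit conditional expectation from Proposition \ref{CEV GMC} into the already-established identity \eqref{petit derivative}. Writing $\tilde\gamma = \sqrt{2\pi}\gamma$ and abbreviating $g = g_{D\backslash\AB_b^u}$, $h = h_{\AB_b^u}$, Proposition \ref{CEV GMC} expresses the conditional expectation as $\int_{D\backslash\AB_b^u} f(z)e^{\tilde\gamma h(z) + \tilde\gamma^2 g(z,z)/2}\,dz$. I would differentiate under the integral sign (justified for each fixed $\gamma > 0$ since $h$ is uniformly bounded and since $|g|e^{\tilde\gamma^2 g/2}$ is bounded by $2/(\tilde\gamma^2 e)$) to obtain
\[
\partial_\gamma \E[(\mu_\gamma,f)\mid\F_{\AB_b^u}] = \int f(z)\bigl(\sqrt{2\pi}\,h(z) + 2\pi\gamma\,g(z,z)\bigr)e^{\tilde\gamma h + \tilde\gamma^2 g/2}\,dz.
\]
Plugging into \eqref{petit derivative} and applying bounded convergence to the $\sqrt{2\pi}h$-piece (its integrand tends to $fh$ pointwise and is dominated uniformly in small $\gamma$ by $\|f\|_\infty\|h\|_\infty e^{C}\mathbf{1}_K$, since $g$ is bounded above on the compact support $K$ of $f$) gives
\[
(\Phi_{\AB_b^u},f) = \int f h\,dz + \lim_{\gamma\to 0^+}\sqrt{2\pi}\gamma\int fg\,e^{\tilde\gamma h + \tilde\gamma^2 g/2}\,dz.
\]
Using the distributional identity $\Phi_{\AB_b^u} = h - \nu_{\AB_b^u}$, this rearranges to $(\nu_{\AB_b^u},f) = -\lim_{\gamma\to 0^+}\sqrt{2\pi}\gamma\int fg\,e^{\tilde\gamma h + \tilde\gamma^2 g/2}\,dz$, which is the target identity \emph{except} for an extra factor $e^{\tilde\gamma h}$ in the integrand.

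The remaining task is therefore to remove this $e^{\tilde\gamma h}$ factor, which I would handle by a sign-separation argument rather than a direct absolute-value bound. Assume first $f \geq 0$, and split the integral into $\{g \geq 0\}$ and $\{g < 0\}$. On $\{g \geq 0\}$, the integrand $fg e^{\tilde\gamma^2 g/2}$ is bounded (both with and without the $e^{\tilde\gamma h}$ factor) by a $\gamma$-independent constant, so the contribution after multiplication by $\sqrt{2\pi}\gamma$ is $O(\gamma)\to 0$. On $\{g < 0\}$, the integrand $fg$ has constant (non-positive) sign, and the uniform bound $e^{\tilde\gamma h(z)} = 1 + O(\gamma)$ yields the two-sided inequality
\[
(1 + C\gamma)\sqrt{2\pi}\gamma\!\int_{\{g<0\}}\!\!fg\,e^{\tilde\gamma^2 g/2}dz \;\leq\; \sqrt{2\pi}\gamma\!\int_{\{g<0\}}\!\!fg\,e^{\tilde\gamma h + \tilde\gamma^2 g/2}dz \;\leq\; (1 - C\gamma)\sqrt{2\pi}\gamma\!\int_{\{g<0\}}\!\!fg\,e^{\tilde\gamma^2 g/2}dz,
\]
(inequalities flipped because $fg\leq 0$). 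As the middle expression has a finite limit, the outer ones do as well, and all three agree in the limit. For general real $f$, decompose $f = f_+ - f_-$ and use linearity; for the almost-surely-for-all-$f$ version, establish the equality for a countable dense family $\{f_k\}\subset C_c(D)$ on a single full-probability event and extend by continuity, both sides being (signed) measures in $f$.

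The main obstacle is precisely this removal of $e^{\tilde\gamma h}$: a naive absolute-value bound on the difference reduces to estimating $\gamma^2\int|fg|e^{\tilde\gamma^2 g/2}dz$, which the pointwise bound $|g|e^{\tilde\gamma^2 g/2} = O(\gamma^{-2})$ only gives as $O(1)$, not $o(1)$ — insufficient for the needed vanishing, and sharpening it would already require the Minkowski-content estimate we are trying to prove. The sign-separation trick above sidesteps this circularity by replacing the absolute-value bound by a multiplicative ratio bound that tends to $1$, while exploiting the a priori existence of the limit with the $e^{\tilde\gamma h}$ factor.
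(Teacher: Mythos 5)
Your proposal follows essentially the same route as the paper: substitute the explicit conditional expectation of Proposition \ref{CEV GMC} into \eqref{petit derivative}, differentiate in $\gamma$, treat the $h_{\AB_b^u}$-term by dominated convergence on the compact support of $f$, identify the remaining $\sqrt{2\pi}\gamma$-term with the right-hand side of \eqref{EqMeas1}, and pass from a fixed $f$ to all $f$ via a countable dense family. The only divergence is that you justify dropping the factor $e^{\tilde\gamma h_{\AB_b^u}}$ by the sign-separation/squeeze argument on $\{g<0\}$, a step the paper dispatches in a single sentence (``$\tilde\gamma h+\tilde\gamma^2 g/2\to 0$''); your more explicit treatment is correct and is in fact the right way to make that sentence rigorous, since, as you note, the crude bound $|g|e^{\tilde\gamma^2 g/2}=O(\gamma^{-2})$ only yields $O(1)$.
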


\begin{proof}
	By Proposition \ref{CEV GMC}, we have that $\partial_{\gamma}\E\left [(\mu_\gamma,f)\vert \AB^{u}_{b}\right ]$ is equal to
	\begin{equation*}
	\sqrt{2\pi}\int_{D\backslash \AB_{b}^u}f(z)\left[h_{\AB_{b}^u}(z)+\sqrt{2\pi}\gamma g_{D\backslash \AB_{b}^u}(z,z)\right]
	e^{\tilde \gamma h_{\AB_{b}^u}(z)
		+\frac{\tilde \gamma^2}{2}g_{D\backslash \AB_{b}^u}(z,z)
	}dz
	.
	\end{equation*}
	Using the fact that $\tilde \gamma h_{\AB_{b}^u}(z)
	+\tilde \gamma^2g_{D\backslash \AB_{b}^u}(z,z)/2
	$ converges to $0$ as $\gamma \to 0$ and \eqref{petit derivative}, we obtain the result for a fixed function $f$. Thus, this also holds for a countable dense family, for the uniform convergence on compact subsets, in the space of continuous functions on $D$. 
	Now, notice that as $g_{D\backslash \AB_{b}^u}(z,z) \leq g_D(z,z) < C_r$ for any compact subset $D_r \subset D$, the measure defined by the RHS of \eqref{EqMeas1} on any such compact subset $D_r$ is of bounded total mass. As the space of such measures is compact, they have a weak limit and \eqref{EqMeas1} holds simultaneously for all $f$ continuous on $D_r$. As we can take $D_r \to D$, the lemma follows.
\end{proof}

In order to deduce Proposition \ref{LemFgamma}, we will now make use of a simple estimate relating $g_D(z,z)$ as defined in  \eqref{EqLogSing} with the distance of $z$ to $\partial D$. This result follows directly from Koebe's quarter theorem in simply connected domains, but in the case of $n-$connected domains requires an argument.

\begin{lemma}
	\label{LemKoebe}
	For any $D$ open bounded connected finitely connected domain
	of $\C$ with non-polar boundary components, there is a constant $c=c(D)>1$ such that 
	\begin{displaymath}
	\forall z\in D,
	d(z,\partial D)\leq
	e^{2\pi g_{D}(z,z)}\leq c d(z,\partial D).
	\end{displaymath}
\end{lemma}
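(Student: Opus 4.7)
The plan is to prove the two inequalities separately. The lower bound will follow from an elementary monotonicity argument, while the upper bound will require Koebe's $1/4$ theorem applied to a simply connected super-domain associated to each boundary component of $D$.

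For the lower bound, I would set $r := d(z,\partial D)$ and use that $B(z,r) \subset D$. Monotonicity of Dirichlet Green's functions under domain inclusion gives $g_D(z,z) \geq g_{B(z,r)}(z,z)$, and a direct computation with the explicit formula $G_{B(z,r)}(z,w) = (2\pi)^{-1}\log(r/|z-w|)$ yields $g_{B(z,r)}(z,z) = (2\pi)^{-1}\log r$. Exponentiating gives $d(z,\partial D) \leq e^{2\pi g_D(z,z)}$ directly.

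For the upper bound, I would enumerate the (finitely many) connected components of $\partial D$ as $K_0,\ldots,K_{n-1}$ and, for each $i$, construct a simply connected super-domain $D_i \supseteq D$ with $\partial D_i = K_i$. Concretely, if $L_i$ denotes the connected component of $\hat{\mathbb{C}} \setminus D$ bordered by $K_i$, take $D_i$ to be the connected component of $\hat{\mathbb{C}} \setminus L_i$ containing $D$; such components are simply connected because $L_i$ is a continuum on the sphere (Alexander duality). A maximum-principle argument shows that monotonicity $g_D(z,z) \leq g_{D_i}(z,z)$ still holds, even when $D_i$ is viewed as a subset of $\hat{\mathbb{C}}$. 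Fixing a uniformizing map $\psi_i: D_i \to \mathbb{D}$, a short calculation with the explicit Green's function of $\mathbb{D}$ gives
\[e^{2\pi g_{D_i}(z,z)} = \frac{1-|\psi_i(z)|^2}{|\psi_i'(z)|},\]
and Koebe's $1/4$ theorem applied to $\psi_i^{-1}$ at $\psi_i(z)$ yields $d(z,\partial D_i) \geq (1-|\psi_i(z)|^2)/(4|\psi_i'(z)|)$. Combining, $e^{2\pi g_D(z,z)} \leq 4\, d(z,K_i)$, and picking the index $i$ minimizing $d(z,K_i)$ produces the desired bound $e^{2\pi g_D(z,z)} \leq 4\, d(z,\partial D)$.

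The only real nuisance is that for inner boundary components the super-domain $D_i$ contains $\infty$. I expect this to cause no essential trouble: one can either precompose with a Möbius transformation sending $\infty$ to a finite point, reducing to the standard bounded simply connected setting for Koebe, or simply note that the displayed formula and the Koebe estimate via $\psi_i^{-1}$ are insensitive to whether $\infty$ lies in $D_i$, because they are only evaluated at the finite point $z$. A pleasant by-product is that the argument gives the uniform constant $c=4$, slightly stronger than the $c = c(D)$ demanded in the statement.
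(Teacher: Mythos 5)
Your lower bound is fine (it is the same observation as in the paper, phrased via monotonicity against the inscribed ball rather than via the harmonic extension of $x\mapsto\log|z-x|$), but the upper bound has a genuine gap at exactly the step you dismiss as a nuisance. For an inner hole, the super-domain $D_i$ contains $\infty$, so $\psi_i^{-1}:\D\to D_i$ has a pole inside $\D$ and Koebe's quarter theorem simply does not apply to it; the inequality $e^{2\pi g_{D_i}(z,z)}\leq 4\,d(z,\partial D_i)$ is in fact false for such domains. Concretely, if $D_i=\widehat{\C}\setminus \overline{B(p,\epsilon)}$ and $z$ is at distance $R\gg\epsilon$ from $p$, the map $\psi_i(w)=\epsilon/(w-p)$ gives $e^{2\pi g_{D_i}(z,z)}=(R^2-\epsilon^2)/\epsilon$, while $d(z,\partial D_i)=R-\epsilon$, so the ratio blows up like $R/\epsilon$. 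Your identity $e^{2\pi g_{D_i}(z,z)}=(1-|\psi_i(z)|^2)/|\psi_i'(z)|$ is still correct there, which shows the failure is precisely in the Koebe step, not in bookkeeping. The Möbius fix does not rescue it either: $e^{2\pi g(z,z)}$ and $d(z,\partial D)$ both transform by derivative factors that do not cancel, and more fundamentally no universal constant such as $4$ can exist. For a hole of diameter $\delta$ inside a domain of size $R$ and $z$ at distance roughly $\sqrt{R\delta}$ from the hole, a harmonic-measure computation gives $e^{2\pi g_D(z,z)}/d(z,\partial D)$ of order a positive power of $R/\delta$, which is unbounded as $\delta/R\to 0$. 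This is why the statement only asserts $c=c(D)$, and your claim of the uniform constant $c=4$ is a red flag that the argument has broken down.

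Your construction does prove the correct bound when the nearest boundary component is the outer one (the hull $D_0$ is bounded and Koebe applies), but that is not the hard case. The paper handles the inner holes by a different mechanism: it writes $2\pi g_D(z,z)=\E\left[\log|z-B^z_{T_{\partial D}}|\right]$ for a Brownian motion started at $z$ and uses Beurling's estimate to show that the exit point is unlikely to be much farther from $z$ than $d(z,\partial D)$, the estimate being applicable only up to the scale $\delta$ of the smallest hole; this is exactly where the $D$-dependence of the constant enters (through $\delta$ and $\operatorname{diam}(D)$). To repair your approach you would need some substitute quantitative input of this kind for points whose nearest boundary component is a hole; the purely conformal comparison with the simply connected hulls cannot give it.
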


\begin{proof}
	If $D$ is simply connected, then
	$e^{2\pi g_{D}(z,z)}$ equals the conformal radius
	$\operatorname{CR}(z,D)$, and the results follows from Koebe's quarter theorem, with $c=4$.
	
	For the other cases, note that $2\pi g_{D}(z,z)$ is the value at $z$ of the
	harmonic extension of the boundary values
	$x\mapsto \log (\vert z-x\vert)$, $x\in\partial D$. This already implies that
	\begin{displaymath}
	2\pi g_{D}(z,z)\geq \log(d(z,\partial D)).
	\end{displaymath}
	Take $B^z$ a Brownian motion started at $z\in D$ and  define $T_{\partial D}$ as the first time $B^z$ hits $\partial D$.
	Let $\delta$ be the smallest diameter of an inner hole of $D$.
	With the Beurling estimate (Theorem 3.76 of \cite{LawC}) we get that 
	$$\mathbb{P}(|z-B^z_{T_{\partial D}}|\geq r d(z,\partial D))
	\leq Cr^{-1/2},$$
	where $C$ is a constant not depending on $z$, as long as the open disk $B(z,r d(z,\partial D))$ does not entirely contain an inner hole of $D$. 
	A sufficient condition for that is 
	$r d(z,\partial D)\leq\delta$. Thus,
	\begin{eqnarray*}
		2\pi g_{D}(z,z)&=&
		\log(d(z,\partial D))
		+\int_{1}^{\frac{\operatorname{diam}(D)}{d(z,\partial D)}}
		\mathbb{P}(|z-B^z_{T_{\partial D}}|\geq e^{s} d(z,\partial D))ds
		\\&\leq &
		\log(d(z,\partial D))
		+\int_{1}^{\frac{\delta}{d(z,\partial D)}}
		\mathbb{P}(|z-B^z_{T_{\partial D}}|\geq e^{s} d(z,\partial D))ds
		\\&&+\mathbb{P}(|z-B^z_{T_{\partial D}}|\geq \delta)
		\log\left(\dfrac{\operatorname{diam}(D)}{d(z,\partial D)}\right)
		\\&\leq&
		\log(d(z,\partial D))
		+C\int_{1}^{+\infty}
		e^{-\frac{s}{2}}ds
		+C\dfrac{d(z,\partial D)^{1/2}}{\delta^{1/2}}
		\log\left(\dfrac{\operatorname{diam}(D)}{d(z,\partial D)}\right).
		\qedhere
	\end{eqnarray*}
\end{proof}

We can now prove Proposition \ref{LemFgamma}.

\begin{proof}[Proof of Proposition \ref{LemFgamma}:]
	First, note that $D\backslash\A^{u}_{-a}$ has a.s. finitely many non simply connected components.
	By \ref{LemKoebe}, there is some $c>1$, such that for all
	$z\in D\backslash\A^{u}_{-a}$,
	\begin{equation*}
	d(z,\A^{u}_{-a})\leq
	e^ {2\pi g_{D\backslash\A^{u}_{-a}}(z,z)}\leq c d(z,\A^{u}_{-a}).
	\end{equation*}
	Plugging this into \eqref{EqMeas1} and taking $f$ positive, we obtain that on the one hand
	\begin{equation*}
	\limsup_{\gamma\to 0^{+}}
	-\dfrac{\gamma}{\sqrt{2\pi}}
	\int_{D\backslash\A^{u}_{-a}}
	f(z)\log(d(z,\partial D)+c)d(z,\partial D)^{\frac{\gamma^{2}}{2}}dz
	\leq(\nu_{\A^{u}_{-a}},f),
	\end{equation*}
	and on the other hand
	\begin{equation*}
	(\nu_{\A^{u}_{-a}},f) \leq\liminf_{\gamma\to 0^{+}}
	-\dfrac{\gamma}{\sqrt{2\pi}}c^{\frac{\gamma^{2}}{2}}
	\int_{D\backslash\A^{u}_{-a}}f(z)
	\log(d(z,\partial D))d(z,\partial D)^{\frac{\gamma^{2}}{2}}
	dz.
	\end{equation*}
	As both limits are equal to
	\eqref{EqNuF}, the proposition follows.
\end{proof}

\subsection{Identification with the Minkowski content measure } In this section, we finish the proof of Theorem \ref{prop:: mes LQG}, by showing that the measure $\nu_{\A_{-a}^u}$ agrees with the Minowski content in the gauge $r\mapsto |\log (r)|^{1/2}r^2$. The proof is ``deterministic'' and gives a rather general strategy for identifying two measures defined on a fractal set and satisfying coherent scaling.

Let us first introduce some notation: given $F$ a function on $(0,+\infty)$ and a compact subset $A$ of $\overline{D}$ with $0$ Lebesgue measure, we define the measure
\begin{displaymath}
\mathfrak{M}(F)=\mathfrak{M}_{A}(F)=
\1_{D\backslash A}
F(d(z,A)) d z.
\end{displaymath}
For $r\in(0,1)$, denote 
\begin{displaymath}
\JJ_{r}= \vert\log(r)\vert^{1/2}
\1_{(0,r)}.
\end{displaymath}
The Minkowski content measure of $A$ in the gauge
$\vert\log(r)\vert^{1/2} r^{2}$ would be the weak limit of
$\mathfrak{M}_{A}(\mathcal{J}_{r})$ as $r\to 0$, if it exists. Finally, for $\gamma>0$, denote
\begin{displaymath}
F_{\gamma}(s)=\dfrac{\gamma}{\sqrt{2\pi}}
\1_{s\in(0,1)}\vert\log(s)\vert s^{\frac{\gamma^{2}}{2}}.
\end{displaymath}
Proposition \ref{LemFgamma} can be then interpreted as saying that $\nu_{\A_{-a}^u} =\lim_{\gamma \to 0}\mathfrak{M}(F_\gamma)$. Notice that in this limit, as we are on a bounded domain, the extra indicator function plays no role. The missing part of Theorem \ref{prop:: mes LQG} then follows from:

\begin{prop}
	Suppose that $(\MM(F_\gamma),f)$ converges as $\gamma \to 0$ for all $f$ continuous functions compactly supported in $D$. Then, also $(\MM(\JJ_r),f)$ converges as $r\to 0$ and moreover 
	$$\dfrac{1}{2}\lim_{r\to 0}(\mathfrak{M}(\mathcal{J}_{r}),f)=
	\lim_{\gamma\to 0} (\MM(F_\gamma),f),$$ for all continuous functions compactly supported in $D$.
\end{prop}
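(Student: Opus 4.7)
The plan is to reduce the claim to a Tauberian statement about one-dimensional measures. Fix a non-negative continuous $f$ supported in a compact $K\subset D$, and push the measure $\1_{D\backslash A}(z)\,f(z)\,dz$ forward along $z\mapsto d(z,A)$ to obtain a finite measure $\mu^{f}$ on $(0,\infty)$. Substituting $t=-\log s$ turns the restriction $\mu^{f}|_{(0,1)}$ into a finite measure $\rho$ on $(0,\infty)$ with non-increasing, right-continuous tail function $V(t):=\rho((t,\infty))$, and a direct computation gives
\[
(\MM(\JJ_{r}),f) = |\log r|^{1/2}\,V(|\log r|),\qquad
(\MM(F_{\gamma}),f) = \sqrt{\beta/\pi}\int_{0}^{\infty} t\,e^{-\beta t}\,\rho(dt),
\]
where $\beta=\gamma^{2}/2$. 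Letting $L(\beta):=\int_{0}^{\infty}e^{-\beta t}\rho(dt)$ denote the Laplace transform of $\rho$, the second quantity equals $\sqrt{\beta/\pi}\,(-L'(\beta))$. Denoting the hypothesized limit by $L^{*}$, the assumption reads $-L'(\beta)=(L^{*}+o(1))\sqrt{\pi/\beta}$ as $\beta\to 0^{+}$.

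Next I would integrate this asymptotic in $\gamma$ from $0$ to $\beta$. Since $\gamma\mapsto 1/\sqrt{\gamma}$ is integrable near the origin, the $o(1)$ error is absorbed uniformly and yields $L(0)-L(\beta)\sim 2L^{*}\sqrt{\pi\beta}$. A standard Stieltjes integration by parts (using $V(0^{+})=\rho((0,\infty))<\infty$ and $V(\infty)=0$) gives the identity $L(\beta)=V(0^{+})-\beta\hat{V}(\beta)$ with $\hat{V}(\beta):=\int_{0}^{\infty} e^{-\beta t}V(t)\,dt$, and comparing the two expressions for $L(0)-L(\beta)$ produces
\[
\hat{V}(\beta)\sim 2L^{*}\sqrt{\pi}\,\beta^{-1/2}\qquad\text{as }\beta\to 0^{+}.
\]

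Introduce the non-decreasing function $U(t):=\int_{0}^{t}V(s)\,ds$, whose Laplace--Stieltjes transform is exactly $\hat{V}$. Karamata's Tauberian theorem at index $1/2$ yields $U(t)\sim 4L^{*}\,t^{1/2}$ as $t\to\infty$ (using $\Gamma(3/2)=\sqrt{\pi}/2$), and since $U'=V$ is monotone, the monotone density theorem upgrades this to $V(t)\sim 2L^{*}\,t^{-1/2}$. Substituting $t=|\log r|$ shows $(\MM(\JJ_{r}),f)=|\log r|^{1/2}V(|\log r|)\to 2L^{*}$ as $r\to 0$, which is exactly $\tfrac{1}{2}\lim_{r\to 0}(\MM(\JJ_{r}),f)=\lim_{\gamma\to 0}(\MM(F_{\gamma}),f)$ for non-negative $f$; a general $f$ is recovered by splitting $f=f_{+}-f_{-}$.

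The main obstacle is the Tauberian step: the hypothesis is only an Abelian-type asymptotic of $\sqrt{\beta}\,(-L'(\beta))$, which a priori does not determine the pointwise tail behaviour of $V$. The monotonicity of $V$ (equivalently, the \emph{positivity} of the measure $\rho$) is what makes both Karamata's theorem and the monotone density theorem applicable; without it, oscillatory cancellations in $V'$ would be invisible to the Laplace transform. This is precisely why it is worth reducing from an arbitrary $f$ to $f\geq 0$ at the outset and working with a positive measure $\rho$ on the line.
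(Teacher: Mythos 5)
Your proposal is correct in substance but takes a genuinely different route from the paper. You linearize the problem: fixing $f\geq 0$, you push $\1_{D\backslash A}f(z)\,dz$ forward through $z\mapsto d(z,A)$, substitute $t=-\log s$, and recast the two quantities as $(\MM(\JJ_r),f)=|\log r|^{1/2}V(|\log r|)$ and $(\MM(F_\gamma),f)=\sqrt{\beta/\pi}\,(-L'(\beta))$, $\beta=\gamma^2/2$, where $V$ is the tail and $L$ the Laplace transform of a finite positive measure $\rho$ on $(0,\infty)$. Integrating the hypothesis in $\beta$, using $L(0)-L(\beta)=\beta\hat V(\beta)$, and then applying Karamata's Tauberian theorem followed by the monotone density theorem gives $t^{1/2}V(t)\to 2L^{*}$, which is exactly the claim; your constants check out ($\Gamma(3/2)=\sqrt{\pi}/2$ reproduces the factor $1/2$). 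The paper instead stays at the level of the kernels: it first proves the exact mixing identity expressing $(\MM(F_\gamma),f)$ as $\int_{-\infty}^{\infty}(1-t^2/2)\frac{e^{-t^2/2}}{\sqrt{2\pi}}\,(\MM(\JJ_{\exp(-t^2/\gamma^2)}),f)\,dt$, whose total weight $1/2$ explains the constant but only yields equality of the limits \emph{assuming both exist}; it then establishes convergence of $(\MM(\JJ_r),f)$ by a self-contained approximation scheme (the boundedness lemma for $q_r$, a sandwich reducing $\JJ_r$ to smoothed versions $\JJ_{r,\a}$, and a $\mathcal{C}^1$ polynomial approximation writing $\JJ_{1/2,\a}$ as $\sum_k c_{k,n}F_{\gamma_k}$ up to a small multiple of $q_{1/2}$, exploiting the common scaling). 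Both arguments reduce to $f\geq 0$ and use positivity in an essential way — for you it is the Tauberian condition (monotonicity of $V$), for the paper it is what makes the domination/sandwich inequalities effective. Your route is shorter but imports standard Tauberian machinery as a black box; the paper's is longer but elementary and makes the origin of the constant $1/2$ transparent.

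One boundary case needs patching: when $L^{*}:=\lim_{\gamma\to 0}(\MM(F_\gamma),f)=0$ (perfectly possible, since the proposition is a deterministic statement about an arbitrary closed set $A$, e.g.\ when $A$ is lower-dimensional near the support of $f$), the asymptotic equivalences $U(t)\sim 4L^{*}t^{1/2}$ and $V(t)\sim 2L^{*}t^{-1/2}$ are vacuous and the standard statements of Karamata's theorem and the monotone density theorem do not apply as cited. The fix is one line and uses only the monotonicity you already emphasize: $\hat V(\beta)\geq \int_0^{1/\beta}e^{-\beta t}V(t)\,dt\geq V(1/\beta)\,(1-e^{-1})/\beta$, so $\hat V(\beta)=o(\beta^{-1/2})$ forces $t^{1/2}V(t)\to 0$, which is the desired conclusion in that case; alternatively, phrase the Tauberian steps as one-sided $\limsup/\liminf$ bounds with an $\epsilon$.
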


To get some insight into why this proposition might be true, notice that both families $\mathcal{J}_r$ and $F_\gamma$ satisfy the same scaling property:
\begin{linenomath}
	\begin{align*} \label{scaling}	
	&\mathcal{J}_{r'}(s)=\beta^{-1/2}\mathcal{J}_{r}(s^{\beta}), ~~\text{ for }
	\beta = \dfrac{\vert \log(r)\vert}{\vert\log(r')\vert}; \\
	&F_{\gamma'}(s)=\beta^{-1/2}F_{\gamma}(s^{\beta}), \text{ for }
	~~\beta = \dfrac{\gamma'^{2}}{\gamma^{2}}.
	\end{align*}
\end{linenomath}
Moreover, for 
$\gamma = |\log r|^{-1/2}$, one can notice that the maximum of $F_\gamma$ is also of order $|\log r|^{-1/2}$ and this maximal value is taken at distance $O(r)$. 

We prove the proposition in two steps: first we show that if both the limits of $\MM(F_{\gamma})$ as $\gamma \to 0$ and of $\MM(\JJ_r)/2$ as $r \to 0$ exist, then they are equal. Then, we show that if the limit of $\MM(F_{\gamma})$ exists then so does the limit of $\MM(\JJ_r)$. From now on, let us take $A$ a closed set such that for all compactly supported $f$, $(\MM(F_\gamma),f)=(\MM_A(F_\gamma),f)$ converges as $\gamma$ tends to 0.

\subsubsection{Relationship between $\MM(F_\gamma)$ and $\MM(\mathcal J_r)$} 

\begin{lemma}
	Assume that both $\lim_{r\to 0}\MM(\JJ_r)$ and $\lim_{\gamma\to 0}\MM(F_\gamma)$ exist, then
	\[\lim_{\gamma\to 0}(\mathfrak{M}(F_{\gamma}),f)=
	\dfrac{1}{2}\lim_{r\to 0}(\mathfrak{M}(\mathcal{J}_{r}),f).\]
\end{lemma}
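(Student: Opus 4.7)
The approach is to express both $(\mathfrak{M}(F_\gamma), f)$ and $(\mathfrak{M}(\mathcal{J}_r), f)$ in terms of the single distribution function
\[\Psi(r) := \int_{D\setminus A} f(z)\, \mathbf{1}_{d(z,A) < r}\, dz,\]
and to link them via Stieltjes integration by parts and a scaling change of variables. By writing $f = f_+ - f_-$ I reduce to $f \geq 0$, in which case $\Psi$ is non-decreasing, right-continuous, and bounded by $\|f\|_1$. The hypothesis rewrites as $|\log r|^{1/2} \Psi(r) \to L$ with $L := \lim_{r \to 0^+}(\mathfrak{M}(\mathcal{J}_r), f)$, while on the other hand $(\mathfrak{M}(F_\gamma), f) = \frac{\gamma}{\sqrt{2\pi}} \int_{(0,1]} |\log r|\, r^{\gamma^2/2}\, d\Psi(r)$.

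Next I would integrate by parts against $\Psi$. The function $\phi_\gamma(r) := |\log r|\, r^{\gamma^2/2} = |\log r|\, e^{-(\gamma^2/2)|\log r|}$ is continuous on $[0,1]$ and vanishes at both endpoints; moreover $\phi_\gamma(r)\Psi(r) \sim L\, |\log r|^{1/2} e^{-(\gamma^2/2)|\log r|} \to 0$ as $r \to 0^+$, so both boundary terms disappear and
\[(\mathfrak{M}(F_\gamma), f) = \frac{\gamma}{\sqrt{2\pi}} \int_0^1 \Psi(r)\, r^{\gamma^2/2 - 1}\left(1 - |\log r|\frac{\gamma^2}{2}\right) dr.\]
Substituting $t = -\log r$ and then $u = t\gamma^2/2$, and introducing the bounded function $\psi(t) := t^{1/2}\Psi(e^{-t})$ (which satisfies $\psi(t) \to L$ at $+\infty$ and $\psi(t) \to 0$ at $0$), everything collapses to
\[(\mathfrak{M}(F_\gamma), f) = \frac{1}{\sqrt{\pi}} \int_0^\infty \psi\!\left(\frac{2u}{\gamma^2}\right) u^{-1/2} e^{-u}(1-u)\, du.\]

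Finally, as $\gamma \to 0^+$ one has $\psi(2u/\gamma^2) \to L$ pointwise for each $u > 0$, with integrable dominating bound $\|\psi\|_\infty\, u^{-1/2} e^{-u}(1+u)$. Dominated convergence then yields
\[\lim_{\gamma \to 0^+}(\mathfrak{M}(F_\gamma), f) = \frac{L}{\sqrt{\pi}} \int_0^\infty u^{-1/2} e^{-u}(1-u)\, du = \frac{L}{\sqrt{\pi}}\bigl(\Gamma(\tfrac12) - \Gamma(\tfrac32)\bigr) = \frac{L}{2},\]
which is exactly the claim. The main technical point to watch is the Stieltjes integration by parts with the non-smooth $\Psi$; this is clean once we observe that the asymptotic $\Psi(r) \sim L|\log r|^{-1/2}$ furnished by the hypothesis is exactly strong enough to kill both boundary contributions. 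The factor $1/2$ then drops out of the Gamma-function computation $\Gamma(1/2) - \Gamma(3/2) = \sqrt{\pi}/2$, which is a pleasingly transparent way to see where it originates.
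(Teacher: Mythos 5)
Your argument is correct and is essentially the paper's own proof in different clothing: your Stieltjes integration by parts against $\Psi$ produces exactly the same identity expressing $(\MM(F_\gamma),f)$ as a weighted average of $(\MM(\JJ_r),f)=\vert\log r\vert^{1/2}\Psi(r)$ that the paper obtains by writing $F_\gamma(s)=-\int_0^1 F_\gamma'(r)\mathbf{1}_{s\le r}\,dr$ and applying Fubini, and both proofs then conclude by dominated convergence using the boundedness furnished by the assumed limit. The only cosmetic differences are in evaluating the constant --- you substitute $u=\gamma^2\vert\log r\vert/2$ and compute $\frac{1}{\sqrt{\pi}}\bigl(\Gamma(\tfrac12)-\Gamma(\tfrac32)\bigr)=\tfrac12$, while the paper substitutes $r=e^{-t^2/\gamma^2}$ and computes the Gaussian integral $\int_{-\infty}^{+\infty}(1-t^2/2)e^{-t^2/2}\,dt/\sqrt{2\pi}=\tfrac12$ --- plus the harmless slip that your $\Psi$ is left- rather than right-continuous.
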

\begin{proof} Write $F_{\gamma}(s)$ as
	\begin{eqnarray*}
		F_{\gamma}(s)=	-\int_{0}^{1}F'_{\gamma}(r)\1_{s\leq r} dr
		&=& -\int_{0}^{1}F'_{\gamma}(r)\vert\log(r)\vert^{-1/2}\mathcal{J}_{r}(s) dr\\
		&=&\dfrac{\gamma}{\sqrt{2\pi}}
		\int_{0}^{1}r^{\frac{\gamma^{2}}{2}-1}
		\Big(1-\dfrac{\gamma^{2}}{2}\vert\log(r)\vert\Big)
		\vert\log(r)\vert^{-1/2}\mathcal{J}_{r}(s) dr.
	\end{eqnarray*}
	By changing the order of integration, we get
	\begin{equation*}
	(\mathfrak{M}(F_{\gamma}),f)=
	\dfrac{\gamma}{\sqrt{2\pi}}
	\int_{0}^{1}r^{\frac{\gamma^{2}}{2}-1}
	\Big(1-\dfrac{\gamma^{2}}{2}\vert\log(r)\vert\Big)
	\vert\log(r)\vert^{-1/2}
	(\mathfrak{M}(\mathcal{J}_{r}),f) dr.
	\end{equation*}
	Further, by the change of variable $r=\exp(-t^2/\gamma^{2})$ and symmetrization, this equals
	\[	
	\int_{-\infty}^{+\infty}(1-t^2/2)\frac{e^{-t^2/2}}{\sqrt{2\pi}}
	(\mathfrak{M}(\mathcal{J}_{\exp(-t^2/\gamma^{2})}),f) dt.
	\]
	Now, if the limit $\lim_{r\to 0}\MM(\JJ_r)$ exists, then 
	$\sup_{r\in(0,1)}\vert (\MM(\mathcal{J}_{r}),f)\vert<\infty$. Thus, from dominated convergence it follows that
	\begin{displaymath}
	(\nu_{\A^{u}_{-a}},f)=\lim_{\gamma\to 0}(\mathfrak{M}_{\A^{u}_{-a}}(F_{\gamma}),f)=
	c\lim_{r\to 0}(\mathfrak{M}_{\A^{u}_{-a}}(\mathcal{J}_{r}),f),
	\end{displaymath}
	where
	\begin{displaymath}
	c=	\int_{-\infty}^{+\infty}(1-t^2/2)\frac{e^{-t^2/2}}{\sqrt{2\pi}}dt=\dfrac{1}{2}.
	\qedhere
	\end{displaymath}
\end{proof}

\subsubsection{Convergence of $\MM(\JJ_r)$} 
In the last paragraph, we saw that a certain integrated version of the Minkowski content measure $\MM(\JJ_r)$ converges. We now strengthen it to full convergence. Throughout the section we suppose that $(\MM(F_\gamma),f)$ converges as $\gamma\to 0$ for all $f$ continuous compactly supported in $D$.

The idea of the proof is to approximate $\mathcal{J}_{r}$ by linear combinations
$\sum_{k=1}^{n}c_{k}F_{\gamma_{k}}$. As $\mathcal{J}_{r}$ is not continuous on $[0,1]$ and does not take value $0$ at $r=0$, one cannot expect a uniform approximation over the whole interval. Thus we have to use an uniform approximation over a subinterval of $[0,1]$ for certain smoothed versions of $\JJ_r$, and to argue that the cut out parts do not matter.

As a first step let us show that the Minkowski content measure is bounded. In fact we will show something a tiny bit stronger, that is useful for us in the later approximations:

\begin{lemma}
	\label{LemBoundedMink}
	Let $q_r(s) = \1_{s\in (0,1)}\frac{|\log (r \vee s)|}{|\log r|^{1/2}}$.
	Then, for all $f$ continuous compactly supported in $D$,
	$(\MM(q_r),f)$ stays bounded as $r\to 0$.
\end{lemma}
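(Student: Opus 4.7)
The plan is to dominate $q_r$ pointwise by a positive $\gamma$-average of the functions $F_\gamma$ and then apply Fubini, using that $(\MM(F_\gamma),f)$ converges (hence stays bounded) as $\gamma\to 0^+$ by the standing hypothesis.

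First I would compute, via the substitution $u=\gamma^{2}/2$,
\[
\int_{0}^{\gamma_{0}} F_{\gamma}(s)\,d\gamma \;=\; \frac{\I{s\in(0,1)}}{\sqrt{2\pi}}\bigl(1-s^{\gamma_{0}^{2}/2}\bigr),
\]
and then specialize to the scale $\gamma_{0}=\gamma(r):=\sqrt{2/|\log r|}$, so that $s^{\gamma_{0}^{2}/2}=e^{-1}$ exactly at $s=r$. This is the scale dictated by the location of the ``jump'' of $q_{r}$ and by the normalizing factor $|\log r|^{1/2}$ in its definition.

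The core of the argument is then the pointwise inequality: there exists an absolute constant $C$ (for example $C=\sqrt{2\pi}/(1-e^{-1})$) such that, for every $s\in(0,1)$,
\[
q_{r}(s)\;\le\; C\,|\log r|^{1/2}\int_{0}^{\gamma(r)}F_{\gamma}(s)\,d\gamma.
\]
I would verify this by splitting into the two natural regimes. For $s\le r$ the right-hand side is bounded below by $C(1-e^{-1})|\log r|^{1/2}/\sqrt{2\pi}$ while $q_{r}(s)=|\log r|^{1/2}$, which pins down the value of $C$. For $s\in(r,1)$, setting $u=|\log s|/|\log r|\in(0,1)$ reduces the inequality to $u\le C(1-e^{-u})/\sqrt{2\pi}$, which follows from the elementary convexity bound $1-e^{-u}\ge (1-e^{-1})u$ on $(0,1]$.

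Finally, assuming $f\ge 0$ (and splitting $f=f_{+}-f_{-}$ otherwise), Fubini and the previous inequality give
\[
(\MM(q_{r}),f)\;\le\; C\,|\log r|^{1/2}\int_{0}^{\gamma(r)}(\MM(F_{\gamma}),f)\,d\gamma\;\le\; CK\,|\log r|^{1/2}\gamma(r)\;=\; CK\sqrt{2},
\]
where $K$ is any uniform upper bound for $(\MM(F_{\gamma}),f)$ on some interval $(0,\gamma_{*}]$; such a $K$ exists because $(\MM(F_{\gamma}),f)$ converges as $\gamma\to 0^{+}$. The genuinely delicate step is identifying the pointwise envelope above with a universal constant; once this inequality is in hand, boundedness of $(\MM(q_{r}),f)$ is a routine Fubini computation.
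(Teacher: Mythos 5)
Your proof is correct. The key pointwise bound checks out: the closed-form computation $\int_{0}^{\gamma_{0}}F_{\gamma}(s)\,d\gamma=\frac{1}{\sqrt{2\pi}}(1-s^{\gamma_{0}^{2}/2})$ is right, and with $\gamma(r)=\sqrt{2/|\log r|}$ the two regimes $s\leq r$ and $s\in(r,1)$ reduce, as you say, to the chord inequality $1-e^{-u}\geq(1-e^{-1})u$ on $[0,1]$, so $q_{r}\leq C|\log r|^{1/2}\int_{0}^{\gamma(r)}F_{\gamma}\,d\gamma$ with $C=\sqrt{2\pi}/(1-e^{-1})$; Tonelli and the bound $K$ on $(0,\gamma_{*}]$ (available since $(\MM(F_{\gamma}),f)$ is finite and converges as $\gamma\to0^{+}$) then give the uniform bound $CK\sqrt{2}$. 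This is the same underlying idea as the paper's proof, namely dominating $q_{r}$ by a positive superposition of the family $F_{\gamma}$ at small $\gamma$ and invoking the standing convergence hypothesis, but the execution differs: the paper uses the scaling identity $F_{\gamma'}(s)=\beta^{-1/2}F_{\gamma}(s^{\beta})$ to build a discrete dyadic series bound $q_{1/2}\leq C\sum_{k\geq1}2^{-k/2}F_{2^{-k/2}\gamma_{0}}$ on doubly exponential scales and then rescales to general $r$ via $q_{r}(s)=\beta^{-1/2}q_{1/2}(s^{\beta})$, whereas you replace the series by a continuous $\gamma$-average that can be integrated exactly, trading the scaling manipulations for a one-line concavity estimate. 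Your version is slightly more explicit (it produces the concrete bound $CK\sqrt{2}$ and avoids the dyadic bookkeeping), while the paper's version fits more naturally with the scaling relations it reuses in the subsequent approximation arguments; both are complete proofs of the lemma.
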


\begin{proof}
	It is enough to prove the result for $f$ non-negative.
	Let $\gamma_{0}>0$. There is $C>0$ such that
	\begin{displaymath}
	\1_{s\in (1/4,1/2)}\leq C F_{\gamma_{0}}(s)~~\text{and}~~
	\1_{s\in (1/2,1)}|\log s|\leq C F_{\gamma_{0}}(s).
	\end{displaymath}
	Thus, by the scaling of $F_\gamma$,
	\begin{displaymath}
	\1_{(2^{-2^{k+1}},2^{-2^{k}})}(s) = \1_{(1/4,1/2)}(s^{2^{-k}})
	\leq C F_{\gamma_{0}}(s^{2^{-k}})
	=C 2^{-\frac{k}{2}}F_{2^{-\frac{k}{2}}\gamma_{0}}(s).
	\end{displaymath}
	From where one can see that
	\begin{displaymath}
	q_{1/2}\leq C\sum_{k=1}^{+\infty}
	2^{-\frac{k}{2}}F_{2^{-\frac{k}{2}}\gamma_{0}},
	\end{displaymath}
	and as $q_r$ satisfies the scaling 
	$q_r(s) = \beta^{-1/2}q_{1/2}(s^\beta)$ with $\beta = \frac{\log 2}{|\log r|}$, it follows that
	\begin{displaymath}	
	q_r\leq C\sum_{k=1}^{+\infty}
	2^{-\frac{k}{2}}F_{2^{-\frac{k}{2}}\beta\gamma_{0}}.
	\end{displaymath}
	Thus, the lemma follows from the convergence of $(\MM(F_\gamma),f)$.
\end{proof}

For $\a > 2$, let us now define $\JJ_{1/2,\a}$ to be a positive smooth function that is bounded by $|\log 2|^{1/2}$, equals $|\log 2|^{1/2}$ over the interval $[1/2^\a,1/2-1/2^\a]$ and takes the value $0$ outside of the interval $(1/2^{2\a}, 1/2)$. Finally, let $\JJ_{r,\a}(s) = \beta^{-1/2} \JJ_{1/2,\a}(s^\beta)$ with $\beta = \frac{\log 2}{|\log r|}$ as above.

We claim that the convergence of $\MM(\JJ_{r,\a})$ as $r \to 0$ implies that of $\MM(\JJ_r)$:

\begin{claim} Assume that for all $\a > 2$ and all continuous $f$ compactly supported in $D$, $(\MM(\JJ_{r,\a}),f)$ converges as $r \to 0$. Then, also $(\MM(\JJ_r),f)$ converges as $r \to 0$.
\end{claim}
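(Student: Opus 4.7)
The plan is to reduce convergence of $\Psi(r) := (\MM(\JJ_r),f)$ to the assumed convergence of $\Psi_\a(r) := (\MM(\JJ_{r,\a}),f)$ via a self-improving comparison. By writing $f=f_+-f_-$, I may assume $f\ge 0$. Let $\mu_f$ be the finite positive measure on $(0,\infty)$ defined by $\mu_f(E) = \int_{\{z\in D\,:\,d(z,A)\in E\}} f(z)\,dz$, so that $(\MM(H),f) = \int H\,d\mu_f$ for any measurable $H\ge 0$. The pointwise inequality $q_R(s)\ge |\log R|^{1/2}\1_{(0,R)}(s)$ combined with Lemma \ref{LemBoundedMink} produces the a priori bound
\[
\mu_f((0,R)) \le C|\log R|^{-1/2}\qquad\text{for all }R\in(0,1),
\]
and in particular $\Psi(r) = |\log r|^{1/2}\mu_f((0,r))$ is uniformly bounded. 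Set $L_\a := \lim_{r\to 0}\Psi_\a(r)$; the pointwise inequality $\JJ_{r,\a} \le \JJ_r$ gives $\Psi_\a(r)\le \Psi(r)$, whence $\liminf_{r\to 0}\Psi(r) \ge \sup_\a L_\a$.

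For the matching upper bound, observe that $\JJ_r - \JJ_{r,\a}$ is non-negative, bounded by $|\log r|^{1/2}$, and supported in $(0,r^\a)\cup (r^{1+c_\a},r)$ with $c_\a := 1/(2^{\a-1}\log 2)$. Hence
\[
\Psi(r)-\Psi_\a(r) \le |\log r|^{1/2}\mu_f((0,r^\a)) + |\log r|^{1/2}\mu_f((r^{1+c_\a},r)).
\]
The first term is bounded by $C\a^{-1/2}$ upon applying the a priori bound with $R=r^\a$. The main obstacle, which the rest of the argument must circumvent, is that the second term cannot be controlled uniformly in $r$ by elementary $q_r$-estimates as $\a\to\infty$; the key trick is to notice the scaling identity $|\log r|^{1/2}\mu_f((0,r^{1+c_\a})) = (1+c_\a)^{-1/2}\Psi(r^{1+c_\a})$, which gives
\[
|\log r|^{1/2}\mu_f((r^{1+c_\a},r)) = \Psi(r) - (1+c_\a)^{-1/2}\Psi(r^{1+c_\a}).
\]
Substituting and rearranging produces the self-referential inequality
\[
\Psi(r^{1+c_\a}) \le (1+c_\a)^{1/2}\bigl(\Psi_\a(r) + C\a^{-1/2}\bigr).
\]

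Sending $r\to 0$ (so $R:=r^{1+c_\a}\to 0$ also ranges over $(0,1)$) and taking $\limsup$ yields $\limsup_{R\to 0}\Psi(R) \le (1+c_\a)^{1/2}(L_\a+C\a^{-1/2})$ for each $\a$. Since this holds for every $\a$, and since $c_\a\to 0$ and $\a^{-1/2}\to 0$ as $\a\to\infty$, we deduce $\limsup\Psi \le \liminf_\a L_\a$. Combined with $\liminf\Psi \ge \sup_\a L_\a$ and the trivial inequality $\liminf_\a L_\a \le \sup_\a L_\a$, all four quantities must coincide; in particular $L_\a$ converges and $\Psi(r)\to \lim_\a L_\a = \sup_\a L_\a$ as $r\to 0$, proving the claim.
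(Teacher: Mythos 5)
Your argument is correct in substance and shares the paper's skeleton: both proofs squeeze $\JJ_r$ between $\JJ_{r,\a}$ and a controlled perturbation of it, and both use Lemma \ref{LemBoundedMink} to kill the contribution of $(0,r^\a)$ --- your a priori bound $\mu_f((0,R))\le C\vert\log R\vert^{-1/2}$ is exactly that lemma read through the pointwise inequality $q_R\ge\vert\log R\vert^{1/2}\1_{(0,R)}$. Where you genuinely differ is the treatment of the outer shoulder near $s=r$: the paper compares $\JJ_r$ with the smoothed gauge at a slightly shifted radius, $\JJ_r\le(1+o(1))\,\JJ_{r+r^{\a/2},\a}+\a^{-1/2}q_{r^\a}$, and then sends $r\to0$ and $\a\to\infty$; you instead exploit the exact scaling $\vert\log r\vert^{1/2}\mu_f((0,r^{1+c}))=(1+c)^{-1/2}\Psi(r^{1+c})$ to turn the shoulder mass into a difference of $\Psi$ at two nearby scales, so that the unknown $\Psi(r)$ cancels and you obtain the self-referential bound $\Psi(r^{1+c_\a})\le(1+c_\a)^{1/2}\bigl(\Psi_\a(r)+C\a^{-1/2}\bigr)$. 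The closing limsup/liminf bookkeeping is sound and even delivers convergence of $L_\a$ as a by-product. This is a legitimate variant: it requires introducing the pushforward measure $\mu_f$, but in exchange avoids the shifted radius $r+r^{\a/2}$ and keeps everything at the level of the single function $\Psi$.

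One small correction. The right edge of the plateau of $\JJ_{r,\a}$ in the $s$ variable is not $r^{1+c_\a}$ with $c_\a=1/(2^{\a-1}\log 2)$ but $r^{1+\delta_\a}$ with $\delta_\a=-\log(1-2^{1-\a})/\log 2$: since $r^\beta=\tfrac12$, the condition $s^\beta\le\tfrac12-2^{-\a}$ reads $s\le r^{1+\delta_\a}$ with $2^{-\delta_\a}=1-2^{1-\a}$, and $\delta_\a>c_\a$. Hence $\JJ_r-\JJ_{r,\a}$ is supported in $(0,r^\a)\cup(r^{1+\delta_\a},r)$, a slightly larger set than the one you state, and your displayed bound on $\Psi(r)-\Psi_\a(r)$, as well as the self-referential inequality, should carry $\delta_\a$ (or any convenient upper bound such as $2c_\a$, valid for $\a\ge2$) in place of $c_\a$. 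Since $\delta_\a\to0$ as $\a\to\infty$, every subsequent step goes through verbatim, so the slip is harmless but should be fixed.
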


\begin{proof}
	By definition $\JJ_{r,\a}(s) \leq \JJ_r(s)$ for all $s \in (0,1)$. Moreover, similarly
	\[\JJ_r(s) \leq \log (1+r^\a) \JJ_{r+r^{\a/2},\a}(s) + \a^{-1/2}q_{r^\a}(s).\]
	Hence for all positive $f$,
	\[(\MM(\JJ_{r,\a}),f) \leq (\MM(\JJ_r),f) \leq (1+r^\a)(\MM(\JJ_{r+r^{\a/2},\a}),f) + \a^{-1/2}(\MM(q_{r^\a}),f).\]
	Thus, by letting first $r \to 0$ and then $\a \to \infty$, we conclude by Lemma \ref{LemBoundedMink} 
\end{proof}
Finally, it remains to show that
\begin{claim}
	For all $\a > 2$ and all continuous $f$ compactly supported in $D$, we have $(\MM(\JJ_{r,\a}),f)$ converges as $r \to 0$.
\end{claim}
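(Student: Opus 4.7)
The plan is to approximate $\JJ_{r,\a}$ by a finite linear combination of functions of the form $F_{\gamma}$, then use the already-established convergence of $(\MM(F_\gamma),f)$ as $\gamma\to 0$ (Proposition~\ref{LemFgamma}) together with Lemma~\ref{LemBoundedMink} to bound the resulting error uniformly in $r$. The Cauchy criterion will then yield the convergence of $(\MM(\JJ_{r,\a}),f)$.

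Under the rescaling $v:=-\log s/|\log r|$ and $\mu:=\gamma^2|\log r|$, a direct computation gives
\begin{equation*}
F_\gamma(s) = |\log r|^{1/2}\,g_\mu(v), \qquad g_\mu(v) := \tfrac{\sqrt{\mu}}{\sqrt{2\pi}}\, v\, e^{-\mu v/2},
\end{equation*}
and
\begin{equation*}
\JJ_{r,\a}(s) = |\log r|^{1/2}\,\tilde J_\a(v), \qquad \tilde J_\a(v) := (\log 2)^{-1/2}\,\JJ_{1/2,\a}(2^{-v}),
\end{equation*}
where $\tilde J_\a$ is smooth, compactly supported in $(1,2\a)$, and independent of $r$. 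The same rescaling yields $q_r(s) = |\log r|^{1/2}\,(v\wedge 1)$ on $(0,1)$. Consequently, if for every $\eps>0$ one can find $\mu_1,\dots,\mu_n>0$ and coefficients $c_1,\dots,c_n$ with
\begin{equation*}
\Bigl|\tilde J_\a(v) - \textstyle\sum_{k=1}^n c_k\, g_{\mu_k}(v)\Bigr| \leq \eps\,(v\wedge 1) \quad\text{for all } v\geq 0,
\end{equation*}
then setting $\gamma_k(r):=\sqrt{\mu_k/|\log r|}$ gives $|\JJ_{r,\a}(s)-\sum_k c_k F_{\gamma_k(r)}(s)|\leq \eps\, q_r(s)$ on $(0,1)$, and Lemma~\ref{LemBoundedMink} then implies
\begin{equation*}
\Bigl|(\MM(\JJ_{r,\a}),f) - \textstyle\sum_k c_k (\MM(F_{\gamma_k(r)}),f)\Bigr| \leq \eps\,C(f)\quad\text{uniformly in }r.
\end{equation*}

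To construct such an approximation, write $\tilde J_\a(v)=v\,h(v)$ with $h(v):=\tilde J_\a(v)/v$, which extends to a continuous function on $[0,\infty]$ vanishing at both endpoints and supported in $(1,2\a)$. The idea is to approximate $h$ uniformly by a sum $\sum_k \tilde c_k e^{-\mu_k v/2}$ with all $\mu_k\geq \mu_{\min}>0$ and with error of the form $\eps\, e^{-\mu_{\min}v/2}$. Multiplying through by $v$ gives $|\tilde J_\a(v)-\sum_k c_k g_{\mu_k}(v)|\leq \eps\, v\, e^{-\mu_{\min}v/2}$, and since $\sup_{v\geq 0}\, v\, e^{-\mu_{\min}v/2}/(v\wedge 1)$ is a finite constant $C(\mu_{\min})$, this is $\leq \eps\, C(\mu_{\min})\,(v\wedge 1)$, which is of the required form after adjusting $\eps$. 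To obtain such an approximation of $h$, we first approximate the bounded, compactly supported function $h(v)\, e^{\mu_{\min}v/2}$ uniformly by a finite sum $\sum_k \tilde c_k e^{-\mu_k' v/2}$ with $\mu_k'>0$; under the substitution $x=e^{-v/2}$ this reduces to the density of finite combinations of $\{x^{\mu_k'}\}_{\mu_k'>0}$ in the space of continuous functions on $[0,1]$ vanishing at both endpoints, which is a standard Stone--Weierstrass (Müntz-type) statement. Multiplying by $e^{-\mu_{\min}v/2}$ and setting $\mu_k:=\mu_k'+\mu_{\min}$ gives the desired approximation.

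Finally, as $r\to 0$ with each $\mu_k$ fixed, we have $\gamma_k(r)\to 0$, so Proposition~\ref{LemFgamma} gives $(\MM(F_{\gamma_k(r)}),f)\to(\nu_{\A_{-a}^u},f)$ for each $k$; hence $\sum_k c_k(\MM(F_{\gamma_k(r)}),f)$ converges as $r\to 0$. Combining this with the uniform bound above yields $\limsup_r (\MM(\JJ_{r,\a}),f)-\liminf_r(\MM(\JJ_{r,\a}),f)\leq 2\eps\,C(f)$, and letting $\eps\to 0$ proves convergence. The main obstacle is producing an approximation whose error is controlled by $(v\wedge 1)$ rather than by $v$: without the exponential twist by $e^{\mu_{\min}v/2}$, Stone--Weierstrass would only yield an error of order $\eps\, v$ on $\tilde J_\a$, which grows linearly in $v$ and is out of reach of the boundedness provided by Lemma~\ref{LemBoundedMink}.
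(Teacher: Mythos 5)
Your proof is correct and takes essentially the same route as the paper: exploiting the common scaling of $\JJ_{r,\a}$, $F_\gamma$ and $q_r$, you approximate the fixed profile by a finite linear combination of the $F_{\gamma_k}$'s with error dominated by $\eps\, q_r$, and then conclude via Lemma \ref{LemBoundedMink}, the convergence of $(\MM(F_\gamma),f)$ as $\gamma\to 0$, and a Cauchy argument. The only difference is in the implementation of the Weierstrass-type step — you work in the logarithmic variable with an exponential damping (M\"untz-style density of $\{x^{\mu}\}_{\mu>0}$), while the paper approximates $\JJ_{1/2,\a}(s)/\vert\log s\vert$ in $\mathcal{C}^1([0,1])$ by polynomials vanishing at the endpoints — and both devices serve the identical purpose of controlling the error against $q$ near $s=0$ and $s=1$.
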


\begin{proof}
	Notice that for any $n-$th degree polynomial $Q_n$ with $0$ constant coefficient, we can write $|\log(s)| Q_{n}(s)$ over $(0,1)$ as $\sum_{k=1}^{n}c_{k,n}F_{\gamma_{k}}(s)$, where  $\gamma_{k}:=\sqrt{2k}$ and $c_{k,n}\sqrt{2\pi}/\gamma_{k}$ is the coefficient in
	$Q_{n}$ of the power
	$s^{k}$. 
	Thus, our aim will be to approximate $\JJ_{1/2,\a}$ via polynomials. To do this, let \[\widehat \JJ_{1/2,\a}(s) = |\log s|^{-1}\JJ_{1/2,\a}(s).\] We claim there exists a sequence of polynomials $Q_{n}$ such that
	\begin{equation*}
	\label{EqAppoxBern}
	\lim_{n\to +\infty}\sup_{s\in(0,1)}
	\vert\log(s)\vert
	\dfrac{\vert \widehat \JJ_{1/2,\a}(s)
		-Q_{n}(s)\vert}{q_{1/2}(s)} =0.
	\end{equation*}	
	Indeed, this is just a consequence of the fact that the polynomials having zeros at 0 and 1 are dense in $$\{f\in \mathcal{C}^1([0,1]): f(0)=f(1)=0\}$$ for the norm 
	$\|f\|_{\mathcal{C}^1}=\max_{[0,1]}|f|+\max_{[0,1]}|f'|$. One way to see this is to approximate $f'$ by a polynomial $P_n$ and to define $Q_n(x)=\int_0^x (P_n(y))dy-x\int_0^1 P_n(y)dy$.
	
	Thus we obtain that for any $\eps > 0$, there is some $n \in \N$ large enough such that for all $s \in (0,1)$ we have 
	\[|\JJ_{1/2,\a}(s) - \sum_{k=1}^{n}c_{k,n}F_{\gamma_{k}}(s)| \leq \eps q_{1/2}(s).\]
	Now take $\beta = \frac{\log 2}{|\log r|}$ as before. Using the scaling of each of the terms, we see that over $s \in (0,1)$,
	\[|\JJ_{r,\a}(s^\beta) - \sum_{k=1}^{n}c_{k,n}F_{\beta^{1/2}\gamma_{k}}(s^\beta)| \leq \eps q_{r}(s^\beta).\]
	Hence for all $\epsilon>0$ there exists $n\in \N$ large enough such that
	\[\left | (\mathfrak{M}(\JJ_{r,\a}),f)-\left (\mathfrak{M}
	\big (\sum_{k=1}^{n}c_{k,n}F_{\beta^{1/2}\gamma_{k}}\big )
	,f\right ) \right |\leq \epsilon (\MM(q_{r}),\vert f\vert). \]
	Combining now Lemma \ref{LemBoundedMink} and the fact that as $\beta \to 0$ the sequence $ \left (\mathfrak{M}
	\big (\sum_{k=0}^{n}c_{k,n}F_{\beta^{1/2}\gamma_{k}}\big )
	,f\right )$ converges, we conclude  that $(\mathfrak{M}(\JJ_{r,\a}),f)$ has a finite limit as $r\to 0$.
\end{proof}

\section*{Acknowledgements}
The authors wish to thank F. Viklund for helpful comments on an earlier draft,  W. Werner for his vision, and B. Werness for his beautiful simulations and interesting discussions. Also, the authors are very thankful to the two anonymous referees for their careful reading and helpful remarks. This work was partially supported by the SNF grants SNF-155922 and SNF-175505. A. Sepúlveda was supported by the ERC grant LiKo 676999. The authors are thankful to the NCCR Swissmap. T. Lupu acknowledges the support of Dr.  Max Rössler,  the Walter Haefner Foundation and the ETH Zurich Foundation. The work of this paper was finished during the memorable visit of J.Aru and A. Sepúlveda to Paris in May 2018, on the invitation by T. Lupu, funded by PEPS "Jeunes chercheuses et jeunes chercheurs" 2018 of INSMI.

\bibliographystyle{alpha}	
\bibliography{biblio}

\end{document}